\documentclass[11pt]{amsart}
\usepackage[utf8]{inputenc}
\usepackage{enumerate}
\usepackage{amsmath}
\usepackage{amsfonts}
\usepackage{mathrsfs}
\usepackage{galois}
\usepackage[toc,page]{appendix}
\usepackage{xcolor}
\usepackage{import} 
\usepackage[colorlinks, linkcolor = blue, anchorcolor = blue, citecolor = blue]{hyperref}
\usepackage{eso-pic}
\usepackage{amssymb}
\usepackage{soul}
\usepackage{graphicx}
\usepackage{mathtools}
\usepackage{lipsum}
\usepackage{adjustbox}

\newcommand{\cB}{\mathcal{B}}
\newcommand{\cC}{\mathcal{C}}

\newcommand{\cF}{\mathcal{F}}
\newcommand{\cG}{\mathcal{G}}
\newcommand{\cH}{\mathcal{H}}

\newcommand{\cM}{\mathcal{M}}

\newcommand{\cP}{\mathcal{P}}

\newcommand{\cS}{\mathcal{S}}

\newcommand{\cU}{\mathcal{U}}
\newcommand{\cV}{\mathcal{V}}
\newcommand{\cW}{\mathcal{W}}

\newcommand{\cZ}{\mathcal{Z}}

\newcommand{\bB}{\mathbf{B}}
\newcommand{\bC}{\mathbf{C}}

\newcommand{\bF}{\mathbf{F}}

\newcommand{\bI}{\mathbf{I}}

\newcommand{\bL}{\mathbf{L}}
\newcommand{\bM}{\mathbf{M}}

\newcommand{\RP}{{\mathbb{RP}}}
\newcommand{\R}{\mathbb R}

\newcommand{\Z}{\mathbb Z}
\newcommand{\N}{\mathbb N}
\newcommand{\Zc}{\mathcal Z}

\newcommand{\Fc}{\mathcal F}
\newcommand{\Pc}{\mathcal P}

\newcommand{\Vc}{\mathcal V}
\newcommand{\Lb}{\mathbf L}
\newcommand{\Fb}{\mathbf F}
\newcommand{\Mb}{\mathbf M}
\newcommand{\id}{\mathrm {id}}

\newcommand{\area}{\mathrm{area}}
\newcommand{\vol}{\mathrm{vol}}

\newcommand{\dmn}{\mathrm{dmn}}
\renewcommand{\index}{\mathrm{index}}

\renewcommand{\tilde}{\widetilde}

\newcommand{\spt}{\operatorname{spt}}

\newcommand{\x}{\times}

\newcommand\norm[1]{\lVert#1\rVert}
\newcommand\HC{\mathcal{H}}

\numberwithin{equation}{section}
\newtheorem{thm}{Theorem}[section]
\newtheorem{cor}[thm]{Corollary}
\newtheorem{prop}[thm]{Proposition}
\newtheorem{lem}[thm]{Lemma}

\newtheorem{claim}[thm]{Claim}

\theoremstyle{definition}
\newtheorem{defn}[thm]{Definition}

\newtheorem{rmk}[thm]{Remark}
\newtheorem{nota}[thm]{Notation}

\newtheorem*{question*}{Question}

\title[Strong multiplicity one theorems]{Strong multiplicity one theorems and homological min-max theory}
\author{Adrian Chun-Pong Chu}\author{Yangyang Li}

\address{The University of Chicago, Department of Mathematics, Eckhart Hall,
5734 S University Ave,
Chicago, IL, 60637}
\email{acpc@uchicago.edu}
\address{The University of Chicago, Department of Mathematics, Eckhart Hall,
5734 S University Ave,
Chicago, IL, 60637}
\email{yangyangli@uchicago.edu}

\usepackage[style=alphabetic, backend=biber, sorting=nyt, url=false ]{biblatex}
\addbibresource{ref.bib}

\begin{document}

\maketitle

\begin{abstract}
It was asked by Marques-Neves which min-max $p$-widths of the unit $3$-sphere lie strictly between $2\pi^2$ and $8\pi$. We show that the 10th to the 13th widths do. More generally, we prove stronger versions of  X. Zhou's multiplicity one theorem.

\end{abstract}

\setcounter{tocdepth}{1}
\tableofcontents

\section{Introduction}

    For every closed smooth Riemannian manifold $(M,g)$ of dimension at least $2$, there is an associated non-decreasing sequence of positive real numbers, $\{\omega_p(M,g)\}_{p=1,2, \dots}$, which are called the min-max $p${\it-widths} of $M$. The heuristic definition can be described as follows.
    
    Using geometric measure theory, we consider the flat cycle space $\Zc$, which consists of ``all boundaryless geometric objects in $M$ of codimension $1$'', with coefficients in $\Z_2$. F. Almgren \cite{Alm62} showed that this space $\Zc$ is weakly homotopy equivalent to $\RP^\infty$, and consequently, its cohomology ring in $\Z_2$-coefficients is $\Z_2[\bar \lambda]$, which is generated by an order-$2$ element $\bar \lambda$. Now, a continuous map $\Phi$ from some finite simplicial complex into $\Zc$ is called a {\it $p$-sweepout} if the pullback  $\Phi^*(\bar \lambda^p)$ is non-zero. The $p$-width of $M$ is then defined by
    \[
        \omega_p(M,g)\coloneqq\inf_{\substack{p\textrm{-sweepout}\\\Phi:X\to\Zc}}\;\sup_{x\in X}\area(\Phi(x))\,.
    \]
    Note that sometimes the $g$ in $\omega_p(M,g)$ is omitted. Remarkably, the $p$-widths also follow a Weyl law \cite{LMN18}, similar to the spectrum of the Laplace-Beltrami operator on $M$.

    For the unit $3$-sphere $S^3$, it is well-known that the first four widths are equal to $4\pi$.
    Building upon the resolution of the Willmore conjecture by Marques-Neves \cite{MN14}, C. Nurser \cite{Nur16} showed that 
    \[
        \omega_5(S^3)=\omega_6(S^3)=\omega_7(S^3)=2\pi^2,\;\;2\pi^2<\omega_9(S^3)<8\pi, \;\;\omega_{13}(S^3)\leq 8\pi\,.
    \]
    Then,  Marques-Neves posed the following question \cite[\S 9]{MN17}:
        Which $p$-widths of $S^3$ lie {\it strictly} between $2\pi^2$ and $8\pi$? 
    
    Recently, F. Marques \cite{Mar23} proved that $\omega_8(S^3)$ is $2\pi^2$. In this paper, we prove that the $10$th to the $13$th widths of $S^3$ lie strictly between $2\pi^2$ and $8\pi$, by establishing the following theorem.
    \begin{thm}\label{thm_13_width}
        $\omega_{13}(S^3) < 8\pi$ for the unit $3$-sphere $S^3$.
    \end{thm}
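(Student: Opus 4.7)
The plan is to argue by contradiction, with the main input being the strong multiplicity one theorem of this paper. As a starting point, recall that Nurser's construction \cite{Nur16} projectivizes the $14$-dimensional space of spherical harmonics on $S^3$ of degree at most $2$ into $\RP^{13}$, which parametrizes a continuous $13$-sweepout via zero sets. The maximum slice area equals $8\pi$ and is attained precisely on the reducible locus, where a quadratic factors into two linear forms and its zero set is a pair of great $2$-spheres meeting along a great circle. This already yields the non-strict bound $\omega_{13}(S^3)\le 8\pi$, so the task is to upgrade the inequality to a strict one.

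Assume for contradiction that $\omega_{13}(S^3)=8\pi$. I would then invoke the strong multiplicity one theorem of this paper (applicable since the round $S^3$ has positive Ricci curvature and $3\le\dim S^3\le 7$) to produce a minimizing sequence of $13$-sweepouts whose varifold limit is supported on a smooth closed embedded minimal hypersurface $\Sigma\subset S^3$, with each connected component occurring with multiplicity one. The index upper bound of Marques-Neves and Zhou for Almgren-Pitts min-max then gives $\area(\Sigma)=8\pi$ and $\ind(\Sigma)\le 13$.

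Next, I would rule out all such $\Sigma$. By Almgren's area lower bound, every connected component of $\Sigma$ has area at least $4\pi$, with equality only for great $2$-spheres. A disconnected $\Sigma$ would therefore be a disjoint union of two or more great $2$-spheres; but any two distinct great $2$-spheres in $S^3$ meet along a great circle and so cannot be disjoint, contradicting embeddedness. Hence $\Sigma$ is connected, and since its area exceeds $4\pi$, it has positive genus $g\ge 1$. The case $g=1$ is excluded by Brendle's resolution of Lawson's conjecture: the unique embedded minimal torus in $S^3$ is the Clifford torus, of area $2\pi^2\ne 8\pi$.

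The main obstacle is the case $g\ge 2$: ruling out connected embedded minimal surfaces of area exactly $8\pi$ and index at most $13$ in $S^3$. The value $8\pi$ is the limiting area of the Lawson surfaces $\xi_{g,1}$ as $g\to\infty$, which suggests an area gap just below $8\pi$. I would combine such a gap statement with a genus-area inequality to bound $g$ from above, and then apply an index-genus lower bound (in the spirit of Ros, Savo, and the Schoen conjecture) to force the index to exceed $13$, yielding the desired contradiction. If a clean analytic classification of these surfaces proves out of reach, an alternative is to directly perturb Nurser's sweepout in a small neighborhood of the reducible locus, using the strong multiplicity one theorem to certify that any local reduction of slice area translates into an honest strict decrease of $\omega_{13}$ rather than being absorbed by doubled-multiplicity limits.
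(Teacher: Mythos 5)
Your reduction to a classification problem has a genuine gap that you yourself flag: the case of a connected embedded minimal surface $\Sigma\subset S^3$ with $\genus(\Sigma)\ge 2$, $\area(\Sigma)=8\pi$ and $\index(\Sigma)\le 13$ is not ruled out by anything you cite, and to my knowledge no such nonexistence statement is available. The Lawson surfaces have areas accumulating at $8\pi$ from below, but there is no ``area gap just below $8\pi$'' theorem, no genus--area inequality sharp enough to bound the genus here, and the index--genus lower bounds you allude to are not strong enough (nor unconditionally established) to force $\index>13$ for a hypothetical such surface. So the core of the argument is missing. There is also a structural misreading: Theorem \ref{thm_13_width} does not follow from the \emph{statement} of Theorem \ref{thm:strong_multi_one} plus a classification of area-$8\pi$ minimal surfaces. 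Even granting Theorem \ref{thm:strong_multi_one}, its conclusion under the hypothesis $\omega_{13}=8\pi$ would only produce a multiplicity-one connected (by Frankel) minimal surface of area $8\pi$, whose existence is not known to be contradictory; this is exactly the case you cannot close.

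The paper's proof deliberately avoids any classification of minimal surfaces at area $8\pi$. It partitions the $(29,r_0)$-almost minimizing varifolds of mass $8\pi$ into a ``good'' set $\cG_{\bar g}$ (those of the form $|T|$ for a flat cycle $T$; in particular multiplicity one) and a ``bad'' set $\cB_{\bar g}$ (the multiplicity-two equators, which are the critical set of Nurser's sweepout), proves via annular replacement lemmas that these two sets stay $\Fb$-separated under the deformations used, and then runs a \emph{restrictive homological} min-max over perturbed bumpy metrics $g_i\to\bar g$. Zhou's multiplicity one theorem forces the homological widths $\bL(\HC_i)\to 8\pi$ to be realized near $\cG_{\bar g}$, while the ``beginning'' $\Phi_{0,i}$ of the cobordism has its high-area slices near $\cB_{\bar g}$. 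A cut-and-paste on the cobordism (excising the region mapped near $\cG_{\bar g}$ and capping with part of its boundary) then produces a competitor in $\HC_i$ with strictly smaller max mass, a contradiction. A connected multiplicity-one minimal surface of area $8\pi$ would simply lie in $\cG_{\bar g}$ and causes no problem for this argument; the contradiction comes from homology, not from nonexistence. Your closing ``alternative'' of perturbing Nurser's sweepout near the reducible locus gestures at this, but as written it is not an argument.
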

    Note that it is still open whether the $14$-width of $S^3$ is also strictly less than $8\pi$.

    Let us explain our motivation for pursuing  the improvement from $\omega_{13}(S^3)\leq 8\pi$ to Theorem \ref{thm_13_width}. In recent years, the $p$-widths have played a crucial role in constructing minimal hypersurfaces using the Almgren-Pitts min-max theory \cite{MN17,IMN18,Zho20,MN21,YangyangLi20_improved_morse_index}. In particular, they are crucial in Song's proof \cite{Son23} of Yau's conjecture regarding the existence of infinitely many immersed closed minimal surfaces in 3-manifolds \cite[p.689]{Yau82}. In min-max theory, one subtle feature is that the minimal hypersurfaces obtained may have multiplicities. In a closed Riemannian manifold $(M^{n+1},g)$ with $3\leq n+1\leq 7$, the min-max theory yields a collection $\{\Sigma_1, \dots,\Sigma_N\}$ of disjoint, closed, smooth, embedded, minimal hypersurfaces accompanied by a set $\{m_1, \dots, m_N\}$ of positive integers. They constitute a varifold
    \begin{equation}\label{eq:emb_min_cycle}
        m_1|\Sigma_1|+ \dots +m_N|\Sigma_N|
    \end{equation}
    with a mass of $\omega_p(M)$.
    Note that any varifold of the form (\ref{eq:emb_min_cycle}) is called an {\it embedded minimal cycle.}
    
    Regarding the result $\omega_{13}(S^3)\leq 8\pi$, C. Nurser constructed an explicit $13$-sweepout such that the supremum of the area is attained by multiplicity-two equatorial $2$-spheres. In our proof of Theorem \ref{thm_13_width}, we show that such a sweepout cannot be optimal, thereby leading to a strict inequality of the $13$-width. 

    For other explicit computations of widths, it is worth noting that for the unit round $2$-sphere,  Aiex showed that the first three widths are $2\pi$ and the fourth to the eighth are $4\pi$ \cite{Aie19}, and then Chodosh-Mantoulidis showed that each $p$-width is given by $2\pi\lfloor\sqrt{p}\rfloor$ \cite{CM23}.
    Readers interested can refer to \cite{Luna19WidthRealProjective, BatistaLima22MinMaxWidth, Chu22,Don22,BatistaLima23Lens,Zhu23}. 
    
    Furthermore, our techniques lead to more general multiplicity one theorems, explained in the following. When the ambient manifold has a bumpy metric or positive Ricci curvature, X. Zhou \cite{Zho20} proved the existence of a multiplicity-one, two-sided, minimal hypersurface with the area given by the $p$-width (though this does not hold for general metrics, as shown by Wang-Zhou \cite{wangZhou2022higherMulti}). In our paper, we strengthen Zhou's multiplicity one theorem as follows.

    Let $X$ be a {\it pure} finite simplicial $k$-complex (which means $X$ is a finite union of $k$-simplexes) with empty boundary. 
    For any  $\Fb$-continuous map $\Phi: X \to\Zc_n(M;\Z_2)$, let $\cH(\Phi)$ be the set of all maps $\Phi':X' \to\Zc_n(M;\Z_2)$, where $X'$ is any pure finite simplicial $k$-cycle, such that there exists some $\Fb$-continuous map  $\Psi:W \to\Zc_n(M;\Z_2)$, where $W$ is a pure finite simplicial $(k+1)$-complex, with 
    $$\partial W=X\sqcup X', \quad \Psi|_{X}=\Phi,\quad \Psi|_{X'}=\Phi'.$$ Heuristically, $\cH(\Phi)$ is the set of all maps {\it homologous} to $\Phi$. Then, we define the {\it width} of $\cH(\Phi)$ by
    \[
        \bL(\cH(\Phi))\coloneqq \inf_{\Phi'\in\cH(\Phi)} \sup_{x \in \dmn(\Phi')} \bM(\Phi'(x))>0\,.
    \] 

    \begin{thm}[Strong multiplicity one theorem I]\label{thm:strong_multi_one_I}
       Consider a closed Riemannian manifold $(M^{n+1},g)$, where $3\leq n+1\leq 7$, equipped with a bumpy metric or a metric of positive Ricci curvature. Let $\Phi: X \to\Zc_n(M; \Z_2)$ be an $\Fb$-continuous map,  where $X$ is a pure finite simplicial cycle, such that $\bL(\cH(\Phi))>0$. Then for every minimizing sequence of the width $\bL(\cH(\Phi))$, its critical set contains some varifold induced by a multiplicity one, smooth, embedded, minimal hypersurface.
    \end{thm}

    Precise definitions of the terminologies will be provided in \S \ref{thm_restrictive_min_max}. Moreover, Theorem \ref{thm_13_width} would be a direct consequence of Theorem \ref{thm:strong_multi_one_I}. We will also prove another version of the strong multiplicity one theorem:  Essentially, the difference between Theorem \ref{thm:strong_multi_one_I} and \ref{thm:strong_multi_one_II} is  $\forall\;\exists$ versus $\exists\;\forall$. 

    \begin{thm}[Strong multiplicity one theorem II]\label{thm:strong_multi_one_II}
       Consider a closed Riemannian manifold $(M^{n+1},g)$, where $3\leq n+1\leq 7$, equipped with a bumpy metric or a metric of positive Ricci curvature. Let $\Phi:X \to\Zc_n(M; \Z_2)$ be an $\Fb$-continuous map, where  $X$ is a pure simplicial cycle, such that $\bL(\cH(\Phi))>0$. Then there exists a pulled-tight minimizing sequence $(\Phi_i)_i$ for the width $\bL(\cH(\Phi))$ such that every embedded minimal cycle in the critical set $\bC((\Phi_i)_i)$ is induced by some multiplicity one, smooth, embedded, minimal hypersurface.
    \end{thm}
    
    \begin{rmk} In fact, as we will see in the proof, in both  Theorem \ref{thm:strong_multi_one_I} and \ref{thm:strong_multi_one_II}, the minimal hypersurfaces mentioned are actually the boundary of some smooth domain.
    \end{rmk}

    It is worth highlighting that very recently, Wang-Zhou \cite{WangZhou23FourMinimalSpheres} established a multiplicity one theorem in the Simon-Smith min-max setting, which yielded four embedded, minimal $2$-spheres in every $S^3$ with a bumpy metric or positive Ricci curvature. This result builds
    on the their work \cite{WangZhou2025improved} on the regularity theory for multiple membranes problem, and also Sarnataro-Stryker's work \cite{SarnataroStryker2023Optimal} on the regularity for minimizers of the prescribed mean curvature functional.



    \subsection{Main ideas}\label{sect:mainIdeas}
    
        In this paper, we employ various variants of the min-max theory. Instead of doing min-max over the class of maps homotopic to a given sweepout, we will, for example, consider the class of maps {\it homologous} to a given sweepout, or restrict the class of maps by imposing an upper bound on mass (a technique previously developed by the second author in \cite{YangyangLi20_improved_morse_index}). These  min-max theorems are detailed in \S \ref{sect:min-max}. 
        Let us outline their roles in proving Theorem \ref{thm_13_width} and \ref{thm:strong_multi_one_II} here. Note that, while Theorem \ref{thm_13_width} will be a direct consequence of Theorem \ref{thm:strong_multi_one_I}, for the sake of clarity in presentation, we would just sketch the proof of Theorem \ref{thm_13_width}.
    
        \subsubsection{Theorem \ref{thm_13_width}} \label{subsubsect:Thm1}
            Let us sketch the proof of $\omega_{13}(S^3)<8\pi$. By C. Nurser \cite{Nur16}, there exists a 13-sweepout $\Phi_{0}$  such that $\Mb\circ\Phi_0\leq 8\pi$ and its critical set consists of multiplicity two equatorial $2$-spheres. Suppose by contradiction that $\omega_{13}(S^3)=8\pi$.

            Let us take a sequence of bumpy metrics $g_1,g_2,...$ that tend to the round metric $\bar g$. Take a sequence of positive numbers $\delta_1,\delta_2,...\to 0$. We will run for each $i$ a {\it restrictive homological min-max with a mass upper bound $8\pi+\delta_i$} under the metric $g_i$. More precisely, for each $i$, we will consider the class $\HC_i$ of all maps that are homologous to $\Phi_0$ through some ``cobordism" (in the space of cycles) whose mass is bounded from above by $8\pi+\delta_i$, and then do min-max in this class. The min-max width $L_i$ will tend to $8\pi$.

            By X. Zhou's multiplicity one theorem, for each $g_i$, the width $L_i$ corresponds to some min-max minimal hypersurface $\Sigma_i$ with multiplicity one. And by Marques-Neves \cite{MN21}, we can assume that $\Sigma_i$ is the only minimal hypersurface in $(M,g_i)$ with area $L_i$, even if multiplicity is allowed. Since $\bar g$ has positive Ricci curvature, by Sharp's compactness theorem we can assume $\Sigma_i$ tends to some minimal hypersurface $\Sigma$ smoothly, with multiplicity one.
 
            Now, let $\cC^1$ (resp. $\cC^{>1}$) be the set of $\bar g$-minimal hypersurfaces with multiplicity one (resp. greater than one) and area $8\pi$. For some sufficiently large $i$, we can  choose an ``optimal" 13-sweepout $\Phi_i$ in $\HC_i$, and an ``optimal"  cobordism $\Psi_i$ between $\Phi_0$ and $\Phi_i$, so that by \cite[Theorem 4.7]{MN21} we would have, heuristically:
            \begin{itemize}
                \item If $\Phi_i(x)$ has large area (i.e. has area greater than $L_i-\epsilon$ for some $\epsilon>0$), then $\Phi_i(x)$ is close to $\Sigma_i$, and thus to $\Sigma\in \cC^1$.
                \item If $\Psi_i(x)$ has large area, then $\Psi_i(x)$ is close to $\cC^1\cup\cC^{>1}$. 
            \end{itemize}

            We will now derive a contradiction by constructing some $\Xi\in\HC_i$ such that $\Mb_{g_i}\circ \Xi<L_i$. For simplicity, {\it let us suppose for now there is a Morse index upper bound for every element $\cC^1$ and $\cC^{>1}$}. Then, since $\bar g$ has a positive Ricci curvature, the closure of $\cC^1$ and  $\cC^{>1}$ are separated from each other by Sharp's compactness. Thus,  viewing $\Phi_i, \Psi_i, \cC^1$, and $\cC^{>1}$ all as sets of currents by abuse of notation, we can choose a subset $A\subset \Psi_i$ that is away from $\cC^{>1}$ and contains all elements of $\Psi_i$ which are close to $\cC^1$: See Figure \ref{fig:introPic}. Now,   we remove from $\Phi_i$ the part $\Phi_i\cap A$, and glue back in a cap $\partial A\backslash \Phi_i $. The new sweepout is our $\Xi$: See Figure \ref{fig:introPic}. Now, $\Xi$ is away from  $\cC^1$ and $\cC^{>1}$. Thus, by the two bullet points in the last paragraph, $\Mb_{g_i}\circ \Xi<L_i$.  Contradiction arises. 

            \begin{figure}[h]
                \centering
                \makebox[\textwidth][c]{\includegraphics[width=5.5in]{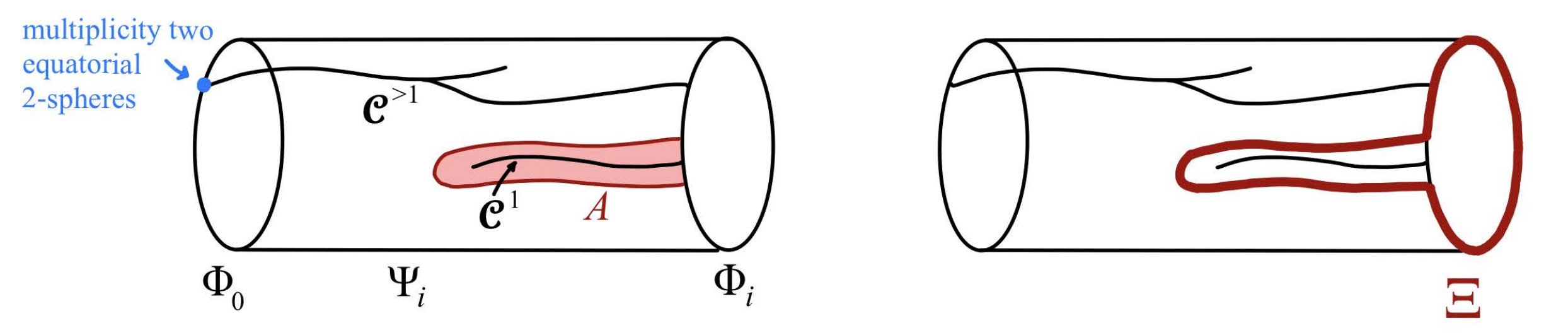}}
                \caption{}
                \label{fig:introPic}
            \end{figure}

            In reality, {\it we do not have an upper bound on Morse index} for elements of $\cC^1$ and  $\cC^{>1}$. The sets $\cC^1$ and  $\cC^{>1}$ will be defined in a different way, using Pitts' notion of almost-minimizing, and the fact that they are separated would be deduced by examining properties of annular replacements.

        \subsubsection{Theorem \ref{thm:strong_multi_one_II}} \label{subsubsect:Thm3}
            Let $\cC^1$ (resp. $\cC^{>1}$) be the set of $\bar g$-minimal hypersurfaces with multiplicity one (resp. greater than one) and area $\omega_p(M)$. As above, for simplicity, we assume these two sets are separated.
            In the spirit of \cite[Theorem 4.7]{MN21}, choose an ``optimal" $p$-sweepout $\Phi$ for $\omega_p(M)$ such that if $x$ is such that $\Phi(x)$ has high area, then $\Phi(x)$ is close to $\cC^1\cup\cC^{>1}$. Let $A$ be the set of  $x$ such that $\Phi(x)$ is in fact close to $\cC^{>1}$. We will do a relative, homological min-max process, by considering {\it the set $\cH$ of maps  that is homological to $\Phi|_{A}$ relative to $\Phi|_{\partial A}$.} 

            To prove Theorem \ref{thm:strong_multi_one_II}, it suffices to show that the min-max width for the relative, homological min-max class $\cH$ is less than $\omega_p(M)$, because then we can ``lower" $\Phi|_A$, and thus $\Phi$, away from $\cC^{>1}$. Suppose the otherwise, so that $\Phi|_A$ is an ``optimal" sweepout for $\cH$ such that elements of high area are close to $\cC^{>1}$. Then we are in a situation analogous to the proof of Theorem \ref{thm_13_width}, where the 13-sweepout $\Phi_0$  is an optimal sweepout whose critical set lie in $\cC^{>1}$. Thus, we can argue as in the proof of Theorem \ref{thm_13_width} to get a contradiction.

    \subsection{Organization}

        In \S \ref{sect:prelim}, we include some preliminary materials. In \S \ref{sect:mr_am} we introduce the notion of $(m,r)$-almost minimizing varifold. Results in these two sections are rather technical: They are mostly adaptation of existing min-max content to the restrictive setting (i.e. with a mass upper bound) or the setting where the ambient metric is perturbed. In \S\ref{sect:deformation}, we prove  results related to deformation of sweepouts.

        In \S \ref{sect:min-max}, we prove a restrictive (homotopic) min-max theorem, and a restrictive homological min-max theory.
        The    key new ideas of this paper (as outlines in \S \ref{sect:mainIdeas}) will be begin in \S \ref{sect:proof_main_thm_I}, where we prove Theorem \ref{thm:strong_multi_one_I}. In   \S \ref{sect:proof_main_thm}, we prove Theorem \ref{thm_13_width} using Theorem \ref{thm:strong_multi_one_I}.  In \S\ref{sect:proof_main_thm_II}, we prove  Theorem \ref{thm:strong_multi_one_II}. In \S \ref{sect:technical}, we prove some technical results used in  \S \ref{sect:proof_main_thm_I}.

    \subsection*{Acknowledgement}
        We would like to thank Andr\'e Neves for the helpful discussions. The second author was partially supported by the AMS-Simons travel grant.

\part{Preparation work}

\section{Preliminaries}\label{sect:prelim}

    Throughout this paper, unless specified otherwise, the ambient Riemannian manifolds $(M^{n+1}, g)$ we consider will always be smooth and closed, with $3\leq n+1\leq 7$.
    
    \subsection{Notations} \label{sect:nota}
        \begin{itemize}
            \item $\bI_k(M;\Z_2)$: the set of integral $k$-dimensional currents in $M$  with $\Z_2$-coefficients.
            \item $\Zc_{k}( M;\Z_2)\subset \bI_k(M;\Z_2)$: the subset that consists of elements $T$ such that $T=\partial Q$ for some $Q\in\bI_{k+1}( M;\Z_2)$ (such $T$ are also called {\it flat $k$-cycles}). 
            \item $\Zc_{k}( M;\nu;\Z_2)$ with $\nu=\cF,\bF,\bM$: the set $\Zc_{k}( M;\Z_2)$ equipped with the three common topologies given respectively by the  {\it flat} metric $\Fc$,  the {\it $\Fb$-metric}, and the  {\it mass} $\Mb$ (see, for example, the survey \cite{MN20}). 
            \item $\mathcal{V}_n(M)$ or $\cV(M)$: the closure, in the varifold weak topology, of the space of $n$-dimensional rectifiable varifolds in $M$.
            \item $\norm{V}$: the Radon measure  induced on $M$ by  $V\in \mathcal{V}_n(M)$.
            \item For any $a$, the varifold topology on $\{V\in \Vc_n(M):\norm{V}(M)\leq a\}$ can be induced by an {\it $\Fb$-metric} defined by Pitts in \cite[p.66]{Pit81}.
            \item $|T| \in \mathcal{V}_n(M)$: the varifold induced by a current $T\in \Zc_{k}( M;\Z_2)$, or a submanifold $T$.
            \item In the same spirit, given a map $\Phi$ into $\Zc_n(M;\Z_2)$, the associated map into $ \mathcal{V}_n(M)$ is denoted $|\Phi|$.
            \item $\spt(\cdot)$: the support of a current or a varifold.
            \item $\bB^\nu_\epsilon(\cdot)$:  the open $\epsilon$-neighborhood  of an element or  a subset in $\Zc_n(M;\nu;\Z_2)$.
            \item $\bB^{\Fb}_\epsilon(\cdot)$: the open $\epsilon$-neighborhood  of an element or  a subset of $\cV_n(M)$ under the $\Fb$-metric.
            \item $I(1,j)$: the cubical complex on $I\coloneqq[0,1]$ whose 1-cells and 0-cells are respectively 
            \[
                [0,1/3^j],[1/3^j,2/3^j],\dots,[1-1/3^j,1]\;\;\textrm{ and }\;\; [0],[1/3^j],[2/3^j],\dots,[1].
            \]
            \item $I(m,j)$: the  cubical complex structure
            \[I(m,j)=I(1,j)\otimes\dots\otimes I(1,j)\;\;(m\textrm{ times})\] 
            on $I^m$.
            \item $X_q$: the set of $q$-cells of $X$.
            \item $X(q)$ for a cubical subcomplex of $I(m,j)$: the subcomplex of $I(m, j + q)$ with support $X$.
            \item $\mathbf{f}(\Phi)$: the {\it fineness} of a map $\Phi:X_0\to\Zc_n(M;\Z_2)$,
            \[
                \sup\{\Mb(\Phi(x)-\Phi(y)):x,y\textrm{ belong to some common cell}\}.
            \]
            \item $\mathbf{n}(i,j) : I(m,i)_0 \to I(m,j)_0$ for $j \leq i$: the map such that $\mathbf{n}(i,j)(x)$ is the closest vertex in $I(m,j)_0$ to $x$.
            \item $\Gamma^\infty(M)$: the set of smooth Riemannian metrics on $M$.
            \item $B_g(p,r)$: the open $r$-neighborhood of a point $p$ in metric $g$.
            \item  Suppose $W$ is a finite simplicial $(k+1)$-complex, and we have already defined two simplicial $k$-complexes $W^\alpha$ and $W^\omega$ such that 
            $\partial W=W^\alpha + W^\omega$ (with $\Z_2$ coefficients). Then given any map $\Psi: W \to \mathcal{Z}_n(M;  \mathbb{Z}_2)$, we  denote $\Psi^\alpha \coloneqq \Psi|_{W^\alpha }$ and $\Psi^\omega\coloneqq\Psi|_{W^\omega}$.
        \end{itemize}
    
        Throughout this paper, for the sake of simplicity, we would consider a complex and its underlying space as identical. Moreover, all simplicial or cubical complexes are assumed to have $\Z_2$-coefficients.

    \subsection{$p$-width}\label{subsect:widths}
        By the Almgren isomorphism theorem \cite{Alm62} (see also \cite[\S 2.5]{LMN18}),  when equipped with the flat topology, $\Zc_{n }( M;\Z_2)$ is weakly homotopic equivalent to $\R\mathbb P^\infty$. Thus we can denote its cohomology ring by $\Z_2[\bar\lambda]$. 
        
        \begin{defn}
            Let $\Pc_p$ be the set of all $\Fb$-continuous maps $\Phi:X\to \Zc_n(M;\Z_2)$, where $X$ is a finite simplicial complex, such that $\Phi^*(\bar\lambda^p)\ne 0$. Elements of $\Pc_p$ are called {\it $p$-sweepouts.}
        \end{defn}

        \begin{rmk}
            Note that every finite cubical complex is homeomorphic to a finite simplicial complex and vice versa (see \cite[\S 4]{BP02}). So when $X$ is a finite cubical complex in above, the notion of $p$-sweepout still makes sense. 
        \end{rmk}
        \begin{defn}
            Denoting by $\dmn(\Phi)$ the domain of $\Phi$, the {\it $p$-width} of $(M,g)$ is defined by 
            \[
                \omega_p(M,g)\coloneqq\inf_{\Phi\in\Pc_p}\sup_{x\in \dmn(\Phi)}\Mb(\Phi(x)).
            \]
        \end{defn}
        We may write $\omega_p(M)$ for $\omega_p(M,g)$ if no confusion is caused.
        \begin{defn}
            A sequence $(\Phi_i)_i$ in $\cP_p$ is called a  {\it minimizing sequence} for $\cP_p$, or the $p$-width $\omega_p(M)$,  if \[\limsup_{i\to\infty} \max_{x\in \dmn(\Phi_i)} \Mb(\Phi_i(x))=\omega_p(M).\] 
        \end{defn}

        For a minimizing sequence $(\Phi_i)_i$, we define its \textit{critical set} by
        \[
            \bC_g((\Phi_i)_i)\coloneqq\{V=\lim_j|\Phi_{i_j}(x_j)|:\{i_j\}_j\subset\mathbb N, x_j\in \dmn(\Phi_{i_j}), \| V\|_g(M)=\omega_p(M)\}\,.
        \]
        We will often omit the subscript $_g$ if no confusion is caused. This will also be the case for other variants of min-max theory in \S \ref{sect:min-max}.
        Now, a sequence is called \textit{pulled-tight} if every varifold in $\bC((\Phi_i)_i)$ is stationary.

        \begin{rmk}\label{def_equiv_width}
            There is an equivalent definition of $p$-widths from \cite[Remark 5.7]{MN21}: First, an $\Fc$-continuous map $\Phi:X\to\Zc_n(M;\Z_2)$ is said to have {\it no concentration of mass} if 
            \[
                \lim_{r\to 0}\sup_{x\in X,p\in M}\norm{\Phi(x)}(B_r(p))=0\,.
            \]
            Then, when defining the $p$-width, instead of using the collection $\Pc_p$, we use the collection of all $\Fc$-continuous maps $\Phi$ with no concentration of mass such that $\Phi^*(\bar\lambda^p)\ne 0$.
        \end{rmk}

    \subsection{Interpolations}
        In this subsection, we collect some interpolation results in the literature. Let $(M, g)$ be a closed manifold and $m$ be a positive integer.
    
        \begin{prop}[{\cite[Theorem~3.7]{MN17}}]\label{prop:almgren_ext}
            There exist positive constants $C_{\ref{prop:almgren_ext}} = C_{\ref{prop:almgren_ext}}(M, g, m)$ and $\delta_{\ref{prop:almgren_ext}} = \delta_{\ref{prop:almgren_ext}}(M, g, m)$ with the following property:
            
            If $X$ is a cubical subcomplex of $I(m, l)$ for some $l \in \mathbb{N}^+$ and 
            \[
                \phi: X_0 \to \mathcal{Z}_n(M; \Z_2)
            \]
            has $\mathbf{f}(\phi) < \delta_{\ref{prop:almgren_ext}}$, then there exists a map (called the \textit{Almgren extension})
            \[
                \Phi: X \to \mathcal{Z}_n(M; \Mb_g; \Z_2)
            \]
            continuous in the mass norm and satisfying
            \begin{enumerate}
                \item $\Phi(x) = \phi(x)$ for all $x \in X_0$;
                \item if $\alpha$ is some $j$-cell in $X_j$, then $\Phi$ restricted to $\alpha$ depends only on the values of $\phi$ assumed on the vertices of $\alpha$;
                \item $\sup\{\mathbf{M}(\Phi(x) - \Phi(y)) : x, y \text{ lie in a common cell of }X\} \leq C_{\ref{prop:almgren_ext}} \mathbf{f}(\phi)$.
            \end{enumerate}
        \end{prop}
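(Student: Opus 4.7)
The plan is to construct $\Phi$ by induction on the dimension of the skeleta of $X$, extending from $X_{j-1}$ to $X_j$ one $j$-cell at a time, using Almgren's isoperimetric filling for $\Z_2$-cycles in $(M,g)$. The crucial ingredient is the small-mass isoperimetric inequality: for any $T\in\Zc_n(M;\Z_2)$ with $\Mb(T)$ below some threshold depending on $(M,g)$, there exists a filling $Q\in\bI_{n+1}(M;\Z_2)$ with $\partial Q=T$ and $\Mb(Q)\leq C\,\Mb(T)$ for a constant $C=C(M,g)$. To secure property~(2), the extension on each cell $\alpha$ will be built by a deterministic rule (e.g.\ a fixed selection among minimizing fillings) that sees only the values of $\phi$ on the vertices of $\alpha$.

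For the base and $1$-cell step: set $\Phi:=\phi$ on $X_0$. For a $1$-cell $\alpha=[v_0,v_1]$, the cycle $\phi(v_1)-\phi(v_0)$ has mass $\leq\mathbf{f}(\phi)<\delta_{\ref{prop:almgren_ext}}$, so by the isoperimetric inequality it admits a canonical filling $Q_\alpha$ with $\Mb(Q_\alpha)\lesssim\mathbf{f}(\phi)$. One then slices $Q_\alpha$ by a fixed Lipschitz function $f_\alpha:\spt(Q_\alpha)\to[0,1]$ and defines
\[
    \Phi(t):=\phi(v_0)+\partial(Q_\alpha\llcorner\{f_\alpha\leq t\}),\qquad t\in[0,1].
\]
This is mass-continuous (by the coarea formula controlling the slice boundaries), joins $\phi(v_0)$ to $\phi(v_1)$, and has mass oscillation $\lesssim\Mb(Q_\alpha)\lesssim\mathbf{f}(\phi)$.

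For the inductive step, assume $\Phi$ has been constructed on $X_{j-1}$ with cellwise mass oscillation bounded by $C_{j-1}\mathbf{f}(\phi)$. For a $j$-cell $\alpha$, the restriction $\Phi|_{\partial\alpha}$ has $\Fc$-diameter $\lesssim C_{j-1}\mathbf{f}(\phi)$. By the Almgren isomorphism theorem, loops and higher spheres of uniformly small diameter in $\Zc_n(M;\Fc;\Z_2)$ are null-homotopic (the $\pi_1=\Z_2$ class is detected by a filling obstruction that vanishes once the spanning chain has mass below the isoperimetric threshold). Treating the cell $\alpha$ as a cone over $\partial\alpha$, we propagate the slicing construction of the previous step one dimension up: we view the $\partial\alpha$-family of cycles as the boundary of an $(n+1)$-chain $Q_\alpha$ built by iteratively filling, and slice it to obtain $\Phi|_\alpha$. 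Choosing the filling by the same canonical rule guarantees that $\Phi|_\alpha$ is a function of $\Phi|_{\partial\alpha}$, and hence, unwinding the induction, only of $\phi$ on the vertices of $\alpha$.

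The main obstacle is the quantitative bookkeeping. Each inductive pass inflates the mass oscillation by a factor $C=C(M,g)$ coming from the isoperimetric filling and from the Lipschitz constants of the slicing functions chosen at previous stages. Since $X$ has dimension $\leq m$, the process terminates after at most $m$ steps, so the final constant $C_{\ref{prop:almgren_ext}}$ is the composition of these $m$ factors, which depends only on $(M,g)$ and $m$ (and not on $l$, since nothing about the size of the cells enters). The threshold $\delta_{\ref{prop:almgren_ext}}$ must then be chosen small enough that at every stage of the induction the cycles being filled have mass below the isoperimetric regime, which again requires only $(M,g)$ and $m$. Mass continuity of $\Phi$ inside each cell is inherited from the slicing construction through the coarea formula, giving a genuinely $\Mb_g$-continuous map, and property~(3) is a direct consequence of the compounded isoperimetric estimates.
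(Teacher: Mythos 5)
The paper does not prove this proposition; it is imported verbatim as \cite[Theorem~3.7]{MN17}, whose proof is Almgren's interpolation construction. Your overall strategy (induction on skeleta plus the small-mass isoperimetric filling) is indeed the strategy behind that construction, but two of your key steps have genuine gaps. First, the $1$-cell step: the map $t\mapsto \phi(v_0)+\partial(Q_\alpha\llcorner\{f_\alpha\leq t\})$ is not mass-continuous and does not satisfy (3). Indeed $\partial(Q_\alpha\llcorner\{f_\alpha\leq t\})=(\partial Q_\alpha)\llcorner\{f_\alpha\leq t\}+\langle Q_\alpha,f_\alpha,t\rangle$, and the slice $\langle Q_\alpha,f_\alpha,t\rangle$ is only an $L^1$ function of $t$: the coarea formula bounds $\int_0^1\Mb(\langle Q_\alpha,f_\alpha,t\rangle)\,dt$ by $\mathrm{Lip}(f_\alpha)\Mb(Q_\alpha)$, but gives neither continuity of $t\mapsto\Mb(\langle Q_\alpha,f_\alpha,t\rangle)$ nor a pointwise bound by $C\,\Mb(Q_\alpha)$. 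So the path can leave the mass-ball of radius $C\mathbf{f}(\phi)$ around the endpoints and can jump in mass. The actual construction instead decomposes $Q_\alpha$ over a fine cubical structure on $M$ itself, adds the small pieces one at a time, and applies the isoperimetric inequality \emph{inside each small cube} to control and continuously interpolate the boundary added at each step; this combinatorial cut-and-paste is where both the mass continuity and the pointwise oscillation bound come from, and it cannot be replaced by a single global slicing.

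Second, the inductive step on $j$-cells: appealing to the Almgren isomorphism to say that "small-diameter spheres in $\Zc_n(M;\Fc;\Z_2)$ are null-homotopic" is a purely qualitative statement in the flat topology; it produces no mass-continuous extension and no bound of the form (3), and smallness of the $\Fc$-diameter is in any case the wrong hypothesis (smallness in mass is what feeds the isoperimetric inequality). The genuine extension to a $j$-cell is not an abstract null-homotopy but an explicit $j$-fold iteration of the $1$-cell construction, filling successively in the $j$ coordinate directions of the cube using only the vertex values of $\phi$; it is this explicit formula that makes property (2) automatic and lets the constants compound only $m$ times. As written, your argument establishes neither the mass continuity of $\Phi$ nor estimate (3), which are the substantive content of the proposition.
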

    
        \begin{prop}[{\cite[Proposition~3.2]{MN21}}]\label{prop:homotopy_of_M_close}
            There exist positive constants $C_{\ref{prop:homotopy_of_M_close}} = C_{\ref{prop:homotopy_of_M_close}}(M, g, m)$ and $\delta_{\ref{prop:homotopy_of_M_close}} = \delta_{\ref{prop:homotopy_of_M_close}}(M, g, m)$ with the following property:
            
            If $X$ is a cubical subcomplex of $I(m, l)$ for some $l \in \mathbb{N}^+$ and two continuous maps
            \[
                \Phi_0, \Phi_1: X \to \mathcal{Z}_n(M; \Mb_g; \Z_2)
            \]
            satisfy
            \[
                \sup_{x \in X} \Mb_g(\Phi_0(x) - \Phi_1(x)) < \delta_{\ref{prop:homotopy_of_M_close}}\,,
            \]
            then there exists a homotopy 
            \[
                H: [0, 1] \times X \to \mathcal{Z}_n(M; \Mb_g; \Z_2)
            \] with $H(0, \cdot) = \Phi_0$ and $H(1, \cdot) = \Phi_1$ and such that
            \[
                \sup_{(t, x) \in [0, 1] \times X} \Mb_g(H(t,x) - \Phi_0(x)) \leq C_{\ref{prop:homotopy_of_M_close}} \sup_{x \in X} \Mb_g(\Phi_0(x) - \Phi_1(x))\,.
            \]
            In particular, for all $(t, x) \in [0, 1] \times X$,
            \[
                \Mb_g(H(t,x)) \leq \Mb_g(\Phi_0(x)) + C_{\ref{prop:homotopy_of_M_close}} \sup_{x \in X} \Mb_g(\Phi_0(x) - \Phi_1(x))\,.
            \]
        \end{prop}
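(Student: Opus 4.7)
My strategy is to realize $H$ as an Almgren extension, via Proposition~\ref{prop:almgren_ext}, of a discrete two-layer interpolation between $\Phi_0$ and $\Phi_1$ defined on a cubical subdivision of $[0,1]\times X$. Since $X$ is a compact complex and both $\Phi_k$ are continuous in the mass norm, they are uniformly continuous: for any $\eta>0$ there exists an integer $l\geq 1$ such that whenever two vertices $v, v'$ of $X(l)$ lie in a common cell, $\mathbf{M}_g(\Phi_k(v)-\Phi_k(v'))<\eta$ for $k=0,1$. I then place the product cubical structure $I(1,0)\otimes X(l)$ on $[0,1]\times X$, viewed as a subcomplex of some $I(m+1,l')$, and define a discrete map $\psi$ on its $0$-skeleton by $\psi(0,v)=\Phi_0(v)$ and $\psi(1,v)=\Phi_1(v)$.

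For vertices $(s,v),(t,v')$ in a common cell of $I(1,0)\otimes X(l)$, the triangle inequality and uniform continuity yield
\[
\mathbf{f}(\psi) \;\leq\; \eta+\sup_{w\in X(l)_0}\mathbf{M}_g(\Phi_0(w)-\Phi_1(w)) \;<\; \eta+\delta_{\ref{prop:homotopy_of_M_close}}.
\]
Choosing $\eta$ and $\delta_{\ref{prop:homotopy_of_M_close}}$ so that $\eta+\delta_{\ref{prop:homotopy_of_M_close}}<\delta_{\ref{prop:almgren_ext}}(M,g,m+1)$, Proposition~\ref{prop:almgren_ext} produces a mass-continuous extension $\Psi:[0,1]\times X\to\mathcal{Z}_n(M;\mathbf{M}_g;\Z_2)$ with cell-wise mass variation at most $C_{\ref{prop:almgren_ext}}\mathbf{f}(\psi)$. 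For any $(t,x)\in [0,1]\times X$, selecting $v_x$ a vertex of the cell of $X(l)$ containing $x$ and combining the cell-wise variation bound with uniform continuity of $\Phi_0$ yields
\[
\mathbf{M}_g(\Psi(t,x)-\Phi_0(x)) \;\leq\; C_{\ref{prop:almgren_ext}}\mathbf{f}(\psi)+\eta,
\]
which is linear in $\sup_x\mathbf{M}_g(\Phi_0(x)-\Phi_1(x))$ once $\eta$ is chosen comparable to this quantity.

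The hard part is that the boundary values of $\Psi$ do not literally equal $\Phi_0,\Phi_1$ on the interior of cells; the Almgren extension only forces agreement on the vertices of $X(l)$. To close this gap I would concatenate $\Psi$ with two short corrector homotopies $\Phi_0\rightsquigarrow\Psi(0,\cdot)$ and $\Psi(1,\cdot)\rightsquigarrow\Phi_1$, each itself a mass-small deformation. These correctors can be produced either by iterating the extension construction on a sequence of increasingly fine refinements $X(l')$, $l'>l$ (so that the pointwise errors shrink geometrically and the concatenated homotopies sum in mass), or, more directly, by invoking the Almgren isoperimetric choice lemma to write $\Phi_0(x)-\Psi(0,x)=\partial Q_x$ with $\mathbf{M}_g(Q_x)$ uniformly small and $x\mapsto Q_x$ continuous, then slicing $Q_x$ by a Morse function on $M$ to produce a continuous path in $\mathcal{Z}_n(M;\mathbf{M}_g;\Z_2)$. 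Keeping the cumulative mass displacement linear in $\sup_x\mathbf{M}_g(\Phi_0(x)-\Phi_1(x))$ throughout this bookkeeping is the technical heart of the argument.
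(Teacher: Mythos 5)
This proposition is not proved in the paper at all --- it is quoted directly from \cite[Proposition~3.2]{MN21} --- and your outline reconstructs essentially the argument given there: discretize both maps on a fine product complex, take the Almgren extension, and repair the two boundary faces. The step you isolate as ``the technical heart'' (homotoping each $\Phi_k$ to the Almgren extension of its own discretization with mass displacement of order $C_{\ref{prop:almgren_ext}}\,\mathbf{f}$) is exactly the content of the interpolation theorems of \cite{MN17} (Theorems~3.9--3.10 there), and is obtained by your first proposed mechanism --- concatenating the extension homotopies over a rapidly refining sequence $X(l_1)\subset X(l_2)\subset\cdots$ chosen so that $\sum_j \mathbf{f}(\Phi_k|_{X(l_j)_0})$ converges, the infinite concatenation being continuous at the terminal time because the extensions converge to $\Phi_k$ uniformly in mass --- so this gap is closed by a known black box rather than by new work. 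Two minor repairs: the complex $I(1,0)\otimes X(l)$ is not a cubical subcomplex of any $I(m+1,l')$ (all factors must carry the same subdivision scale), so one should instead work on $I(1,l)\otimes X(l)$ and set $\psi(t,v)=\Phi_0(v)$ for every vertex $t\neq 1$ and $\psi(1,v)=\Phi_1(v)$, which leaves your fineness bound unchanged; and in the degenerate case $\sup_x\Mb_g(\Phi_0(x)-\Phi_1(x))=0$ one takes the constant homotopy, which makes legitimate your choice of $\eta$ comparable to that supremum.
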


    Moreover, we will need the following improved version of \cite[Proposition~3.8]{MN17}. Namely, in the original proposition, the interpolation map is continuous in the flat topology, but below we show that the interpolation map can made continuous in the mass norm.
    
        \begin{prop}[Improved version of {\cite[Proposition~3.8]{MN17}}]\label{prop:cts_Hotp_of_disc_Hotp}
            There exist  positive constants $\eta_{\ref{prop:cts_Hotp_of_disc_Hotp}} = \eta_{\ref{prop:cts_Hotp_of_disc_Hotp}}(M, g, m)$ and $C_{\ref{prop:cts_Hotp_of_disc_Hotp}} = C_{\ref{prop:cts_Hotp_of_disc_Hotp}}(M, g, m)$ with the following property:
    
            Suppose that $X$ is a cubical subcomplex in $I(m, q)$, and $\phi_0: X(l_0)_0 \to \Zc_n(M; \Z_2)$ is $(X, \Mb_g)$-homotopic to $\phi_1: X(l_1)_0 \to \Zc_n(M; \Z_1)$ through a discrete homotopy map 
            \[
                h: I(1, q + l)_0 \times X(l)_0 \to \Zc_n(M; \Z_2)
            \]
            with fineness $\mathbf{f}(h) < \eta_{\ref{prop:cts_Hotp_of_disc_Hotp}}$ and such that if $i = 0, 1$ and $x \in X(l)_0$, then
            \[
                h([i], x) = \phi_i(\mathbf{n}(q + l, q + l_i)(x))\,.
            \]
            Then the Almgren extensions
            \[
                \Phi_0, \Phi_1: X \to \Zc_n(M; \Mb_g; \Z_2)
            \]
            of $\phi_0$, $\phi_1$, respectively, are homotopic through a $\Mb_g$-continuous homotopy map
            \[
                H: [0, 1] \times X \to \Zc_n(M; \Mb_g; \Z_2)
            \]
            with $H(0, \cdot) = \Phi_0$ and $H(1, \cdot) = \Phi_1$. Furthermore, for all $(t, x) \in [0, 1] \times X$, there exists $(t_0, x_0) \in I(1, q + l)_0 \times X(l)_0$ such that $x$ and $x_0$ are in the same cell of $X(l)$, and 
            \[
                \Mb_g(H(t,x)) \leq \Mb_g(h(t_0, x_0)) + C_{\ref{prop:cts_Hotp_of_disc_Hotp}} \mathbf{f}(h)\,.
            \]
        \end{prop}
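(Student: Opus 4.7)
The plan is to apply the Almgren extension procedure of Proposition~\ref{prop:almgren_ext} to the discrete homotopy $h$ itself, viewed as a map on the $0$-skeleton of the product cubical complex $I(1, q+l) \times X(l)$, whose geometric realization is canonically $[0, 1] \times X$. Provided $\mathbf{f}(h)$ is smaller than the threshold $\delta_{\ref{prop:almgren_ext}}$ associated to this product complex, this produces a $\Mb_g$-continuous map $\tilde H : [0, 1] \times X \to \Zc_n(M; \Mb_g; \Z_2)$ extending $h$ on vertices and with cell-local oscillation at most $C_{\ref{prop:almgren_ext}}\, \mathbf{f}(h)$. The desired pointwise mass estimate is then immediate for $\tilde H$: for $(t, x)$ in a cell $\sigma$ of $I(1, q+l) \times X(l)$ and any vertex $(t_0, x_0)$ of $\sigma$, the point $x_0$ lies in the same $X(l)$-cell as $x$ (namely, the projection of $\sigma$ onto the second factor), so
\[
    \Mb_g(\tilde H(t, x)) \leq \Mb_g(h(t_0, x_0)) + C_{\ref{prop:almgren_ext}}\, \mathbf{f}(h).
\]

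The remaining task is to arrange $\tilde H(0, \cdot) = \Phi_0$ and $\tilde H(1, \cdot) = \Phi_1$ on the nose. By construction, $\tilde H(i, \cdot)$ is the Almgren extension of $\phi_i \circ \mathbf{n}(q + l, q + l_i)$ over the fine lattice $X(l)_0$, which agrees with $\Phi_i$ on the coarser vertex set $X(l_i)_0$ but need not agree elsewhere. We will claim $\sup_{x \in X} \Mb_g(\tilde H(i, x) - \Phi_i(x)) \leq C'(M, g, m)\, \mathbf{f}(h)$. Granting this, we choose $\eta_{\ref{prop:cts_Hotp_of_disc_Hotp}}$ small enough that $C'\, \eta_{\ref{prop:cts_Hotp_of_disc_Hotp}} < \delta_{\ref{prop:homotopy_of_M_close}}$ and invoke Proposition~\ref{prop:homotopy_of_M_close} to build $\Mb_g$-continuous homotopies on $[0, 1/4] \times X$ from $\Phi_0$ to $\tilde H(0, \cdot)$ and on $[3/4, 1] \times X$ from $\tilde H(1, \cdot)$ to $\Phi_1$, each with pointwise mass bounded by $\Mb_g(\Phi_i(x)) + C_{\ref{prop:homotopy_of_M_close}} C'\, \mathbf{f}(h)$. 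Concatenating these with an affine reparametrization of $\tilde H$ on $[1/4, 3/4]$ yields the desired $H$. On the outer intervals, the mass bound reduces to an estimate on $\Mb_g(\Phi_i(x))$, which is in turn controlled by $h([i], x_0) + C\, \mathbf{f}(h)$ for any vertex $x_0$ in the $X(l)$-cell of $x$, using the oscillation bound of $\Phi_i$ on the surrounding $X(l_i)$-cell.

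The main obstacle is the mass-closeness claim $\sup_x \Mb_g(\tilde H(i, x) - \Phi_i(x)) \leq C'\, \mathbf{f}(h)$. A naive triangle-inequality estimate involves $\mathbf{f}(\phi_i)$ rather than $\mathbf{f}(h)$, and these need not be directly comparable when $X(l)$ is much finer than $X(l_i)$. However, the map $\phi_i \circ \mathbf{n}(q+l, q+l_i)$ is very rigid: along any cubical edge of $X(l_i)$, subdivided into $3^{l - l_i}$ sub-edges in $X(l)$, this composition takes only two distinct values and jumps exactly once, at the midpoint, between two $X(l)$-adjacent vertices. Applying the $\mathbf{f}(h)$-bound to this single jump and iterating through the at most $m$ edges joining any two corners of a common $X(l_i)$-cube yields $\mathbf{f}(\phi_i) \leq m\, \mathbf{f}(h)$. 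Combining this with Proposition~\ref{prop:almgren_ext}(3) applied to both cell structures then closes the argument with a constant $C'$ depending only on $M$, $g$, $m$.
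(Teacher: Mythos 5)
Your proposal is correct and follows essentially the same route as the paper's proof: Almgren-extend the discrete homotopy $h$ to get the middle homotopy, observe that its restrictions to $t=0,1$ are the Almgren extensions of $\phi_i\circ\mathbf{n}(q+l,q+l_i)$, which are $\Mb_g$-close to $\Phi_0,\Phi_1$, bridge the gaps with Proposition \ref{prop:homotopy_of_M_close}, concatenate, and trace the mass of each piece back to a vertex value of $h$ up to $C\,\mathbf{f}(h)$. Your explicit justification that $\mathbf{f}(\phi_i)\leq m\,\mathbf{f}(h)$ (via the single mid-edge jump of $\phi_i\circ\mathbf{n}$ on each subdivided edge) is a detail the paper's write-up leaves implicit, and it is a welcome addition.
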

        \begin{proof}
            Set $\eta_{\ref{prop:cts_Hotp_of_disc_Hotp}} = \min\left(\frac{\delta_{\ref{prop:homotopy_of_M_close}}}{2C_{\ref{prop:almgren_ext}}}, \delta_{\ref{prop:almgren_ext}}\right)$ and $C_{\ref{prop:cts_Hotp_of_disc_Hotp}} = C_{\ref{prop:almgren_ext}} + C_{\ref{prop:homotopy_of_M_close}} \cdot 2C_{\ref{prop:almgren_ext}}$.
    
            For $i = 0, 1$, let $\phi'_i: X(l)_0 \to \Zc_n(M; \Z_2)$ be given by $\phi'_i(x) = h([i], x)$. Since 
            \[
                \mathbf{f}(\phi_i), \mathbf{f}(\phi'_i) \leq \mathbf{f}(h) < \eta_{\ref{prop:cts_Hotp_of_disc_Hotp}}\,,
            \]
            by Proposition \ref{prop:almgren_ext}, for $i = 0, 1$, let $\Phi_i, \Phi'_i: X \to \Zc_n(M; \Mb_g; \Z_2)$ be the Almgren extensions of $\phi_i, \phi'_i$, respectively, and it follows that 
            \[
                \Mb_g(\Phi_i(x) - \Phi'_i(x)) \leq 2C_{\ref{prop:almgren_ext}} \mathbf{f}(h) < 2C_{\ref{prop:almgren_ext}} \eta_{\ref{prop:cts_Hotp_of_disc_Hotp}} \leq \delta_{\ref{prop:homotopy_of_M_close}}\,.
            \]
            Hence, for $i = 0, 1$, by Proposition \ref{prop:homotopy_of_M_close}, there exists a $\Mb_g$-continuous 
            \[
                H'_i: [0, 1] \times X \to \Zc_n(M; \Mb_g; \Z_2)\,,
            \]
            with $H'_i(0, \cdot) = \Phi_i$ and $H'_i(1, \cdot) = \Phi'_i$, and such that for all $(t, x) \in (0, 1) \times X$,
            \[
            \begin{aligned}
                \Mb_g(H'_i(t,x)) &\leq \Mb_g(\Phi'_i(x)) + C_{\ref{prop:homotopy_of_M_close}} \cdot 2C_{\ref{prop:almgren_ext}} \mathbf{f}(h)\\
                    &\leq \Mb_g(\phi'_i(x_0)) + C_{\ref{prop:almgren_ext}} \mathbf{f}(\phi'_i) + C_{\ref{prop:homotopy_of_M_close}} \cdot 2C_{\ref{prop:almgren_ext}} \mathbf{f}(h)\\
                    &\leq \Mb_g(h([i], x_0)) + (C_{\ref{prop:almgren_ext}}+ C_{\ref{prop:homotopy_of_M_close}} \cdot 2C_{\ref{prop:almgren_ext}}) \mathbf{f}(h)\\
                    &\leq \Mb_g(h([i], x_0)) + C_{\ref{prop:cts_Hotp_of_disc_Hotp}} \mathbf{f}(h)\,
            \end{aligned}
            \]
            as long as $x_0 \in X(l)_0$ and $x$ are in the same cell of $X(l)$.
    
            By Proposition \ref{prop:almgren_ext} again, the Almgren extension
            \[
                H': [0, 1] \times X \to \Zc_n(M;\Mb_g;\Z_2)
            \]
            of $h$ is a homotopy between $\Phi'_0$ and $\Phi'_1$ such that for all $(t, x) \in (0, 1) \times X$,
            \[
                \Mb_g(H'(t,x)) \leq \Mb_g(h(t_0, x_0)) + C_{\ref{prop:almgren_ext}} \mathbf{f}(h) \leq \Mb_g(h(t_0, x_0)) + C_{\ref{prop:cts_Hotp_of_disc_Hotp}} \mathbf{f}(h)\,,
            \]
            as long as $(t_0, x_0) \in I(1, q + l)_0 \times X(l)_0$ and $(t, x)$ are in the same cell of $I(1, q + l) \times X(l)$.
    
            Finally, concatenating $H'_0, H'$ and $H'_1(1 - t, x)$, we obtain a homotopy
            \[
                H: [0, 1] \times X \to \Zc_n(M;\Mb_g;\Z_2)
            \]
            between $\Phi_0$ and $\Phi_1$. By the previous estimates, for all $(t, x) \in [0, 1] \times X$,
            \[
                \Mb_g(H(t,x)) \leq \Mb_g h(t_0, x_0) + C_{\ref{prop:cts_Hotp_of_disc_Hotp}} \mathbf{f}(h)\,.
            \]
            for some $(t_0, x_0) \in I(1, q + l)_0 \times X(l)_0$ where $x$ and $x_0$ are in the same cell of $X(l)$.
        \end{proof}
    
        \begin{prop}[Simplicial variant of {\cite[Proposition~3.7]{MN21}}]\label{prop:M_cts_approx_F_cts}
            Let $X$ be a finite simplicial complex, and $\Phi: X \to \cZ_n(M; \Fb_g; \Z_2)$ be a continuous map. Then for every $\varepsilon > 0$ there exists an $\Mb_g$-continuous map 
            \[
                \Phi': X \to \cZ_n(M; \Mb_g; \Z_2)
            \] and an $\bF_g$-continuous homotopy $H: [0, 1] \times X \to \cZ_n(M; \bF_g; \Z_2)$ with $H(0, \cdot) = \Phi$ and $H(1, \cdot) = \Phi'$, and such that
            \[
            \begin{aligned}
                \sup_{(t, x) \in (0, 1) \times X} \bF_g(H(t, x), \Phi(x)) &< \varepsilon\,,\\
                \sup_{(t, x) \in (0, 1) \times X} \bM_g(H(t, x)) &< \sup_{x \in X} \bM_g(\Phi(x)) + \varepsilon\,.
            \end{aligned}
            \]
        \end{prop}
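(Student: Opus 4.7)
The plan is to bootstrap the cubical version, \cite[Proposition~3.7]{MN21}, to the simplicial setting via a homeomorphism on the domain. This is reasonable because both the hypotheses and the conclusions concern only the target $\cZ_n(M;\Z_2)$ (via the $\bF_g$- and $\Mb_g$-metrics); the domain $X$ appears purely as a topological space, so its combinatorial decomposition (simplicial vs.\ cubical) should be irrelevant.

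Here is the plan in three steps. First, I would invoke the standard fact, recalled in \S\ref{subsect:widths} and proved in \cite[\S 4]{BP02}, that every finite simplicial complex $X$ is homeomorphic to some finite cubical complex $Y$, which we may realize as a subcomplex of $I(m,q)$ for suitable $m,q$; fix such a homeomorphism $h:X\to Y$. Second, I would apply \cite[Proposition~3.7]{MN21} to the $\bF_g$-continuous composition $\Phi\circ h^{-1}:Y\to\cZ_n(M;\bF_g;\Z_2)$, obtaining an $\Mb_g$-continuous map $\widetilde{\Phi}':Y\to\cZ_n(M;\Mb_g;\Z_2)$ together with an $\bF_g$-continuous homotopy $\widetilde H:[0,1]\times Y\to\cZ_n(M;\bF_g;\Z_2)$ from $\Phi\circ h^{-1}$ to $\widetilde{\Phi}'$ satisfying the analogous $\varepsilon$-closeness and mass estimates. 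Third, I would pull back: set $\Phi':=\widetilde{\Phi}'\circ h$ and $H(t,x):=\widetilde H(t,h(x))$. Since $h$ is a homeomorphism, $\Phi'$ is $\Mb_g$-continuous, $H$ is an $\bF_g$-continuous homotopy from $\Phi$ to $\Phi'$, and both required sup-bounds transfer verbatim from those of $\widetilde H$: reparametrizing the domain by a homeomorphism leaves every individual cycle, and hence every $\bF_g$- and $\Mb_g$-value, unchanged.

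The only minor obstacle I anticipate is confirming that \cite[Proposition~3.7]{MN21} applies to an arbitrary finite cubical subcomplex of $I(m,q)$ rather than only to the full cube. This is essentially automatic from inspection of its proof, which proceeds by subdivision together with cell-by-cell Almgren extensions from vertex discretizations (in the spirit of Propositions \ref{prop:almgren_ext}--\ref{prop:cts_Hotp_of_disc_Hotp} above) — a local operation entirely compatible with restriction to subcomplexes. Alternatively, one could avoid this worry altogether by proving the statement directly in the simplicial category, repeating the proof of \cite[Proposition~3.7]{MN21} with the cubical discretization $\mathbf{n}(i,j)$ replaced by iterated barycentric subdivision of $X$, and with Almgren extension performed over simplices; I would only pursue this route if the homeomorphism shortcut turned out to be insufficient.
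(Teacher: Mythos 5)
Your proposal is correct and follows essentially the same route as the paper: transport $\Phi$ to a cubical subcomplex via the homeomorphism from \cite[Chapter~4]{BP02}, apply \cite[Proposition~3.7]{MN21} there, and pull back, noting that the $\bF_g$- and $\Mb_g$-estimates are unaffected by reparametrizing the domain. The only cosmetic difference is that the paper obtains the mass bound as a consequence of the $\bF_g$-closeness estimate (shrinking $\varepsilon$ if necessary, since $\bF_g$-proximity of varifolds controls the difference of masses), rather than quoting it directly from the cubical statement.
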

        \begin{proof}
            By \cite[Chapter~4]{BP02}, the finite simplicial complex $X$ is homeomorphic to a cubical subcomplex of some $I^N$. It follows from that {\cite[Proposition~3.7]{MN21}} for every $\varepsilon > 0$, we have a desired homotopy map $H$ with
            \[
                \sup_{(t, x) \in (0, 1) \times X} \bF_g(H(t, x), \Phi(x)) < \varepsilon\,.
            \]
            Furthermore, by the definition of $\mathbf{F}_g$, for every $\varepsilon' > 0$, we can select even smaller $\varepsilon$ ensuring that the aforementioned inequality leads to
            \[
                \sup_{(t, x) \in (0, 1) \times X} |\bM_g(H(t, x)) - \bM_g(\Phi(x))| < \varepsilon'\,.
            \]
            This concludes the proof.
        \end{proof}

\section{\texorpdfstring{$(m, r)_{g}$}{(m,r)g}-almost minimizing varifolds}\label{sect:mr_am}

    Let $M^{n+1}$ ($3 \leq n+1 \leq 7$) be a closed smooth manifold, and $\Gamma^\infty(M)$ be the set of all the Riemannian metrics on $M$. Let us first recall the definitions of almost minimizing varifolds in \cite{Pit81, MN21}.

    \begin{defn}
        For each pair of positive numbers $\varepsilon, \delta$, an open subset $U \subset (M^{n+1}, g)$, and $T \in \Zc_n(M; \Z_2)$, an \textit{$(\varepsilon, \delta)$-deformation} of $T$ in $U$ is a finite sequence $(T_i)^q_{i = 0}$ in $\Zc_n(M; \Z_2)$ with
        \begin{enumerate}
            \item $T_0 = T$ and $\spt(T - T_i) \subset U$ for all $i = 1, \cdots, q$;
            \item $\Mb_g(T_i - T_{i - 1}) \leq \delta$ for all $i = 1, \cdots, q$;
            \item $\Mb_g(T_i) \leq \Mb_g(T) + \delta$ for all $i = 1, \cdots, q$;
            \item $\Mb_g(T_q) < \Mb_g(T) -\varepsilon$.
        \end{enumerate}

        We define $\mathfrak{a}_g(U; \varepsilon, \delta)$ to be the set of all flat cycles $T \in \Zc_n(M; \Z_2)$ that do not admit $(\varepsilon, \delta)$-deformations in $U$.
    \end{defn}

    \begin{defn}
        For an open set $U \subset (M^{n+1}, g)$, a varifold $V \in \Vc_n(M)$ is \textit{almost minimizing} if for every $\varepsilon > 0$, we can find $\delta > 0$ and
        \[
            T \in \mathfrak{a}_g(U; \varepsilon, \delta)
        \]
        with $\Fb_{g}(V, |T|) < \varepsilon$.
    \end{defn}

    In the following, for $m \in \N^+$, we set $I_m \coloneqq 3^{m3^m}$. We now define a quantitative almost minimizing condition inspired by Pitts' combinatorial arguments.

    \begin{defn}\label{defn_property_m}
        Let $m \in \N^+$ and $r \in \R^+$. A varifold $V$ in $(M,g)$ is \textit{$(m, r)_g$-almost minimizing} if the following holds. 
        For any point $p\in M$ and any $I_m$ concentric annuli $\{\mathrm{An}_g(p,r_i-s_i,r_i+s_i)\}^{I_m}_{i = 1}$, where $\{r_i\}$ and $\{s_i\}$ satisfy
          \begin{align*}
              r_i-2s_i&> 2(r_{i+1}+2s_{i+1}),\;i=1,\dots,I_m-1,\\
              r_{I_m}-2s_{I_m}&>0,\\
              r_1 + s_1 &< r,
          \end{align*}
        $V$ is almost minimizing \cite[Definition 3.1]{Pit81} in at least one of the annuli.
    \end{defn}
    \begin{rmk}
        The definition presented here closely resembles the Property $(m)$ in \cite{Yangyang_Li19} with the added requirement of a radius bound assumption.
    \end{rmk}
    
    \begin{thm}[{\cite[Theorem~4]{SS81}}]\label{thm:Schoen_Simon_reg}
        In $(M^{n+1}, g) (3 \leq n+1 \leq 7)$, if $V$ is $(m, r)_g$-almost minimizing and stationary, then $V$ is a stationary integral varifold whose support $\spt(V)$ is a smooth, embedded, closed, minimal hypersurface. 
    \end{thm}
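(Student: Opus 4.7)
The strategy is to reduce directly to the classical Pitts--Schoen--Simon regularity package by showing that the $(m,r)_g$-almost minimizing hypothesis already forces, at every point $p \in M$, the existence of \emph{arbitrarily small} annuli centered at $p$ in which $V$ is almost minimizing in the sense of \cite[Definition~3.1]{Pit81}. Granted this, since $V$ is additionally stationary, the theorem is exactly of the form handled by the standard combinatorial--regularity machinery developed in \cite{Pit81} together with the Schoen--Simon interior regularity estimate \cite[Theorem~4]{SS81}, which requires precisely the dimension restriction $n+1 \leq 7$.

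\textbf{Step 1: pigeonhole at every point and scale.} Fix any $p \in M$ and any scale $r_0 \in (0, r)$. The plan is to construct $I_m$ concentric annuli around $p$ satisfying the nesting inequalities
\[
    r_i - 2s_i > 2(r_{i+1} + 2s_{i+1}),\qquad r_{I_m} - 2s_{I_m} > 0,\qquad r_1 + s_1 < r_0,
\]
by choosing $r_i$ and $s_i$ to form a sufficiently fast-decaying geometric sequence (e.g.\ $r_i, s_i \sim c^i$ for suitably small $c$). Definition~\ref{defn_property_m} then furnishes one annulus among the $I_m$ in which $V$ is almost minimizing. Letting $r_0 \to 0$, we extract a sequence of almost-minimizing annuli at $p$ with outer radii tending to $0$. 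Hence $V$ satisfies Pitts' almost-minimizing-in-small-annuli condition at every $p \in M$.

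\textbf{Step 2: replacements and regularity.} With stationarity of $V$ and the almost-minimizing-in-small-annuli condition in hand, we invoke the Pitts replacement construction in each annulus: a subsequential varifold limit of mass-decreasing $(\varepsilon,\delta)$-deformations produces a varifold $V^*$ which agrees with $V$ outside the annulus, is stationary everywhere, and is stable (in fact, locally area-minimizing) on the regular part inside the annulus. Iterating and overlapping replacements on concentric annuli then shows that $\spt(V)$ coincides, outside a small singular set, with a smooth stable minimal hypersurface. The Schoen--Simon regularity theorem \cite[Theorem~4]{SS81} applies to conclude that the singular set is empty and $\spt(V)$ is a smooth, embedded, closed, minimal hypersurface of $M$; integrality of $V$ follows from the fact that the approximating replacements are limits of integral cycles $|T|$ with $T \in \mathfrak{a}_g(U;\varepsilon,\delta)$, together with Allard's compactness theorem.

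\textbf{Expected main obstacle.} The only genuinely new content beyond quoting \cite{Pit81, SS81} is the Step~1 pigeonhole, and the key quantitative point there is that the nesting conditions in Definition~\ref{defn_property_m} are strong enough to guarantee that the $I_m$ annuli can be chosen pairwise disjoint and shrinking to any prescribed $p$ at any prescribed scale. This is where the constant $I_m = 3^{m 3^m}$ and the factor-of-two separation $r_i - 2s_i > 2(r_{i+1} + 2s_{i+1})$ are used; once this combinatorial bookkeeping is carried out, the remainder is essentially a citation of the regularity results that are standard in this range of dimensions and have already been deployed in the same form in \cite{MN21}.
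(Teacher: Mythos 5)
The paper offers no proof of this statement — it is quoted verbatim as a combination of Pitts' almost-minimizing regularity machinery and the Schoen--Simon estimate (cf.\ also Property $(m)$ in the second author's earlier work referenced in Remark \ref{defn_property_m}'s follow-up remark) — and your reduction is exactly the intended one: since Definition \ref{defn_property_m} is universally quantified over admissible annulus families, exhibiting at each point $p$ and each scale $r_0<r$ one family with $r_1+s_1<r_0$ produces an almost-minimizing annulus of outer radius below $r_0$, after which the standard replacement construction and \cite[Theorem~4]{SS81} give smoothness and integrality. Your proposal is correct and takes essentially the same approach as the paper.
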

    
    When the metric $g$ is obvious from the context, we might omit the subscript $_g$ for simplicity. 
    
    In the subsequent subsections, we establish a set of technical lemmas concerning the two essential concepts of the previous definitions: annular replacements and the almost minimizing property.

    \subsection{Annular replacement}
     
        First, we prove a lemma that bounds the $\mathbf{F}$-distance between two varifolds that differ in small annuli, where one of the varifolds is sufficiently close to a stationary varifold.
        
        \begin{lem}\label{lem:close_of_small_rep}
            Let $D, L \in \R^+$, $m \in \N^+$, and $K \subset \Gamma^\infty(M)$ be a compact set of $C^\infty$ Riemannian metrics on $M$.
            There exists $\eta_{\ref{lem:close_of_small_rep}} = \eta_{\ref{lem:close_of_small_rep}}(M, K, D, L, m) > 0$ for which the following hold.
            
            If $g, \bar g \in K$ and $V_0, V_1, V_2 \in \Vc_n(M)$ satisfy
            \begin{itemize}
                \item $\|g - \bar g\|_{C^\infty, \bar g} \leq \eta_{\ref{lem:close_of_small_rep}}$,
                \item $\|V_0\|_g(M), \|V_1\|_g(M), \|V_2\|_g(M) \leq 2L$,
                \item $\Fb_g(V_0, V_1) \leq \eta_{\ref{lem:close_of_small_rep}}$, 
                \item $V_0$ stationary in $(M, \bar g)$,
                \item $V_1 = V_2$ on $M \setminus (\overline{B}_{g}(p_1, 2\eta_{\ref{lem:close_of_small_rep}}) \cup \cdots \cup \overline{B}_{g}(p_t, 2\eta_{\ref{lem:close_of_small_rep}}))$ for some collection $\{p_1, \cdots, p_t\} \subset M, t \leq 3^{2m}$,
                \item $ \|V_1\|_g(M) - \eta_{\ref{lem:close_of_small_rep}} \leq \|V_2\|_g(M) \leq \|V_1\|_g(M) + \eta_{\ref{lem:close_of_small_rep}}$,
            \end{itemize}
            then $\Fb_g(V_1, V_2) < D / 2$.
        \end{lem}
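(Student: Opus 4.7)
The plan is to argue by contradiction via a compactness argument in the varifold weak topology. Suppose the lemma fails. Then for $\eta_k = 1/k \to 0$ one obtains sequences $g_k, \bar g_k \in K$ with $\|g_k - \bar g_k\|_{C^\infty, \bar g_k} \le \eta_k$, varifolds $V_0^k, V_1^k, V_2^k$, and points $\{p^k_1, \ldots, p^k_{t_k}\}$ (with $t_k \le 3^{2m}$) satisfying every hypothesis with $\eta_k$ in place of $\eta_{\ref{lem:close_of_small_rep}}$, yet with $\Fb_{g_k}(V_1^k, V_2^k) \ge D/2$. Since $K$ is $C^\infty$-compact, the varifold masses are uniformly bounded, and $t_k$ is bounded, after passing to subsequences we may assume $\bar g_k, g_k \to g_\infty \in K$ in $C^\infty$, $t_k$ stabilizes to some $t$, $p^k_j \to p^\infty_j$ for each $j$, and $V_i^k$ converges weakly as varifolds to some $V_i^\infty$ for $i = 0, 1, 2$.

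The first key step is to identify $V_0^\infty$ with $V_1^\infty$. Since $V_0^k$ is stationary in $\bar g_k$ and $\bar g_k \to g_\infty$ in $C^\infty$, the first variation formula passes to the limit, so $V_0^\infty$ is stationary in $g_\infty$. The hypothesis $\Fb_{g_k}(V_0^k, V_1^k) \le \eta_k$, together with the fact that on bounded-mass subsets the metrics $\Fb_{g_k}$ and $\Fb_{g_\infty}$ are uniformly comparable (a consequence of $g_k \to g_\infty$ in $C^\infty$), forces $V_0^\infty = V_1^\infty$. Hence $V_1^\infty$ is a stationary varifold in $(M, g_\infty)$, and by the monotonicity formula $\|V_1^\infty\|(\{p\}) = 0$ for every $p \in M$.

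The next step is to show that both $\|V_1^k\|_{g_k}(U_k)$ and $\|V_2^k\|_{g_k}(U_k)$ tend to zero, where $U_k := \bigcup_{j=1}^{t} \overline{B}_{g_k}(p^k_j, 2\eta_k)$. For $V_1^k$, I would use upper semicontinuity of mass on closed sets under weak varifold convergence: for each $\delta > 0$ and $k$ large enough that $U_k \subset \bigcup_j \overline{B}_{g_\infty}(p^\infty_j, \delta)$,
\[
    \limsup_{k} \|V_1^k\|_{g_k}(U_k) \le \sum_{j=1}^{t} \|V_1^\infty\|(\overline{B}_{g_\infty}(p^\infty_j, \delta))\,,
\]
and sending $\delta \to 0$ gives zero by the previous step. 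For $V_2^k$, combining $V_1^k = V_2^k$ on $M \setminus U_k$ with $\big|\|V_1^k\|_{g_k}(M) - \|V_2^k\|_{g_k}(M)\big| \le \eta_k$ yields $\|V_2^k\|_{g_k}(U_k) = \|V_1^k\|_{g_k}(U_k) + O(\eta_k) \to 0$.

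Finally, as varifolds, $V_1^k$ and $V_2^k$ coincide over $M \setminus U_k$, so for every $\varphi \in C^0(G_n(M))$,
\[
    \Big|\int \varphi \, d(V_1^k - V_2^k)\Big| \le \|\varphi\|_\infty \big(\|V_1^k\|_{g_k}(U_k) + \|V_2^k\|_{g_k}(U_k)\big) \to 0\,.
\]
Together with the uniform comparability of the $\Fb_{g_k}$-metrics, this forces $\Fb_{g_k}(V_1^k, V_2^k) \to 0$, contradicting $\Fb_{g_k}(V_1^k, V_2^k) \ge D/2$. The principal obstacle is handling two delicate points simultaneously: transferring stationarity of $V_0^k$ to the weak limit across the varying metrics $\bar g_k$, and invoking the monotonicity formula to annihilate any potential concentration of mass at the shrinking ball centres — both are standard, but must be carried out carefully since the metrics, the balls, and the varifolds all vary with $k$.
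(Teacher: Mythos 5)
Your proposal is correct and follows essentially the same route as the paper: a contradiction/compactness argument in the varifold topology, identifying the limit of $V_0$ with that of $V_1$ as a stationary varifold, and using the monotonicity formula to rule out mass concentration at the shrinking ball centres so that $V_1$ and $V_2$ cannot stay $\Fb$-far apart. The only cosmetic difference is that you run the final estimate along the sequence while the paper passes entirely to the limit objects; the substance is identical.
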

        \begin{proof}
            The proof is essentially the same as that of \cite[Lemma~4.5]{MN21}.
    
            If this is false for any $\eta_i = \frac{1}{i}$ $(i = 1, 2, 3, \cdots)$, we obtain the existence of $g_i, \bar{g}_i \in K$, $V^i_0, V^i_1, V^i_2 \in \Vc_n(M)$ satisfying all the conditions in the lemma with $(\eta_i, g_i, \bar{g}_i, V^i_0, V^i_1, V^i_2)$ in place of $(\eta, g, \bar g, V_0, V_1, V_2)$ but
            \[
                \Fb_g(V^i_1, V^i_2) \geq D / 2\,.
            \]
    
            Up to a subsequence, by taking a limit, we obtain $g', \bar g' \in K$, $V'_0, V'_1, V'_2 \in \Vc_n(M)$ and $\{p_1, \cdots, p_{t'}\} \subset M$, $t' \leq 3^{2m}$, such that 
            \begin{enumerate}[(i)]
                \item $g' = \bar g'$;
                \item $V'_1 = V'_0$ is stationary;
                \item $\|V'_0\|(M) = \|V'_2\|(M)$ and $V'_0 = V'_2$ on $M \setminus (\overline{B}_{g'}(p_1, r) \cup \cdots \cup \overline{B}_{g'}(p_{t'}, r))$ for all $r > 0$;
                \item $\Fb_g(V'_0, V'_2) \geq D / 2$.
            \end{enumerate}
    
            It follows from the monotonicity formula for stationary varifolds that there exists $C > 0$ such that
            \[
                \|V'_0\|_{g'}(B_{g'}(q, r)) \leq Cr^n
            \]
            for all $r$ sufficiently small. Hence, (iii) implies that $V'_0 = V'_2$, which contradicts (iv).
        \end{proof}
    
        \begin{lem}\label{lem:balls_not_cover_SV}
            For any $m \in \mathbb N^+$ and $g \in \Gamma^\infty(M)$, there exists a positive constant $\eta_{\ref{lem:balls_not_cover_SV}} = \eta_{\ref{lem:balls_not_cover_SV}}(M, g, m)$ with the following property:
    
            If $V \in \Vc_n(M)$ is a stationary varifold on $(M, g)$, $\{p_i\}^t_{i = 1} \subset M$ with $t \leq 4\cdot 3^{2m}$ is a collection of points, and $\{r_i\}^t_{i = 1} \subset (0, 4\eta_{\ref{lem:balls_not_cover_SV}})$, then
            \[
                \spt(V) \setminus \bigcup^{t}_{i = 1} \overline{B}_g(p_i, r_i) \neq \emptyset\,.
            \]
        \end{lem}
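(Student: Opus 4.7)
The plan is to use the monotonicity formula for stationary varifolds to bound the $\|V\|$-mass of any small ball in terms of $\|V\|(M)$ and the radius, and then observe that the $t \leq 4 \cdot 3^{2m}$ prescribed balls are simply too few and too small to carry all of $\|V\|$ at once. I note at the outset that the conclusion implicitly forces $V \neq 0$, since $\spt(V) = \emptyset$ when $V = 0$; I therefore assume $\|V\|(M) > 0$ throughout.

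By the standard monotonicity formula on $(M, g)$ (applied in normal coordinates at each point, with the usual curvature correction), there exist constants $r_0 = r_0(M, g) > 0$ and $C_0 = C_0(M, g) \geq 1$ such that every stationary $V \in \Vc_n(M)$ satisfies
\[
    \|V\|(B_g(p, s)) \leq \frac{C_0}{r_0^n}\, s^n\, \|V\|(M) \qquad \text{for all } p \in M,\ 0 < s \leq r_0\,.
\]
This follows by combining the monotonicity of $s \mapsto e^{\Lambda s}\|V\|(B_g(p, s))/s^n$ (for a curvature-dependent $\Lambda$) with the trivial inequality $\|V\|(B_g(p, r_0)) \leq \|V\|(M)$. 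I then choose $\eta_{\ref{lem:balls_not_cover_SV}} > 0$ small enough that both $4\eta_{\ref{lem:balls_not_cover_SV}} \leq r_0$ and $4 \cdot 3^{2m} \cdot (C_0/r_0^n)\,(4\eta_{\ref{lem:balls_not_cover_SV}})^n < 1$; explicitly, one may take $\eta_{\ref{lem:balls_not_cover_SV}} = \tfrac{1}{4}\min\!\bigl(r_0,\ \tfrac{1}{2}(4\cdot 3^{2m}\cdot C_0/r_0^n)^{-1/n}\bigr)$.

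If, contrary to the lemma, a collection $\{\overline{B}_g(p_i, r_i)\}_{i=1}^t$ with $t \leq 4\cdot 3^{2m}$ and $r_i < 4\eta_{\ref{lem:balls_not_cover_SV}}$ were to cover $\spt(V)$, summing the monotonicity estimate over these balls would yield
\[
    \|V\|(M) \leq \sum_{i=1}^t \|V\|(\overline{B}_g(p_i, r_i)) \leq 4\cdot 3^{2m}\cdot \frac{C_0}{r_0^n}(4\eta_{\ref{lem:balls_not_cover_SV}})^n\, \|V\|(M) < \|V\|(M)\,,
\]
a contradiction. The only substantive input is the monotonicity estimate above, which is entirely standard; once it is quoted, no compactness argument or passage to a subsequential limiting varifold is needed, and the lemma reduces to this one-line summation.
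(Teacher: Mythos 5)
Your proof is correct and follows essentially the same route as the paper's: both rest on the monotonicity formula for stationary varifolds, which prevents a bounded number of balls of small radius from carrying all of $\|V\|$. The only cosmetic difference is that the paper packages monotonicity as a doubling inequality $\|V\|_g(B_g(p,R)) \geq 8\cdot 3^{2m}\,\|V\|_g(B_g(p,s))$ and applies pigeonhole to a single ball, whereas you use the equivalent upper bound $\|V\|(B_g(p,s)) \leq C s^n \|V\|(M)$ and sum over all balls; your explicit remark that the statement implicitly requires $V \neq 0$ is a point the paper leaves tacit.
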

        \begin{proof}
            Using the monotonicity formula, we can find positive constants $R > s > 0$ which only depend on $M, g$ and $m$, such that for any $p \in M$ and for any stationary varifold $V$,
            \[
                \|V\|_g(B_g(p,R)) \geq 8 \cdot 3^{2m} \|V\|_g(B_g(p,s))\,.
            \]
            We can set $\eta_{\ref{lem:balls_not_cover_SV}} \coloneqq s / 4$. 
            
            Now, assume for the sake of contradiction that there exists a stationary varifold $V$ such that
            \[
                \spt(V) \subset \bigcup^{t}_{i = 1} \overline{B}_g(p_i, r_i)\,.
            \]
            Then, there must exist a ball $\overline{B}_g(p_{i_0}, r_{i_0})$ satisfying
            \[
                \|V\|_g(\overline{B}_g(p_{i_0}, r_{i_0})) > \frac{\|V\|_g(M)}{4 \cdot 3^{2m}}\,.
            \]
            Since $r_{i_0} < 4 \eta_{\ref{lem:balls_not_cover_SV}} \leq s$, it follows that
            \[
                \|V\|_g(\overline{B}_g(p_{i_0}, R)) \geq (8 \cdot 3^{2m}) \|V\|_g(\overline{B}_g(p_{i_0}, r_{i_0}) ) > \|V\|_g(M)\,.
            \]
            This leads to a contradiction.
        \end{proof}
        
        \begin{lem}\label{lem:cycles_remove_balls}
            For any $m \in \mathbb N^+$ and $g \in \Gamma^\infty(M)$, there exists a positive constant $\eta_{\ref{lem:cycles_remove_balls}} = \eta_{\ref{lem:cycles_remove_balls}}(M, g, m)$ with the following property:
            
            For some $k \in \{0, 1, \cdots, n\}$, let $T, S \in \mathbf{I}_k(M; \Z_2)$ be two flat chains with $\partial T = \partial S = 0$, $\{p_i\}^t_{i = 1} \subset M$ with $t \leq 4\cdot 3^{2m}$ be a collection of points, and $\{r_i\}^t_{i = 1} \subset (0, 4\eta_{\ref{lem:cycles_remove_balls}})$. If
            $T = S$ on $M \setminus \bigcup^{t}_{i = 1} \overline{B}_g(p_i, r_i)$, then 
            \[
                T \in \Zc_k(M; \Z_2) \iff S \in \Zc_k(M; \Z_2)\,.
            \]
        \end{lem}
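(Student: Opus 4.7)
The approach is to show directly that $R := T - S$ itself lies in $\Zc_k(M;\Z_2)$, which immediately gives the biconditional since $\Zc_k(M;\Z_2)$ is a subgroup of $\bI_k(M;\Z_2)$. Note $R$ is a $k$-cycle (as $\partial T = \partial S = 0$) supported in $U := \bigcup_{i=1}^t \overline{B}_g(p_i, r_i)$. The strategy is to choose $\eta_{\ref{lem:cycles_remove_balls}}$ small enough that each connected component of $U$ fits inside a single geodesically convex ball of $(M, g)$, so that $R$ can be filled locally, cluster by cluster.

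Concretely, I would set $\eta_{\ref{lem:cycles_remove_balls}} := r_{\mathrm{conv}}(M,g)/(64 \cdot 3^{2m})$, where $r_{\mathrm{conv}}(M,g)$ is the convexity radius of $(M, g)$. Since $t \leq 4 \cdot 3^{2m}$ and $r_i < 4\eta_{\ref{lem:cycles_remove_balls}}$, two closed balls $\overline{B}_g(p_i, r_i)$ and $\overline{B}_g(p_j, r_j)$ can only overlap when $d_g(p_i, p_j) < 8\eta_{\ref{lem:cycles_remove_balls}}$, so a triangle-inequality argument along a spanning chain of pairwise-overlapping balls bounds the $g$-diameter of each connected component $C_a$ of $U$ by $8t \eta_{\ref{lem:cycles_remove_balls}} < r_{\mathrm{conv}}/2$. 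In particular each $C_a$ lies in an open geodesic ball $\tilde{B}_a \subset M$ of radius less than $r_{\mathrm{conv}}$ which is convex and hence contractible. I would then pick pairwise disjoint open tubular neighborhoods $V_a = \{x \in M : d_g(x, C_a) < \delta\} \subset \tilde{B}_a$ for some small generic $\delta > 0$, chosen so each distance sphere $\partial V_a$ is disjoint from $\spt R$. Writing $R_a := R \lfloor V_a$, we obtain $R = \sum_a R_a$ (since $\spt R \subset \bigsqcup_a V_a$), and the slicing formula together with $\partial R = 0$ and $\spt R \cap \partial V_a = \emptyset$ gives $\partial R_a = 0$, so each $R_a$ is itself a $k$-cycle supported in the contractible set $\tilde{B}_a$.

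Finally, contractibility of $\tilde B_a$ yields $H_k(\tilde B_a;\Z_2) = 0$ for $k \geq 1$, so each $R_a = \partial Q_a$ for some $Q_a \in \bI_{k+1}(\tilde B_a;\Z_2) \subset \bI_{k+1}(M;\Z_2)$; explicitly $Q_a$ can be obtained as the geodesic cone of $R_a$ from a center of $\tilde B_a$. (The case $k = 0$ is either vacuous or immediate from the parity of the $\Z_2$-count of $R_a$ in the path-connected $\tilde B_a$, depending on the convention for the boundary of $0$-chains.) Summing gives $R = \partial \sum_a Q_a \in \Zc_k(M;\Z_2)$, as desired.

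The main technical point is the slicing argument — specifically, verifying that $\delta > 0$ can be taken small enough that $\partial V_a \cap \spt R = \emptyset$ for every $a$. This is automatic once $2\delta$ is less than the minimum pairwise distance between the disjoint compact components $\{C_a\}$, since then $\partial V_a \subset M \setminus U$ while $\spt R \subset U$; any remaining genericity concern is handled by Federer's slicing theorem, which ensures $\partial(R\lfloor\{d_g(\cdot, C_a) \leq s\})$ is a well-defined rectifiable slice for almost every $s$, so a suitable $\delta$ can indeed be chosen.
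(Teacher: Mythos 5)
Your proof is correct and rests on the same core idea as the paper's: the difference $R=T-S$ is a cycle supported in the union of the small balls, and one fills it piece by piece inside contractible sets via a cone construction. The difference lies in how the decomposition into contractible pieces is produced. The paper first proves a covering claim by a compactness/contradiction argument: for $\eta$ small enough, any collection of at most $4\cdot 3^{2m}$ balls of radius $<4\eta$ can be covered by at most $4\cdot 3^{2m}$ \emph{pairwise disjoint} closed balls of controlled radius; restricting $R$ to each of these disjoint balls then automatically yields cycles, which are filled since each ball is diffeomorphic to a Euclidean ball. You instead group the given balls into connected components of their union, bound the diameter of each component explicitly by a chain-of-overlapping-balls triangle-inequality estimate ($8t\eta< r_{\mathrm{conv}}/2$), place each component in a convex geodesic ball, and use tubular neighborhoods plus Federer slicing to see that the restrictions are cycles. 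Your route buys an explicit value of $\eta_{\ref{lem:cycles_remove_balls}}$ in terms of the convexity radius rather than a nonconstructive constant, at the cost of invoking slightly heavier machinery (slicing) where the paper gets the same "clean restriction" for free from disjointness of closed balls. One shared caveat: for $k=0$ the hypothesis $\partial T=\partial S=0$ is vacuous and "cycle supported in a contractible set" does not by itself imply "boundary" (a single point in a ball is not a mod-2 boundary), so the claim really needs a parity count there; you flag this, the paper's proof silently has the same issue, and the lemma is only ever applied with $k=n\geq 2$, so this is not a substantive gap.
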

        \begin{rmk}
            In other words, if two flat chains with zero boundary coincide except on a suitable collection of balls, and one of them is a flat cycle, i.e., the boundary of a flat chain, then so is the other.
        \end{rmk}
        \begin{proof}
            Since $(M, g)$ is closed, we can find a positive constant $\eta_1 = \eta_1(M, g)$ such that for every $p \in M$ and  $s \in (0, 4\eta_1)$, $\overline{B}_g(p, s)$ is diffeomorphic to a ball.
    
            \begin{claim}\label{claim:cycles_remove_balls_1}
                If every $r_i < 4\eta_1$ and all the balls $\{\overline{B}_g(p_i, r_i)\}_i$ are disjoint from each other, then
                \[
                    T \in \Zc_n(M; \Z_2) \iff S \in \Zc_n(M; \Z_2)\,.
                \]
            \end{claim}
            \begin{proof}
                If $T \in \Zc_n(M; \Z_2)$, then there exists $R \in \mathbf{I}_{k+1}(M; \Z_2)$ such that $T = \partial R$.
    
                Since $T = S$ on $M \setminus \bigcup^{t}_{i = 1} \overline{B}_g(p_i, r_i)$, then
                \[
                    \spt(T - S) \subset \bigcup^{t}_{i = 1} \overline{B}_g(p_i, r_i)\,.
                \]
    
                In particular, since $\partial(T - S) = 0$ and the closed balls are disjoint from each other, for each $i$, $P_i = (T - S) \llcorner \overline{B}_g(p_i, r_i)$ also satisfies
                \[
                    \partial P_i = 0\,.
                \]
                As $\overline{B}_g(p_i, r_i)$ is contractible, we can find $R_i \in \mathbf{I}_k(M; \Z_2)$ with $\partial R_i = P_i$.
    
                Therefore, for $R' \coloneqq R + \sum_i R_i$, we have
                \[
                    \partial R' = \partial R + \sum_i \partial R_i = T + (T - S) = S\,,
                \]
                i.e., $S \in \Zc_k(M; \Z_2)$.
    
                By symmetry, if $S \in \Zc_k(M; \Z_2)$, then $T \in \Zc_k(M; \Z_2)$ as well. This completes the argument.
            \end{proof}
    
             In general, we have the following covering results.
    
            \begin{claim}\label{claim:cycles_remove_balls_2}
                There exists $\eta_2 = \eta_2(M, g, m) > 0$ such that if $r_i \in (0, 4 \eta_2)$, then any collection of balls $\{\overline{B}_g(p_i, r_i)\}^t_{i = 1}$ with $t \leq 4 \cdot 3^{2m}$, can be covered by $4 \cdot 3^{2m}$ many pairwise disjoint closed balls of radius at most $\eta_1$, $\{\overline{B}_g(p'_i, r'_i)\}^{t'}_{i = 1}$.
            \end{claim}
            \begin{proof}
                If false, we obtain a sequence of balls $\{\overline{B}_g(p^j_i, r^j_i)\}^{t_j}_{i = 1}$ such that
                \begin{enumerate}[(i)]
                    \item $t_j \leq 4\cdot 3^{2m}$;
                    \item $\lim_j \sup_i r^j_i = 0$;
                    \item $\{\overline{B}_g(p^j_i, r^j_i)\}^{t_j}_{i = 1}$ cannot be covered by $4 \cdot 3^{2m}$ disjoint closed balls of radius at most $\eta^1$.
                \end{enumerate}
                Up to a subsequence, $\{\overline{B}_g(p^j_i, r^j_i)\}^{t_j}_{i = 1}$ converges to a set of points $\{p'_i\}^{t'}_{i = 1}$ in the Hausdorff sense and $t' \leq 4\cdot 3^{2m}$. Obviously, we can choose $4 \cdot 3^{2m}$ pairwise disjoint closed balls of radius at most $\eta_1$  whose interiors cover $\{p'_i\}^{t'}_{i = 1}$. Hence, these balls also cover $\{\overline{B}_g(p^j_i, r^j_i)\}^{t_j}_{i = 1}$ for sufficiently large $j$. This contradicts our assumption (iii).
            \end{proof}
    
            Now, let $\eta_{\ref{lem:cycles_remove_balls}} = \min(\eta_1, \eta_2)$ defined above, and then $\{\overline{B}_g(p_i, r_i)\}^t_{i = 1}$ can be covered by pairwise disjoint balls of radii no greater than $\eta_1$,
            \[
                \{\overline{B}_g(p'_i, r'_i)\}^{t'}_{i = 1}\,.
            \]
            Since $T = S$ on $M \setminus \bigcup^{t'}_{i = 1} \overline{B}_g(p'_i, r'_i)$, it follows from Claim \ref{claim:cycles_remove_balls_1} that 
            \[
                T \in \Zc_k(M; \Z_2) \iff S \in \Zc_k(M; \Z_2)\,.
            \]
        \end{proof}

        \begin{defn}
            In $(M, g)$, an \emph{embedded minimal cycle} is a stationary integral varifold supported on a smooth, closed, embedded minimal hypersurface. We denote by $\mathcal{W}^L_g \subset \Vc_n(M)$ the set of all the embedded minimal cycles with total measure $L$.
        \end{defn}

        Next, we prove a varifold version of Lemma~\ref{lem:cycles_remove_balls}.
    
        \begin{lem}\label{lem:cycle_of_small_rep}
            Let $m \in \mathbb N^+$, $L \in \R^+$ and $g \in \Gamma^\infty(M)$ be a $C^\infty$ Riemannian metrics on $M$. Let $\cG \subset \mathcal{W}^L_g$ be a compact subset such that for every $W \in \cG$, there exists $T \in \Zc_n(M; \Z_2)$ such that $W = |T|$. Let $r \coloneqq \min(\eta_{\ref{lem:balls_not_cover_SV}}(M, g, m), \eta_{\ref{lem:cycles_remove_balls}}(M, g, m))$ from Lemmas \ref{lem:balls_not_cover_SV} and \ref{lem:cycles_remove_balls}. Then there exists $\eta_{\ref{lem:cycle_of_small_rep}} = \eta_{\ref{lem:cycle_of_small_rep}}(M, g, m, \cG) > 0$ with the following property.
    
            If $V_1, V_2 \in \mathcal{V}_n(M)$ satisfy
            \begin{itemize}
                \item $V_1 \in \mathbf{B}^{\Fb_g}_{\eta_{\ref{lem:cycle_of_small_rep}}}(\cG)$;
                \item $V_2 \in \mathcal{W}^L_g$;
                \item $V_2 = V_1$ on $M \setminus (\overline{B}_{g}(p_1, 2r) \cup \cdots \cup \overline{B}_{g}(p_{t}, 2r))$ for some collection\\ $\{p_1, \cdots, p_{t}\} \subset M, t \leq 3^{2m}$.
            \end{itemize}
            Then there exists $T_2 \in \Zc_n(M; \Z_2)$ such that $V_2 = |T_2|$.
        \end{lem}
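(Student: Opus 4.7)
The plan is to argue by contradiction. Assuming the conclusion fails, I would take a sequence $\eta_j \to 0^+$ with corresponding data $V_1^j \in \bB^{\Fb_g}_{\eta_j}(\cG)$, $V_2^j \in \cM^L_g$ and balls $\{\bar B_g(p_l^j, 2r)\}_{l \leq t_j}$ with $t_j \leq 3^{2m}$ satisfying the hypotheses, but such that no $T_2 \in \Zc_n(M; \Z_2)$ satisfies $V_2^j = |T_2|$. By the $\Fb_g$-compactness of $\cG$ and of bounded-mass varifolds, and by passing to subsequences, I would extract limits $W^j \to W_\infty = |\Sigma_\infty| \in \cG$ with $T^j \in \Zc_n(M; \Z_2)$ representing $W^j$, $V_1^j \to W_\infty$ in $\Fb_g$, $V_2^j \to V_\infty$ with $\|V_\infty\|_g(M) = L$, and $p_l^j \to p_l^\infty$, yielding limit balls $\Omega_\infty := \bigcup_l \bar B_g(p_l^\infty, 2r)$. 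Since $V_1^j = V_2^j$ on $M \setminus \Omega_j$, the two limits agree on $M \setminus \bar \Omega_\infty$, forcing $V_\infty = W_\infty = |\Sigma_\infty|$ (multiplicity one) there, where $\Sigma_\infty$ is a smooth closed embedded minimal hypersurface.

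Next I would show each connected component of $\Sigma_2^j := \spt(V_2^j)$ has multiplicity one for $j$ large. The multiplicity $k_i^j$ on a component $\Sigma_2^{(i),j}$ is constant by the constancy theorem. By Lemma~\ref{lem:balls_not_cover_SV} applied to $|\Sigma_2^{(i),j}|$ as a stationary varifold (valid since $2r < 4\eta_{\ref{lem:balls_not_cover_SV}}$ and $t_j \leq 3^{2m}$), we obtain $q_j^i \in \Sigma_2^{(i),j} \setminus \Omega_j$; choosing it interior, extracting a limit $q^i$, and using the monotonicity formula to conclude $q^i \in \spt V_\infty \cap (M \setminus \bar \Omega_\infty) = \Sigma_\infty \setminus \bar \Omega_\infty$, the fact that $W_\infty$ has density one at $q^i$ together with upper semicontinuity of density and integrality of $V_2^j$ forces $\theta(V_2^j, q_j^i) = 1$ for $j$ large, so $k_i^j = 1$. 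Hence $V_2^j = |\Sigma_2^j|$ and we set $T_2^j := [\Sigma_2^j] \in \bI_n(M; \Z_2)$, with $\partial T_2^j = 0$.

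To verify $T_2^j \in \Zc_n(M; \Z_2)$, I would combine a local cobordism with Lemma~\ref{lem:cycles_remove_balls}. By Allard's regularity and uniform compactness of $\cG$, for $j$ large $\Sigma_2^j$ is a smooth multiplicity-one graph over $\Sigma^j := \spt(W^j)$ on $M \setminus \Omega_j$, bounding a thin slab $Q_{\mathrm{out}}^j \in \bI_{n+1}(M; \Z_2)$ with
\[
    \partial Q_{\mathrm{out}}^j = (T^j + T_2^j) \llcorner (M \setminus \Omega_j) + L_{\mathrm{out}}^j,
\]
where $L_{\mathrm{out}}^j \in \bI_n(M; \Z_2)$ is supported in $\partial \Omega_j$. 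Define
\[
    \tilde T^j := T^j \llcorner (M \setminus \Omega_j) + T_2^j \llcorner \Omega_j + L_{\mathrm{out}}^j.
\]
A slicing computation shows $\partial \tilde T^j = 0$, the traces on $\partial \Omega_j$ cancelling in $\Z_2$, using that $(T^j + T_2^j) \cap \partial B_l^j$ is null-homologous in each $\partial B_l^j \cong S^n$ since $H_{n-1}(S^n; \Z_2) = 0$ for $n \geq 2$. Moreover $\tilde T^j = T^j$ on $M \setminus \Omega_j$, with $2r \leq 2\eta_{\ref{lem:cycles_remove_balls}} < 4\eta_{\ref{lem:cycles_remove_balls}}$ and $t_j \leq 4 \cdot 3^{2m}$, so Lemma~\ref{lem:cycles_remove_balls} yields $\tilde T^j \in \Zc_n(M; \Z_2)$, as $T^j$ is. Finally, a direct $\Z_2$-computation gives $T_2^j + \tilde T^j = \partial Q_{\mathrm{out}}^j$, so $T_2^j = \tilde T^j + \partial Q_{\mathrm{out}}^j \in \Zc_n(M; \Z_2)$, contradicting the assumption.

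The main obstacle will be establishing the smooth graph structure of $\Sigma_2^j$ over $\Sigma^j$ uniformly outside the balls across the compact family $\cG$ (via Allard-type regularity), and managing the boundary traces on $\partial \Omega_j$ to construct $\tilde T^j$ as a closed flat chain compatible with Lemma~\ref{lem:cycles_remove_balls}; the spherical topology encoded in $H_{n-1}(S^n; \Z_2) = 0$ is what makes the patching possible.
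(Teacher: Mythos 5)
Your proposal is correct and follows essentially the same route as the paper: Allard's regularity gives the multiplicity-one graphical structure of $V_2$ over an element of $\cG$ outside the balls, Lemma \ref{lem:balls_not_cover_SV} forces every component of $\spt(V_2)$ to reach that graphical region (hence multiplicity one), and the graphical slab together with Lemma \ref{lem:cycles_remove_balls} upgrades $\partial T_2 = 0$ to $T_2 \in \Zc_n(M;\Z_2)$ — indeed your $\tilde T^j$ is identically $T_2^j + \partial Q^j_{\mathrm{out}}$, so the slicing/trace discussion is superfluous. The only cosmetic difference is that you exploit compactness of $\cG$ via a contradiction subsequence (with upper semicontinuity of density at a limit point for multiplicity one), whereas the paper fixes a quantitative $\eta_W$ for each $W \in \cG$ via Allard, chooses a radius $s \in [2r,5r/2]$ of transversal intersection to localize the $\Fb$-distance, and concludes by a finite cover.
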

        \begin{rmk}
            As in Lemma \ref{lem:cycles_remove_balls}, the key point here is that $V_2$ is the varifold associated to a flat chain that is a flat cycle, which, more precisely, is the reduced boundary of a Caccioppoli set.
        \end{rmk}
        \begin{proof}
            The lemma follows from the following claim.
        
            \begin{claim}\label{claim:cycle_of_small_rep_0}
                For every $W \in \cG$, there exists $\eta_1 = \eta_1(M, g, r, W) > 0$ such that for any $V \in \mathcal{W}^L_g$, any $s \in [2r, 5r/2]$, and any collection $\{p_1, \cdots, p_{t}\} \subset M, t \leq 3^{2m}$, if
                \begin{equation}\label{eqn:cycle_of_small_rep_1}
                    \Fb_{g, M \setminus (\overline{B}_{g}(p_1, s) \cup \cdots \cup \overline{B}_{g}(p_{t}, s))}(W, V) < \eta_1\,,
                \end{equation}
                then there exists $T' \in \cZ_n(M; \Z_2)$ such that $V = |T'|$. In particular, since $r$ depends on $M$, $g$, and $m$, so $\eta_1 = \eta_1(M, g, m, W)$.
            \end{claim}
            \begin{proof}
                Let $T \in \Zc_n(M; \Z_2)$ such that $W = |T|$. By Allard's $\varepsilon$-regularity theorem \cite{All72}, there exists an $\eta_1(M, g, r, W) > 0$ independent of the choice of balls, such that for any $V$ satisfying \eqref{eqn:cycle_of_small_rep_1}, we have 
                \[
                    \spt(V) \cap M \setminus (\overline{B}_{g}(p_1, 3r) \cup \cdots \cup \overline{B}_{g}(p_{t}, 3r))
                \] is a subset of a (multiplicity-one) minimal graph over $W$. 
                
                Hence, as $r \leq \eta_{\ref{lem:balls_not_cover_SV}}(M, g, m)$, by Lemma \ref{lem:balls_not_cover_SV}, $V$ has multiplicity one and thus,
                \[
                    \mathcal{W}^L_g \ni V = |T'|\,,
                \]
                for some current $T' \in \mathbf{I}_n(M; \Z_2)$ with $\partial T' = 0$.
    
                Furthermore, the graphical property implies the existence of $R \in \mathbf{I}_{n+1}(M; \Z_2)$ such that
                \[
                    \spt (\partial R - T' - T) \subset (\overline{B}_{g}(p_1, 7r/2) \cup \cdots \cup \overline{B}_{g}(p_{t}, 7r/2))\,.
                \]
                i.e., $\partial R + T = T'$ in $M \setminus (\overline{B}_{g}(p_1, 7r/2) \cup \cdots \cup \overline{B}_{g}(p_{t}, 7r/2))$. It follows from Lemma \ref{lem:cycles_remove_balls} and $r \leq \eta_{\ref{lem:cycles_remove_balls}}(M, g, m)$ that 
                \[
                    T' \in \Zc_n(M; \Z_2)\,.
                \]
            \end{proof}
            
            Indeed, there exists $s \in [2r, 5r/2]$ such that $\spt(W)$ intersects every $\partial B_{g}(p_i, s)$ transversally. Hence, there exist $M_s = M \setminus (\overline{B}_{g}(p_1, s) \cup \cdots \cup \overline{B}_{g}(p_{t}, s))$ and  $\eta'_W > 0$ such that for every $V' \in \Vc_n(M)$,
            \begin{equation}\label{eqn:lem:cycle_of_small_rep_1}
                \Fb_{g}(W, V') < \eta'_W \implies \Fb_{g, M_s}(W, V') < \eta_1(M, g, r, W)\,,
            \end{equation}
            Since $\cG$ is compact, there exists a finite subset $\{W_i\}^N_{i = 1}$ such that
            \[
                \cG \subset \bigcup^N_{i = 1} \bB^{\Fb_g}_{\eta'_{W_i}}(W_i)\,,
            \]
            and furthermore, we can choose $\eta_{\ref{lem:cycle_of_small_rep}}$ such that
            \begin{equation}\label{eqn:lem:cycle_of_small_rep_2}
                \bB^{\Fb_g}_{\eta_{\ref{lem:cycle_of_small_rep}}}(\cG) \subset \bigcup^N_{i = 1} \bB^{\Fb_g}_{\eta'_{W_i}}(W_i)\,.
            \end{equation}
    
            Now if $V_1$ and $V_2$ satisfies all the conditions in the lemma, then
            \begin{align*}
                V_1 \in \bB^{\Fb_g}_{\eta_{\ref{lem:cycle_of_small_rep}}}(\cG) &\implies \exists\ W_i \in \cG,\ V_1 \in \bB^{\Fb_g}_{\eta'_{W_i}}(W_i)\quad \text{(by \eqref{eqn:lem:cycle_of_small_rep_2})}\\
                    &\implies \exists\ s \in [2r, 5r/2],\ \Fb_{g, M_s}(W_i, V_1) < \eta_{W_i}\quad \text{(by \eqref{eqn:lem:cycle_of_small_rep_1})}\\
                    &\implies \Fb_{g, M_s}(W_i, V_2) < \eta_{W_i} \quad \text{(by the third bullet point)}\\
                    &\implies \exists\ T_2 \in \Zc_n(M; \Z_2),\ V_2 = |T_2| \quad \text{(by Claim \ref{claim:cycle_of_small_rep_0})}\,.
            \end{align*}
        \end{proof}
        
        \begin{lem}\label{lem:far_of_diff_geom}
            Let $m \in \mathbb N^+$, $L \in \R^+$, $g \in \Gamma^\infty(M)$ be a $C^\infty$ Riemannian metrics on $M$, $r \coloneqq \min(\eta_{\ref{lem:balls_not_cover_SV}}(M, g, m), \eta_{\ref{lem:cycles_remove_balls}}(M, g, m))$ and $\cG, \cB \subset \mathcal{W}^L_g$ be compact subsets such that
            \begin{itemize}
                \item For every $V \in \cG$, there exists $T \in \Zc_n(M; \Z_2)$ such that $V = |T|$;
                \item For every $V \in \cB$, no $T \in \Zc_n(M; \Z_2)$ such that $V = |T|$.
            \end{itemize}
            Then there exists $\eta_{\ref{lem:far_of_diff_geom}} = \eta_{\ref{lem:far_of_diff_geom}}(M, g, m, L, \cG, \cB) > 0$ with the following property:
    
            If $g', g'' \in \Gamma^\infty(M)$, and $\{V_i\}^6_{i = 1} \cup \{W_i\}^6_{i = 1} \subset \Vc_n(M)$ satisfy
            \begin{itemize}
                \item $\|g' - g\|_{C^\infty, g} < \eta_{\ref{lem:far_of_diff_geom}}$, $\|g'' - g\|_{C^\infty, g} < \eta_{\ref{lem:far_of_diff_geom}}$;
                \item $\|V_i\|_{g}(M), \|W_i\|_{g}(M)< 2L$ for any $i$;
                \item $V_1 \in \bB^{\Fb_g}_{\eta_{\ref{lem:far_of_diff_geom}}}(\cG), W_1 \in \bB^{\Fb_g}_{\eta_{\ref{lem:far_of_diff_geom}}}(\cB)$;
                \item $V_2 = V_1$ on $M \setminus (\overline{B}_{g'}(p_1, 2r) \cup \cdots \cup \overline{B}_{g'}(p_{t}, 2r))$ for some collection\\ $\{p_1, \cdots, p_{t}\} \subset M, t \leq 3^{2m}$;
                \item $W_2 = W_1$ on $M \setminus (\overline{B}_{g'}(\bar{p}_1, 2r) \cup \cdots \cup \overline{B}_{g'}(\bar{p}_{\bar t}, 2r))$ for some collection\\ $\{\bar p_1, \cdots, \bar p_{\bar t}\} \subset M, \bar t \leq 3^{2m}$;
                \item $\Fb_{g'}(V_3, V_2) < \eta_{\ref{lem:far_of_diff_geom}}$;
                \item $\Fb_{g'}(W_3, W_2) < \eta_{\ref{lem:far_of_diff_geom}}$;
                \item $\Fb_{g'}(V_4, V_3) < \eta_{\ref{lem:far_of_diff_geom}}$;
                \item $\Fb_{g'}(W_4, W_3) < \eta_{\ref{lem:far_of_diff_geom}}$;
                \item $V_5 = V_4$ on $M \setminus (\overline{B}_{g''}(p'_1, 2r) \cup \cdots \cup \overline{B}_{g''}(p'_{t'}, 2r))$ for some collection\\ $\{p'_1, \cdots, p'_{t'}\} \subset M, t' \leq 3^{2m}$;
                \item $W_5 = W_4$ on $M \setminus (\overline{B}_{g''}(\bar{p}'_1, 2r) \cup \cdots \cup \overline{B}_{g''}(\bar{p}'_{\bar t'}, 2r))$ for some collection\\ $\{\bar p'_1, \cdots, \bar p'_{\bar t'}\} \subset M, \bar t' \leq 3^{2m}$;
                \item $\Fb_{g''}(V_6, V_5) < \eta_{\ref{lem:far_of_diff_geom}}$;
                \item $\Fb_{g''}(W_6, W_5) < \eta_{\ref{lem:far_of_diff_geom}}$;
            \end{itemize}
            Then $\bF_{g}(V_6, W_6) \geq \eta_{\ref{lem:far_of_diff_geom}}$.
    
            In particular, we have: Let $\cG'_{\ref{lem:far_of_diff_geom}} = \cG'_{\ref{lem:far_of_diff_geom}}(M, g, m, L, \cG, \cB)$ be the set of all $V_3$, $\cG''_{\ref{lem:far_of_diff_geom}} = \cG''_{\ref{lem:far_of_diff_geom}}(M, g, m, L, \cG, \cB)$ be the set of all $V_6$, $\cB'_{\ref{lem:far_of_diff_geom}} = \cB'_{\ref{lem:far_of_diff_geom}}(M, g, m, L, \cG, \cB)$ be the set of all $W_3$, and $\cB''_{\ref{lem:far_of_diff_geom}} = \cB''_{\ref{lem:far_of_diff_geom}}(M, g, m, L, \cG, \cB)$ be the set of all $W_6$, which satisfy the conditions above. Then,
            \begin{enumerate}
                \item $\cG'_{\ref{lem:far_of_diff_geom}}, \cG''_{\ref{lem:far_of_diff_geom}}, \cB'_{\ref{lem:far_of_diff_geom}}$ and $\cB''_{\ref{lem:far_of_diff_geom}}$ are open;
                \item $\cG \subset \cG'_{\ref{lem:far_of_diff_geom}} \subset \cG''_{\ref{lem:far_of_diff_geom}}$ and $\cB \subset \cB'_{\ref{lem:far_of_diff_geom}} \subset \cB''_{\ref{lem:far_of_diff_geom}}$;
                \item ${\bF_g}(\cG''_{\ref{lem:far_of_diff_geom}}, \cB''_{\ref{lem:far_of_diff_geom}}) \geq {\bF_g}(\cG'_{\ref{lem:far_of_diff_geom}}, \cB'_{\ref{lem:far_of_diff_geom}}) \geq \eta_{\ref{lem:far_of_diff_geom}}$.
            \end{enumerate}
        \end{lem}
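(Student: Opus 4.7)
The plan is a contradiction-and-compactness argument, in the spirit of Lemma \ref{lem:close_of_small_rep} but keeping track of whether the limit admits an integral current representative. Suppose no such $\eta_{\ref{lem:far_of_diff_geom}}$ exists; then setting $\eta_i = 1/i$, I obtain sequences of metrics $g'_i, g''_i$, ball centers, and varifolds $\{V^i_j\}^6_{j=1}, \{W^i_j\}^6_{j=1}$ satisfying every listed hypothesis with $\eta_i$ in place of $\eta_{\ref{lem:far_of_diff_geom}}$, yet with $\Fb_g(V^i_6, W^i_6) < \eta_i$. Using the uniform mass bound $2L$, the compactness of $\cG$ and $\cB$, varifold weak compactness, and compactness of $M$, I pass to a subsequence to extract limits $V^\infty_j, W^\infty_j \in \Vc_n(M)$ and limit points for the ball centers, with $V^\infty_1 \in \cG$, $W^\infty_1 \in \cB$, and $g'_i \to g$ in $C^\infty$ (with analogous control on $g''_i$, either from an implicit hypothesis or by passing to a further subsequence of convergent metrics).

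In the limit, every $\Fb$-closeness hypothesis collapses to equality, so $V^\infty_2 = V^\infty_3 = V^\infty_4$, $V^\infty_5 = V^\infty_6$, the corresponding identifications for the $W$-chain, and $V^\infty_6 = W^\infty_6$. Combined with the two replacement steps in each chain (whose fixed-radius $2r$ balls converge in Hausdorff distance to $g$-balls of radius $2r$), transitivity forces $V^\infty_1$ and $W^\infty_1$ to agree outside a union of at most $4 \cdot 3^{2m}$ closed $g$-balls of radius $2r$.

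Since $V^\infty_1 \in \cG$ admits the multiplicity-one integral representative $V^\infty_1 = |T_1|$, and $W^\infty_1 \in \cM^L_g$ agrees with $V^\infty_1$ off that ball set, I would run the Allard $\varepsilon$-regularity argument of Claim \ref{claim:cycle_of_small_rep_0} in the proof of Lemma \ref{lem:cycle_of_small_rep}: Allard's theorem shows $W^\infty_1$ is graphical over $\spt(V^\infty_1)$ outside slightly enlarged balls; Lemma \ref{lem:balls_not_cover_SV} ensures $\spt(W^\infty_1)$ is not covered by the balls, hence $W^\infty_1$ is multiplicity one; and Lemma \ref{lem:cycles_remove_balls} promotes the resulting flat chain to an element of $\cZ_n(M; \Z_2)$. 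Thus $W^\infty_1$ admits an integral current representative, contradicting $W^\infty_1 \in \cB$. The main obstacle I anticipate is the ball-count bookkeeping: the natural count after concatenating both chains is $4 \cdot 3^{2m}$ rather than the $3^{2m}$ in Lemma \ref{lem:cycle_of_small_rep}; I would therefore invoke Lemmas \ref{lem:balls_not_cover_SV} and \ref{lem:cycles_remove_balls} directly (both already accommodate the count $4 \cdot 3^{2m}$) rather than Lemma \ref{lem:cycle_of_small_rep} as a black box.

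The ``in particular'' statements are then formal consequences. For (2), trivial intermediate data ($V_2 = V_1$ with an empty collection of balls, $V_3 = V_1$, and likewise through $V_6$) realizes every element of $\cG$ as a legitimate $V_3$ and as a legitimate $V_6$, and analogously for $\cB$. The openness in (1) is inherited from the openness of the defining $\Fb$-closeness conditions under perturbation of the distinguished varifold. And (3) is the main inequality applied directly to pairs $(V_3, W_3) \in \cG'_{\ref{lem:far_of_diff_geom}} \times \cB'_{\ref{lem:far_of_diff_geom}}$ and $(V_6, W_6) \in \cG''_{\ref{lem:far_of_diff_geom}} \times \cB''_{\ref{lem:far_of_diff_geom}}$.
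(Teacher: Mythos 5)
Your proposal is correct and follows essentially the same route as the paper: a compactness/contradiction argument producing $V_1\in\cG$ and $W_1\in\cB$ that coincide outside at most $4\cdot 3^{2m}$ small balls, then Lemma \ref{lem:balls_not_cover_SV} to force both to be multiplicity one and Lemma \ref{lem:cycles_remove_balls} to transfer the cycle structure, contradicting $W_1\in\cB$. The only differences are cosmetic: the Allard step you import from Claim \ref{claim:cycle_of_small_rep_0} is superfluous here because the limit varifolds agree exactly (not merely approximately) off the balls, and your observation that control on $g''$ must be an implicit hypothesis is a fair reading of the statement, whose absence the paper's own (equally terse) proof also glosses over.
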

        \begin{rmk}\label{rmk:desc_cB}
            \begin{enumerate}
                \item This lemma essentially shows that through a finite number of annular replacements and approximations, it is not possible for two elements from sets $\cG$ and $\cB$, respectively, to approach each other closely.
                \item For every $V \in \cB$, either one connected component of $\spt(V)$ has multiplicity, or there exists $T' \in \mathbf{I}_n(M; \Z_2) \setminus \Zc_n(M; \Z_2)$ such that $V = |T'|$ and $\partial T' = 0$.
            \end{enumerate}
        \end{rmk}
        \begin{proof}
            If this is false, by compactness, there exists $V_1 \in \cG$ and $W_1 \in \cB$ such that
            \[
                V_1 = W_1
            \]
            on $M \setminus (\overline{B}_{g}(p_1, 3r) \cup \cdots \cup \overline{B}_{g}(p_{t}, 3r))$ for some collection $\{p_1, \cdots, p_{t}\} \subset M, t \leq 4\cdot 3^{2m}$;
    
            Suppose for the sake of contradiction that one connected component $\tilde \Sigma$ of $\spt(W_1)$ has multiplicity at least two, then by Lemma \ref{lem:balls_not_cover_SV} and $r \leq \eta_{\ref{lem:balls_not_cover_SV}}(M, g, m)$, 
            \[
                \tilde \Sigma' \coloneqq \tilde \Sigma \setminus \bigcup^{t}_{i = 1} \bar{B}_g(p_i, 3r) \neq \emptyset\,.
            \]
            Therefore, $V_1\llcorner \tilde \Sigma' = W_1\llcorner \tilde \Sigma'$, which contradicts the multiplicity one property of $V_1$.
    
            Hence, both $V_1$ and $W_1$ have multiplicity one. In particular, there exists $T \in \mathcal Z_n(M; \Z_2)$  and $S \in \bI_n(M;\Z_2)$ such that
            \[
                V_1 = |T|, \quad W_1 = |S|\,.
            \]
            Since $T = S$ on $M \setminus (\overline{B}_{g}(p_1, 3r) \cup \cdots \cup \overline{B}_{g}(p_{t}, 3r))$, and $r \leq \eta_{\ref{lem:cycles_remove_balls}}(M, g, m)$, $S \in \cZ_n(M; \Z_2)$, which contradicts the definition of $\cB$.
        \end{proof}
    
    \subsection{Almost minimizing property}

        First, we show that admitting an $(\varepsilon, \delta)$-deformation is an open condition.
        
        \begin{lem}\label{lem:(e, d)-def_varying_metric}
            Let $U \subset (M, g)$ be an open subset and $L, \varepsilon > 0$ be positive numbers. There exists $\eta_{\ref{lem:(e, d)-def_varying_metric}} = \eta_{\ref{lem:(e, d)-def_varying_metric}}(M, g, L, U, \varepsilon) > 0$ for which the following hold.
            
            If $g' \in \Gamma^\infty(M)$ and $V, V' \in \mathcal{V}_n(M)$ satisfy
            \begin{itemize}
                \item $\|g' - g\|_{C^\infty, g} \leq \eta_{\ref{lem:(e, d)-def_varying_metric}}$,
                \item $\|V\|_g(M) \leq 2L$,
                \item $T \notin \mathfrak{a}_{g}(U; 2\varepsilon, \delta)$ for any $\delta > 0$ and $T \in \Zc_n(M; \Z_2)$ with $\Fb_{g}(V, |T|) < 2\varepsilon$,
                \item $\Fb_{g}(V, V') \leq \eta_{\ref{lem:(e, d)-def_varying_metric}}$,
            \end{itemize}
            then for any $\delta > 0$ and any $T' \in \Zc_n(M; \Z_2)$ with $\Fb_{g'}(V', |T'|) < \varepsilon$, 
            \[
                T' \notin \mathfrak{a}_{g'}(U; \varepsilon, \delta).
            \]
        \end{lem}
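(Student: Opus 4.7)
The plan is to apply the hypothesis on $V$ to produce a $(2\varepsilon,\delta_0)$-deformation of $T'$ in $U$ under $g$ for a suitably small $\delta_0$, and then verify that the same chain sequence serves as an $(\varepsilon,\delta)$-deformation in $U$ under $g'$. The whole argument rests on the standard metric comparison: if $\|g'-g\|_{C^\infty,g}\leq\eta$, then for every rectifiable $n$-current $S$ and every varifold $W$,
\[
(1-C\eta)\,\Mb_g(S)\leq \Mb_{g'}(S)\leq (1+C\eta)\,\Mb_g(S),
\]
with $C=C(M,g,n)$, and the corresponding $\Fb$-distances on subsets of bounded total mass are equivalent up to a comparable multiplicative factor. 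This is immediate from the pointwise comparison of the two Riemannian $n$-area densities on any tangent plane.

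Given $T'\in\Zc_n(M;\Z_2)$ with $\Fb_{g'}(V',|T'|)<\varepsilon$ and any $\delta>0$, the first step is to transfer the geometric closeness to the metric $g$. Combining $\Fb_g(V,V')\leq\eta$, the comparison above, and the triangle inequality, one obtains $\Fb_g(V,|T'|)<2\varepsilon$ provided $\eta$ is sufficiently small in terms of $\varepsilon$. The hypothesis on $V$ then gives, for every $\delta_0>0$ to be chosen, a sequence $(T_i)_{i=0}^{q}$ in $\Zc_n(M;\Z_2)$ with $T_0=T'$, $\spt(T'-T_i)\subset U$, $\Mb_g(T_i-T_{i-1})\leq\delta_0$, $\Mb_g(T_i)\leq\Mb_g(T')+\delta_0$, and $\Mb_g(T_q)<\Mb_g(T')-2\varepsilon$.

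The last step is to verify the four conditions of an $(\varepsilon,\delta)$-deformation under $g'$. The support condition is metric-independent; the step-size bound reads $\Mb_{g'}(T_i-T_{i-1})\leq(1+C\eta)\delta_0$, which is below $\delta$ once $\delta_0<\delta/(1+C\eta)$. The main drop is
\[
\Mb_{g'}(T_q)-\Mb_{g'}(T')\leq (1+C\eta)\Mb_g(T_q)-(1-C\eta)\Mb_g(T')\leq -2\varepsilon+2C\eta\,\Mb_g(T'),
\]
and since $\Mb_g(T')\leq 3L$ (using $\|V\|_g(M)\leq 2L$ and $\Fb_g(V,|T'|)<2\varepsilon$, which controls mass for small $\varepsilon$), this is below $-\varepsilon$ provided $\eta\leq \varepsilon/(6CL)$. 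An entirely analogous computation gives $\Mb_{g'}(T_i)-\Mb_{g'}(T')\leq(1+C\eta)\delta_0+2C\eta\,\Mb_g(T')\leq\delta$, where again the $O(\eta L)$ error is absorbed by taking $\eta$ small in terms of $\varepsilon$ and $L$, and then $\delta_0$ small in terms of $\delta$.

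The main obstacle is keeping the multiplicative error $2C\eta\,\Mb_g(T')\sim\eta L$ under control simultaneously against both the threshold $\varepsilon$ (for the drop at step $q$) and the prescribed $\delta$ (for the intermediate mass bound). This is why $\eta_{\ref{lem:(e, d)-def_varying_metric}}$ must depend on both $L$ and $\varepsilon$ but is permitted to be independent of $\delta$ only if one is careful: for the intermediate bound one chooses $\eta$ small enough that $2C\eta\,\Mb_g(T')\leq\delta/2$ — an inequality built into the constant $\eta_{\ref{lem:(e, d)-def_varying_metric}}(M,g,L,U,\varepsilon)$ via the a priori mass budget — and then chooses $\delta_0$ depending on $\delta$ after the fact. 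With these choices the sequence $(T_i)$ is the required $(\varepsilon,\delta)$-deformation of $T'$ in $U$ under $g'$, contradicting $T'\in\mathfrak{a}_{g'}(U;\varepsilon,\delta)$ and completing the proof.
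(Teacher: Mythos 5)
Your proof follows the same route as the paper's: reduce to the hypothesis on $V$ by showing $\Fb_g(V,|T'|)<2\varepsilon$, extract a $(2\varepsilon,\delta_0)$-deformation of $T'$ in $U$ under $g$, and transfer it to $g'$ via the uniform comparison of area densities. (The paper packages the identical transfer inside a compactness argument with $\eta_i=1/i$; making the constants explicit as you do is only a cosmetic difference.) Conditions (1), (2) and (4) of the transferred deformation are handled correctly: the support condition is metric-independent, the step bound costs a factor $(1+C\eta)$ which is absorbed by shrinking $\delta_0$, and the final drop survives because the error $2C\eta\,\Mb_g(T')\lesssim \eta L$ is dominated by the slack between $2\varepsilon$ and $\varepsilon$ once $\eta\leq \varepsilon/(6CL)$.

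The gap is in condition (3), and your own write-up exposes it without closing it. The transfer yields only $\Mb_{g'}(T_j)\leq \Mb_{g'}(T')+2C\eta\,\Mb_g(T')+(1+C\eta)\delta_0$, and the additive term $2C\eta\,\Mb_g(T')\sim\eta L$ does not shrink as $\delta_0\to 0$. Your fix --- ``choose $\eta$ small enough that $2C\eta\,\Mb_g(T')\leq\delta/2$'' --- makes $\eta$ depend on $\delta$, which is not permitted: $\delta$ is universally quantified in the conclusion \emph{after} $\eta_{\ref{lem:(e, d)-def_varying_metric}}$ is fixed, and the statement is strongest exactly for small $\delta$ (a current outside $\mathfrak{a}_{g'}(U;\varepsilon,\delta)$ for small $\delta$ is automatically outside it for larger $\delta$, so the small-$\delta$ case cannot be dismissed). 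No positive constant depending only on $(M,g,L,U,\varepsilon)$ satisfies that inequality for all $\delta>0$, so as written the argument does not establish the conclusion in the regime $\delta\ll \eta L$. I will note in fairness that the paper's own proof asserts the corresponding implication ``for $i$ sufficiently large'' without addressing that the adversarial $\delta_i$ may tend to zero much faster than $\eta_i=1/i$, so you have reproduced the published argument together with its weakest step; but the sentence claiming the $\delta$-dependence can be ``built into the constant via the a priori mass budget'' is circular and must be replaced by a genuine argument for small $\delta$ (for instance by producing a different deformation path under $g'$, or by reformulating the statement and its applications so that only $\delta$ bounded below in terms of $\eta L$ is ever required).
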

        \begin{proof}
            If this is false for any $\eta_i = \frac{1}{i} (i = 1, 2, 3, \cdots)$, we obtain the existence of $g'_i, V_i, V'_i$ satisfying all the conditions in the lemma with $(\eta_i, g'_i, V_i, V'_i)$ in place of $(\eta, g', V, V')$ but
            \[
                T'_i \in \mathfrak{a}_{g'_i}(U; \varepsilon, \delta_i)
            \]
            for some $\delta_i > 0$ and $T'_i \in \Zc_n(M; \Z_2)$ with $\Fb_{g'_i}(V'_i, |T'_i|) < \varepsilon$.
    
            For sufficiently large $i$, $\Fb_{g}(V_i, |T'_i|) < 2\varepsilon$, so $T'_i \notin \mathfrak{a}_{g}(U; 2\varepsilon, \delta_i / 2)$, i.e., there exists a finite sequence $(T_j)^q_{j = 0}$ in $\Zc_n(M; \Z_2)$ with
            \begin{enumerate}[(i)]
                \item $T_0 = T'_i$ and $\spt(T - T_j) \subset U$ for all $j = 1, \cdots, q$;
                \item $\Mb_g(T_j - T_{j - 1}) \leq \delta_i/2$ for all $j = 1, \cdots, q$;
                \item $\Mb_g(T_j) \leq \Mb_g(T) + \delta_i/2$ for all $j = 1, \cdots, q$;
                \item $\Mb_g(T_q) < \Mb_g(T) - 2\varepsilon$.
            \end{enumerate}
            Hence, if $i$ is sufficiently large, the above conditions induce the similar ones with respect to $g'_i$,
            \begin{enumerate}[(i)]
                \item $T_0 = T'_i$ and $\spt(T - T_j) \subset U$ for all $j = 1, \cdots, q$;
                \item $\Mb_{g'_i}(T_j - T_{j - 1}) \leq \delta_i$ for all $j = 1, \cdots, q$;
                \item $\Mb_{g'_i}(T_j) \leq \Mb_{g'_i}(T) + \delta_i$ for all $j = 1, \cdots, q$;
                \item $\Mb_{g'_i}(T_q) < \Mb_{g'_i}(T) - \varepsilon$;
            \end{enumerate}
            which contradicts $T'_i \in \mathfrak{a}_{g'_i}(U; \varepsilon, \delta_i)$.
        \end{proof}
        
        \begin{lem}\label{lem:not_am_varying_metric}
            Let $m \in \N^+$, $r, d, L \in \R^+$, $\mathcal{SV}^L_g$ be the space of all stationary varifolds in $(M^{n+1}, g)\ (3\leq n+1 \leq 7)$ with total measure $L$, and 
            \[
                \mathcal{W}^L_{g, m, r} \subset \mathcal{SV}^L_g
            \]
            be the subset of all the $(m, r)_g$-almost minimizing, stationary integral varifolds whose support is a smooth, closed minimal hypersurface with total measure $L$. Clearly, $\mathcal{W}^L_{g, m, r} \subset \mathcal{W}^L_g$. Then there exist positive constants $\bar \varepsilon_{\ref{lem:not_am_varying_metric}} = \bar \varepsilon_{\ref{lem:not_am_varying_metric}}(M, g, m, r, d, L)$, $\bar s_{\ref{lem:not_am_varying_metric}} = \bar s_{\ref{lem:not_am_varying_metric}}(M, g, m, r, d, L)$ and $\eta_{\ref{lem:not_am_varying_metric}} = \eta_{\ref{lem:not_am_varying_metric}}(M, g, m, r, d, L)$ for which the following hold.
            
            If $g' \in \Gamma^\infty(M)$ and $V \in \mathcal{V}_n(M)$ satisfy
            \begin{itemize}
                \item $\|g - g'\|_{C^\infty, g} < \eta_{\ref{lem:not_am_varying_metric}}$,
                \item $V \in \bB^{\Fb_g}_{\eta_{\ref{lem:not_am_varying_metric}}}(\mathcal{SV}^L_g)$, and
                \item $V \notin \bB^{\Fb_g}_d(\mathcal{W}^L_{g, m, r})$,
            \end{itemize}
            then there exists $p\in M$ and $I_m$ concentric annuli
            \[
                \{\mathrm{An}_{g',i}(V)\}^{I_m}_{i = 1} \coloneqq\{\mathrm{An}_{g'}(p,r_i-s_i,r_i+s_i)\}^{I_m}_{i = 1}
            \] 
            such that
            \begin{enumerate}
                \item $\{r_i\}$ and $\{s_i\}$ satisfy
                    \begin{align*}
                        r_i-2s_i&> 2(r_{i+1}+2s_{i+1}),\;i=1,\dots,I_m-1,\\
                        r_{I_m}-2s_{I_m}&>0,\\
                        r_1 + s_1 &< r,\\
                        \min_i\{s_i\} &> \bar s_{\ref{lem:not_am_varying_metric}};
                    \end{align*}
                \item For any  $i \in \{1, \cdots, I_m\}$, $\delta > 0$ and $T \in \Zc_n(M; \Z_2)$, if $\Fb_{g'}(V, |T|) < \bar \varepsilon_{\ref{lem:not_am_varying_metric}}$, then $T \notin \mathfrak{a}_{g'}(\mathrm{An}_{g',i}(V), \delta, \bar\varepsilon)$.
            \end{enumerate}
        \end{lem}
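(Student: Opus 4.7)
The plan is to argue by contradiction. Suppose that for sequences $\bar\varepsilon_j, \bar s_j, \eta_j \downarrow 0$ one can produce metrics $g'_j \in \Gamma^\infty(M)$ and varifolds $V_j \in \Vc_n(M)$ satisfying the three bullet hypotheses with parameter $\eta_j$, yet for which no choice of annuli verifies the conclusion with parameters $(\bar\varepsilon_j, \bar s_j)$. The first step is to extract a subsequential limit: since $V_j$ lies within $\Fb_g$-distance $\eta_j$ of a stationary varifold of mass $L$, weak compactness of Radon measures of bounded mass together with the closedness of stationarity yields a subsequence converging in $\Fb_g$ to some $V_\infty \in \mathcal{SV}^L_g$. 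A triangle-inequality argument combined with $V_j \notin \bB^{\Fb_g}_d(\mathcal{W}^L_g)$ forces $\Fb_g(V_\infty, \mathcal{W}^L_g) \geq d$; Theorem \ref{thm:Schoen_Simon_reg} then forces $V_\infty$ to \emph{not} be $(m, r)_g$-almost minimizing, since any stationary $(m,r)_g$-almost minimizing varifold of mass $L$ would otherwise belong to $\mathcal{W}^L_g$.

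Next I would unpack this failure to extract quantitative data. By Definition \ref{defn_property_m} there is a point $p \in M$ and concentric $g$-annuli $U_i = \mathrm{An}_g(p, r_i - s_i, r_i + s_i)$ ($i = 1, \dots, I_m$) satisfying the three radii inequalities, in none of which $V_\infty$ is almost minimizing. Set $\bar s_\star := \min_i s_i > 0$. Failure of the Pitts almost-minimizing condition supplies $\tilde\varepsilon_i > 0$ for each $i$, and with $\tilde\varepsilon := \tfrac{1}{2}\min_i \tilde\varepsilon_i$ one has, for every $i$, every $\delta > 0$ and every $T \in \Zc_n(M;\Z_2)$ with $\Fb_g(V_\infty, |T|) < 2\tilde\varepsilon \leq \tilde\varepsilon_i$, that $T$ admits a $(\tilde\varepsilon_i,\delta)$-deformation in $U_i$ and hence an $(2\tilde\varepsilon,\delta)$-deformation, i.e.\ $T \notin \mathfrak{a}_g(U_i; 2\tilde\varepsilon, \delta)$. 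Applying Lemma \ref{lem:(e, d)-def_varying_metric} to each pair $(U_i, \tilde\varepsilon)$ with $V = V_\infty$ and $V' = V_j$ then yields, for all $j$ sufficiently large, that every $T' \in \Zc_n(M;\Z_2)$ with $\Fb_{g'_j}(V_j, |T'|) < \tilde\varepsilon$ satisfies $T' \notin \mathfrak{a}_{g'_j}(U_i; \tilde\varepsilon, \delta)$ for every $\delta > 0$ and every $i$.

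Finally I would convert the $g$-annuli $U_i$ into the $g'_j$-annuli demanded by the statement by slightly enlarging them. Choose $\varepsilon_\circ > 0$ small enough that $s_i' := s_i + \varepsilon_\circ$ still satisfies all three radii inequalities (using the wiggle room in the original strict inequalities) and $\min_i s_i' > \bar s_\star$. For $j$ so large that the uniform distortion $|d_g(p, \cdot) - d_{g'_j}(p, \cdot)| < \varepsilon_\circ/2$ holds on $M$, one obtains $U_i \subset U'_i := \mathrm{An}_{g'_j}(p, r_i - s_i', r_i + s_i')$; any $(\tilde\varepsilon, \delta)$-deformation supported in $U_i$ is then supported in $U'_i$, so $T' \notin \mathfrak{a}_{g'_j}(U'_i; \tilde\varepsilon, \delta)$. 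Setting $\bar\varepsilon_{\ref{lem:not_am_varying_metric}} := \tilde\varepsilon$ and $\bar s_{\ref{lem:not_am_varying_metric}} := \bar s_\star$, the system $\{U'_i\}_{i=1}^{I_m}$ based at $p$ witnesses the conclusion of the lemma for $V_j, g'_j$ once $j$ is large, contradicting the standing assumption. The principal obstacle, as I see it, is precisely this passage from the $g$-annuli naturally produced by the limit argument to the $g'_j$-annuli required by the statement; the small thickening combined with the strict nature of the radii inequalities and the metric-perturbation robustness furnished by Lemma \ref{lem:(e, d)-def_varying_metric} handles it.
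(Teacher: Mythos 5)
Your proposal is correct and follows essentially the same route as the paper: quantify the failure of the $(m,r)_g$-almost minimizing property (via Theorem \ref{thm:Schoen_Simon_reg}) for stationary varifolds of mass $L$ lying outside $\bB^{\Fb_g}_d(\mathcal{W}^L_g)$, uniformize the constants by compactness, transfer to nearby metrics with Lemma \ref{lem:(e, d)-def_varying_metric}, and slightly enlarge the $g$-annuli into $g'$-annuli. The only cosmetic difference is that you phrase the compactness step as a sequential contradiction argument, whereas the paper extracts a finite subcover of the compact set $\mathcal{SV}^L_g \setminus \bB^{\Fb_g}_d(\mathcal{W}^L_g)$ directly; the two are interchangeable here.
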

        \begin{proof}
            Let $\mathcal{K} \coloneqq \mathcal{SV}^L_g \setminus \bB^{\Fb_g}_d(\mathcal{W}^L_g)$.
        
            For every $V \in \mathcal{K}$, there exist $\varepsilon_V > 0$, $p_V \in M$, and $I_m$ concentric annuli $\{\tilde{\mathrm{An}}_{g, i}(V)\}^{I_m}_{i = 1} \equiv \{\mathrm{An}_g(p_V,r_{V, i}-s_{V, i},r_{V, i}+s_{V, i})\}^{I_m}_{i = 1}$ such that 
            \begin{enumerate}[(i)]
                \item $\{r_{V, i}\}$ and $\{s_{V, i}\}$ satisfy
                    \begin{align*}
                        r_{V,i}-2s_{V,i}&> 2(r_{V, i+1}+2s_{V, i+1}),\;i=1,\dots,I_m-1\,,\\
                        r_{V, I_m}-2s_{V, I_m}&>0\,,\\
                        r_{V, 1} + s_{V, 1} &< r\,;
                    \end{align*}
                \item For any $i \in \{1, \cdots, I_m\}$, $\delta > 0$ and $T \in \Zc_n(M; \Z_2)$, if $\Fb_{g}(V, |T|) <  \varepsilon_V$, then 
                \[
                    T \notin \mathfrak{a}_{g}(\tilde{\mathrm{An}}_{g, i}(V), \delta, \varepsilon_V)\,.
                \]
            \end{enumerate}
            Otherwise, $V$ is both $(m, r)_g$-almost minimizing and stationary, so by Theorem \ref{thm:Schoen_Simon_reg}, $V \in \mathcal{W}^L_g$, which contradicts $V \not \in \bB^{\Fb_g}_d(\mathcal{W}^L_g)$.
    
            Since $\mathcal{K}$ is compact, there exists a finite subset $\{V_j\}^N_{j = 1} \subset \mathcal{K}$ such that
            \[
                \mathcal{K} \subset \bigcup^N_{j = 1} \bB^{\Fb_g}_{\varepsilon_{V_j}/2}(V_j)\,.
            \]
            In other words, for every $V \in \mathcal{K}$, there exists some $j$ such that $V \in \bB^{\Fb_g}_{\varepsilon_{V_j}/2}(V_j)$, so for any $T \in \Zc_n(M;\Z_2)$ with $\Fb_{g}(V, |T|) <  \varepsilon_{V_j}/2$, any $i \in \{1, \cdots, I_m\}$ and any $\delta > 0$,
            \[
                T \notin \mathfrak{a}_{g}(\tilde{\mathrm{An}}_{g, i}(V_j), \delta, \varepsilon_{V_j}/2)\,.
            \]
    
            Moreover, there exists a positive constant $\eta'$ such that for any $g' \in \Gamma^\infty(M)$ with $\|g' - g\|_{C^\infty, g} < \eta'$, and any $j \in \{1, \cdots, N\}$, there exist concentric annuli $\{\tilde{\mathrm{An}}_{g', i}(V_j)\}^{I_m}_{i = 1} \equiv \{\mathrm{An}_{g'}(p_{V_j},r_{g', V_j, i}-s_{g', V_j, i},r_{g', V_j, i}+s_{g', V_j, i})\}^{I_m}_{i = 1}$ in $(M, g')$ such that
            \begin{enumerate}[(i)]
                \item The radii  $\{r_{g', V_j, i}\}^{I_m}_{i = 1}$ and $\{s_{g', V_j, i}\}^{I_m}_{i = 1}$ satisfy
                    \begin{align*}
                        r_{g', V_j, i}-2s_{g', V_j, i}&> 2(r_{g', V_j, i+1}+2s_{g', V_j, i+1}),\;i=1,\dots,I_m-1\,,\\
                        r_{g', V_j, I_m}-2s_{g', V_j, I_m}&>0\,,\\
                        r_{g', V_j, 1} + s_{g', V_j, 1} &< r\,;
                    \end{align*}
                \item $\min_i (s_{g', V_j, i}) > \min_j \min_i \frac{s_{V_j, i}}{2}$;
                \item $\tilde{\mathrm{An}}_{g', i}(V_j) \supset \tilde{\mathrm{An}}_{g, i}(V_j)$;
            \end{enumerate}
    
            Now, we set 
            \begin{align*}
                \bar\varepsilon_{\ref{lem:not_am_varying_metric}} &\coloneqq \min_j \frac{\varepsilon_{V_j}}{10}\,,\\
                \bar s_{\ref{lem:not_am_varying_metric}} &\coloneqq \min_j \min_i \frac{s_{V_j, i}}{2}\,,\\
                \eta_{\ref{lem:not_am_varying_metric}} &\coloneqq \min\{d, \min_j \min_i \eta_{\ref{lem:(e, d)-def_varying_metric}}(M, g, L, \tilde{\mathrm{An}}_{g, i}(V_j), \bar\varepsilon_{\ref{lem:not_am_varying_metric}})\}\,.
            \end{align*}
            
            To see that these constants fulfill our requirements, let $g' \in \Gamma^\infty(M)$ and $W \in \Vc_n(M)$ with $\|g - g'\|_{C^\infty, g} < \eta$, $W \in \bB^{\Fb_g}_\eta(\mathcal{SV}^L_g)$, and $W \notin \bB^{\Fb_g}_d(\mathcal{W}^L_g)$. Therefore, there exists $V \in \mathcal{K}$ and $V_j$ such that 
            \begin{align*}
                \Fb_g(W, V) &< \eta_{\ref{lem:not_am_varying_metric}}\,,\\
                V &\in \bB^{\Fb_g}_{\varepsilon_{V_j}/2}(V_j)\,.
            \end{align*}
            Since $V$ satisfies that for any $T \in \Zc_n(M;\Z_2)$ with $\Fb_{g}(V, |T|) < 2\bar\varepsilon_{\ref{lem:not_am_varying_metric}}$, any $i \in \{1, \cdots, I_m\}$ and any $\delta > 0$,
            \[
                T \notin \mathfrak{a}_{g}(\tilde{\mathrm{An}}_{g, i}(V_j), \delta, 2\bar\varepsilon_{\ref{lem:not_am_varying_metric}})\,,
            \]
            it follows from Lemma \ref{lem:(e, d)-def_varying_metric} that for any $T' \in \Zc_n(M; \Z_2)$ with $\Fb_{g'}(W, |T'|) < \bar\varepsilon_{\ref{lem:not_am_varying_metric}}$, any $i \in \{1, \cdots, I_m\}$ and any $\delta > 0$ that
            \[
                T' \notin \mathfrak{a}_{g'}(\tilde{\mathrm{An}}_{g, i}(V_j), \delta, \bar\varepsilon_{\ref{lem:not_am_varying_metric}})\,,
            \]
            and thus,
            \[
                T' \notin \mathfrak{a}_{g'}(\tilde{\mathrm{An}}_{g', i}(V_j), \delta, \bar\varepsilon_{\ref{lem:not_am_varying_metric}})\,,
            \]
            as $\tilde{\mathrm{An}}_{g', i}(V_j) \supset \tilde{\mathrm{An}}_{g, i}(V_j)$. Letting
            \[
                \mathrm{An}_{g', i}(W) \coloneqq \tilde{\mathrm{An}}_{g', i}(V_j)\,,
            \]
            for each $i$, the conclusions (1) and (2) follow immediately.
        \end{proof}
        
        \begin{lem}\label{lem:am_cpt_varying_metric}
            Let $m \in \N^+$, $r \in \R^+$, $(M^{n+1}, g) (3\leq n+1 \leq 7)$ be a closed Riemannian manifold, and $(g_i)^\infty_{i = 1}$ be a sequence of metrics with $g_i\to g$ in $C^\infty$. For each $i \in N^+$, let $\Sigma_i$ be an $(m, r)_{g_i}$-almost minimizing minimal hypersurface in $(M,g_i)$. Then $\Sigma_i$ subsequentially converges graphically in $C^\infty$, away from at most finitely many points, to some $(m, r)_{g}$-almost minimizing minimal hypersurface $\Sigma$ in $(M,g)$.
    
            Furthermore, suppose that $g$ is a metric with positive Ricci curvature or a bumpy metric. If $(\Sigma_i)^\infty_{i = 1}$ is multiplicity-one, then $\Sigma$ is also multiplicity-one; If $(\Sigma_i)^\infty_{i = 1}$ is two-sided, then $\Sigma$ is also two-sided. 
        \end{lem}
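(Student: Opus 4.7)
The plan is to combine a Sharp-type compactness theorem with a pigeonhole argument over the $I_m$ concentric annuli appearing in Definition \ref{defn_property_m}, and then to invoke Lemma \ref{lem:(e, d)-def_varying_metric} to pass the almost minimizing property to the limit. First, each $\Sigma_i$ is a smooth, closed, embedded minimal hypersurface which is $(m,r)_{g_i}$-AM at every point; a standard covering of $M$ together with the monotonicity formula yields a uniform area bound $\sup_i \area_{g_i}(\Sigma_i) < \infty$, and Pitts' combinatorial argument (cf.\ \cite{YangyangLi20_improved_morse_index}) yields a uniform Morse index bound $\ind(\Sigma_i) \leq C(m)$. Since $g_i \to g$ in $C^\infty$, Sharp's compactness theorem then provides, after passing to a subsequence, smooth graphical convergence $\Sigma_i \to \Sigma$ to a smooth, closed, embedded minimal hypersurface $\Sigma$ in $(M,g)$, possibly with integer multiplicity. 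Denote the limit varifold by $V := \lim_i |\Sigma_i|$, which is supported on $\Sigma$.

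Second, I would show that $V$ is $(m,r)_g$-AM. Fix $p \in M$ and $I_m$ concentric $g$-annuli $\mathrm{An}_j := \mathrm{An}_g(p, r_j - s_j, r_j + s_j)$, $j = 1, \dots, I_m$, satisfying the conditions of Definition \ref{defn_property_m} with $r_1 + s_1 < r$. Because $g_k \to g$ in $C^\infty$, for all sufficiently large $k$ the same centre and radii define genuine $g_k$-annuli which still satisfy the required gap conditions (after shrinking each $s_j$ by a factor $1 - o(1)$ if necessary). By the $(m,r)_{g_k}$-AM property of $\Sigma_k$ there exists an index $i_k \in \{1, \dots, I_m\}$ for which $|\Sigma_k|$ is AM in $\mathrm{An}_{g_k}(p, r_{i_k} - s_{i_k}, r_{i_k} + s_{i_k})$. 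Pigeonholing, pass to a further subsequence on which $i_k \equiv i_0$ is constant. Suppose for contradiction that $V$ is not AM in $\mathrm{An}_{i_0}$; then there exists $\varepsilon > 0$ such that for every $\delta > 0$ and every $T \in \Zc_n(M;\Z_2)$ with $\Fb_g(V, |T|) < 2\varepsilon$, one has $T \notin \mathfrak{a}_g(\mathrm{An}_{i_0}; 2\varepsilon, \delta)$. Applying Lemma \ref{lem:(e, d)-def_varying_metric} with $(g', V, V') = (g_k, V, |\Sigma_k|)$ then forces $|\Sigma_k|$ to fail AM in $\mathrm{An}_{g_k, i_0}$ for all sufficiently large $k$, contradicting the pigeonhole selection. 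Hence $V$ is $(m,r)_g$-AM.

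Third, assume in addition that each $\Sigma_i$ is two-sided and multiplicity-one, and that $g$ is bumpy or has positive Ricci curvature. Two-sidedness of $\Sigma$ is immediate from the smooth graphical convergence: the trivial normal bundles of the graphs of $\Sigma_i$ over $\Sigma$ converge in $C^\infty$ to a trivialisation of the normal bundle of $\Sigma$. For the multiplicity, suppose for contradiction $V = m |\Sigma|$ with $m \geq 2$. The standard blow-up of the sheets of $\Sigma_i$ (after rescaling their normal separations) produces a non-trivial, nonnegative Jacobi field $\phi$ on the two-sided $\Sigma$. If $g$ is bumpy, this directly contradicts the non-degeneracy of $\Sigma$. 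If $g$ has positive Ricci curvature, then $\phi > 0$ by the strong maximum principle, and dividing $L_\Sigma \phi = \Delta \phi + (|A|^2 + \mathrm{Ric}_g(\nu, \nu)) \phi = 0$ by $\phi$ and integrating over $\Sigma$ yields
\[
    \int_\Sigma |\nabla \log \phi|^2 \, + \int_\Sigma \bigl(|A|^2 + \mathrm{Ric}_g(\nu, \nu)\bigr) \, = 0,
\]
contradicting $\mathrm{Ric}_g > 0$. Either way $m = 1$, so $\Sigma$ is multiplicity-one.

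The main obstacle is the second paragraph: a $g$-annulus is only approximately a $g_k$-annulus, and the AM property is both quantitative and metric-dependent. The combination of a pigeonhole over the $I_m$ available annuli (the raison d'être of the Pitts-type constant $I_m = 3^{m 3^m}$) together with Lemma \ref{lem:(e, d)-def_varying_metric} (which encodes the stability of non-AM behaviour under joint perturbation of the metric and the varifold) provides exactly the right mechanism to pass AM to the limit.
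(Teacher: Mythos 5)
Your second and third paragraphs track the paper closely: the paper also fixes a family of $I_m$ concentric $g$-annuli, pigeonholes to get a single annulus in which a subsequence is almost minimizing, and passes the property to the limit via Lemma \ref{lem:(e, d)-def_varying_metric}; and it also rules out multiplicity $\geq 2$ by producing a positive Jacobi field (citing \cite{AmbrozioCarlottoSharp2018comparing}), which is impossible for a bumpy metric or under positive Ricci. (One small caution on the annuli: you should \emph{enlarge} rather than shrink the $g$-annuli to obtain the $g_k$-annuli, so that a deformation supported in the $g$-annulus is also a deformation in the $g_k$-annulus; otherwise failure of AM in the $g$-annulus does not contradict AM in a strictly smaller $g_k$-annulus.)

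The genuine gap is in your first step. The asserted uniform Morse index bound $\ind(\Sigma_i)\leq C(m)$ is not a consequence of the $(m,r)_{g_i}$-almost minimizing condition: that condition only guarantees, at each point, that $\Sigma_i$ is almost minimizing (hence stable) in \emph{some} annulus of outer radius $<r$ centered there; the combinatorial index bounds of Pitts/Marques--Neves/Li require almost minimizing with respect to collections of \emph{disjoint} open sets spread over $M$, which is a different and strictly stronger input. Indeed the introduction of this paper states explicitly that no Morse index upper bound is available for these classes, which is precisely why it cannot invoke Sharp's compactness. The paper instead gets varifold convergence from Allard compactness (a mass bound being implicit), and upgrades to locally smooth graphical convergence away from a \emph{finite} set of points using the stability-in-annuli furnished by the AM property together with a Zorn's lemma exhaustion; that weaker convergence is still enough for the Jacobi field argument. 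Your proof would be repaired by substituting this mechanism for Sharp's theorem. Relatedly, the claim that the monotonicity formula yields a uniform \emph{upper} area bound is backwards — monotonicity gives lower density bounds — so the mass bound must be assumed or supplied by the application, as the paper implicitly does.
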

        \begin{proof}
            By Allard's compactness theorem \cite{All72}, $\Sigma_i$ subsequentially converges, in the varifold sense, to a stationary integral varifold $V$ in $(M, g)$. Without loss of generality, by relabelling, we may assume that $\Sigma_i$ converges to $V$ in the varifold sense.
        
            To see that $V$ is $(m, r)_g$ almost-minimizing, we take any $p \in M$ and  $I_m$ concentric annuli $\{\mathrm{An}_g(p,r_j-s_j,r_j+s_j)\}$ where $\{r_j\}$ and $\{s_j\}$ satisfy
            \begin{align*}
                r_j-2s_j&>2(r_{j+1}+2s_{j+1}),\;j=1,\dots,I_m-1,\\
                r_{I_m}-2s_{I_m}&>0,\\
                r_1 + s_ 1 &< r\,.
            \end{align*}
            Since each $\Sigma_i$ is almost minimizing in at least one of these finitely many annuli, there exists a $j_0 \in \{1, \cdots, I_m\}$ and a subsequence $(\Sigma_{i_k})^\infty_{k = 1}$ such that every $\Sigma_{i_k}$ is almost minimizing in $\mathrm{An}_g(p,r_{j_0}-s_{j_0},r_{j_0}+s_{j_0})$. Consequentially, their limit $V$ is also almost minimizing in $\mathrm{An}_g(p,r_{j_0}-s_{j_0},r_{j_0}+s_{j_0})$ by Lemma \ref{lem:(e, d)-def_varying_metric}, so $V$ is $(m, r)_g$-almost minimizing and its support is a smooth embedded minimal hypersurface, denoted by $\Sigma$, i.e., $V = m|\Sigma|$ for some $m \in \mathbb{N}^+$.
        
            If $m = 1$, then by Allard's regularity theorem, $\Sigma_i$ subsequentially converges in $C^\infty$. Therefore, it suffices to show that $m \geq 2$ is impossible provided that $g$ is a metric with positive Ricci curvature or a bumpy metric.
        
            Note that the almost minimizing property implies stability. By Schoen-Simon's regularity theory, the convergence above in $\mathrm{An}_g(p,r_{j_0}-s_{j_0},r_{j_0}+s_{j_0})$ is locally smooth and graphical.
        
            Let $\mathcal{S}$ be the set of open subsets of $M$ such that for each $A \in \mathcal{S}$, there exists a subsequence $(\Sigma_{i_k})^\infty_{k = 1}$ converges locally smoothly and graphically in $A$. Clearly, every non-empty totally ordered subset $\mathcal{T}$ of $\mathcal{S}$ has an upper bound (simply by taking the union of all the sets in $\mathcal{T}$), so by Zorn's lemma, $\mathcal{S}$ has at least one maximal element. Let us denote one of these maximal elements by $R$.
        
            Now, we shall show that $M \setminus R$ has at most finitely many points. Suppose not and there exists a sequence of distinct points $(p_l)^\infty_{l = 1}$ in $M \setminus R$ which converges to some $p \in M$. Let $(\Sigma_{i_k})^\infty_{k = 1}$ be the subsequence that converges locally smoothly and graphically in $R$. We can choose $I_m$ concentric annuli $\{\mathrm{An}_g(p, r_j -s _j, r_j + s_j)\}$ satisfying the relations as above and further such that each annuli contains at least one $p_l$. By the $(m, r)_g$-almost minimizing property at $p$, $(\Sigma_{i_k})^\infty_{k = 1}$ has a subsequence $(\Sigma_{i'_k})^\infty_{k = 1}$, each of which is also almost minimizing in a concentric annulus, say, $\mathrm{An}_g(p, r_{j_0} - s_{j_0}, r_{j_0} + s_{j_0})$ containing $p_{l_0}$. Therefore, $(\Sigma_{i'_k})^\infty_{k = 1}$ converges locally smoothly and graphically in $R \cup \mathrm{An}_g(p, r_{j_0} - s_{j_0}, r_{j_0} + s_{j_0}) \supsetneq R$, contradicting the maximality of $R$.
        
            It follows from \cite[Theorem 5]{AmbrozioCarlottoSharp2018comparing} that when $m \geq 2$, $\Sigma$ (or its double cover $\tilde \Sigma$ if one-sided) is stable and has nullity $1$, which is impossible since $g$ is either bumpy or of positive Ricci curvature.
        \end{proof}
    
        In \cite{Yangyang_Li19}, it was shown that every $p$-width $\omega(M, g)$ can be realized by a varifold with Property $(2p+1)_g$. An important ingredient of its proof is the following lemma, which was established in Proposition 3.2 therein. For the sake of completeness, we present the proof here.
    
        \begin{lem}\label{lem:simp_to_cub}
            If $X$ is a $k$-dimensional finite simplicial complex, there exists a cubical subcomplex $Y$ of $I(2k+1, l)$ for some $l \in \mathbb{N}^+$ for which the following hold.
            \begin{enumerate}
                \item If we regard $Y$ as a simplicial complex, $X$ can be viewed as a subcomplex of some refinement of $Y$. In particular, there exists an embedding
                \[
                    \iota: X \to Y;
                \]
                \item There exists a retraction map
                \[
                    r: Y \to X\,.
                \]
            \end{enumerate}
        \end{lem}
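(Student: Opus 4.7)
The plan is to realize $X$ PL-embedded in the unit cube $[0,1]^{2k+1}$ and then take a thin cubical neighborhood in $I(2k+1,l)$, using regular neighborhood theory to produce the retraction.

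First, I would place the vertices of $X$ in general position inside $(0,1)^{2k+1}$: by the standard general-position argument, a generic choice of vertices guarantees that any collection of $2k+2$ of them is affinely independent, so the linear extension gives a PL embedding $j : X \hookrightarrow (0,1)^{2k+1}$ in which distinct simplices meet only along common faces. Let $\eta > 0$ be smaller than the minimum distance between pairs of closed simplices of $j(X)$ which do not share a face, and smaller than a uniform tubular-neighborhood radius for each simplex.

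Second, I would choose $l$ large enough that the mesh of $I(2k+1,l)$ is much less than $\eta$, and define
\[
    Y := \bigcup\{\sigma \in I(2k+1,l)_{2k+1} : \sigma \cap j(X) \neq \emptyset\},
\]
a cubical subcomplex of $I(2k+1,l)$. Because the mesh is small compared to $\eta$, each cube of $Y$ lies in a fixed tubular neighborhood of a single closed simplex of $j(X)$, and $Y$ is contained in an $\eta$-neighborhood of $j(X)$. Standard regular-neighborhood theory (applied componentwise across the strata of $j(X)$, starting from the vertices and working up in dimension) then provides a deformation retraction $r : Y \to j(X) \cong X$, which is the required map (2).

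Third, for the embedding $\iota: X \to Y$ with the subcomplex property, I would triangulate each cube of $Y$ via the standard Kuhn/star triangulation so that $Y$ becomes a simplicial complex. Then take a common subdivision of this simplicial structure on $Y$ together with $j(X)$: since $j(X)$ is a PL subset of the PL manifold with corners $|Y|$, classical PL theory yields a simultaneous simplicial subdivision of both, in which $j(X)$ appears as a full subcomplex. This subdivision is the refinement promised in (1), and $\iota := j$ (composed with inclusion) embeds $X$ as a subcomplex.

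The main obstacle is the simultaneous control required in the second and third steps: I need $Y$ to be \emph{both} a cubical subcomplex of $I(2k+1,l)$ \emph{and} a genuine regular neighborhood of $j(X)$. The key is to pick $l$ large enough relative to the geometric data of the embedding $j$ (minimum distance $\eta$, minimum simplex size, transversality of $j(X)$ with cube boundaries after a generic perturbation of $j$), so that the combinatorial cubical neighborhood functions as a regular neighborhood in the PL-topological sense, after which the retraction and subcomplex refinement follow from classical PL facts. Everything else is bookkeeping.
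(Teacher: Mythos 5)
Your overall strategy coincides with the paper's: put $X$ in general position to get a PL embedding into $I^{2k+1}$ (possible since $\dim X = k$ and $2k - (2k+1) < 0$), take the union of small cubes of $I(2k+1,l)$ meeting the image as $Y$, and invoke regular neighborhood theory for the retraction. Part (1) via a common subdivision is fine.

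The weak point is exactly the step you flag as "the main obstacle": you try to show that the cubical star neighborhood $Y$ is \emph{itself} a regular neighborhood of $j(X)$, or at least deformation retracts onto it, "componentwise across the strata." This is not a standard fact and would require real work — a first star neighborhood of a subpolyhedron in a cell structure need not be a regular neighborhood (that is why PL theory passes to second derived neighborhoods), and gluing stratum-by-stratum retractions into a single continuous map on $Y$ is delicate. The paper sidesteps this entirely with a sandwich argument that you should adopt: first take \emph{any} regular neighborhood $U$ of $j(X)$ in $I^{2k+1}$, which comes with a retraction $\tilde r : U \to j(X)$ by [RS82, Cor.~3.30]; since $d = \dist(j(X), \partial U) > 0$, choose $l$ so large that every closed $(2k+1)$-cell of $I(2k+1,l)$ meeting $j(X)$ lies inside $U$, and let $Y$ be the union of those cells. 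Then $j(X) \subset Y \subset U$, and $r := j^{-1} \circ \tilde r|_Y$ is already the required retraction — note the lemma only asks for a retraction, not a deformation retraction, so you never need $Y$ itself to be regular. With that substitution your argument closes; as written, the retraction step is a genuine gap.
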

        \begin{proof}
            Note that the underlying set of a finite simplicial complex is also a compact polyhedron (See \cite[1.8]{RS82}). Applying the general position theorem for maps \cite[Theorem~5.4]{RS82} with $M = I^{2k+1}$ (endowed with the Euclidean metric), $P = X$, $P_0 = \emptyset$, $\varepsilon = 1/10$ and the closed map $f: X \to I^{2k+1}$ defined by 
            \[
                f(p) \equiv c(I^{2k+1})
            \]
            where $c(I^{2k+1})$ is the center point of $I^{2k+1}$, we obtain a piecewise-linear embedding
            \[
                f' : X \to I^{2k+1}\,.
            \]
            $f'$ is an embedding since it is nondegenrate and $\dim(S(f')) \leq 2k - (2k + 1) < 0$.
    
            By \cite[p.33]{RS82}, there exists a regular neighborhood $U$, and by \cite[Corollary~3.30]{RS82}, $f'(X)$ is a deformation retract of $Z$, i.e., there exists a retraction map
            \[
                \tilde r: U \to f'(X)\,.
            \]
            Since $d = \operatorname{dist}(f'(X), \partial U) > 0$, we can find a large integer $l = l(k, d) \in \mathbb{N}^+$ such that for every (closed) $(2k+1)$-cell $\alpha$ of $I(2k+1, l)$, if $|\alpha| \cap f'(X) \neq \emptyset$, then $|\alpha| \subset U$, and we set $Y$ to be the union of all such $|\alpha|$. It is obvious that
            \[
                f'(X) \subset Y \subset U\,.
            \]
            
            Therefore, $f'$ and $\tilde r$ induce the embedding $\iota: X \to Y$ and the retraction $r \coloneqq  f'^{-1} \circ \tilde r \vert_{Y} : Y \to X$.
        \end{proof}

\section{Deformations}\label{sect:deformation}

    In this subsection, we adapt some crucial technical deformation constructions from Pitts \cite{Pit81} and Marques-Neves \cite{MN21} to our setting. Later, we will apply these constructions to improve sweepouts.  

    First, we have the following lemma regarding pull-tight with a mass upper bound.
    \begin{lem}[Pull-tight]\label{lem:pull-tight}
        Given $c > 0$, we define 
        \[
        \begin{aligned}
            \mathcal{V}^{\leq c} &\coloneqq \{V \in \mathcal{V}_n(M):\|V\|_g(M) \leq c\}\,,\\
            \mathcal{SV}^{\leq c} &\coloneqq \{V \in \mathcal{V}^{\leq c} :V \text{ is stationary in }(M, g)\}\,,\\
            \cZ^{\leq c} &\coloneqq \cZ_n(M; \bF_g; \Z_2) \cap \{T :|T| \in \mathcal{V}^{\leq c}\}\,.
        \end{aligned}
        \]
        Then there exist continuous maps,
        \[
        \begin{aligned}
            \bar F^\textsc{PT}:& [0, 1] \times \mathcal{V}^{\leq c} \to \mathcal{V}^{\leq c}\,,\\
            F^\textsc{PT}:& [0, 1] \times \cZ^{\leq c} \to \cZ^{\leq c}\,,\\
        \end{aligned}
        \]
        such that 
        \begin{enumerate}
            \item For all $V \in \mathcal{V}^{\leq c}$, $\bar F^\textsc{PT}(0, V) = V$;
            \item For all $t \in [0, 1]$, $\bar F^\textsc{PT}(t, V) = V$ if $V \in \mathcal{SV}^{\leq c}$;
            \item For all $t \in (0, 1]$, $\|\bar F^\textsc{PT}(t, V)\|_g(M) < \|V\|_g(M)$ if $V \notin \mathcal{SV}^{\leq c}$;
            \item Furthermore, for each $t \in [0, 1]$ and each $S \in \cZ^{\leq c}$,
            \[
                |F^\textsc{PT}(t, S)| = \bar F^\textsc{PT}(t, |S|)\,.
            \]
        \end{enumerate}
    \end{lem}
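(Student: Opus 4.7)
The plan is to build a Palais-type pseudo-gradient vector field assignment $V \mapsto X(V) \in \mathfrak{X}(M)$ that is continuous, vanishes exactly on $\mathcal{SV}^{\leq c}$, and satisfies $\delta V(X(V)) \leq 0$ (strictly negative off the stationary set). Pushing forward $V$ (or $T$) along the time-$t$ flow of $X(V)$, reparametrized by a small continuous time factor $\tau(V)$, then produces the two maps $\bar F^\textsc{PT}$ and $F^\textsc{PT}$ simultaneously and compatibly.

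\textbf{Step 1: constructing $X(V)$.} Fix a countable dense sequence $\{X_i\}_{i \geq 1}$ in the Banach space of smooth vector fields on $M$, normalized so that $\|X_i\|_{C^2} \leq 1$. Since $\|V\|_g(M) \leq c$, each map $V \mapsto \delta V(X_i)$ is continuous on $\mathcal{V}^{\leq c}$ (in the $\bF_g$-topology) and uniformly bounded in $i$. Set
\[
    X(V) \coloneqq -\sum_{i=1}^\infty 2^{-i}\, \delta V(X_i)\, X_i\,.
\]
The series converges uniformly in $C^2$, so $V \mapsto X(V)$ is continuous into $\mathfrak{X}(M)$. Direct computation gives
\[
    \delta V(X(V)) = -\sum_{i=1}^\infty 2^{-i}\,(\delta V(X_i))^2 \leq 0\,,
\]
with equality iff $\delta V(X_i) = 0$ for every $i$, which by density is equivalent to $V \in \mathcal{SV}^{\leq c}$.

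\textbf{Step 2: flow and time-rescaling.} Let $\phi^V_t : M \to M$ be the time-$t$ flow of $X(V)$; continuity of $V \mapsto \phi^V_\cdot$ in $V$ and $t$ follows from standard ODE theory together with $C^2$-continuity of $V \mapsto X(V)$. Because $\tfrac{d}{ds}\|(\phi^V_s)_\# V\|_g(M)\big|_{s=0} = \delta V(X(V)) < 0$ whenever $V \notin \mathcal{SV}^{\leq c}$, and this derivative is continuous in $V$, a partition-of-unity / Urysohn argument on the compact metrizable space $\mathcal{V}^{\leq c}$ (using the closed subset $\mathcal{SV}^{\leq c}$) produces a continuous $\tau \colon \mathcal{V}^{\leq c} \to [0, 1]$ with $\tau^{-1}(0) = \mathcal{SV}^{\leq c}$ and
\[
    \|(\phi^V_s)_\# V\|_g(M) < \|V\|_g(M) \quad \text{for all } V \notin \mathcal{SV}^{\leq c},\ s \in (0, \tau(V)]\,.
\]

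\textbf{Step 3: definitions and verification.} Define
\[
    \bar F^\textsc{PT}(t, V) \coloneqq (\phi^V_{t\tau(V)})_\# V, \qquad F^\textsc{PT}(t, T) \coloneqq (\phi^{|T|}_{t\tau(|T|)})_\# T\,.
\]
Smooth pushforward is continuous in both the $\bF_g$-topology on $\mathcal{V}_n(M)$ and the $\bF_g$-topology on $\cZ_n(M;\Z_2)$, commutes with taking the underlying varifold ($|(\phi)_\# T| = (\phi)_\# |T|$), and preserves the cycle condition, so $\bar F^\textsc{PT}$ and $F^\textsc{PT}$ are continuous, land in $\mathcal{V}^{\leq c}$ and $\cZ^{\leq c}$ respectively, and satisfy (4). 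Properties (1) and (2) are immediate ($\phi^V_0 = \mathrm{id}$; $X(V) = 0$ on $\mathcal{SV}^{\leq c}$), and (3) is precisely the defining property of $\tau$.

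\textbf{Main obstacle.} The subtlety is arranging continuity of $V \mapsto X(V)$ simultaneously with strict descent off $\mathcal{SV}^{\leq c}$: a pointwise ``optimal'' choice with $\delta V(X(V)) < 0$ need not vary continuously in $V$. The averaged construction above, together with the squared-norm identity for $\delta V(X(V))$, converts density of $\{X_i\}$ directly into continuous strict negativity, bypassing this issue in the spirit of Pitts' and Marques--Neves' pull-tight constructions. The secondary technical point is the joint continuity of the rescaling $\tau$, which requires a compactness-plus-partition-of-unity argument to globalize a pointwise mass-decrease time into a continuous function vanishing exactly on $\mathcal{SV}^{\leq c}$.
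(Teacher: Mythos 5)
Your proposal is correct in substance and follows the same overall strategy as the proof the paper invokes (it simply defers to Pitts \cite[Theorem~4.3]{Pit81} and \S 15 of \cite{MN14}): build a continuous pseudo-gradient assignment $V\mapsto X(V)$ vanishing exactly on the stationary set, flow, and reparametrize time by a continuous $\tau$ vanishing exactly on $\mathcal{SV}^{\leq c}$. Where you genuinely deviate is Step 1: Pitts constructs $X(V)$ by stratifying $\mathcal{V}^{\leq c}\setminus\mathcal{SV}^{\leq c}$ into annular regions according to $\Fb$-distance from the stationary set, choosing a mass-decreasing vector field for each piece, and gluing with a partition of unity; your weighted series $X(V)=-\sum_i 2^{-i}\,\delta V(X_i)\,X_i$ over a countable $C^1$-dense family produces the same object in one stroke, with the identity $\delta V(X(V))=-\sum_i 2^{-i}(\delta V(X_i))^2$ giving continuity and strict negativity simultaneously. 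This is a clean simplification. Two small points to tighten: (a) the space of smooth vector fields is Fr\'echet, not Banach, so you should fix the topology in which $\{X_i\}$ is dense (density in $C^1$ suffices to conclude $\delta V\equiv 0$, and uniform $C^2$ bounds suffice for the flow); (b) in Step 2, a bare convex combination $\tau(V)=\sum_j\rho_j(V)t_j$ need not satisfy $\tau(V)\leq t_j$ for every $j$ whose chart contains $V$, so one must take the minimum of the $t_k$ over overlapping supports (as Pitts does) before averaging — you flag this correctly but it is the one step that cannot be waved through.
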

    \begin{proof}
        The proof is essentially the same as that of \cite[Theorem~4.3]{Pit81} (see also Sect. 15 of \cite{MN14}).
    \end{proof}

    \begin{cor}[Pulled-tight sequence]\label{cor:PT-seq}
        Given a constant $c > 0$ and a sequence of finite simplicial complices $(X_i)^\infty_{i = 1}$, let $(\Phi_i: X_i \to \Zc_n(M; \Fb_g; \Z_2))^\infty_{i = 1}$ be a sequence of $\Fb_g$-continuous maps such that
        \[
        \begin{aligned}
            L = \limsup_{i} \sup_{x \in X_i} \Mb_g \circ \Phi_i(x) &> 0\,,\\
            \sup_i \sup_{x \in X_i} \Mb_g \circ \Phi_i(x) &\leq c\,.
        \end{aligned}
        \]
        Then there exists a sequence $(\Phi'_i: X_i \to \Zc_n(M; \Fb_g; \Z_2))^\infty_{i = 1}$ such that
        \begin{enumerate}
            \item For each $i \in \mathbb{N}^+$, there exists a homotopy map 
            \[
                H^{\textsc{PT}}_i: [0, 1] \times X_i \to \Zc_n(M; \bF_g; \Z_2)
            \]
            with $H^{\textsc{PT}}_i(0, \cdot) = \Phi_i(\cdot), H^{\textsc{PT}}_i(1, \cdot) = \Phi'_i(\cdot)$ satisfying that for each $(t, x) \in [0, 1] \times X_i$,
            \[
                \Mb_g (H^{\textsc{PT}}_i(t, x)) \leq \Mb_g(\Phi_i(x))\,;
            \]
            \item The set of varifolds
            \[
                \cV^{L}((\Phi'_i)_i)\coloneqq \{V=\lim_j|\Phi'_{i_j}(x_j)|:\N^+ \ni i_j \nearrow \infty, x_j\in X_{i_j},\|V\|_g(M)=L\}
            \]
            is a subset of 
            \[
                \cV^{L}((\Phi_i)_i)\coloneqq \{V=\lim_j|\Phi_{i_j}(x_j)|:\N^+ \ni i_j \nearrow \infty, x_j\in X_{i_j},\|V\|_g(M)=L\};
            \]
            
            \item $\cV^{L}((\Phi'_i)_i) \subset \cS\cV^{\leq c}$, i.e., $\cV^{L}((\Phi'_i)_i)$ only contains stationary varifolds.
        \end{enumerate}
    \end{cor}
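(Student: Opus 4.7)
The plan is to apply Lemma~\ref{lem:pull-tight} pointwise. Since $\sup_i \sup_{x \in X_i} \Mb_g(\Phi_i(x)) \leq c$, we have $\Phi_i(X_i) \subset \cZ^{\leq c}$, so I set
\[
    \Phi'_i(x) \coloneqq F^\textsc{PT}(1, \Phi_i(x))\,, \qquad H^{\textsc{PT}}_i(t, x) \coloneqq F^\textsc{PT}(t, \Phi_i(x))\,.
\]
Continuity of $H^{\textsc{PT}}_i$ is inherited from that of $F^\textsc{PT}$. For the mass bound in item~(1), I use that the pull-tight flow is non-increasing on mass: items~(2)--(3) of Lemma~\ref{lem:pull-tight} give this at the endpoint $t=1$, and the standard Pitts/Marques--Neves construction of $\bar F^\textsc{PT}$ via a continuous gradient-like deformation makes $t \mapsto \|\bar F^\textsc{PT}(t, V)\|_g(M)$ monotone non-increasing for every $V \in \cV^{\leq c}$. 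This is the one small point that I anticipate requiring explicit justification, since it is not spelled out in the statement of Lemma~\ref{lem:pull-tight}; the rest of the argument is a direct unwinding of its properties.

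For items~(2) and~(3) I argue simultaneously. Fix $V \in \cV^L((\Phi'_i)_i)$, say $V = \lim_j |\Phi'_{i_j}(x_j)|$ with $\|V\|_g(M) = L$. By $\Fb_g$-compactness of $\cV^{\leq c}$, after extracting a further subsequence I may assume $|\Phi_{i_j}(x_j)| \to V_0$ for some $V_0 \in \cV^{\leq c}$. Then item~(4) of Lemma~\ref{lem:pull-tight} together with continuity of $\bar F^\textsc{PT}$ yields
\[
    V \;=\; \lim_j \bar F^\textsc{PT}(1, |\Phi_{i_j}(x_j)|) \;=\; \bar F^\textsc{PT}(1, V_0)\,.
\]

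The hypothesis $\limsup_i \sup_x \Mb_g(\Phi_i(x)) = L$ gives $\|V_0\|_g(M) \leq L$ (since on a closed manifold $M$, total mass passes to the limit under varifold convergence), while mass non-increase under $\bar F^\textsc{PT}$ yields $\|V_0\|_g(M) \geq \|V\|_g(M) = L$. Hence $\|V_0\|_g(M) = L$, so $V_0 \in \cV^L((\Phi_i)_i)$. The equality $\|\bar F^\textsc{PT}(1, V_0)\|_g(M) = \|V_0\|_g(M)$ combined with item~(3) of Lemma~\ref{lem:pull-tight} forces $V_0 \in \cS\cV^{\leq c}$ (else the mass would strictly decrease), and then item~(2) gives $\bar F^\textsc{PT}(1, V_0) = V_0$, so $V = V_0$. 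Therefore $V \in \cV^L((\Phi_i)_i)$ and $V$ is stationary, establishing items~(2) and~(3) at once.
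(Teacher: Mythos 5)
Your proposal is correct and is exactly the paper's (one-line) proof: define $H^{\textsc{PT}}_i(t,x)=F^\textsc{PT}(t,\Phi_i(x))$ and unwind the properties of Lemma \ref{lem:pull-tight}. The one point you flag as needing extra justification is not actually a gap: item (3) of Lemma \ref{lem:pull-tight} holds for \emph{all} $t\in(0,1]$ (not just $t=1$), and the bound required in item (1) of the corollary compares $\Mb_g(H^{\textsc{PT}}_i(t,x))$ to the initial mass $\Mb_g(\Phi_i(x))$, so items (1)--(3) of the lemma already give it without any monotonicity of $t\mapsto\|\bar F^\textsc{PT}(t,V)\|_g(M)$.
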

    \begin{proof}
        By Lemma \ref{lem:pull-tight}, we can define $H^\textsc{PT}_i$ by
        \[
            H^\textsc{PT}_i(t, x) \coloneqq F^\textsc{PT}(t, \Phi_i(x))\,. 
        \]
        The properties of $F^\textsc{PT}$ and $\bar{F}^\textsc{PT}$ immediately yield all the stated conclusions.
    \end{proof}
    
    The following is a continuous version of \cite[Theorem 4.6]{MN21}.
    
    \begin{lem}[$(\varepsilon, \delta)$-deformation]\label{lem:(e, d)-deformation}
        Let $R, \bar\varepsilon, \eta, s> 0$ be constants such that $\bar{\varepsilon} < 2R$, and $\mathcal{W} \subset \mathcal{V}_n(M)$.
        Let $X$ be a pure finite simplicial $k$-complex, $\Phi: X \to \Zc_n(M; \Mb_g; \Z_2)$ be a continuous map and $L = \sup_{x \in X} \mathbf{M}_g(\Phi(x))$ such that if $x \in X$ satisfies
        \[
            \Mb_g(\Phi(x)) \geq L - \bar{\varepsilon}, \quad \mathbf{F}_g(|\Phi(x)|, \mathcal{W}) \geq R\,,
        \]
        then $\Phi(x)$ satisfies \emph{annular $(\bar \varepsilon, \delta)$-deformation conditions}, i.e., there exist $p(x) \in M$ and $I_{2k+1}$ positive numbers 
        \[
            r_1(x), \cdots, r_{I_{2k+1}}(x), s_1(x), \cdots, s_{I_{2k+1}}(x)
        \]
        satisfying
        \[
        \begin{aligned}
            s_i(x) &\geq s, \quad i = 1, \cdots, I_{2k + 1} - 1\\
            r_i(x) - 2s_i(x) &> 2(r_{i+1}(x) + 2s_{i+1}(x)), \quad i = 1, \cdots, I_{2k+1} - 1\\
            r_1(x) + 2s_1(x) &< \eta,\\
            r_{I_{2k+1}}(x) - 2s_{I_{2k+1}}(x) &> 0,\\
        \end{aligned}
        \]
        such that $\Phi(x)$ admits an $(\bar{\varepsilon}, \delta)$-deformation in each annulus
        \[
            \mathrm{An}_g(p(x), r_i(x) - s_i(x), r_i(x) + s_i(x)) \cap M,
        \]
        $i = 1, \cdots, I_{2k+1}$, for every $\delta > 0$.

        Then for any $\bar \delta > 0$, there exists a continuous map
        \[
            \Phi^*: X \to \Zc_n(M; \Mb_g; \Z_2)
        \] for which the following hold.
        \begin{enumerate}
            \item There exists a homotopy map 
            \[
                H^{\textsc{DEF}}: [0, 1] \times X \to \mathcal{Z}_n(M; \Mb_g; \Z_2)
            \]
            with $H^{\textsc{DEF}}(0, \cdot) = \Phi$ and $H^{\textsc{DEF}}(1, \cdot) = \Phi^*$ satisfying that for each $(t, x) \in [0, 1] \times X$,
            \[
                \bM_g(H^{\textsc{DEF}}(t,x)) < \bM_g(\Phi(x)) + \bar \delta\,;
            \]
            \item For any $x \in X$ and $t \in [0, 1]$, there exist $\hat x = \hat x(x) \in X$ and $T_{t, x} \in \cZ_n(M; \mathbb{Z}_2)$ such that
            \[
                \Mb_g(H^\textsc{DEF}(t, x), T_{t, x}) < \bar \delta\,,
            \]
            \[
                \Mb_g(\Phi(\hat x), \Phi(x)) < \bar \delta\,,
            \]
            \[
                \Mb_g(\Phi(\hat x)) > \Mb_g(H^\textsc{DEF}(t, x)) - \bar \delta\,,
            \]
            and
            \[
                T_{t, x}\llcorner(M \setminus(\overline{B}_g(p_1, \eta) \cup \cdots \cup (\overline{B}_g(p_m, \eta))) = \Phi(\hat x)\llcorner(M \setminus(\overline{B}_g(p_1, \eta) \cup \cdots \cup (\overline{B}_g(p_m, \eta)))
            \]
            for some collection $\{p_1, \cdots, p_m\} \subset M$, $m \leq 3^{2k+1}$;
            \item If $\mathbf{M}_g(\Phi^*(x)) \geq L - \bar{\varepsilon}/10$, then
            \[
                \mathbf{F}_g(|\Phi(\hat x)|, \mathcal{W}) \leq 2R\,,
            \]
            where $\hat x$ is the same as that in (2).
        \end{enumerate}
    \end{lem}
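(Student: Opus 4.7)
\medskip

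\noindent\textbf{Proof plan.} The plan is to discretize $\Phi$, invoke the combinatorial deformation machinery of Pitts and Marques--Neves \cite{MN21} to get a discrete deformation whose mass is strictly decreased where needed, and then interpolate back to a continuous map using the Almgren extension.

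First I would pick a mass bound $L' \in (L - \bar\varepsilon/10, L)$ and work with the ``bad'' set $\mathrm{Bad} := \{x \in X : \Mb_g(\Phi(x)) \geq L' \text{ and } \Fb_g(|\Phi(x)|, \mathcal{W}) \geq R\}$ and a small open neighborhood $U$ of it. By the assumption, every $x \in \mathrm{Bad}$ carries an annular $(\bar\varepsilon, \delta)$-deformation structure based at some $p(x) \in M$ with $I_{2k+1}$ disjoint concentric annuli of inner radius separation at least $s$. After possibly refining $X$ (viewing it as a subcomplex of some cubical complex by Lemma~\ref{lem:simp_to_cub}), I would sample $\Phi$ on the vertices of a sufficiently fine subdivision $X(l)$ to produce a discrete map $\phi: X(l)_0 \to \Zc_n(M; \Z_2)$ of fineness less than any prescribed $\eta'$, using $\Mb_g$-continuity of $\Phi$ (and Proposition~\ref{prop:almgren_ext} to control the comparison between $\Phi$ and the Almgren extension of $\phi$).

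The core step is then the combinatorial deformation. For each vertex $x \in X(l)_0$ lying in the discretized bad set, assign a base point $p(x)$ and one of its $I_{2k+1}$ annuli. By Pitts' pigeonhole argument (the reason $I_{m}=3^{m 3^m}$ was introduced in Definition~\ref{defn_property_m}), whenever several vertices share a common cell of $X(l)$, one can pass to a refinement and select among the prescribed annuli so that the chosen annuli based at different vertices in any common cell are either identical or pairwise disjoint; this is exactly the combinatorial device driving \cite[Theorem~4.6]{MN21}. Using the $(\bar\varepsilon, \delta)$-deformation inside each chosen annulus, one constructs, cell by cell, a discrete homotopy $h$ from $\phi$ to a new discrete map $\phi^*$ whose fineness stays below $\eta'$ and whose mass along the homotopy exceeds $\Mb_g(\Phi(x))$ by at most $\bar\delta/2$, while decreasing by at least $\bar\varepsilon$ on vertices whose image remains in $\mathrm{Bad}$; crucially, $\phi^*(x)$ agrees with $\phi(x)$ outside the union of at most $3^{2k+1}$ of the $\eta$-balls (from the annular supports).

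Finally, I would feed $\phi$, $\phi^*$ and $h$ into Proposition~\ref{prop:cts_Hotp_of_disc_Hotp} to obtain $\Phi^* := \Phi_1$ together with an $\Mb_g$-continuous homotopy $H^{\textsc{DEF}}$ from $\Phi$ to $\Phi^*$ satisfying the mass bound (1): choosing $\eta'$ and the mesh $l$ small enough relative to $\bar\delta$ makes the extension errors $C_{\ref{prop:cts_Hotp_of_disc_Hotp}}\mathbf{f}(h)$ absorbable into $\bar\delta$. Property (2) follows by taking $\hat x$ to be a vertex $x_0$ of $X(l)$ in the same cell as $x$ and $T_{t,x}$ to be one of the discrete cycles $h(t_0, x_0)$ from Proposition~\ref{prop:cts_Hotp_of_disc_Hotp}: since $h(t_0, x_0)$ is obtained from $\phi(\hat x)$ by replacements supported in the $3^{2k+1}$ chosen balls of radius at most $\eta$, the coincidence outside those balls is automatic, and the mass inequalities follow from the fineness bound on $h$. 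Property (3) is a contrapositive: if $\hat x$ had been in $\mathrm{Bad}$, then the discrete deformation would have reduced the mass at $\hat x$ by at least $\bar\varepsilon$, so $\Mb_g(\Phi^*(x))$ would be forced below $L - \bar\varepsilon/10$ once $\bar\delta$ is taken small; hence $\Fb_g(|\Phi(\hat x)|, \mathcal{W}) < R \leq 2R$.

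The main obstacle is the combinatorial step: verifying that the annular deformations at neighboring vertices can be simultaneously realized without the supports of the replacements colliding in a way that destroys the mass estimate. This is precisely where the size $I_{2k+1}$ of the annular family and the lower bound $s$ on annulus thicknesses are used, and it is a direct adaptation of the argument in \cite[Theorem~4.6]{MN21} to the current framework; the remaining work is bookkeeping to pass from the discrete statement back to a continuous map via Proposition~\ref{prop:cts_Hotp_of_disc_Hotp}.
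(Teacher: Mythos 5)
Your proposal is correct and follows essentially the same route as the paper: pass to a cubical complex via Lemma~\ref{lem:simp_to_cub}, discretize, invoke the combinatorial annular deformation of \cite[Theorem~4.6]{MN21} (which the paper cites as a black box rather than re-deriving), and interpolate back with Propositions~\ref{prop:almgren_ext}, \ref{prop:homotopy_of_M_close} and \ref{prop:cts_Hotp_of_disc_Hotp}, with (2) read off from the discrete homotopy at a nearby vertex and (3) obtained by the contrapositive mass-drop argument. The only point to be careful about, which the paper handles explicitly, is that one must work with $\Psi=\Phi\circ r$ on the whole cubical complex $Y$ (using the retraction from Lemma~\ref{lem:simp_to_cub}) and restrict back to $X$ at the end, since the interpolation and deformation machinery is stated for cubical subcomplexes.
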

    \begin{proof}
        Fix $\bar \delta > 0$.
    
        Since $X$ is a $k$-dimensional finite simplicial complex, by Lemma~\ref{lem:simp_to_cub}, there exists a cubical subcomplex $Y$ of $I(2k+1, l)$ for some $l \in \mathbb{N}^+$ and a retraction map
        \[
            r: Y \to X\,.
        \]
        Define $\Psi: Y \to \mathcal{Z}_n(M;\Mb_g; \Z_2)$ by $\Psi = \Phi \circ r$. It is easy to verify that $\Psi$ also satisfies all the assumptions for $\Phi$ in the lemma.

        For each $q \in \mathbb{N}^+$, we define $\psi_q: Y(q)_0 \to \mathcal{Z}_n(M; \Z_2)$, by
        \[
            \psi_q = \Psi\vert_{Y(q)_0}\,.
        \]
        Since $\Psi$ is continuous in the $\Mb_g$-topology, the fineness $\mathbf{f}(\psi_q) \to 0$ as $q \to \infty$. In the following, we shall subsequently choose $q$ larger and larger so as to apply interpolation propositions from the previous section.

        First, we choose $N_1 \in \mathbb{N}^+$, such that for all $q \geq N_1$,
        \[
            \mathbf{f}(\psi_q) < \min(\delta_{\ref{prop:almgren_ext}}, \min(\delta_{\ref{prop:homotopy_of_M_close}}, \bar \delta / (5 C_{\ref{prop:homotopy_of_M_close}} ))/(2C_{\ref{prop:almgren_ext}}))
        \]
        and for all $x, y \in Y$, if $x$ and $y$ lie in a common cell of $Y(q)$,
        \[
            \Mb_g(\Psi(x), \Psi(y)) < \min(\delta_{\ref{prop:homotopy_of_M_close}}, \bar\delta / (5 C_{\ref{prop:homotopy_of_M_close}}))/2\,.
        \]
        Then by Proposition \ref{prop:almgren_ext}, $\psi_q$ has the Almgren extension $\Psi_q$ and
        \[
            \sup_{y \in Y} \bM_g(\Psi(y) - \Psi_q(y)) < \min(\delta_{\ref{prop:homotopy_of_M_close}}, \bar\delta / (5 C_{\ref{prop:homotopy_of_M_close}})\,.
        \]
        It follows from Proposition \ref{prop:homotopy_of_M_close}, there exists a homotopy map 
        \[
            H^{(1)}_q:[0, 1] \times Y \to \Zc_n(M; \Mb_g; \Z_2)
        \]
        with $H^{(1)}_q(0, \cdot) = \Psi$ and $H^{(1)}_q(1, \cdot) = \Psi_q$, and for all $t \in [0, 1]$ and $y \in Y$,
        \[
        \begin{aligned}
            \Mb_g(H^{(1)}_q(t, y)) &\leq \Mb_g(\Psi(y)) + C_{\ref{prop:homotopy_of_M_close}} \sup_{y \in Y} \bM_g(\Psi(y) - \Psi_q(y))\\
                &< \Mb_g(\Psi(y)) + \bar\delta/5\,.
        \end{aligned}
        \]
        
        Secondly, we choose $N_2 \in \mathbb{N}^+$ with $N_2 > N_1$, such that for all $q \geq N_2$, the following condition from \cite[Theorem~4.6]{MN21} holds,
        \[
            (2k+1) \mathbf{f}(\psi_q)(1 + 4(3^{2k+1} - 1)s^{-1}) < \min\{\frac{\bar\varepsilon}{3^{2(2k+1)}8}, \gamma_{\text{iso}}\}\,,
        \]
        Consequently, \cite[Theorem~4.6]{MN21} implies that there exists $C = C(k, s) > 0$, an integer $q' > q$, and a map
        \[
            \psi^*_q: Y(q')_0 \to \mathcal{Z}_n(M; \mathbb{Z}_2)
        \]
        such that
        \begin{enumerate}[(i)]
            \item $\psi^*_q$ is $(Y, \mathbf{M}_g)$-homotopic to $\psi_q$, through a discrete homotopy
            \[
                h_q: I(1, l + q')_0 \times Y(q')_0 \to \mathcal{Z}_n(M; \Z_2)
            \]
            with fineness $\mathbf{f}(h_q) \leq C \mathbf{f}(\psi_q)$;
        \end{enumerate}
        and for all $(t, y) \in I(1, l + q')_0 \times Y(q')_0$, if $\hat y = \mathbf{n}(l + q', l + q)(y)$, then
        \begin{enumerate}[(i)]
            \setcounter{enumi}{1}
            \item $h_q(t, y)(M \setminus(\overline{B}_\eta(p_1) \cup \cdots \cup (\overline{B}_\eta(p_m))) = \psi_q(\hat y)\llcorner(M \setminus(\overline{B}_\eta(p_1) \cup \cdots \cup (\overline{B}_\eta(p_m)))$
            for some collection $\{p_1, \cdots, p_m\} \subset M$, $m \leq 3^{2k+1}$;
            \item $\Mb_g(h_q(t, y)) \leq \Mb_g(\psi(\hat y)) + 2 \cdot 3^{2(2k+1)}(2k+1)(1 + 4(3^{2k+1} - 1)s^{-1})\mathbf{f}(\psi_q)$;
            \item if $\Mb_g(\psi^*_q(y)) \geq L - \bar\varepsilon / 5$, then $\Fb_g(|\psi_q(\hat y)|, \mathcal{W}) \leq 2R$.
        \end{enumerate}

        Thirdly, we choose $N_3 \in \mathbb{N}^+$ with $N_3 > N_2$, such that for all $q \geq N_3$, 
        \[
            \mathbf{f}(h_q) \leq C\mathbf{f}(\psi_q) < \min(\eta_{\ref{prop:cts_Hotp_of_disc_Hotp}}, \bar \delta / (5C_{\ref{prop:cts_Hotp_of_disc_Hotp}}))\,,
        \]
        \[
            2 \cdot 3^{2(2k+1)}(2k+1)(1 + 4(3^{2k+1} - 1)s^{-1})\mathbf{f}(\psi_q) < \bar \delta / 5\,,
        \]
        and for all $x$ and $y$ which lie in the same cell of $Y(q)$,
        \[
            \Mb_g(\Psi(x) - \Psi(y)) < \bar \delta /5\,.
        \]
        Applying Proposition \ref{prop:cts_Hotp_of_disc_Hotp} to (i) above, we obtain a $\Mb_g$-continuous homotopy map
        \[
            H^{(2)}_q:[0,1] \times Y \to \Zc_n(M; \Mb_g; \Z_2)
        \]
        with $H^{(2)}_q(0, \cdot) = \Psi_q$ the Almgren extension of $\psi_q$, and $H^{(2)}_q(1, \cdot) = \Psi^*_q$ the Almgren extension of $\psi^*_q$. Furthermore, for all $t \in [0, 1]$, $y \in Y$, there exists $(t_0, y_0) \in I(1, l + q')_0 \times Y(q')_0$ such that
        $y$ and $y_0$ are in the same cell of $Y(q')$, and
        \[
        \begin{aligned}
            \Mb_g(H^{(2)}_q(t, y)) &\leq \Mb_g(h_q(t_0, y_0)) + C_{\ref{prop:cts_Hotp_of_disc_Hotp}} \mathbf{f}(h_q) \\
                &\leq \Mb_g(\psi_q(\hat y_0)) + 2 \cdot 3^{2(2k+1)}(2k+1)(1 + 4(3^{2k+1} - 1)s^{-1})\mathbf{f}(\psi_q) + C_{\ref{prop:cts_Hotp_of_disc_Hotp}} \mathbf{f}(h_q)\\
                &< \Mb_g(\psi_q(\hat y_0)) + 2\bar\delta / 5\\
                &< \Mb_g(\Psi(y)) + 3\bar\delta / 5
        \end{aligned}
        \]
        where we use (iii) in the second line and the fact that $y$ and $\hat y_0$ are in the same cell of $Y(q)$ in the last line.
        
        Now, concatenating $H^{(1)}_q$ and $H^{(2)}_q$, we obtain a homotopy
        \[
            H_q: [0, 1] \times Y \to \Zc_n(M; \Mb_g; \Z_2)
        \]
        between $\Psi$ and $\Psi^*_q$, and for all $(t, y) \in [0, 1] \times Y$,
        \begin{equation}\label{eqn:DEF_delta}
            \Mb_g(H_q(t,y)) < \bM(\Psi(y)) + \bar \delta\,.
        \end{equation}

        Finally, we choose $N_4 \in \mathbb{N}^+$ with $N_4 > N_3$ such that for all $q \geq N_4$,
        \[
            \mathbf{f}(h_q) \leq C \mathbf{f}(\psi_q) < \min(\bar \varepsilon, \bar\delta) / (10 C_{\ref{prop:almgren_ext}})\,,
        \]
        and for all $x$ and $y$ which lie in the same cell of $Y(q)$,
        \[
            \Fb_g(|\Psi(x)|, |\Psi(y)|) < R\,.
        \]
        
        Hence, we can fix a $q \geq N_4$, and set
        \[
            H^\textsc{DEF} \coloneqq H_q\vert_{[0, 1] \times X}, \quad \Phi^* \coloneqq \Psi^*_q\vert_X\,.
        \]
        The statement (1) follows immediately from \eqref{eqn:DEF_delta}. 
        
        To see (2), for any $x \in X$ and $t \in [0, 1]$, choose $y_0 \in Y(q')_0$ such that $x$ and $y_0$ lie in the same cell of $Y(q')$. If $H^\textsc{DEF}(x, t) = H^{(1)}_q(x, t_1)$, then we set $t_0 \coloneqq 0$; Otherwise $H^\textsc{DEF}(x, t) = H^{(2)}_q(x, t_2)$, and we choose $t_0 \in [0,1](l+q')_0$ such that $t_0$ and $t_2$ lie in the same cell of $[0,1](1+q')$. We let $T_{t, x} \coloneqq h_q(t_0, y_0)$. By Proposition \ref{prop:almgren_ext} and Proposition \ref{prop:homotopy_of_M_close},
        \begin{align*}
            \Mb_g(H^\textsc{DEF}(t, x), T_{t, x}) &= \Mb_g(H_q(t, x), h_q(t_0, y_0)) \\
                &< \max(C_{\ref{prop:homotopy_of_M_close}}\sup_{y \in Y}\Mb_g(\Psi(y) - \Psi_q(y)), C_{\ref{prop:almgren_ext}}\mathbf{f}(h_q)) \\
                &< \bar \delta / 5\,.
        \end{align*}
        It follows from (ii) above that
        \[
            h_q(t_0, y_0)(M \setminus(\overline{B}_\eta(p_1) \cup \cdots \cup (\overline{B}_\eta(p_m))) = \psi_q(\hat y_0)\llcorner(M \setminus(\overline{B}_\eta(p_1) \cup \cdots \cup (\overline{B}_\eta(p_m)))
        \]
        for some collection $\{p_1, \cdots, p_m\} \subset M$, $m \leq 3^{2k+1}$. Using the retraction map $r$, we let $\hat x = r(\hat y_0)$, then we obtain
        \[
            T_{t, x}\llcorner(M \setminus(\overline{B}_\eta(p_1) \cup \cdots \cup (\overline{B}_\eta(p_m))) = \Phi(\hat x)\llcorner(M \setminus(\overline{B}_\eta(p_1) \cup \cdots \cup (\overline{B}_\eta(p_m)))\,.
        \]
        In addition, by (iii),
        \begin{align*}
            \Mb_g(\Phi(\hat x)) &= \Mb_g(\Psi(\hat y_0)) \\
                &= \Mb_g(\psi_q(\hat y_0))\\
                &\geq \Mb_g(h_q(t_0, y_0)) - \bar \delta / 5\\
                &> \Mb_g(H^\textsc{DEF}(t, x)) - \bar \delta\,.
        \end{align*}
        Since $x$ and $y_0$ lie in the same cell of $Y(q')$
        \begin{align*}
            \Mb_g(\Phi(\hat x), \Phi(x)) = \Mb_g(\Psi(\hat y_0), \Psi(x)) < \bar \delta
        \end{align*}
        
        As for (3), if $\Mb_g(\Phi^*(x)) \geq L - \bar \varepsilon / 10$, we choose $y_0 \in Y(q')_0$ again such that $x$ and $y_0$ lie in the same cell of $Y(q')$. Since
        \[
        \begin{aligned}
            \Mb_g(\psi^*_q(y_0)) &\geq \Mb_g(\Psi^*_q(x)) - C_{\ref{prop:almgren_ext}} \mathbf{f}(h_q) \\
                &\geq \Mb_g(\Phi^*(x)) - \frac{\bar\varepsilon}{10}\\
                &\geq L - \frac{\bar\varepsilon}{5}\,,
        \end{aligned}
        \]
        by (iv) above, there exist $\hat y_0 \in Y(q)_0$, such that 
        \[
            \Fb_g(|\Phi(\hat x)|, \mathcal{W}) = \Fb_g(|\Psi(\hat y_0)|, \mathcal{W}) \leq 2R\,.
        \]
    \end{proof}
    
    \begin{cor}[$(\varepsilon,\delta)$-deformed sequence]\label{cor:(e, d)-deformation_seq}
        For any $c, D > 0$, there exists a positive constant $\eta_{\ref{cor:(e, d)-deformation_seq}} = \eta_{\ref{cor:(e, d)-deformation_seq}}(M, g, D, c) \in (0, D)$ with the following property.
        
        Let $\mathcal{W}\subset \mathcal{V}_n(M)$, $(X_i)^\infty_{i = 1}$ a sequence of $k$-dimensional finite simplicial complices and $(\Phi_i: X_i \to \Zc_n(M; \Fb_g; \Z_2))^\infty_{i = 1}$ be a sequence of $\Fb_g$-continuous maps such that
        \begin{itemize}
            \item $L = \limsup_{i} \sup_{x \in X_i} \Mb_g \circ \Phi_i(x) \in (0, c)$;
            \item $\Vc^L((\Phi_i)_i) \subset \bB^{\Fb_g}_{\eta_{\ref{cor:(e, d)-deformation_seq}}}(\mathcal{SV}^{L})$;
            \item No varifold in $\mathcal{V}^L((\Phi_i)_i) \setminus \bB^{\Fb_g}_{\eta_{\ref{cor:(e, d)-deformation_seq}}}(\mathcal{W})$ is $(2k+1, \eta_{\ref{cor:(e, d)-deformation_seq}})_g$-almost minimizing.
        \end{itemize}

        For any sequence $(\delta_i)^\infty_{i = 1} > 0$, there exists a sequence $(\Phi^*_i: X_i \to \Zc_n(M; \Fb_g; \Z_2))^\infty_{i = 1}$ such that
        \begin{enumerate}
            \item For each $i \in \N^+$, there exists a homotopy map
            \[
                H^\textsc{DEF}_i: [0, 1] \times X_i \to \Zc_n(M; \Fb_g; \Z_2)
            \]
            with $H^\textsc{DEF}_i(0, \cdot) = \Phi_i$, $H^\textsc{DEF}_i(1, \cdot) = \Phi^*_i$ satisfying that for each $(t, x) \in [0, 1] \times X_i$,
            \[
                \Mb_g(H^\textsc{DEF}_i(t,x)) < \Mb_g(\Phi_i(x)) + \delta_i\,;
            \]
            \item $\Vc^L((\Phi^*_i)_i) \subset \bB^{\bF_g}_{D}(\mathcal{W})$.
        \end{enumerate}
    \end{cor}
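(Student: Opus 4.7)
The plan is to construct $(\Phi^*_i)$ by applying Lemma~\ref{lem:(e, d)-deformation} to each $\Phi_i$ (for $i$ large) with $\bar{\delta} = \delta_i$ and $\mathcal{W}$ as given, then to upgrade the conclusion on preimages $\Phi_i(\hat x)$ to one on the new images $\Phi^*_i(x)$ via Lemma~\ref{lem:close_of_small_rep}. I would fix $R = D/4$ in Lemma~\ref{lem:(e, d)-deformation}, invoke Lemma~\ref{lem:not_am_varying_metric} with $m = 2k+1$ and $d = R$ to produce constants $\bar{\varepsilon}^*, \bar{s}^*, \eta^*$, and invoke Lemma~\ref{lem:close_of_small_rep} with compact metric set $\{g\}$, mass bound $c$, and tolerance $D/2$ to produce $\eta^{**}$. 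Then $\eta_{\ref{cor:(e, d)-deformation_seq}}$ is chosen smaller than all of $\bar{\varepsilon}^*, \bar{s}^*, \eta^*, \eta^{**}$, and $D/8$ so that the computations below close up.

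To verify the annular hypothesis of Lemma~\ref{lem:(e, d)-deformation}, I would fix $i$ large and consider $x \in X_i$ with $\Mb_g(\Phi_i(x)) \geq L - \bar{\varepsilon}^*$ and $\bF_g(|\Phi_i(x)|, \mathcal{W}) \geq R$. By compactness of the set of varifolds of mass $\leq c$ together with the definition of $\Vc^L((\Phi_i)_i)$, for all sufficiently large $i$ the varifold $|\Phi_i(x)|$ lies within $\bar{\varepsilon}^*$ of some $V \in \Vc^L((\Phi_i)_i)$ with $\bF_g(V, \mathcal{W}) \geq R - \eta_{\ref{cor:(e, d)-deformation_seq}} \geq \eta_{\ref{cor:(e, d)-deformation_seq}}$; by hypothesis this $V$ is not $(2k+1, \eta_{\ref{cor:(e, d)-deformation_seq}})_g$-almost minimizing. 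Lemma~\ref{lem:not_am_varying_metric}, applied uniformly over the compact family of such $V$'s (uniformity in $V$ following from another compactness argument together with the metric-perturbation stability of Lemma~\ref{lem:(e, d)-def_varying_metric}), produces $I_{2k+1}$ concentric annuli around some $p(V) \in M$ with radii bounded below by $\bar{s}^*$, in which $\Phi_i(x)$ admits the required annular $(\bar{\varepsilon}^*, \delta)$-deformations for all $\delta > 0$. Lemma~\ref{lem:(e, d)-deformation} then yields $\Phi^*_i$ with $\Mb_g(H^{\textsc{DEF}}_i(t,x)) < \Mb_g(\Phi_i(x)) + \delta_i$, giving conclusion~(1); for the finitely many small $i$ not covered by the above I simply set $\Phi^*_i = \Phi_i$.

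For conclusion~(2), let $V^* \in \Vc^L((\Phi^*_i)_i)$, realized by $|\Phi^*_{i_j}(x_j)| \to V^*$ with $\|V^*\|_g(M) = L$. For large $j$ one has $\Mb_g(\Phi^*_{i_j}(x_j)) \geq L - \bar{\varepsilon}^*/10$, so property~(3) of Lemma~\ref{lem:(e, d)-deformation} gives $\bF_g(|\Phi_{i_j}(\hat x_j)|, \mathcal{W}) \leq 2R = D/2$, while property~(2) shows that $\Phi^*_{i_j}(x_j)$ and $\Phi_{i_j}(\hat x_j)$ agree outside at most $3^{2k+1}$ balls of radius $\eta_{\ref{cor:(e, d)-deformation_seq}}$ and their masses differ by at most $\delta_{i_j}$. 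Extracting a further subsequence, $|\Phi_{i_j}(\hat x_j)| \to V$ with $\|V\|_g(M) = L$ and $V \in \Vc^L((\Phi_i)_i) \subset \bB^{\bF_g}_{\eta_{\ref{cor:(e, d)-deformation_seq}}}(\mathcal{SV}^L)$ by the corollary's second hypothesis. Applying Lemma~\ref{lem:close_of_small_rep} with $V_0 \in \mathcal{SV}^L$ close to $V$, $V_1 = V$, $V_2 = V^*$ then yields $\bF_g(V^*, V) < D/4$, and the triangle inequality produces $\bF_g(V^*, \mathcal{W}) < D$. The principal difficulty I anticipate is the uniformization step: passing from the pointwise hypothesis that each limit varifold $V \in \Vc^L((\Phi_i)_i) \setminus \bB^{\bF_g}_{\eta}(\mathcal{W})$ fails to be almost minimizing to a single choice of $(\bar{\varepsilon}^*, \bar{s}^*, \eta^*)$ working for every high-mass, far-from-$\mathcal{W}$ value $\Phi_i(x)$ and every large $i$, which requires genuinely combining the compactness of the relevant family of $V$'s with Lemma~\ref{lem:(e, d)-def_varying_metric}.
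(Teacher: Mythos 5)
Your overall strategy coincides with the paper's: verify the annular deformation hypothesis by combining the compactness of $\Vc^L((\Phi_i)_i)$ with Lemma \ref{lem:not_am_varying_metric}, apply Lemma \ref{lem:(e, d)-deformation}, and then upgrade the conclusion about $\Phi_{i_j}(\hat x_j)$ to one about the limit of $\Phi^*_{i_j}(x_j)$ via Lemma \ref{lem:close_of_small_rep}. The uniformization issue you flag as the ``principal difficulty'' is exactly what Lemma \ref{lem:not_am_varying_metric} is designed to deliver (its proof already performs the finite-cover compactness argument over $\mathcal{SV}^L_g\setminus \bB^{\Fb_g}_d(\mathcal{W}^L_g)$), so that part of your plan goes through.

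There is, however, one genuine gap: you apply Lemma \ref{lem:(e, d)-deformation} directly to $\Phi_i$, but that lemma requires its input to be continuous in the \emph{mass} topology, $\Phi: X \to \Zc_n(M;\Mb_g;\Z_2)$, whereas the corollary's hypotheses only give $\Fb_g$-continuous maps. The deformation lemma's proof is built on discretization (the fineness $\mathbf{f}(\psi_q)\to 0$ of the restrictions to vertices of $Y(q)_0$) and Almgren extensions, which fail for maps that are merely $\Fb_g$-continuous. The paper first invokes Proposition \ref{prop:M_cts_approx_F_cts} to replace each $\Phi_i$ by an $\Mb_{g}$-continuous $\Psi_i$ joined to $\Phi_i$ by an $\Fb_g$-continuous homotopy with mass increase at most $\min(\delta_i,1/i)$, applies the deformation lemma to $\Psi_i$, and concatenates the two homotopies to obtain $H^{\textsc{DEF}}_i$; you need this interpolation step. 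A second, smaller imprecision: property (2) of Lemma \ref{lem:(e, d)-deformation} asserts that the auxiliary cycle $T_{1,x}$ (not $\Phi^*_i(x)$ itself) agrees with $\Phi(\hat x)$ outside the balls, while $\Phi^*_i(x)$ is only $\Mb_g$-close to $T_{1,x}$; so Lemma \ref{lem:close_of_small_rep} should be applied with $V_1=|\Phi(\hat x)|$, $V_2=|T_{1,x}|$, and the remaining gap to $|\Phi^*_{i_j}(x_j)|$ closed by the $\Fb_g$-smallness of $\Mb_g(\Phi^*_i(x)-T_{1,x})$ before passing to the limit. With these two repairs your argument matches the paper's.
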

    \begin{rmk}
        Intuitively, given any sweepout and any subset of varifolds containing no almost minimizing varifold, we can perturb the sweepout away from the subset.
    \end{rmk}
    \begin{proof}
        By Proposition \ref{prop:M_cts_approx_F_cts}, for each $i \in \N^+$, there exists a $\Mb_g$-continuous sequence $(\Psi_i: X \to \Zc_n(M; \Mb_g; \Z_2)^\infty_{i = 1}$ such that $\Phi_i$ and $\Psi_i$ are homotopic through a $\Fb_g$-continuous homotopy $\tilde H_i$ which satisfies
        \[
            \Mb_g(\tilde H_i(x, t)) < \Mb_g(\Phi_i(x)) + \min(\delta_i, 1/i)
        \]
        for all $x$ and $t$. Clearly,
        \[
            L = \limsup_{i} \sup_{x \in X_i} \Mb_g \circ \Psi_i(x) \equiv \limsup_{i} L_i\,,
        \]
        and
        \[
            \Vc^L((\Phi_i)_i) = \Vc^L((\Psi_i)_i)\,.
        \]
    
        We define 
        \begin{align*}
            R \equiv \eta_{\ref{cor:(e, d)-deformation_seq}} &\coloneqq \min(D / 6, \eta_{\ref{lem:close_of_small_rep}}(M, \{g\}, D, c, 2k+1))\,,\\
            \bar\varepsilon_1 &\coloneqq \bar\varepsilon_{\ref{lem:not_am_varying_metric}}(M, g, 2k+1, R, R, c)\,,\\
            \eta &\coloneqq \eta_{\ref{lem:not_am_varying_metric}}(M, g, 2k+1, R, R, c)\,,\\
            s &\coloneqq \bar{s}_{\ref{lem:not_am_varying_metric}}\,,
        \end{align*}
        from Lemma \ref{lem:close_of_small_rep} and Lemma \ref{lem:not_am_varying_metric}.

        Since $\Vc^L((\Psi_i)_i)$ is compact and $\bB^{\Fb_g}_{\eta_{\ref{cor:(e, d)-deformation_seq}}}(\mathcal{SV}^{L}_n)$ is open, for sufficiently large $i$, there exists $\bar\varepsilon \in (0, \bar\varepsilon_1)$, such that if $\Mb_g(\Psi_i(x)) \geq L_i - \bar\varepsilon$, then 
        \[
            \Psi_i(x) \in \bB^{\Fb_g}_{\eta_{\ref{cor:(e, d)-deformation_seq}}}(\mathcal{SV}^L_n)\,.
        \]
        By Lemma \ref{lem:not_am_varying_metric} with $r = d = R$, $\Psi_i$ satisfies the annular $(\bar\varepsilon, \delta)$-deformation assumptions of Lemma \ref{lem:(e, d)-deformation} with $R, \bar\varepsilon, R, s$ and $\mathcal{W}$ defined above.

        Therefore, for each $i$, we can choose $\bar\delta \in (0, \min(\delta_i, \eta_{\ref{cor:(e, d)-deformation_seq}}, \bar\varepsilon / 2))$, we obtain a homotopy map
        \[
            \tilde H^\textsc{DEF}_i: [0, 1] \times X_i \to \Zc_n(M; \Fb_g; \Z_2)
        \]
        with $\tilde H^\textsc{DEF}_i(0, \cdot) = \Psi_i$, $\tilde H^\textsc{DEF}_i(1, \cdot) = \Psi^*_i$ satisfying that for each $(t, x) \in [0, 1] \times X_i$,
        \[
            \Mb_g(\tilde H^\textsc{DEF}_i(t,x)) < \Mb_g(\Psi_i(x)) + \bar \delta < \Mb_g(\Psi_i(x)) + \delta_i\,.
        \]
        Hence, concatenating $\tilde H_i$ and $\tilde H^\textsc{DEF}_i$ implies the conclusion (1).
        
        Moreover, if $\Mb_g(\Psi^*_i(x)) \geq L_i - \bar\varepsilon / 10$, by Lemma \ref{lem:(e, d)-deformation}, there exist $\hat x \in X_i$ and $T_{1, x} \in \Zc_n(M; \Z_2)$ with the following properties:
        \begin{enumerate}[(i)]
            \item $\Mb_g(\Psi^*_i(x), T_{1, x}) < \bar\delta < \eta_{\ref{cor:(e, d)-deformation_seq}}$ and $\Fb_g(\Psi^*_i(x), T_{1, x}) < \eta_{\ref{cor:(e, d)-deformation_seq}}$;
            \item $T_{1, x} = \Psi_i(\hat x)$ on $M \setminus \bigcup^m_{i = 1} \overline{B}_g(p_1, \eta)$, $m \leq 3^{2k + 1}$;
            \item $\Mb_g(\Psi_i(\hat x)) > \Mb_g(\Psi^*_i(x)) - \bar\delta \geq L_i - \bar\varepsilon$;
            \item $\bF_g(|\Psi_i(\hat x)|, \mathcal{W}) \leq 2R < D/3$\,.
        \end{enumerate}
        By (iii), $\Psi_i(\hat x) \in \bB^{\Fb_g}_{\eta_{\ref{cor:(e, d)-deformation_seq}}}(\mathcal{SV}^L_n)$. By (ii) and Lemma \ref{lem:close_of_small_rep}, 
        \[
            \bF_g(|\Psi_i(\hat x)|, |T_{1, x}|) < D/2\,.
        \]
        By definition of $\eta_{\ref{cor:(e, d)-deformation_seq}}$, we conclude that 
        \[
            |\Psi^*_i(x)| \subset \bB^{\bF_g}_{D}(\mathcal{W})\,.
        \]
        which is the conclusion (2).
    \end{proof}
    
\part{Main arguments}
\section{Min-max theorems}\label{sect:min-max}
    The concept of restrictive min-max theory was originally introduced by the second author in \cite[Section~2.3]{YangyangLi20_improved_morse_index} with the purpose of generating CMC hypersurfaces. In this work, we extend and apply this theory to our specific setting. At the end of this section, we will briefly explain the motivation of the mass restriction conditions and their role in proving the main theorems of the paper.

    In the following, let $X$ be a $k$-dimensional, finite, simplicial complex and $Z$ be a subcomplex of a refinement of $X$. Note that $Z$ can be the empty set.

\subsection{Restrictive homotopic min-max theory}
    \begin{defn}\label{def:restrictive_homotopy_class}
        In a closed Riemannian manifold $(M, g)$, given $\delta > 0$ and an $\bF_g$-continuous map $\Phi: X \to \Zc_n(M;\bF_g;\Z_2)$, we define the \textit{restrictive $(X, Z)$-homotopy class of $\Phi$ with an upper bound $\delta$}, denoted by $\Pi^\delta_g(\Phi)$, to be the set of $\bF_g$-continuous maps $\Psi: X \rightarrow \Zc_n(M;\bF_g;\Z_2)$ satisfying the following conditions:
            \begin{enumerate}
                \item Each $\Psi$ is homotopic to $\Phi$ in the $\bF_g$-topology, and
                \item The homotopy map $H: [0, 1] \times X \rightarrow \Zc_n(M;\bF_g;\Z_2)$ satisfies
                    \begin{equation}\label{eq:homotopyZ}
                        \sup_{t \in [0, 1], z\in Z} \bM_g(H(t,z)) < \sup_{z\in Z} \bM_g(\Phi(z)) + \delta\,,
                    \end{equation}
                    and
                    \begin{equation}
                        \sup_{t \in [0, 1], x \in X} \bM_g(H(t,x)) < \sup_{x \in 
                        X} \bM_g(\Phi(x)) + \delta\,.
                    \end{equation}
            \end{enumerate}
    \end{defn}

    \begin{defn}
        The {\it restrictive min-max width of $\Pi^\delta_g(\Phi)$} is defined as
        \[
          \mathbf{L}(\Pi^\delta_g(\Phi)) \coloneqq \inf_{\Psi\in \Pi^\delta_g(\Phi)} \sup_{x \in X} \bM_g\circ\Psi(x)\,.
        \]
    \end{defn}

    \begin{rmk}
        One might wonder why in (\ref{eq:homotopyZ}) we allow the homotopy to increase the mass by $\delta$ on $Z$, instead of requiring the map to be fixed on $Z$. The reason is that when doing interpolation, we need to allow the mass to increase slightly on $Z$ (c.f. (1.4) in X. Zhou's definition of relative homotopy class in \cite{Zho20}).
    \end{rmk}

    \begin{defn}
        A sequence of maps $(\Phi_i)^\infty_{i = 1}$ in $\Pi^\delta_g(\Phi)$ is called a \textit{minimizing sequence} for $\Pi^\delta_g(\Phi)$ if 
        \[
            \mathbf{L}(\Pi^\delta_g(\Phi)) = \limsup_{i \to \infty} \sup_{x \in X} \mathbf{M}_g\circ \Phi_i(x)\,.
        \]
        
        For a minimizing sequence $(\Phi_i)^\infty_{i = 1}$ for $\Pi^\delta_g(\Phi)$, we define its \textit{critical set} by 
        \[
            \bC((\Phi_i)_i)\coloneqq\{V=\lim_j|\Phi_{i_j}(x_j)|: \N^+ \ni i_j\nearrow \infty, x_j\in X,\| V\|_g(M)=\mathbf{L}(\Pi^\delta_g(\Phi))\}\,.
        \]
        Furthermore, the sequence is called \textit{pulled-tight} if every varifold in $\bC((\Phi_i)_i)$ is stationary.
    \end{defn}

    \begin{thm}[Restrictive homotopic min-max theorem for $\Pi^\delta_g(\Phi)$]\label{thm:homotopy_min_max}
        Given a closed Riemannian manifold $(M^{n + 1}, g)\,(2 \leq n \leq 6)$, $\delta > 0$, $D > 0$, and $\Phi: X \rightarrow \Zc_n(M;\bF_g;\Z_2)$, if
        \begin{equation}\label{eq:nonTrvialWidth}
            \mathbf{L}(\Pi^\delta_g(\Phi)) > 0\,,
        \end{equation}
        then there exists some sequence $(\Phi_{i})^\infty_{i = 1}$ in $\Pi^\delta_g(\Phi)$ such that:
        \begin{enumerate}[\normalfont(1)]
            \item $(\Phi_{i})^\infty_{i = 1}$ is a pulled-tight minimizing sequence for $\Pi^\delta_g(\Phi)$.
            \item The critical set $\bC((\Phi_{i})_i)$ contains a $(2k+1, r)_g$-almost minimizing varifold $V$ with $\lVert V\rVert_g(M) = \mathbf{L}(\Pi^\delta_g(\Phi))$ for some $r > 0$;
            \item $\spt(V)$ is a smooth, closed, embedded, minimal hypersurface.
            \item If $\mathcal{W}^L_g$ is the set of all the embedded minimal cycles of total measure $L\coloneqq\mathbf{L}(\Pi^\delta_g(\Phi))$, then
            \[
                \bC((\Phi_{i})_i) \subset \bB^{\Fb_g}_D(\mathcal{W}^L_g)\,.
            \]
            \item If we further assume that $Z \neq \emptyset$ and $\mathbf{L}(\Pi^\delta_g(\Phi)) > \sup_{x \in Z} \bM_g(\Phi(x)) + \delta$, then the corresponding homotopy maps $H_i$ for $\Phi_i$, defined in Definition~\ref{def:restrictive_homotopy_class}, satisfies
            \[
                \sup_i \sup_{t \in [0, 1], z \in Z} \bM_g(H_i(t,z)) < \bL(\Pi^\delta_g(\Phi))\,.
            \]
        \end{enumerate}
    \end{thm}

    \begin{proof}[Proof of Theorem \ref{thm:homotopy_min_max}]
        First, we pick a minimizing sequence $(\Psi_i: X \to \Zc_n(M; \Fb_g; \Z_2))^\infty_{i = 1}$ for $\Pi^\delta_g(\Phi)$. Without loss of generality, we can assume that 
        \[
            \sup_{x \in X} \mathbf{M}_g(\Psi_i(x)) \leq \sup_{x \in X} \mathbf{M}_g(\Phi(x))\,.
        \] Since $\mathbf{L}(\Pi^\delta_g(\Phi)) > 0$, applying the pull-tight process, Corollary \ref{cor:PT-seq}, to $(\Psi_i)_i$ with an upper bound $c = \sup_x \Mb_g(\Phi(x)) + \delta$, we obtain a new sequence 
        \[
            (\Psi'_i: X \to \Zc_n(M; \Fb_g; \Z_2))_i\,.
        \]
        This new sequence $(\Psi'_i)_i \subset \Pi^\delta_g(\Phi)$, because the homotopy map $H^\text{PT}_i$ in the pull-tight process doesn't increase the mass (Corollary \ref{cor:PT-seq} (1)).
        
        Since $(\Psi_i)_i$ is a minimizing sequence, 
        \[
            L = \limsup_i \sup_{x \in X} \Mb_g(\Psi_i(x)) = \bL(\Pi^\delta_g(\Phi))\,,
        \]
        and by Corollary~\ref{cor:PT-seq} (1) again,
        \[
            \limsup_i \sup_{x \in X} \Mb_g(\Psi'_i(x)) \leq \limsup_i \sup_{x \in X} \Mb_g(\Psi_i(x)) = \bL(\Pi^\delta_g(\Phi))\,.
        \]
        Hence, the definition of restrictive min-max width implies 
        \[
            L = \limsup_i \sup_{x \in X} \Mb_g(\Psi'_i(x))\,.
        \]
        It follows from  Corollary~\ref{cor:PT-seq} (2) and (3) that 
        \[
            \bC((\Psi_i)_i) = \Vc^L((\Psi_i)_i)
        \]
        and 
        \[
            \bC((\Psi'_i)_i) = \Vc^L((\Psi'_i)_i) \subset \cS\cV^{\leq c}_n\,,
        \]
        which implies that $(\Psi'_i)_i$ is a pulled-tight minimizing sequence.
        
        Next, since $H^\textsc{PT}_i$ is a pull-tight process, we have
        \[
            \sup_{x \in X} \mathbf{M}_g(\Psi'_i(x)) \leq \sup_{x \in X} \mathbf{M}_g(\Psi_i(x)) \leq \sup_{x \in X} \mathbf{M}_g(\Phi(x))\,.
        \]
        Hence, we choose a sequence $(\delta_i)^\infty_{i = 1}$ such that for each $i \in \N^+$,
        \[\begin{aligned}
            \delta_i < \delta - 
                \big(\sup_{z \in Z} \mathbf{M}_g(\Psi'_i(z)) - \sup_{z \in Z} \mathbf{M}_g(\Phi(z))\big)
        \end{aligned}\]
        and 
        \[
            \lim_{i \to \infty} \delta_i = 0\,.
        \]
        In addition, let $c = 2L$, $m = 2k + 1$, $\eta = \eta_{\ref{cor:(e, d)-deformation_seq}}(M, g, D, 2L)$ from Corollary \ref{cor:(e, d)-deformation_seq}, $\mathcal{W}^L_{g, m, \eta}$ be the set of all the $(m, \eta)_g$ almost-minimizing varifolds and 
        \[
            \mathcal{W} := \mathcal{W}^L_{g, m, \eta} \cap \bC((\Psi'_i)_i)\,.
        \]
        Hence, we can apply Corollary~\ref{cor:(e, d)-deformation_seq} to obtain a new sequence of sweepout $(\Psi^*_i)_i$, such that
        \[
            \bC((\Psi^*_i)_i) \subset \bB^{\Fb_g}_D(\mathcal{W}) \subset \bB^{\Fb_g}_D(\mathcal{W}^L_g)\,.
        \]
        Due to the choice of $\delta_i$, the homotopy map $H^\text{DEF}_i$ in Corollary~\ref{cor:(e, d)-deformation_seq} doesn't increase the mass too much, i.e.,
        \[
            \sup_t \bM_g(H^\text{DEF}_i(t,x)) < \bM_g(\Psi'_i(x)) + \delta_i < \bM_g(\Phi_i(x)) + \delta\,,
        \]
        and thus, we have $(\Psi^*_i)_i \subset \Pi^\delta_g(\Phi)$.
        In particular, this implies that $\mathcal{W}^L_{g, m, \eta} \cap \bC((\Psi'_i)_i) \neq \emptyset$. Analogously, by applying Corollary \ref{cor:(e, d)-deformation_seq} to $(\Psi^*_i)_i$ again, we can conclude that
        \[
            \mathcal{W}^L_{g, m, \eta} \cap \bC((\Psi^*_i)_i) \neq \emptyset\,,
        \]
        which implies (2) and thus, by Theorem \ref{thm:Schoen_Simon_reg}, (3).

        Finally, we apply Corollary \ref{cor:PT-seq} again to $(\Psi^*_i)_i$ to obtain a pulled-tight sequence $(\Phi_i)_i$ which satisfies (2), (3), and (4) as well. Note that $(\Phi_i)_i \subset \Pi^\delta_g(\Phi)$, as shown in the preceding argument. In addition, (5) follows from Definition~\ref{def:restrictive_homotopy_class}.
    \end{proof}

\subsection{Restrictive homological min-max theory}\label{thm_restrictive_min_max}

    Recall that in $(M^{n+1}, g)$, for each $p \in \mathbb{N}^+$, the min-max $p$-width is defined by
    \[
        \omega_p(M, g) = \inf_{\Phi \in \mathcal{P}_p} \sup_{x \in \mathrm{dmn}(\Phi)} \Mb_g \circ \Phi(x)\,,
    \]
    where 
    \[
        \mathcal{P}_p = \{\Phi: X \to \cZ_n(M; \mathbf{F}; \mathcal{Z}_2) \vert X \text{ is a finite simplicial complex and } \Phi^*(\bar{\lambda}^p) \neq 0\}\,.
    \]

    The effectiveness of the homotopic min-max theory in producing minimal hypersurfaces of $p$-width can be attributed to the following rationale. Given two continuous maps $\Phi: X \to \cZ_n(M; \mathbf{F}; \Z_2)$ and $\Phi': X \to \cZ_n(M; \mathbf{F}; \Z_2)$, by the homotopy theory, if there exists a homotopy map $H: [0, 1] \times X \to \cZ_n(M; \mathbf{F}; \mathcal{Z}_2)$ such that $H(0, \cdot) = \Phi(\cdot)$ and $H(1, \cdot) = \Phi'(\cdot)$, then
    \begin{equation}
        \Phi \in \mathcal{P}_p \iff \Phi' \in \mathcal{P}_p\,.
    \end{equation}

    However, from the definition of the admissible set $\mathcal{P}_p$, the condition $\Phi^*(\bar{\lambda}^p) \neq 0$ suggests that we should appeal to a homology/cohomology theory. In addition, following the approach of choosing a representative of $\sigma$ in \$cite[p.47]{MN21}, for each $\Phi \in \mathcal{P}_p$, we may restrict to a pure simplicial $p$-cycle $\tilde X \subset X$. The Almgren-Pitts min-max theory applies as before.
    
    We have the following two observations.

    \begin{lem}\label{lem:holg_p_width}
        Given $p \in \mathbb{N}^+$, let $W$ be a pure finite simplicial $(p+1)$-complex such that $\partial W = W^\alpha + W^\omega$ (with $\Z_2$ coefficients) where $W^\alpha$ and $W^\omega$ are both simplicial $p$-cycles. Given a continuous map $\Psi: W \to \mathcal{Z}_n(M;\mathbf{F}; \Z_2)$, we set $\Psi^\alpha \coloneqq \Psi\vert_{W^\alpha}$ and $\Psi^\omega \coloneqq \Psi\vert_{W^\omega}$. Then we have
        \[
            \Psi^\alpha \in \mathcal{P}_p \iff \Psi^\omega \in \mathcal{P}_p\,.
        \]
    \end{lem}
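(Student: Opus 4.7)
The approach is a chain-level Stokes identity combined with Kronecker duality, exploiting the weak homotopy equivalence $\Zc_n(M;\Z_2) \simeq \RP^\infty$. Set $\mu := \Psi^*(\bar{\lambda}) \in H^1(X; \Z_2)$, so that by multiplicativity of pullback $\Psi^*(\bar{\lambda}^p) = \mu^p \in H^p(X;\Z_2)$. Naturality under the inclusions $X^\alpha, X^\omega \hookrightarrow X$ yields $(\Psi^\alpha)^*(\bar{\lambda}^p) = \mu^p\vert_{X^\alpha}$ and $(\Psi^\omega)^*(\bar{\lambda}^p) = \mu^p\vert_{X^\omega}$, so the lemma reduces to showing these two restrictions are simultaneously zero or nonzero.

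Fix a simplicial cocycle $c \in Z^p(X; \Z_2)$ representing $\mu^p$. Since $\delta c = 0$ and $\partial X = X^\alpha + X^\omega$ as simplicial $p$-chains,
\[
    \langle c, X^\alpha \rangle + \langle c, X^\omega \rangle = \langle c, \partial X \rangle = \langle \delta c, X \rangle = 0 \in \Z_2,
\]
hence $\langle c, X^\alpha \rangle = \langle c, X^\omega \rangle$. Each side is the Kronecker pairing $\langle (\Psi^\beta)^*(\bar{\lambda}^p), [X^\beta] \rangle = \langle \bar{\lambda}^p, \Psi^\beta_*[X^\beta] \rangle$ for $\beta \in \{\alpha, \omega\}$, where $[X^\beta] \in H_p(X^\beta;\Z_2)$ is the fundamental class of the simplicial $p$-cycle $X^\beta$. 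Since $\bar{\lambda}^p$ generates $H^p(\Zc_n(M;\Z_2); \Z_2) \cong \Z_2$ and pairs nontrivially with the generator of $H_p(\Zc_n(M;\Z_2); \Z_2) \cong \Z_2$, this identifies with $\Psi^\beta_*[X^\beta] \in \Z_2$, and the displayed identity becomes $\Psi^\alpha_*[X^\alpha] = \Psi^\omega_*[X^\omega]$ in $\Z_2$.

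The final step identifies the $p$-sweepout condition with the nonvanishing of this pushforward: $\Psi^\beta \in \Pc_p \iff \Psi^\beta_*[X^\beta] \neq 0$. The ``if'' direction is immediate, since a nontrivial Kronecker pairing forces the pullback to be a nonzero cohomology class. The ``only if'' direction---the main obstacle---relies on the structural fact that a simplicial $p$-cycle has its top cohomology detected by its fundamental class: by universal coefficients over the field $\Z_2$, $H^p(X^\beta;\Z_2) \cong \mathrm{Hom}(H_p(X^\beta;\Z_2), \Z_2)$, and the simplicial fundamental cycle $[X^\beta]$ serves as the relevant detecting class for top-degree cohomology (with a componentwise reduction if $X^\beta$ is disconnected). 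Granting this identification, the Stokes equality produces the desired biconditional $\Psi^\alpha \in \Pc_p \iff \Psi^\omega \in \Pc_p$.
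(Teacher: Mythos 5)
Your argument is correct and is essentially the paper's proof in adjoint form: the paper pushes the chains $X$, $X^\alpha$, $X^\omega$ forward to singular chains in $\Zc_n(M;\Z_2)$ and uses $\partial\,\Psi_\sharp(X)=\Psi_\sharp(X^\alpha)+\Psi_\sharp(X^\omega)$ to conclude $[\Psi_\sharp(X^\alpha)]=[\Psi_\sharp(X^\omega)]$ in $H_p(\Zc_n(M;\Z_2);\Z_2)\cong\Z_2$, which is exactly your identity $\langle c,X^\alpha\rangle=\langle c,X^\omega\rangle$ read through the Kronecker adjunction $\langle \Psi^*(\bar\lambda^p),X^\beta\rangle=\langle\bar\lambda^p,\Psi_\sharp(X^\beta)\rangle$. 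The identification you flag as the main obstacle, $\Psi^\beta\in\Pc_p\iff\langle(\Psi^\beta)^*(\bar\lambda^p),[X^\beta]\rangle\neq 0$, is asserted equally without justification in the paper's own proof (your universal-coefficients remark only shows a nonzero class pairs nontrivially with \emph{some} $p$-cycle, not necessarily with $[X^\beta]$); since only the easy direction (nontrivial pairing implies nontrivial pullback class) is needed in the applications, this shared imprecision does not affect the result.
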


    \begin{proof}
        Let $A \coloneqq \Psi(W^\alpha)$, $B \coloneqq \Psi(W^\omega)$ and $C \coloneqq \Psi(W)$ be the corresponding singular chains in $C_*(\cZ_n(M; \mathbf{F}; \Z_2); \Z_2)$. Since both $W^\alpha$ and $W^\omega$ are cycles, $\partial A = \partial B = 0$ and we obtain
        \[
            a \coloneqq [A], b \coloneqq [B] \in H_p(\cZ_n(M; \mathbf{F}; \Z_2); \Z_2)\,.
        \]
        Moreover, $\partial W = W^\alpha + W^\omega$ implies that $\partial C = A + B\,,$
        and thus $a=b$.
        Hence, we have
        \[
            \Psi^\alpha \in \mathcal{P}_p \iff \langle\bar{\lambda}^p, a \rangle \neq 0 \iff \langle\bar{\lambda}^p, b \rangle \neq 0 \iff \Psi^\omega \in \mathcal{P}_p\,.
        \]        
    \end{proof}

    \begin{lem}\label{lem:hotp_to_holg}
        Given $k \in \mathbb{N}^+$, let $X$ be a pure finite simplicial $k$-complex with boundary $Z = \partial X$ (possibly empty). Then $W = [0, 1] \times X$ is a pure finite simplicial $(k + 1)$-complex such that $\partial W = W^\alpha + W^\omega$ (with $\Z_2$ coefficients) where 
        \[
            W^\alpha \coloneqq \{0\} \times X
        \] and 
        \[
            W^\omega \coloneqq \{1\} \times X + [0, 1] \times Z
        \] are both pure finite simplicial $k$-complex with boundary $\{0\} \times Z$.
    \end{lem}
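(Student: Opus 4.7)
The plan is a direct computation with the product rule for simplicial boundaries over $\mathbb{Z}_2$: for any two finite simplicial chains $A$ and $B$, one has
\[
    \partial(A \times B) = \partial A \times B + A \times \partial B\,,
\]
with no sign concerns because we are working modulo $2$. (We read the statement as saying $X = [0,1] \times W$; the $[0,1] \times [0,1]$ in the display appears to be a typo, since otherwise $X^\alpha = \{0\} \times W$ would not even be a subcomplex of $X$.) The entire lemma then reduces to three applications of the product formula.

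First I would compute $\partial X = \partial([0,1] \times W)$. The product rule gives
\[
    \partial X = \partial[0,1] \times W + [0,1] \times \partial W = \{0\} \times W + \{1\} \times W + [0,1] \times Z\,.
\]
Grouping these three terms into $X^\alpha \coloneqq \{0\} \times W$ and $X^\omega \coloneqq \{1\} \times W + [0,1] \times Z$ immediately yields the decomposition $\partial X = X^\alpha + X^\omega$ claimed in the lemma.

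Next I would verify that $X^\alpha$ and $X^\omega$ are both $k$-chains with boundary $\{0\} \times Z$. For $X^\alpha$ this is immediate: $\partial X^\alpha = \partial(\{0\} \times W) = \{0\} \times \partial W = \{0\} \times Z$ (using that $\partial \{0\} = 0$). For $X^\omega$, apply the product rule to each summand:
\[
    \partial(\{1\} \times W) = \{1\} \times Z\,, \qquad \partial([0,1] \times Z) = (\{0\}+\{1\}) \times Z + [0,1] \times \partial Z\,,
\]
and note that $\partial Z = 0$ because $Z = \partial W$ and $\partial^2 = 0$. Summing (mod $2$), the two copies of $\{1\} \times Z$ cancel, leaving $\partial X^\omega = \{0\} \times Z$, as required.

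There is no real obstacle; the only subtle points are (i) reconciling the apparent typo $X=[0,1]\times[0,1]$ with what the rest of the statement forces, namely $X = [0,1] \times W$, and (ii) being careful that the product rule is formulated for simplicial chains, which is standard after triangulating the product $[0,1] \times W$ (e.g.\ by the prism triangulation) so that $\{0\}\times W$, $\{1\}\times W$, and $[0,1]\times Z$ are all subcomplexes. Once those conventions are fixed, both bullets in the conclusion follow from the two boundary computations above.
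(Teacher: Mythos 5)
Your computation is correct and is exactly the verification the paper leaves implicit — its entire proof is ``This follows immediately from the definition,'' and your three applications of the $\Z_2$ product rule $\partial(A\times B)=\partial A\times B+A\times\partial B$ (after the prism triangulation) are the content of that ``immediately.'' You are also right that $X=[0,1]\times[0,1]$ is a typo for $X=[0,1]\times W$.
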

    \begin{proof}
        This follows immediately from the definition.
    \end{proof}

    \begin{defn}
        Given $\delta > 0$ and an $\Fb$-continuous map $\Phi:X \to\Zc_n(M;\bF_g;\Z_2)$, where $X$ is a pure finite simplicial $k$-complex with boundary $Z = \partial X$ (possibly empty), we define $\tilde\HC^\delta_{g}(\Phi)$ to be the set of all $\bF_g$-continuous 
        maps $\Psi: W \to \mathcal{Z}_n(M; \bF_g; \mathbb{Z}_2)$ such that:
        \begin{itemize}
            \item  $W$ is a pure finite simplicial $(k+1)$-complex such that $\partial W=W^\alpha + W^\omega$ (with $\Z_2$ coefficients) where $W^\alpha=X$ and $W^\omega$ is another pure finite simplicial $k$-complex with $\partial W^\omega = Z$;
            \item $\Psi^\alpha \coloneqq \Psi|_{W^\alpha }=\Phi$, and $\Psi^\omega\coloneqq\Psi|_{W^\omega}$;
            \item 
            \[
                \sup_{x \in W} \mathbf{M}_g\circ \Psi(x) < \sup_{x \in X} \mathbf{M}_g\circ \Phi(w) + \delta\,.
            \]
        \end{itemize}

        We define
        \[
            \HC^\delta_g(\Phi)\coloneqq\{\Psi^\omega:\Psi\in\tilde\HC^\delta_g(\Phi)\}\,.
        \]
    \end{defn}
    
    We should think of  $\Psi$ as a cobordism between ``the beginning" $\Phi$ and ``the loose end" $\Psi|_{W^\omega}$. The set $\HC^\delta_g(\Phi)$ is the homology class represented by $\Phi$.

    \begin{defn}
        The {\it restrictive min-max width of $\HC^\delta(\Phi)$} is defined as
        \[
          \bL_g(\HC^\delta_g(\Phi)) \coloneqq \inf_{\Phi'\in \HC^\delta_g(\Phi)}\sup \bM_g\circ\Phi'\;\;\left(=\inf_{\Psi\in\tilde\HC^\delta_g(\Phi)} \sup \mathbf{M}_g\circ \Psi^\omega\right).
        \]
    \end{defn}

    Lemma \ref{lem:holg_p_width} implies that this width is nontrivial provided that $\Phi$ is some $p$-admissible sweepout.

    \begin{cor}\label{cor:relation_to_p_width}
        If $\Phi: X \to \cZ_n(M; \bF_g; \Z_2) \in \cP_p$ with $X$ a pure finite simplicial $p$-cycle, then every element of $ \HC^\delta_g(\Phi)$ is also inside $\cP_p$. In particular,
        \[
            \bL_g(\HC^\delta_g(\Phi))\geq \omega_p(M, g)\,.
        \]
    \end{cor}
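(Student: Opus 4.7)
The plan is to deduce this corollary directly from Lemma \ref{lem:holg_p_width}. By the definition of $\HC^\delta_g(\Phi_0)$, any $\Phi \in \HC^\delta_g(\Phi_0)$ arises as $\Phi = \Psi^\omega$ for some $\Psi \in \tilde{\HC}^\delta_g(\Phi_0)$. By construction, $\Psi$ is a continuous map from a finite simplicial $(p+1)$-chain $X$ with $\partial X = X^\alpha + X^\omega$ where $X^\alpha = W$ and $X^\omega$ is another finite simplicial $p$-chain with $\partial X^\omega = Z = \partial W$. Since $W$ is assumed to be a $p$-cycle, $Z = \emptyset$, so both $X^\alpha$ and $X^\omega$ are themselves $p$-cycles, which is exactly the hypothesis needed to apply Lemma \ref{lem:holg_p_width}.

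The lemma then gives $\Psi^\alpha \in \cP_p \iff \Psi^\omega \in \cP_p$. Since $\Psi^\alpha = \Phi_0 \in \cP_p$ by assumption, we conclude that $\Phi = \Psi^\omega \in \cP_p$, proving the first claim.

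For the inequality, it suffices to note that $\HC^\delta_g(\Phi_0) \subset \cP_p$ by the above, so
\[
    \bL(\HC^\delta_g(\Phi_0)) = \inf_{\Phi \in \HC^\delta_g(\Phi_0)} \sup_{x \in \dmn(\Phi)} \Mb_g \circ \Phi(x) \geq \inf_{\Phi \in \cP_p} \sup_{x \in \dmn(\Phi)} \Mb_g \circ \Phi(x) = \omega_p(M, g)\,.
\]
There is no real obstacle; the only subtlety is matching the hypotheses of Lemma \ref{lem:holg_p_width}, which requires both boundary pieces to be cycles, but this is automatic once $W$ is a cycle since then $Z = \emptyset$.
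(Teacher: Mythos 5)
Your proof is correct and is exactly the argument the paper intends: the corollary is stated as an immediate consequence of Lemma \ref{lem:holg_p_width}, using that $W$ being a $p$-cycle forces $Z=\partial W=\emptyset$ so both $X^\alpha$ and $X^\omega$ are cycles, and then the inclusion $\HC^\delta_g(\Phi_0)\subset\cP_p$ gives the width inequality by comparing infima. No issues.
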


    \begin{defn}
        A sequence of maps $(\Phi_i)^\infty_{i = 1}$ in $\HC^\delta_g(\Phi)$ is called a \textit{minimizing sequence} for $\HC^\delta_g(\Phi)$ if 
        \[
            \mathbf{L}(\HC^\delta_g(\Phi)) = \limsup_{i \to \infty} \sup \mathbf{M}_g\circ \Phi_i\,.
        \]
        For convenience, we  also say that a sequence $(\Psi_i)_i$ in $\tilde\HC^\delta_g(\Phi)$ is a \emph{minimizing sequence} for $\tilde\HC^\delta_g(\Phi)$ if $(\Psi^\omega_i)_i$ is a minimizing sequence for $\HC^\delta_g(\Phi)$.
        
        For a minimizing sequence $(\Phi_i)^\infty_{i = 1}$ for $\HC^\delta_g(\Phi)$, we define its \textit{critical set} by 
        \[
            \bC_g((\Phi_i)_i)\coloneqq\{V=\lim_j|\Phi_{i_j}(x_j)|:\{i_j\}_j\subset\mathbb N, x_j\in \dmn(\Phi_{i_j}),\| V\|_g(M)=\mathbf{L}(\HC^\delta_g(\Phi))\}\,.
        \]
        Furthermore, the sequence is called \textit{pulled-tight} if every varifold in $\bC((\Phi_i)_i)$ is stationary.
    \end{defn}

    \begin{nota}\label{nota:critical}
        For notational convenience, given {\it any} sequence of maps $(\Psi_i:W_i\to\cZ_n(M;\bF_g;\Z_2))_i$, we denote
        \begin{align*}
        \bC_g((\Psi_i)_i)\coloneqq\{V=\lim_j|\Psi_{i_j}(x_j)|:&\;\{i_j\}_j\subset\mathbb N, x_j\in W_{i_j},\\
        &\| V\|_g(M)=\limsup_{i\to\infty}\sup\bM_g\circ\Psi_i\}\,.
        \end{align*}
        \end{nota}

    Let us now state the homological min-max theorem we need. 

    \begin{thm}[Restrictive homological min-max theorem]\label{thm_restrictive_holo_min_max}
        Given $\delta > 0$, $D > 0$ and an $\Fb_g$-continuous map $\Phi:X \to\Zc_n(M;\Fb_g;\Z_2)$, where $X$ is a pure finite simplicial $k$-complex with boundary $Z = \partial X$, suppose that 
        \begin{equation}\label{eqn:mass_gap}
            L\coloneqq\bL_g( \HC^\delta_g(\Phi)) > \max(\sup_{x \in Z} \bM_g(\Phi(x)) + \delta, 0)\,.
        \end{equation} Then there exists a minimizing sequence 
        \[
            (\Psi_{i}:W_i\to\Zc_n(M;\Fb_g;\Z_2))^\infty_{i = 1}
        \]
        in $\tilde \HC^\delta_g(\Phi)$ such that:
        \begin{enumerate}
            \item $(\Psi_{i}^\omega)^\infty_{i = 1}$ is a pulled-tight minimizing sequence for $\HC^\delta_g(\Phi)$.
            \item The critical set $\bC((\Psi_{i}^\omega)_i)$ contains a $(2k+1, r)_g$-almost minimizing varifold $V$ with $\lVert V\rVert_g(M) = \mathbf{L}(\Pi^\delta_g(\Phi))$ for some $r > 0$;
            \item $\spt(V)$ is a smooth, closed, embedded, minimal hypersurface.
            \item If $\mathcal{W}^L_g$ is the set of all the embedded minimal cycles of area $L$, then
            \[
                \bC((\Psi_{i}^\omega)_i) \subset \bB^{\Fb_g}_D(\mathcal{W}^L_g)\,.
            \]
        \end{enumerate}
    \end{thm} 
    \begin{proof}
        First, we pick a minimizing sequence $(\Psi'_i: W'_i \to \cZ_n(M; \mathbf{F}; \Z_2))^\infty_{i=1}$ in $\tilde \HC^\delta_g(\Phi)$. Without loss of generality, we assume that for every $i$,
        \[
            \sup_x \Mb_g({\Psi'_i}^\omega(x)) \leq \sup_x \Mb_g({\Phi}(x))\,.
        \]
        Since 
        \[
            L\coloneqq\bL_g( \HC^\delta_g(\Phi)) > 0\,,
        \] 
        we can apply Corollary \ref{cor:PT-seq} to each ${\Psi'_i}^\omega$ (recall the notation $^\omega$ in \S \ref{sect:nota}) with $c = \sup_{x \in X} \Mb_g \circ \Phi(x) + \delta$, and obtain \[
            H^\textsc{PT}_i: [0, 1] \times {W'_i}^\omega \to \Zc_n(M; \Fb_g; \Z_2)
        \]
        such that for all $(t, x) \in [0, 1] \times {W'_i}^\omega$,
        \[
            \bM_g(H^\textsc{PT}_i(t, x)) \leq \bM_g({\Psi'_i}^\omega(x))\,.
        \]
        
        Then, we define a new parameter space
        \[
            W''_i \coloneqq W'_i \cup [0, 1] \times {W'_i}^\omega\,,
        \]
        by identifying ${W'_i}^\omega$ in $W'_i$ and $\{0\} \times {W'_i}^\omega$ (see Figure~\ref{fig:W''}), and a $\Fb_g$-continuous map in $\tilde{\mathcal{H}}^\delta_g(\Phi)$,
        \[
            \Psi''_{i}: W''_i\to\Zc_n(M;\Fb_g;\Z_2)
        \]
        by concatenating $\Psi'_i$ and $H^\textsc{PT}_i$.
        By Lemma \ref{lem:hotp_to_holg}, we denote ${W''_i}^\alpha = X$ and ${W''_i}^\omega = [0, 1] \times Z \cup \{1\} \times {W'_i}^\omega$. Since the homotopy map $H^{\text{PT}}_i$ in the pull-tight process doesn't increase the mass
        , and in $[0, 1] \times Z \subset {W''_i}^\omega$, by~\eqref{eqn:mass_gap}, 
        \[
        \begin{aligned}
            \sup_{x \in [0, 1] \times Z} \Mb_g({\Psi''_i}^\omega(x)) &= \sup_{x \in [0, 1] \times Z} \Mb_g(H^\textsc{PT}_i(x))\\
                &\leq \sup_{z \in Z} \Mb_g(\Phi(z)) < L - \delta\,,
        \end{aligned}
        \]
        we can conclude that
        $\mathcal{V}^L(({\Psi''_i}^\omega)_i) = \mathcal{V}^L((H^\textsc{PT}_i(1))_i)$.
        Therefore, $({\Psi''_i}^\omega)_i$ is a pulled-tight minimizing sequence in $\mathcal{H}^\delta_g(\Phi)$. Moreover, we have
        \[
            \sup_x \mathbf{M}_g({\Psi''_i}^\omega(x)) \leq \sup_x \mathbf{M}_g(\Psi'_i(x)) \leq \sup_x \Mb_g(\Phi_i(x))\,.
        \]

        \begin{figure}
            \centering
             
            \includegraphics[width=4in]{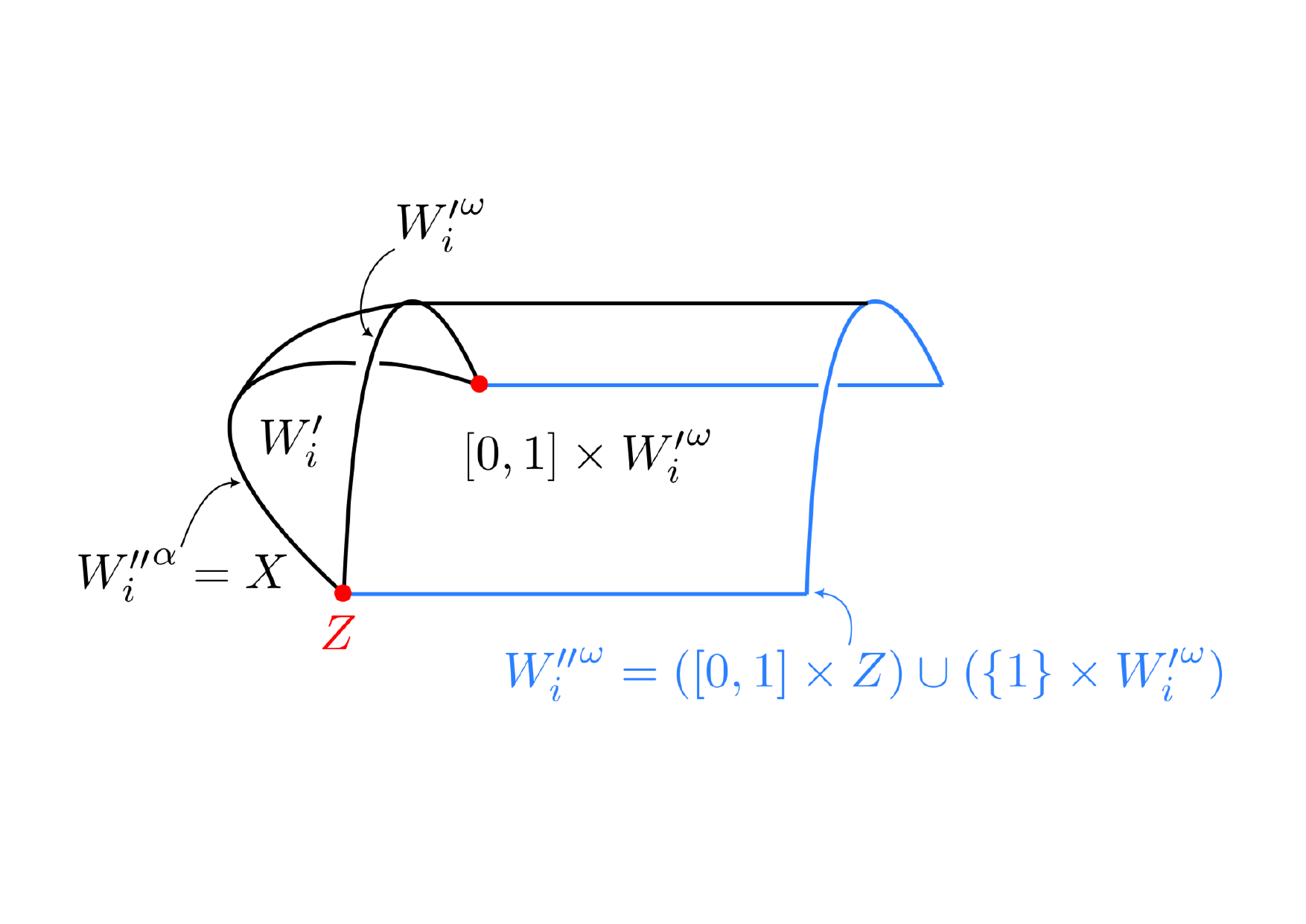}
            \caption{Construction of $W''_i=\dmn(\Psi''^\omega_i)$.}
            \label{fig:W''}
        \end{figure}
    
        Next, for each $i$, by the previous inequality, we can choose $\delta_{i} < \delta / 2$,
        such that $\lim_i \delta_i = 0$. As in the proof of Theorem \ref{thm:homotopy_min_max}, let $c = 2L$, $m = 2k + 1$, $\eta = \eta_{\ref{cor:(e, d)-deformation_seq}}(M, g, D, 2L)$ from Corollary \ref{cor:(e, d)-deformation_seq}, $\mathcal{W}^L_{g, m, \eta}$ be the set of all the $(m, \eta)_g$-almost minimizing varifolds and 
        \[
            \mathcal{W} = \mathcal{W}^L_{g, m, \eta} \cap \bC(({\Psi''_i}^\omega)_i)\,.
        \]
        Hence, we can apply Corollary \ref{cor:(e, d)-deformation_seq} to $({\Psi''_i}^\omega)_i$ to obtain a new sequence of sweepout $(\Phi^*_i)^\infty_{i = 1}$ and homotopy maps $H^\textsc{DEF}_i$ such that
        \[
            \mathcal{V}^L((\Phi^*_i)_i) \subset \bB^{\Fb_g}_D(\mathcal{W}) \subset \bB^{\Fb_g}_D(\mathcal{W}^L_g)\,
        \]
        and
        \begin{equation}\label{eqn:restr_min_max_unique}
            \Mb_g(H^\textsc{DEF}_i(t,x)) < \Mb_g({\Psi''_i}^\omega(x)) + \delta_i \leq \sup_{x \in X} \Mb_g(\Phi(x)) + \delta_i\,.
        \end{equation}

        We define a space
        \[
            W^{(3)}_i \coloneqq W''_i \cup [0, 1] \times {W''_i}^\omega
        \]
        by identifying ${W''_i}^\omega$ in $W''_i$ and $\{0\} \times {W''_i}^\omega$, and define a $\Fb_g$-continuous map in $\tilde{\mathcal{H}}^\delta_g(\Phi)$,
        \[
            \Psi^{(3)}_{i}: W^{(3)}_i \to \Zc_n(M;\Fb_g;\Z_2)
        \]
        by concatenating $\Psi''_i$ and $H^\textsc{DEF}_i$. By Lemma \ref{lem:hotp_to_holg}, we denote ${W^{(3)\alpha}_i} = X$ and ${W^{(3)\omega}_i} = [0, 1] \times Z \cup \{1\} \times {W''_i}^\omega$. The choice of $\delta_i$ guarantees that the increment of the mass in $H^\textsc{DEF}_i$ satisfies the mass restriction, i.e., $\Psi^{(3)}_i \in \tilde{ \mathcal{H}}^\delta(\Phi)$. Moreover, by ~\eqref{eqn:mass_gap} again, we have
        \[
            \sup_{[0, 1] \times Z} \bM_g\left({\Psi^{(3)}_i}^\omega\right) \leq \sup_Z \bM_g(\Phi) + \delta_i < L - \delta/2\,,
        \]
        and thus,
        \[
            \Vc^L(({\Psi^{(3)\omega}_{i}} )_i) = \mathcal{V}^L((\Phi^*_i)_i)\,.
        \]
        Together with \eqref{eqn:restr_min_max_unique}, we can conclude that $(\Psi^{(3)}_{i})_i$ is a minimizing sequence in $\tilde{\HC}^\delta_g(\Phi)$, and
        \[
            \bC(({\Psi^{(3)\omega}_{i}} )_i) = \Vc^L(({\Psi^{(3)\omega}_{i}} )_i) = \mathcal{V}^L((\Phi^*_i)_i) \subset \bB^{\Fb_g}_D(\mathcal{W}) \subset \bB^{\Fb_g}_D(\mathcal{W}^L_g)\,.
        \]
        In particular, this also implies that $\mathcal{W}^L_{g, m, \eta} \cap \bC(({\Psi''_{i}}^\omega)_i) \neq \emptyset$, and analogously, by applying Corollary \ref{cor:(e, d)-deformation_seq} to $({\Psi^{(3)\omega}_{i}} )_i$, we also have
        \[
            \mathcal{W}^L_{g, m, \eta} \cap \bC(({\Psi^{(3)\omega}_{i}} )_i) \neq \emptyset\,,
        \]
        which concludes (2) and thus, by Theorem \ref{thm:Schoen_Simon_reg}, (3).

        Finally, we apply Corollary \ref{cor:PT-seq} again to pull tight $({\Psi^{(3)\omega}_{i}} )_i$ as in the first step, so the concatenation induces a minimizing sequence $({\Psi}_i)_i$ where $(\Psi^\omega_i)_i$ is pulled tight, which (2), (3) and (4) as well. This concludes the theorem.
    \end{proof}

\begin{rmk} Let us make a remark regarding the role of mass restriction.
In all the statements of Theorem~\ref{thm_13_width} (implicit in the definition of $p$-width),  Theorem~\ref{thm:strong_multi_one_I} and Theorem~\ref{thm:strong_multi_one_II}, no mass restrictions are imposed on the homology classes, i.e., we take $\partial X = \emptyset$ and $\delta = \infty$. 
    
    In their proofs, however (see, for example, the sketches in \S\ref{subsubsect:Thm1} and \S\ref{subsubsect:Thm3}), we begin by connecting the given sweepout $\Phi_0$ with an improved sweepout $\Phi_1$ in a perturbed metric (Proposition~\ref{prop_baire_residual}), and then analyze the ``cobordism'' $\Psi$ bridging them. Using restrictive min-max, we can control the maximal area on  $\Psi$ and have good characterization of those  members with large area: See the second bullet point in the fourth paragraph of \S \ref{subsubsect:Thm1}.
\end{rmk}

\section{Proof of Theorem \ref{thm:strong_multi_one_I}}\label{sect:proof_main_thm_I}
    Let $(M^{n+1}, g)$ ($3 \leq n + 1 \leq 7$) be a closed Riemannian manifold equipped with a bumpy metric or a metric of positive Ricci curvature. Fix an $\bF$-continuous map $\Phi:X \to\Zc_n(M; \Fb_g; \Z_2)$, where  $X$ is a pure finite simplicial $k$-cycle with $\partial X = \emptyset$, such that $L \coloneqq \bL_g(\cH(\Phi))>0$. Note that by Lemma~\ref{lem:simp_to_cub}, $X$ can be viewed as a cubical subcomplex of ambient dimension $2k + 1$, and a pure finite simplicial $(k + 1)$-complex can be viewed as a cubical subcomplex of ambient dimension $2k + 3$.
    
    We define 
    \[
        r_0 \coloneqq \min(\eta_{\ref{lem:balls_not_cover_SV}}(M, g, 2k + 3), \eta_{\ref{lem:cycles_remove_balls}}(M, g, 2k + 3))
    \]
    from Lemmas \ref{lem:balls_not_cover_SV} and \ref{lem:cycles_remove_balls}, and define the following sets of varifolds on $(M, g)$:
    \begin{itemize}
        \item $\mathcal{SV}^{L}_{g} \subset \mathcal{V}_n(M)$ is the set of all stationary $n$-varifolds on $(M, g)$ with total measure $L$;
        \item $\mathcal{W}^{L}_{g} \subset \mathcal{SV}^{L}_{g}$ is the subset consisting of all stationary integral varifolds whose support is a smooth, embedded, closed minimal surface;
        \item $\cG_{g} \subset \mathcal{W}^{L}_{g, m+1, r_0}$ is the (good) subset comprising all $(2k + 3, r_0)_g$-almost minimizing varifolds $V$ for which there is $T \in \Zc_n(M;\Z_2)$ with $V = |T|$;
        \item $\cB_{g} \subset \cW^{L}_{g, m+1, r_0}$ is the (bad) subset comprising all $(2k + 3, r_0)_{g}$-almost minimizing varifolds $V$ for which no $T \in \Zc_2(M;\Z_2)$ with $V = |T|$.
    \end{itemize}
    With these definitions, it follows that $\cG_g \cup \cB_g = \cW^{L}_{g, m+1, r_0}$.

    Suppose, for the sake of contradiction, that there exists a minimizing sequence $(\Phi_i)_i$ of $\mathcal{H}(\Phi)$ with critical set $\bC((\Phi_i)_i)$ containing no varifold induced by a multiplicity one, smooth, embedded, separating minimal hypersurface. In other words,
    \begin{equation}\label{eqn:crit_contra}
        \bC((\Phi_i)_i) \cap \cW^{L}_{g, m+1, r_0} \subset \cB_{g}\,.
    \end{equation}
    In the following, for each $i$, we denote $X_i := \dmn(\Phi_i)$, which has dimension $k$.

    Since $g$ either is bumpy or has positive Ricci curvature, by Lemma \ref{lem:am_cpt_varying_metric}, both $\cG_g$ and $\cB_g$ are compact in the varifold topology. Therefore, we can define
    \begin{align*}
        d_0 &\coloneqq \eta_{\ref{lem:far_of_diff_geom}}(M, g, 2k + 3, L, \cG_g, \cB_{g}) / 10\,,\\
        \varepsilon_0 &\coloneqq \bar\varepsilon_{\ref{lem:not_am_varying_metric}}(M, g, 2k + 3, r_0, d_0, L)\,,\\
        s_0 &\coloneqq \bar{s}_{\ref{lem:not_am_varying_metric}}(M, g, 2k + 3, r_0, d_0, L)\,,\\
        \eta_0 &\coloneqq \min(d_0, \varepsilon_0, \eta_{\ref{lem:not_am_varying_metric}}(M, g, 2k + 3, r_0, d_0, L)) / 10\,,\\
        \tilde \cG_g &\coloneqq \bB^{\bF_{g}}_{\eta_0} (\cG_g)\,,\\
        \tilde \cB_{g} &\coloneqq \bB^{\bF_{g}}_{\eta_0} (\cB_{g})\,,\\
        \hat\cG_g &\coloneqq \cG'_{\ref{lem:far_of_diff_geom}}(M, g, 2k + 3, L, \cG_g, \cB_{g})\,,\\
        \hat\cB_{g} &\coloneqq \cB'_{\ref{lem:far_of_diff_geom}}(M, g, 2k + 3, L, \cG_g, \cB_{g})\,,
    \end{align*}
    from Lemma \ref{lem:far_of_diff_geom} and Lemma \ref{lem:not_am_varying_metric}. By \eqref{eqn:crit_contra}, we can apply Corollary~\ref{cor:(e, d)-deformation_seq} to $(\Phi_i)_i$ to obtain a new minimizing sequence, still denoted by $(\Phi_i)_i$, and there exists $\bar\varepsilon_0 > 0$ such that for sufficiently large $i$,
    \begin{equation}\label{eqn:g_bar_e_0}
        \Mb_{g} \circ \Phi_{i} (x) \geq L - \bar\varepsilon_0 \implies |\Phi_{i} (x)| \in \tilde{\cB}_{g}\,.
    \end{equation}

\medskip
\paragraph*{\textbf{Part 1. Metric perturbations.}}
    We begin with a proposition, which asserts the $\Z$-linear independency of area of minimal hypersurfaces for a generic metric.
    
    \begin{prop}\label{prop_baire_residual}
        Let $M^{n+1}$ be a smooth closed manifold with $3\leq n+1\leq 7$. Then there exists a Baire residual set $\Gamma^\infty_{\mathrm{uniq}}$ of $C^\infty$ bumpy Riemannian metrics on $M$ such that for any $g \in \Gamma^\infty_{\mathrm{uniq}}$ and any $L \in \mathbb{R}^+$, there exists at most one combination of minimal hypersurfaces (with multiplicities) whose total areas sum up to $L$.
    \end{prop}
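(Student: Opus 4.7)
The plan is to combine White's bumpy metric theorem with a countable transversality/Baire argument. Unpacking the conclusion: the property says that for every $g \in \Gamma^\infty_{\mathrm{uniq}}$ the countable set of areas $\{\area_g(\Sigma) : \Sigma \text{ is a closed embedded minimal hypersurface in } (M, g)\}$ is $\mathbb{Q}$-linearly independent, i.e.\ no nontrivial integer combination of distinct minimal hypersurface areas vanishes. Since by White's theorem the set $\Gamma^\infty_{\mathrm{bumpy}}$ of bumpy metrics is Baire residual in $\Gamma^\infty(M)$, it will suffice to exhibit $\Gamma^\infty_{\mathrm{uniq}}$ as a countable intersection of open dense subsets of $\Gamma^\infty_{\mathrm{bumpy}}$.

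To make the conditions countable, I will cut by area and index bounds. For $L, I \in \mathbb{Q}^+$, let $\mathcal{M}_{L, I}(g)$ denote the set of closed embedded minimal hypersurfaces in $(M, g)$ with area $\leq L$ and Morse index $\leq I$; for bumpy $g$ this set is finite, since non-degeneracy makes every element isolated while Sharp's compactness theorem controls it globally. Around each bumpy $g_0$ the implicit function theorem produces an open neighborhood $U_{g_0, L, I} \subset \Gamma^\infty(M)$ and, for each $\Sigma \in \mathcal{M}_{L, I}(g_0)$, a smooth continuation $g \mapsto \Sigma^g$; upper semi-continuity of area and index combined with Sharp's theorem ensures that, after possibly shrinking $U_{g_0, L, I}$, these continuations exhaust $\mathcal{M}_{L, I}(g)$ for every bumpy $g$ in this neighborhood. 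Using second countability of $\Gamma^\infty(M)$, I then cover $\Gamma^\infty_{\mathrm{bumpy}}$ by countably many such neighborhoods.

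The heart of the argument is a transversality step: on each $U_{g_0, L, I}$, for each finite subset $\{\Sigma_1, \ldots, \Sigma_N\} \subset \mathcal{M}_{L, I}(g_0)$ and each nonzero $(c_1, \ldots, c_N) \in \mathbb{Z}^N$, I will show that the condition $\sum_i c_i \, \area_g(\Sigma_i^g) \neq 0$ is open and dense. Openness is immediate from smoothness of the area functionals. For density, pick $i_0$ with $c_{i_0} \neq 0$ and a smooth symmetric $(0,2)$-tensor $h$ supported in a small neighborhood of $\Sigma_{i_0}$ that is disjoint from the other $\Sigma_j$'s. Along $g_t = g + t h$, each $\Sigma_j$ with $j \neq i_0$ remains $g_t$-minimal with unchanged area (since $g_t = g$ near $\Sigma_j$, and uniqueness from IFT forces $\Sigma_j^{g_t} = \Sigma_j$), while minimality of $\Sigma_{i_0}$ and the first variation formula give
\[
    \frac{d}{dt}\Big|_{t = 0}\area_{g_t}(\Sigma_{i_0}^{g_t}) = \frac{1}{2} \int_{\Sigma_{i_0}} \tr_{T\Sigma_{i_0}} h \, d\mathrm{vol}_g,
\]
which is nonzero for $h = \phi g$ with $\phi \geq 0$ a nontrivial bump. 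Hence $\sum_i c_i \, \area_{g_t}(\Sigma_i^{g_t})$ has nonzero derivative at $t = 0$, so an arbitrarily small perturbation of $g$ lies in the open condition; since $\Gamma^\infty_{\mathrm{bumpy}}$ is dense, a further tiny perturbation keeps us inside it. Intersecting these countably many open dense conditions, over all $(g_0, L, I)$ in the chosen cover and all tuples in $\bigcup_N (\mathbb{Z}^N \setminus \{0\})$, delivers $\Gamma^\infty_{\mathrm{uniq}}$.

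The main obstacle to handle carefully is the compatibility between local IFT continuations and the globally defined family $\mathcal{M}_{L, I}(g)$, namely ruling out the appearance of new minimal hypersurfaces of area $\leq L$ and index $\leq I$ under small metric perturbation. This is exactly what Sharp's compactness theorem combined with bumpiness handles: any sequence $\Sigma_k \in \mathcal{M}_{L, I}(g_k)$ with $g_k \to g_0$ subconverges to some $\Sigma_\infty \in \mathcal{M}_{L, I}(g_0)$, and non-degeneracy of $\Sigma_\infty$ then forces $\Sigma_k$ to eventually coincide with the IFT continuation $\Sigma_\infty^{g_k}$.
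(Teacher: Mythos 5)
Your proposal is correct and follows essentially the same route as the paper: the paper defines the same countable family of sets $\cU_{p,\alpha}$ (metrics for which all minimal hypersurfaces of index $\leq p$ and area $\leq \alpha$ are nondegenerate and their areas admit no nontrivial integer relation with bounded coefficients), cites \cite[Claim~8.6]{MN21} for the fact that each $\cU_{p,\alpha}$ is open and dense, and intersects over $p=\alpha=n$. The only difference is that you supply the proof of that open-and-dense claim yourself — via White's bumpy metric theorem, Sharp's compactness, IFT continuations, and the first-variation-of-area transversality argument — which is precisely the argument behind the cited claim.
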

    \begin{proof}
        For each metric $g$, let $\mathfrak M_g$ be the set of closed, embedded, smooth, minimal hypersurfaces in $(M,g)$.
        For each $\alpha>0$ and integer $p>0$, let us denote by $\cU_{p,\alpha}$ the set of smooth metrics on $M$ such that:
        \begin{enumerate}
            \item \label{prop_item_gammaC_nondeg} Every element of $\mathfrak M_g$ with index at most $p$ and area at most $\alpha$ is nondegenerate.
            \item \label{prop_item_gammaC_lin_indep} For any $p_1,\dots,p_N\in \Z$ and $\Sigma_1,\dots,\Sigma_N\in \mathfrak M_g$, where $|p_l|\leq p$, $\index(\Sigma_l)\leq p$, and $\area(\Sigma_l)\leq \alpha$ for each $l$, if
            \[
                p_1\area_g(\Sigma_1)+\dots+p_N\area_g(\Sigma_N)=0\,,
            \]
            then $p_1=\dots=p_N=0$.
        \end{enumerate}
        By \cite[Claim 8.6]{MN21}, for each $p$ and $\alpha$, the set $\cU_{p,\alpha}$ is open and dense in the space of all smooth metrics in $M$. Thus, we can let 
        \[
            \Gamma^\infty_{\mathrm{uniq}}\coloneqq\bigcap_{n\in \mathbb N^+}\cU_{n,n}\,,
        \]
        which is clearly a Baire residual set.
    \end{proof}

    Then, combined with X. Zhou's multiplicity one theorem \cite{Zho20}, this proposition implies that for each $g\in \Gamma^\infty_{\mathrm{uniq}}$, every (restrictive) min-max width can be realized by a unique combination of multiplicity one two-sided minimal hypersurfaces, which is also the boundary of a Caccioppoli set.

    We choose a sequence $(g_i)^\infty_{i = 1}$ in $\Gamma^\infty_{\text{uniq}}$ such that $\|g_i -  g\|_{C^\infty, g} < \eta_0$, and
    \[
        \lim_{i \to \infty} g_i = g
    \]
    in the $C^\infty$ topology. We define for each $i \in \N^+$,
    \[
        L_i \coloneqq \sup_x \Mb_{g_i} \circ \Phi_{i}(x)\,.
    \]
    Let $(\delta_i)^\infty_{i = 1}$ be a decreasing sequence in $\mathbb{R}^+$ such that 
    \[
        \lim_{i \to \infty}\delta_i = 0\,.
    \]

\medskip
\paragraph*{\textbf{Part 2. Restrictive homological min-max.}}
    We set $\eta_2 \coloneqq \eta_0$.

    For each $i \in \mathbb{N}^+$, we consider the restrictive homology class $\HC^{\delta_i}_{g_i}(\Phi_{i})$ of $\Phi_{i}$. For simplicity, we denote 
    \[
        \tilde\HC_i=\tilde \HC^{\delta_i}_{g_i}(\Phi_{i})\textrm{ and }\HC_i=\HC^{\delta_i}_{g_i}(\Phi_{i})\,.
    \]
    
    Note that, since $(\Phi_{i})_i$ is a minimizing sequence of $\mathcal{H}(\Phi)$, we have
    \[
        \lim_{i \to \infty} \bL_{g_i}(\HC_i) = \lim_{i \to \infty} (L_i + \delta_i) = L\,.
    \] 
    Thus, applying the restrictive min-max theorem, Theorem \ref{thm_restrictive_holo_min_max}, to each $\HC_i$,  we obtain for each $i$  a pulled-tight minimizing sequence 
    \[
        (\Psi^{j}_{i}:W^j_i\to\cZ_n(M;\bF_{g_i};\Z_2))^\infty_{j = 1}
    \] 
    in $\tilde \HC_i$ (note $\dim W^j_i=k+1$) such that:
    \begin{enumerate}[(i)]
        \item $\lim_{j \to \infty} \sup\mathbf{M}_{g_i}\circ (\Psi^{j}_{i})^\omega = \bL_{g_i}(\HC_i)\,.$
        \item There exists a constant $\varepsilon_{2,i}>0$ such that for all $j$ large enough,
            \begin{equation}\label{eqn:e0_of_e2i}
                \Mb_{g_i}((\Psi^j_i)^\alpha(x))\geq \bL_{g_i}(\HC_i)-\varepsilon_{2,i} \implies \Mb_{g}((\Psi^j_i)^\alpha(x))\geq L - \bar\varepsilon_0\,,
            \end{equation}
            and
            \begin{equation}\label{eqn:uniq_real}
                \Mb_{g_i}((\Psi^j_i)^\omega(x)) \geq \bL_{g_i}(\HC_i)-\varepsilon_{2,i} \implies |(\Psi^j_i)^\omega(x)|\in \bB^{\bF_{g_i}}_{1/i}(\Sigma_i),
            \end{equation}
        where $\Sigma_i$ is the unique multiplicity-one two-sided minimal surface with area $\bL_{g_i}(\HC_i)$, and it is the boundary of a Caccioppoli set. Indeed, the last statement follows from the fact that $g_i\in\Gamma^\infty_{\text{uniq}}$ and Proposition \ref{prop_baire_residual}.
        \item $|\Sigma_i|$ is $(2k + 1, r_0)_{g_i}$-almost minimizing and thus, $(2k + 3, r_0)_{g_i}$-almost minimizing.
    \end{enumerate}
    Note that, according to the definition of a minimizing sequence, 
    $$\partial W^j_i=(W^j_i)^\alpha\sqcup(W^j_i)^\omega$$
    with $(W^j_i)^\alpha=X_i$, and  $$(\Psi^j_i)^\alpha=\Psi^j_i|_{X_i}=\Phi_i.$$
    
    Then by Proposition \ref{lem:am_cpt_varying_metric}, after relabeling the $i$'s, $\Sigma_i$ converges in $C^\infty$ (with multiplicity one) to some $\Sigma\in \cG_{g}$. Hence, we can assume for any $i \in \mathbb{N}^+$, 
    \begin{equation}\label{eqn:Sigma_i_approx}
        \Sigma_i \in\bB^{\bF_{g}}_{\eta_2 / 2} (\cG_{g})\,.
    \end{equation}
    Furthermore, for each $i$ we can take an $j(i) \in \mathbb{N}^+$ such that, by discarding finitely many $g_i$:
    \begin{enumerate}[(i)]
        \item $(\Psi^{j(i)}_i)^\infty_{i = 1}$ is a minimizing sequence for $\tilde\cH(\Phi)$. 
        \item For all $j\geq j(i)$,
            \begin{equation}  \label{eqn:epsilon_2,i_a}
                \mathbf{M}_{g_i}((\Psi^{j}_i)^\alpha(x)) \geq  \mathbf{L}_{g_i}(\HC_i) - \varepsilon_{2,i} \implies |(\Psi^{j}_i)^\alpha(x)| \in \tilde{\cB}_{g}\,.
            \end{equation}
            and
            \begin{equation}  \label{eqn:epsilon_2,i_w}
                \mathbf{M}_{g_i}((\Psi^{j}_i)^\omega(x)) \geq  \mathbf{L}_{g_i}(\HC_i) - \varepsilon_{2,i} \implies |(\Psi^{j}_i)^\omega(x)| \in \tilde{\cG}_{g}\,.
            \end{equation}
    \end{enumerate}
    Note that the first item follows from $\delta_i\to 0$, while the second from \eqref{eqn:g_bar_e_0}, \eqref{eqn:e0_of_e2i}, \eqref{eqn:uniq_real} and \eqref{eqn:Sigma_i_approx}.
    For simplicity, we will denote 
    \[ 
        \Psi_i\coloneqq\Psi^{j(i)}_i, \quad W_i\coloneqq W^{j(i)}_i\,.
    \]
    
\medskip
\paragraph*{\textbf{Part 3. Pull-tight.}} 
    In this part, our goal is to pull tight, for each $i$, $\Psi_i$ to obtain a new sequence $\hat{\Psi}_i\in\tilde \HC_i$, such that for some $\eta_3 \in (0, \eta_0)$ the critical set
    \[
        \mathbf{C}_{g}((\hat{\Psi}_i)_{i }) \subset \bB^{\bF_{g}}_{\eta_3}(\mathcal{SV}^{L}_{g})\,, 
    \]
    while preserving conditions similar to \eqref{eqn:epsilon_2,i_a} and \eqref{eqn:epsilon_2,i_w}. 

    We set 
    \[
        \eta_3 \coloneqq 3\eta_2\,.
    \]
    Additionally, for any positive constant $A$, we denote the set
    \[
        \mathcal{V}^{\leq A}_g \coloneqq \{V \in \mathcal{V}_n (M) \mid \|V\|_{g}(M) \leq A\}\,.
    \]
    We also denote, by $\mathcal{S}^{\leq A}_{g}$, a subset of $\mathcal{V}^{\leq A}_g$ consisting of stationary varifolds with respect to the metric $g$, and additionally, denote
    \begin{equation}\label{eq:U}
        \mathcal{U}^{\leq A}_g \coloneqq \mathcal{V}^{\leq A}_g \setminus \bB^{\bF_{g}}_{\eta_3}(\mathcal{S}^{\leq A}_g)\,,
    \end{equation}
    which is a compact set. Similarly, for cycles, we define
    \[
        \cZ^{\leq A}_{n, g} \coloneqq \{T\in\cZ_n(M;\Z_2):\bM_{g}(T)\leq A\}\,.
    \]

    Following Pitt's pull-tight construction \cite{Pit81}, we have the following proposition which, intuitively, asserts that we pull tight $\Psi_i$ with respect to both $g$ and $g_i$ metrics. The proof is postponed to \S \ref{proof_prop_pull_tight}.
    
    \begin{prop} \label{prop_pull_tight}
        After possibly discarding finitely many elements in the sequence $(g_i)_i$ of metrics, there exists a continuous deformation map 
        \[
            H:[0,1]\x \cZ^{\leq A}_{n, g} \to \cZ^{\leq A}_{n, g}
        \]
        satisfying the following properties:
        \begin{enumerate}
            \item\label{prop_item_start_id} $H(0,\cdot)=\id$.
            \item\label{prop_item_fix_stationary} If $|T| \in \bB^{\Fb_{g}}_{\eta_3 / 2}(\mathcal{S}^{\leq A}_g)$ then $H(t,T)=T$ for each $t$.
            \item\label{prop_item_decrease_mass} For each $i$ and $(t,T)$, $\bM_{g}(H(t,T))\leq \bM_{g}(T)$ and $\bM_{g_i}(H(t,T)) \leq \bM_{g_i}(T)$. And either equality holds only if $H(t,T)=T$.
            \item\label{prop_item_mass_fixed_amount_below} There exists $\varepsilon_3 > 0$, such that for each $i$ and $T$, if $|H(1,T)|\notin \bB^{\bF_{g}}_{\eta_3}(\mathcal{S}^{\leq A}_g)$, then
            \[
                \bM_{g}(H(1,T)) \leq \bM_{g}(T) - \varepsilon_3\,,\quad \bM_{g_i}(H(1,T)) \leq \bM_{g_i}(T) - \varepsilon_3\,.
            \]
        \end{enumerate}
    \end{prop}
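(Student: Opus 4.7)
The plan is to adapt Pitts' classical pull-tight construction \cite{Pit81} (see also \cite[\S 15]{MN14}) to our bi-metric setting. The deformation $H$ will be produced by flowing cycles along a continuous family of ambient vector fields on $M$, cut off near the $\bar g$-stationary locus. The key additional feature is that a flow that decreases $\bar g$-mass also decreases $g_i$-mass for $i$ large, by $C^\infty$-convergence $g_i \to \bar g$ together with compactness of $\mathcal{U}^{\leq A}$; this is what allows us to discard finitely many early $g_i$.

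First, I would construct a continuous assignment $V \mapsto X_V$ from $\mathcal{U}^{\leq A}$ to smooth vector fields on $M$ with $\delta^{\bar g} V(X_V) \leq -c_0 < 0$ uniformly. For each $V \in \mathcal{U}^{\leq A}$ the varifold $V$ is not $\bar g$-stationary, so some $X$ gives $\delta^{\bar g} V(X) < 0$; continuity of the first variation in the varifold topology, compactness of $\mathcal{U}^{\leq A}$, and a subordinate partition of unity then yield the desired assignment. Since the first variation is jointly continuous in $V$ and in the metric (for $g$ in the $C^1$-topology), after discarding finitely many $g_i$ we also have $\delta^{g_i} V(X_V) \leq -c_0 / 2$ uniformly on $\mathcal{U}^{\leq A}$. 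Next, pick a continuous cutoff $\chi : \mathcal{V}^{\leq A} \to [0, 1]$ (in the $\Fb_{\bar g}$-topology) that vanishes on $\bB^{\Fb_{\bar g}}_{\eta_3 / 2}(\mathcal{S}^{\leq A})$ and equals $1$ outside $\bB^{\Fb_{\bar g}}_{2\eta_3/3}(\mathcal{S}^{\leq A})$, and replace $X_V$ by $\chi(V) X_V$ to extend the assignment to all of $\mathcal{V}^{\leq A}$.

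I would then define $H(t, T)$ as the push-forward of $T$ by the time-$(\tau t)$ flow on $M$ of the vector field $X_{|H(s,T)|}$ evaluated along the trajectory, where $\tau > 0$ is a small fixed constant. Joint continuity of $H$ in $(t, T)$ follows from standard ODE dependence together with continuity of push-forward under Lipschitz maps in the $\Fb$-topology. Properties (1) and (2) are immediate from $\chi(V) X_V \equiv 0$ when $|V| \in \bB^{\Fb_{\bar g}}_{\eta_3/2}(\mathcal{S}^{\leq A})$. Property (3) follows from $\tfrac{d}{dt} \Mb_g (H(t, T)) = \chi(|H(t, T)|) \cdot \delta^{g} H(t, T) (X_{|H(t, T)|}) \leq 0$ in both $g = \bar g$ and $g = g_i$; equality over $[0, t]$ forces $\chi$ to vanish along the trajectory, hence $H(t, T) = T$.

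The main obstacle is Property (4), but it becomes manageable by choosing $\tau$ small enough. Since $\{X_V\}$ is uniformly bounded on $M$, a Lipschitz bound on the flow gives $\Fb_{\bar g}(|H(t, T)|, |T|) \leq C \tau$ for all $(t, T)$ with a uniform constant $C$; choosing $\tau \leq \eta_3 / (6 C)$ forces $\Fb_{\bar g}(|H(1, T)|, |T|) < \eta_3 / 6$. If $|H(1, T)| \notin \bB^{\Fb_{\bar g}}_{\eta_3}(\mathcal{S}^{\leq A})$, the triangle inequality yields $|T| \notin \bB^{\Fb_{\bar g}}_{5\eta_3 / 6}(\mathcal{S}^{\leq A}) \supset \bB^{\Fb_{\bar g}}_{2\eta_3 / 3}(\mathcal{S}^{\leq A})$, so $\chi \equiv 1$ along the whole trajectory; the uniform rates $-c_0$ (resp.\ $-c_0 / 2$) then yield a uniform deficit $\varepsilon_3$ of order $c_0 \tau$. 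The hardest technical point is making the time-dependent vector field construction fully rigorous (the direction changes along the trajectory), for which one can either iterate a short-time flow a finite number of times as in Pitts' combinatorial argument, or appeal directly to \cite[Theorem 4.3]{Pit81} and \cite[\S 15]{MN14} for the relevant continuity and monotonicity statements, carried over to $g_i$ by the uniform estimate $\delta^{g_i} V(X_V) \leq -c_0/2$ established above.
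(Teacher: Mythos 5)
Your overall strategy coincides with the paper's: cover the compact non-$\bar g$-stationary locus, build a continuous assignment $V\mapsto X(V)$ of vector fields with a uniformly negative first variation $\delta_{\bar g}V(X(V))<-2\bar\varepsilon$, transfer this to $\delta_{g_i}V(X(V))<-\bar\varepsilon$ for all but finitely many $i$ by $C^\infty$-convergence and compactness (this bi-metric transfer is the only genuinely new content relative to Pitts and \cite{MN14}, and you handle it correctly), then flow and cut off near $\bB^{\Fb_{\bar g}}_{\eta_3/2}(\mathcal{S}^{\leq A})$. Two points of divergence are worth recording. First, your primary definition of $H$ flows $T$ along the field $X_{|H(s,T)|}$ re-evaluated at the current varifold; this is an ODE in varifold space for a merely continuous (not Lipschitz) right-hand side, so existence, uniqueness and continuity of such a flow are not available, and your derivative computation for property (3) and the ``uniform rate along the trajectory'' step in property (4) both presuppose it. You flag this yourself; the fix is the one the paper uses: freeze the field at the initial varifold, set $f(t,V)$ to be the flow of $X(V)$ alone, and flow only for a continuously chosen time $h(V)$ on which mass is strictly decreasing in $\bar g$ and in every $g_i$. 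With the frozen field your quantitative argument for (4) still needs a uniform-continuity step (the first variation of $f(t,|T|)_\#|T|$ against the fixed field $X(|T|)$ stays below $-c_0/2$ for $t$ small, uniformly over the compact set of relevant $T$); the paper instead proves (4) by a compactness--contradiction argument on a minimizing sequence $(T_j)_j$, which avoids that estimate. Both routes work. Second, a small bookkeeping slip: you construct $X_V$ only on $\mathcal{U}^{\leq A}=\mathcal{V}^{\leq A}\setminus\bB^{\Fb_{\bar g}}_{\eta_3}(\mathcal{S}^{\leq A})$ but then multiply by a cutoff supported outside $\bB^{\Fb_{\bar g}}_{\eta_3/2}(\mathcal{S}^{\leq A})$, where $X_V$ and its uniform negativity have not been defined. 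Run the compactness construction on the larger compact set $\mathcal{V}^{\leq A}\setminus\bB^{\Fb_{\bar g}}_{\eta_3/2}(\mathcal{S}^{\leq A})$ (every varifold there is still non-stationary), or, as the paper does, cover $\mathcal{U}^{\leq A}$ by $\Fb_{\bar g}$-balls of radius $<\eta_3/2$ so that the enlarged cover stays disjoint from $\bB^{\Fb_{\bar g}}_{\eta_3/2}(\mathcal{S}^{\leq A})$ while the first-variation bound persists on each ball.
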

    
    We choose $A \coloneqq L + 1$ and, in accordance with the proposition above, discard finitely many values of $i$. Without loss of generality, we may assume for each $i$,
    \[
        \sup \bM\circ \Psi_{i} + \delta_i < A.
    \]

    Next, we define for each $i$ the space
    \[
        \hat W_i\coloneqq([0,1] \x X_i)\cup W_i
    \]
    where $\{1\}\x X_i \subset [0,1]\x X_i$ and $X_i \subset W_i$ are identified. Note that
    \[
        \partial \hat W_i=(\{0\}\x X_i) \cup [0, 1] \times Z \cup W_i^\omega.
    \]
    Naturally, we denote $\hat W_i^\alpha:=\{0\}\x X_i$, $\hat W_i^\omega:= [0, 1] \times Z \cup W^\omega_i$ (see Figure \ref{fig:hatWi}).
    We can apply Proposition \ref{prop_pull_tight} to obtain a deformation map $H$, and define
    \[
        \hat\Psi_i:\hat W_i\to \cZ_n(M;\bF_{g_i};\Z_2)
    \] by
    \begin{align*}
        \hat\Psi_i|_{[0,1]\x X_i}(t, x)&\coloneqq H(t, \Phi_{i}(x))\\
        \hat\Psi_i|_{W_i}(x)&\coloneqq H(1,\Psi_{i}(x)).
    \end{align*}

        \begin{figure}
            \centering
            \includegraphics[width=3.5in]{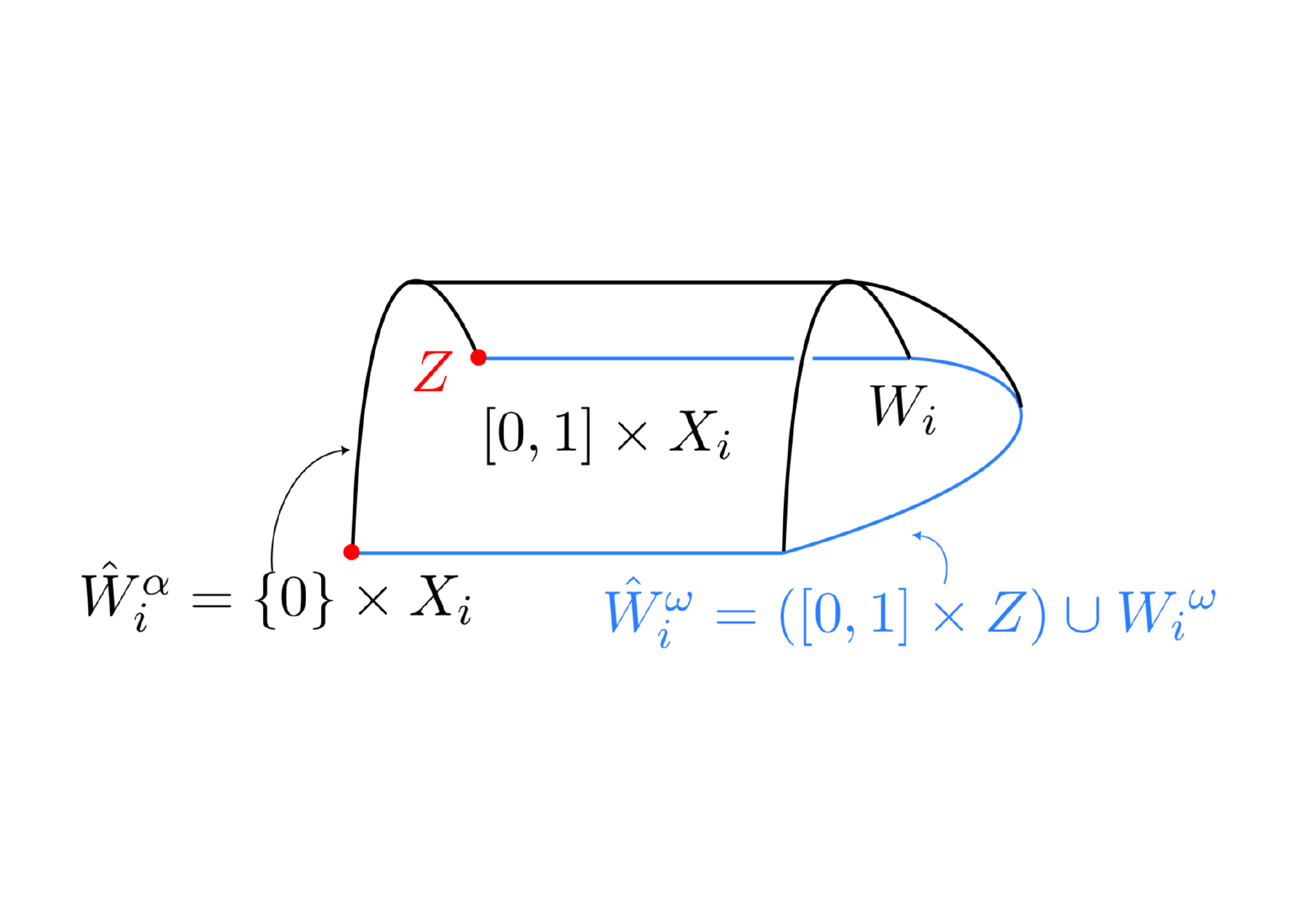}
            \caption{Construction of $\hat W_i=\dmn(\hat\Psi_i)$.}
            \label{fig:hatWi}
        \end{figure}

    Now, note that: 
    \begin{enumerate}[(i)]
        \item Since $(M, g_i)$ and $(M, g)$ are diffeomorphic for each $i$, $\bF_{g_i}$ and $\bF_{g}$ induces homeomorphic topologies. Thus, $\hat\Psi_i$ is still $\Fb_{g_i}$-continuous.
        \item By construction, we still have $\hat\Psi^\alpha_i=\Phi_{i}$.
        \item By Proposition \ref{prop_pull_tight} (\ref{prop_item_decrease_mass}), 
        \[
            \sup_{x \in \hat W_i} \bM_{g_i}\circ\hat \Psi_i (x) \leq \sup_{x \in W_i} \bM_{g_i}\circ \Psi_i(x) < \sup_{x \in X_i} \bM_{g_i}\circ\Phi_{i}(x) + \delta_i.
        \]
    \end{enumerate}
    Consequently, we can conclude that $\hat\Psi_i\in\tilde\HC_i$. Moreover, by (ii) and (iii), since $\delta_i\to 0$, $(\hat{\Psi}_i)_{i }$ is also a minimizing sequence.
    
    Additionally, we have the following lemma with proof postponed to \S \ref{sect_lem_critical_set_proof}.

    \begin{lem}\label{lem_critical_set}
        Under Notation \ref{nota:critical}, we have $\mathbf{C}_{g}((\hat{\Psi}_i)_{i }) \subset \bB^{\bF_{g}}_{\eta_3}(\mathcal{SV}^{L}_{g})\,.$
    \end{lem}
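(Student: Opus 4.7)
Pick $V\in\mathbf{C}_{\bar g}((\hat\Psi_i)_i)$, realized as a varifold limit $|\hat\Psi_{i_j}(x_j)|\to V$ with $i_j\to\infty$, $x_j\in\hat X_{i_j}$, and $\|V\|_{\bar g}(S^3)=8\pi$. Using $\hat X_{i_j}=([0,1]\times\RP^{13})\cup X_{i_j}$, I split into two cases according to where $x_j$ lies (after passing to a subsequence) and analyze each using the deformation $H$ from Proposition \ref{prop_pull_tight}. The common strategy is to use the mass-monotonicity of $H$ together with the minimizing nature of the underlying sequence to force mass convergence to $8\pi$, and then use the fixing property (2) or the quantitative almost-stationarity (4) to place $|\hat\Psi_{i_j}(x_j)|$ near a stationary varifold of the right mass.

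Suppose first that $x_j=(t_j,y_j)\in[0,1]\times\RP^{13}$, so that $\hat\Psi_{i_j}(x_j)=H(t_j,\Phi_{0,i_j}(y_j))$. Proposition \ref{prop_pull_tight}(3) gives $\bM_{\bar g}(\hat\Psi_{i_j}(x_j))\leq \bM_{\bar g}(\Phi_{0,i_j}(y_j))\leq 8\pi+o(1)$, the last inequality because $(\Phi_{0,i})_i$ is a minimizing sequence for $\omega_{13}(S^3,\bar g)=8\pi$. Combined with lower semicontinuity of mass together with $\|V\|_{\bar g}(S^3)=8\pi$, this forces $\bM_{\bar g}(\Phi_{0,i_j}(y_j))\to 8\pi$, which upgrades the varifold convergence of $\Phi_{0,i_j}(y_j)$ (along a further subsequence) to $\bF_{\bar g}$-convergence to some $V'\in\mathbf{C}_{\bar g}((\Phi_{0,i})_i)\subset\cB_{\bar g}\subset\mathcal{SV}^{8\pi}_{\bar g}\subset\mathcal{S}^{\leq A}$. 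For $j$ large, $|\Phi_{0,i_j}(y_j)|\in\bB^{\bF_{\bar g}}_{\eta_3/2}(\mathcal{S}^{\leq A})$, so Proposition \ref{prop_pull_tight}(2) yields $\hat\Psi_{i_j}(x_j)=\Phi_{0,i_j}(y_j)$, and we conclude $V=V'\in\mathcal{SV}^{8\pi}_{\bar g}\subset\bB^{\bF_{\bar g}}_{\eta_3}(\mathcal{SV}^{8\pi}_{\bar g})$.

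In the remaining case $x_j\in X_{i_j}$, we have $\hat\Psi_{i_j}(x_j)=H(1,\Psi_{i_j}(x_j))$. The construction of $\Psi_i$ gives $\bM_{g_{i_j}}(\Psi_{i_j}(x_j))\leq \bL(\HC_{i_j})+o(1)\to 8\pi$, and $g_i\to\bar g$ in $C^\infty$ upgrades this to $\bM_{\bar g}(\Psi_{i_j}(x_j))\leq 8\pi+o(1)$. Proposition \ref{prop_pull_tight}(4) then forces $|\hat\Psi_{i_j}(x_j)|\in\bB^{\bF_{\bar g}}_{\eta_3}(\mathcal{S}^{\leq A})$ for all large $j$, since the alternative $\bM_{\bar g}(\hat\Psi_{i_j}(x_j))\leq \bM_{\bar g}(\Psi_{i_j}(x_j))-\varepsilon_3\leq 8\pi-\varepsilon_3+o(1)$ would contradict $\bM_{\bar g}(\hat\Psi_{i_j}(x_j))\to 8\pi$. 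To finish, I argue that $V$ is itself stationary, so that $V\in\mathcal{SV}^{8\pi}_{\bar g}\subset\bB^{\bF_{\bar g}}_{\eta_3}(\mathcal{SV}^{8\pi}_{\bar g})$ trivially: since $\hat\Psi_i\in\mathcal{P}_{13}$ (the $13$-sweepout property is inherited via the inclusion $\RP^{13}\hookrightarrow\hat X_i$ on which $\hat\Psi_i$ restricts to $\Phi_{0,i}$) and $(\hat\Psi_i)$ is a minimizing sequence for $\omega_{13}(S^3,\bar g)=8\pi$, a standard Pitts-style cut-off first-variation argument shows that otherwise an arbitrarily small $\bF_{\bar g}$-homotopy of $\hat\Psi_i$ supported near $V$ would strictly drop $\limsup\sup\bM_{\bar g}\circ\hat\Psi_i$ below $8\pi$, violating $\omega_{13}(S^3,\bar g)=8\pi$. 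The main obstacle is precisely this last stationarity step: Proposition \ref{prop_pull_tight}(4) only places $|\hat\Psi_{i_j}(x_j)|$ within $\eta_3$ of $\mathcal{S}^{\leq A}$, which in the limit gives at best closeness to a stationary varifold of mass only approximately $8\pi$, and the gap is bridged by this additional pull-tight contradiction rather than by the properties of $H$ alone.
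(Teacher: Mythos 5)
Your first case (points in $[0,1]\times\RP^{13}$) and the first half of your second case follow the paper's argument and are fine: using Proposition \ref{prop_pull_tight}(3) to squeeze the masses up to $8\pi$, then invoking property (2) (for the $\Phi_{0,i}$ part) and the mass-drop property (4) (for the $X_i$ part) is exactly what the paper does. The problem is your final step. To bridge from ``$|\hat\Psi_{i_j}(x_j)|\in\bB^{\bF_{\bar g}}_{\eta_3}(\mathcal{S}^{\leq A})$'' to the stated conclusion you assert that $V$ itself is stationary, on the grounds that otherwise a small homotopy of $\hat\Psi_i$ supported near $V$ would push $\limsup_i\sup\bM_{\bar g}\circ\hat\Psi_i$ strictly below $8\pi$. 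This is false: a deformation localized near $V$ in the space of varifolds only lowers the mass of those slices $\hat\Psi_i(x)$ whose varifolds are close to $V$; the supremum over $x$ can perfectly well be attained (up to $o(1)$) at slices far from $V$, so the deformed maps still have maximum $\to 8\pi$ and no contradiction with $\omega_{13}(S^3,\bar g)=8\pi$ arises. Indeed, the critical set of a general minimizing sequence need not consist of stationary varifolds --- that is precisely why the pull-tight procedure produces a \emph{new} sequence rather than a statement about the old one --- and the entire design of Proposition \ref{prop_pull_tight} (fixing only $\bB^{\Fb_{\bar g}}_{\eta_3/2}(\mathcal{S}^{\leq A})$, guaranteeing a definite mass drop only outside $\bB^{\Fb_{\bar g}}_{\eta_3}(\mathcal{S}^{\leq A})$) means the lemma can only claim the $\eta_3$-neighborhood, not stationarity of $V$.

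The paper's proof of the second case is shorter and does not attempt to show $V$ is stationary: it argues by contradiction that if $V$ were \emph{outside} the $\eta_3$-ball, then Proposition \ref{prop_pull_tight}(4) would force $\bM_{\bar g}(\hat\Psi_i(x_i))\leq\bM_{\bar g}(\Psi_i(x_i))-\varepsilon_3$ for all large $i$, which is incompatible with $\bM_{\bar g}(\hat\Psi_i(x_i))\to\|V\|(S^3)=8\pi$ and $\limsup_i\sup\bM_{\bar g}\circ\Psi_i=8\pi$. You should replace your stationarity argument with this contradiction and stop there. (You are right to be uneasy about the distinction between closeness to $\mathcal{S}^{\leq A}$ and closeness to $\mathcal{SV}^{8\pi}_{\bar g}$ --- the paper is terse on this point --- but the cure cannot be the claim that $V$ is stationary.)
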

    
    Hence, there exists an $\bar\varepsilon_{3}>0$ such that for every sufficiently large $i$ and each $x\in \hat W_i$, if $|\hat{\Psi}_{i}(x)| \notin \bB^{\bF_{g}}_{\eta_3}(\mathcal{SV}^{L}_{g})$, then
    \begin{equation} \label{eqn:epsilon_bar_3}
        \mathbf{M}_{ g}(\hat{\Psi}_{i}(x)) < L - \bar\varepsilon_{3}\,, \quad \mathbf{M}_{g_i}(\hat{\Psi}_{i}(x)) < \mathbf{L}_{g_i}(\HC_i) - \bar \varepsilon_{3}\,.
    \end{equation}
    Additionally, let $\varepsilon_{3,i} \coloneqq \varepsilon_{2, i}$ and we observe that
    \begin{equation}  \label{eqn:epsilon_3,i_a}
        \mathbf{M}_{g_i}(\hat \Psi^\alpha_i(x)) \geq  \mathbf{L}_{g_i}(\HC_i) - \varepsilon_{3,i} \implies |\hat \Psi^\alpha_i(x)| \in \tilde{\cB}_{g}\,,
    \end{equation}
    and 
    \begin{equation}  \label{eqn:epsilon_3,i_w}
        \mathbf{M}_{g_i}(\hat \Psi^\omega_i(x)) \geq  \mathbf{L}_{g_i}(\HC_i) - \varepsilon_{3,i} \implies |\hat \Psi^\omega_i(x)| \in \tilde{\cG}_{g}\,.
    \end{equation}
    Indeed, \eqref{eqn:epsilon_3,i_a} trivially follows from \eqref{eqn:epsilon_2,i_a} and the fact that $\hat\Psi^\alpha_i=\Phi_{i}$. Regarding \eqref{eqn:epsilon_3,i_w}, we can justify it step by step:
    \begin{align*}
        & \mathbf{M}_{g_i}(\hat \Psi^\omega_i(x)) \geq \mathbf{L}_{g_i}(\HC_i) - \varepsilon_{3,i} \\
        \implies & \mathbf{M}_{g_i}(\Psi^\omega_i(x)) \geq  \mathbf{L}_{g_i}(\HC_i) - \varepsilon_{3,i}\quad &&(\text{by Proposition \ref{prop_pull_tight}} (\ref{prop_item_decrease_mass}))\\
        \implies & |\Psi^\omega_i(x)| \in \tilde{\cG}_{g} \subset \bB^{\bF_{g}}_{\eta_3 / 2}(\mathcal{SV}^{L}_{g})\quad &&(\text{by \eqref{eqn:epsilon_2,i_w}})\\
        \implies & \Psi^\omega_i(x) = \hat \Psi^\omega_i(x) \quad &&(\text{by Proposition \ref{prop_pull_tight} ((\ref{prop_item_fix_stationary}))})\\
        \implies & |\hat \Psi^\omega_i(x)| \in \tilde{\cG}_{g}\,.
    \end{align*}

    For simplicity, we discard finitely many $i$ such that \eqref{eqn:epsilon_bar_3}, \eqref{eqn:epsilon_3,i_a} and \eqref{eqn:epsilon_3,i_w} hold for every $i$.

\medskip
\paragraph*{\textbf{Part 4. $(\varepsilon, \delta)$-deformation.}}
    In this part, we aim to construct a sweepout $\hat{\hat{\Psi}}:\hat{\hat{W}}\to\Zc_n(M;\bF_{g_i};\Z_2)$ in $\tilde \HC_i$ such that for each $x\in \hat{\hat{W}}$,
    \begin{equation}\label{eq_hat_hat_Psi_prop}
        \mathbf{M}_{g_i}(\hat{\hat{\Psi}}(x)) \geq \mathbf{L}_{g_i}(\HC_i) - \varepsilon_{4} \implies |\hat{\hat{\Psi}}(x)| \in \hat\cG_{g} \sqcup \hat\cB_{g}\,,
    \end{equation}
    where $i$ is some positive integer, and $\varepsilon_4 > 0$.
    Furthermore, similar to \eqref{eqn:epsilon_3,i_a} and \eqref{eqn:epsilon_3,i_w}, it also satisfies
    \begin{equation}\label{eqn:epsilon_4_a}
        \mathbf{M}_{g_i}(\hat{\hat\Psi}^\alpha(x)) \geq  \mathbf{L}_{g_i}(\HC_i) - \varepsilon_{4}\implies|\hat {\hat \Psi}^\alpha(x)| \in \hat\cB_{g}\,,
    \end{equation}
    and
    \begin{equation}\label{eqn:epsilon_4_w}
        \mathbf{M}_{g_i}(\hat{\hat\Psi}^\omega(x)) \geq  \mathbf{L}_{g_i}(\HC_i) - \varepsilon_{4}\implies|\hat {\hat \Psi}^\omega(x)| \in \hat\cG_{g}\,.
    \end{equation}

    For each $i \in \N^+$, we choose an arbitrary $\varepsilon > 0$, which we will specify later, and apply Proposition \ref{prop:M_cts_approx_F_cts} with this $\varepsilon$ to $\hat \Psi_i$ to obtain a $\Mb_{g_i}$-continuous map
    \[
        \hat \Psi'_i: \hat W_i \to \Zc_n(M; \bM_{g_i}; \Z_2)\,.
    \]
    $\hat \Psi'_i(x)$ satisfies the conditions:
    \begin{enumerate}[(i)]
        \item There is an $\Fb_{g_i}$-continuous homotopy 
        \[
            \hat H_i: [0, 1] \times \hat W_i \to \Zc_n(M; \bF_{g_i}; \Z_2)
        \]
        with $\hat H_i(0, \cdot) = \hat \Psi_i$ and $\hat H_i(1, \cdot) = \hat \Psi'_i$;
        \item $\sup_{(t, x)} \Fb_{g_i}(\hat H_i(t, x), \hat \Psi_i(x)) < \varepsilon$;
        \item $\sup_{(t, x)} \Mb_{g_i}(\hat H_i(t, x)) < \sup_x \Mb_{g_i}(\hat \Psi_i(x)) + \varepsilon$.
    \end{enumerate}
    We can take $\varepsilon$ very small such that
    \begin{equation}\label{eqn:hat_H_i_upper_bound}
        \sup_{(t, x)} \Mb_{g_i}(\hat H_i(t, x)) < L_i + \delta_i\,,
    \end{equation}
    and for $\eta_4 \coloneqq 2\cdot\eta_3 \in (0, \min(\eta_0, \varepsilon_0))$, $\bar \varepsilon_4 \coloneqq \bar \varepsilon_3 / 2$, and $\varepsilon_{4, i} \coloneqq \varepsilon_{3, i}/2$,
    \begin{align}
        \label{eqn:epsilon_bar_4} \forall x \in \hat W_i,\ \mathbf{M}_{g_i}(\hat H_i(t, x)) \geq  \mathbf{L}_{g_i}(\HC_i) - \bar\varepsilon_4 &\implies|\hat H_i(t, x)| \in \bB^{\bF_{g}}_{\eta_4} (\mathcal{SV}^{L}_{g})\,,\\
        \label{eqn:epsilon_4_i_a} \forall x \in \hat W^\alpha_i,\ \mathbf{M}_{g_i}(\hat H_i(t, x)) \geq  \mathbf{L}_{g_i}(\HC_i) - \varepsilon_{4,i}&\implies|\hat H_i(t, x)| \in \tilde{\cB}_{g}\,,\\
        \label{eqn:epsilon_4_i_w} \forall x \in \hat W^\omega_i,\ \mathbf{M}_{g_i}(\hat H_i(t, x)) \geq  \mathbf{L}_{g_i}(\HC_i) - \varepsilon_{4,i}&\implies|\hat H_i(t, x)| \in \tilde{\cG}_{g}\,,
    \end{align}
    for any $t \in [0, 1]$. These conditions are a direct result of (ii) and the previously established \eqref{eqn:epsilon_bar_3}, \eqref{eqn:epsilon_3,i_a}, and \eqref{eqn:epsilon_3,i_w}.

    Given that $\lim_i (L_i + \delta_i) = \lim_i \mathbf{L}_{g_i}(\HC_i)$, for sufficiently large $i$, we have
    \[
        \mathbf{L}_{g_i}(\HC_i) - \min(\varepsilon_0, d_0, \bar \varepsilon_4) / 100 > L_i + \delta_i - \min(\varepsilon_0, d_0, \bar \varepsilon_4)/10\,.
    \] We can fix such an $i$ for our subsequent construction.

    Now, in $(M, g_i)$, we apply Lemma \ref{lem:(e, d)-deformation} to $\hat \Psi'_i$ with
    \begin{align*}
        R &= d_0\,,\\
        \bar\varepsilon &= \min(\varepsilon_0, d_0, \bar \varepsilon_4)\,,\\
        \eta &= r_0\,,\\
        s &= s_0\,,\\
        \mathcal{W} &= \cG_{g} \cup \cB_{g}\,.
    \end{align*}
    To verify that $\hat \Psi'_i$ satisfies the assumptions of Lemma \ref{lem:(e, d)-deformation}, let $x \in \hat W_i$ satisfy 
    \[
        \bM_{g_i}(\hat \Psi'_i(x)) \geq L - \bar\varepsilon\,,\quad \Fb_{g_i}(|\hat \Psi'_i(x)|, \mathcal{W}) \geq R\,.
    \]
    Since $L = \sup_{x \in \hat X_i} \bM_g(\hat \Psi'_i(x)) \geq \mathbf{L}_{g_i}(\HC_i)$ and $\bar \varepsilon \leq \bar \varepsilon_4$, we have
    \[
        \bM_{g_i}(\hat \Psi'_i(x)) \geq \mathbf{L}_{g_i}(\HC_i) - \bar\varepsilon_4\,,
    \]
    and thus, by \eqref{eqn:epsilon_bar_4},
    \[
        |\hat \Psi'_i(x)| \in \bB^{\bF_{g}}_{\eta_4} (\mathcal{SV}^{L}_{g}) \subset \bB^{\bF_{g}}_{\eta_0} (\mathcal{SV}^{L}_{g})\,.
    \]
    By Lemma \ref{lem:not_am_varying_metric}, given that 
    \[
        R = d_0 < \eta_{\ref{lem:far_of_diff_geom}}(M, g, 2k + 3, L, \cG_{g}, \cB_{g})
    \] and 
    \[
        \|g_i - g\|_{C^\infty, g} < \eta_0 \leq \eta_{\ref{lem:not_am_varying_metric}}(M, g, 2k + 3, r_0, d_0, L)\,,
    \] $|\hat \Psi'_i(x)|$ satisfies the annular $(\bar \varepsilon, \delta)$-deformation conditions as required by Lemma \ref{lem:(e, d)-deformation}.
    
    Consequently, for an arbitrary $\bar \delta > 0$, which will be specified later, we obtain a continuous map
    \[
        \hat \Psi^*_i: \hat{W}_i \to \Zc_n(M; \bM_g; \Z_2)\,,
    \]
    and a homotopy map
    \[
        \hat H^\textsc{DEF}_i: [0, 1] \times \hat{W}_i \to \Zc_n(M; \bM_g; \Z_2)
    \]
    with $\hat H^\textsc{DEF}_i(0, \cdot) = \hat\Psi'_i$ and $\hat H^\textsc{DEF}_i(1, \cdot) = \hat\Psi^*_i$. 
    
    We can indeed choose $\bar \delta$ to satisfy the following conditions:
    \begin{enumerate}[(i)]
        \item $\bar \delta < \min(d_0, \varepsilon_{4, i}/10)$;
        \item \begin{equation}\label{eqn:H_DEF_upper_bound}
            \sup_{t, x} \Mb_{g_i} \circ \hat H^\textsc{DEF}_i(t, x) < L_i + \delta_i\,,
        \end{equation}
        \item For any $T, S \in \Zc_n(M; \Z_2)$ with $\Mb_{g_i}(T), \Mb_{g_i}(S) \leq L_i + \delta_i$,
        \[
            \Mb_{g_i}(T, S) < \bar\delta  \implies \Fb_{g_i}(T, S) < d_0\,.
        \]
    \end{enumerate}

    Now, we need the following three claims, whose proof will be postponed to the \S \ref{sect:technical}.
    
    \begin{claim}\label{claim1}
        For any $x \in \hat W_i$, if $\Mb_{g_i}(\hat \Psi^*_i(x)) \geq \mathbf{L}_{g_i}(\HC_i) - \bar \varepsilon / 100$, then $|\hat \Psi^*_i(x)| \in \hat \cG_{g} \sqcup \hat \cB_{g}$
    \end{claim}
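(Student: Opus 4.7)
The plan is to apply Lemma~\ref{lem:(e, d)-deformation} at $t=1$ to produce a witness point $\hat x\in\hat X_i$ and a cycle $T_{1,x}\in\cZ_n(M;\Z_2)$ that are $\Mb_{g_i}$-close to $\hat\Psi_i^*(x)$, then use conclusion~(3) of that lemma to place $|\hat\Psi'_i(\hat x)|$ in a small $\Fb_{g_i}$-neighborhood of $\cG_{\bar g}\cup\cB_{\bar g}$, and finally feed the triple $(|\hat\Psi'_i(\hat x)|,\,|T_{1,x}|,\,|\hat\Psi_i^*(x)|)$ into Lemma~\ref{lem:far_of_diff_geom} to conclude $|\hat\Psi_i^*(x)|\in\hat\cG_{\bar g}\sqcup\hat\cB_{\bar g}$.

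Setting $L\coloneqq\sup_{x\in\hat X_i}\Mb_{g_i}(\hat\Psi'_i(x))$, the bound \eqref{eqn:hat_H_i_upper_bound} gives $L<S_i+\delta_i$, while the choice of $i$ made just before applying Lemma~\ref{lem:(e, d)-deformation} gives $\bL(\HC_i)-\bar\varepsilon/100>S_i+\delta_i-\bar\varepsilon/10$. Together these yield $\bL(\HC_i)-\bar\varepsilon/100>L-\bar\varepsilon/10$, so the hypothesis $\Mb_{g_i}(\hat\Psi_i^*(x))\ge\bL(\HC_i)-\bar\varepsilon/100$ activates both conclusions~(2) and~(3) of Lemma~\ref{lem:(e, d)-deformation}. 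Conclusion~(2) furnishes $\hat x\in\hat X_i$ and $T_{1,x}\in\cZ_n(M;\Z_2)$ with $\Mb_{g_i}(\hat\Psi_i^*(x)-T_{1,x})<\bar\delta$ and $T_{1,x}=\hat\Psi'_i(\hat x)$ outside a collection of at most $3^{29}$ open $g_i$-balls of radius $r_0$; the third bullet in the choice of $\bar\delta$ upgrades this to $\Fb_{g_i}(\hat\Psi_i^*(x),T_{1,x})<d_0$. Conclusion~(3), applied with $\cW=\cG_{\bar g}\cup\cB_{\bar g}$ and $R=d_0$, gives $\Fb_{g_i}(|\hat\Psi'_i(\hat x)|,\cG_{\bar g}\cup\cB_{\bar g})\le 2d_0$.

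Since the compact sets $\cG_{\bar g}$ and $\cB_{\bar g}$ are disjoint in $\Vc_n(S^3)$ and $g_i$ is $C^\infty$-close to $\bar g$ for large $i$, the estimate $2d_0=\eta_{\ref{lem:far_of_diff_geom}}/5$ forces $|\hat\Psi'_i(\hat x)|$ into exactly one of $\bB^{\Fb_{\bar g}}_{\eta_{\ref{lem:far_of_diff_geom}}}(\cG_{\bar g})$ or $\bB^{\Fb_{\bar g}}_{\eta_{\ref{lem:far_of_diff_geom}}}(\cB_{\bar g})$. In the former case I set $V_1\coloneqq|\hat\Psi'_i(\hat x)|$, $V_2\coloneqq|T_{1,x}|$, $V_3\coloneqq|\hat\Psi_i^*(x)|$, take any fixed $W_1=W_2=W_3\in\cB_{\bar g}$, let $g=\bar g$, $g'=g_i$, and verify the hypotheses of Lemma~\ref{lem:far_of_diff_geom}: the metric perturbation is controlled by $\|g_i-\bar g\|_{C^\infty,\bar g}<\eta_0<\eta_{\ref{lem:far_of_diff_geom}}$; all relevant masses lie below $S_i+\delta_i<A=8\pi+1<2L$; the annular-replacement condition $V_2=V_1$ holds outside $r_0$-balls, hence outside $2r$-balls with $r=r_0$; and $\Fb_{g_i}(V_3,V_2)<d_0<\eta_{\ref{lem:far_of_diff_geom}}$. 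The $W$-chain is trivial, using zero-radius replacements. Lemma~\ref{lem:far_of_diff_geom} then places $V_3$ in $\cG'_{\ref{lem:far_of_diff_geom}}=\hat\cG_{\bar g}$; the parallel argument in the other case yields $V_3\in\cB'_{\ref{lem:far_of_diff_geom}}=\hat\cB_{\bar g}$.

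The conceptual work is light, amounting only to threading Lemmas~\ref{lem:(e, d)-deformation} and~\ref{lem:far_of_diff_geom} together, so I expect the main obstacle to be the bookkeeping needed to check that the small slacks $\bar\delta$, $\eta_0$, and $2d_0$ jointly lie strictly below the single controlling threshold $\eta_{\ref{lem:far_of_diff_geom}}$, and that the switches between $\Fb_{g_i}$ and $\Fb_{\bar g}$ cost only lower-order error on the bounded-mass range. This is exactly what motivated the earlier stipulations $d_0=\eta_{\ref{lem:far_of_diff_geom}}/10$, $\eta_0\le d_0/10$, and $\bar\delta<d_0$.
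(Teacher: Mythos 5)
Your proposal is correct and follows essentially the same route as the paper: derive $\Mb_{g_i}(\hat\Psi^*_i(x))\geq L-\bar\varepsilon/10$ from the fixed-$i$ inequality, invoke conclusions (2) and (3) of Lemma \ref{lem:(e, d)-deformation} to get $\hat x$ and $T_{1,x}$ with the $\Fb_{g_i}$-closeness upgrade via property (iii) of $\bar\delta$, and then feed the chain $|\hat\Psi'_i(\hat x)|\to|T_{1,x}|\to|\hat\Psi^*_i(x)|$ into Lemma \ref{lem:far_of_diff_geom} to land in $\cG'_{\ref{lem:far_of_diff_geom}}\sqcup\cB'_{\ref{lem:far_of_diff_geom}}=\hat\cG_{\bar g}\sqcup\hat\cB_{\bar g}$. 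Your extra remarks (the dichotomy forced by $2d_0=\eta_{\ref{lem:far_of_diff_geom}}/5$ and the trivial $W$-chain) only make explicit what the paper leaves implicit.
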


    \begin{claim}\label{claim2}
        For any $x \in \hat W^\alpha_i$ and any $t \in [0, 1]$, if $\Mb_{g_i}(\hat H^\textsc{DEF}_i(t, x)) \geq \mathbf{L}_{g_i}(\HC_i) - \varepsilon_{4, i} / 4$, then $|\hat H^\textsc{DEF}_i(t, x)| \in \hat \cB_{g}$.
    \end{claim}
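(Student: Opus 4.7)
The plan is to reduce the claim at a general $t \in [0,1]$ back to the endpoint $t = 1$, where the $\alpha$-side dichotomy \eqref{eqn:epsilon_4_i_a} already identifies $|\hat\Psi'_i(x)|$ as lying in $\tilde\cB_{\bar g}$ for high-mass $x \in \hat X^\alpha_i$. The mechanism for this reduction is part (2) of Lemma \ref{lem:(e, d)-deformation} applied to $\hat H^\textsc{DEF}_i$: it furnishes a companion point $\hat x \in \hat X_i$ and a cycle $T_{t,x} \in \Zc_2(S^3; \Z_2)$ close to $\hat H^\textsc{DEF}_i(t,x)$ in mass, agreeing with $\hat\Psi'_i(\hat x)$ outside a union of at most $3^{29}$ balls of radius $r_0$, together with the mass bounds $\Mb_{g_i}(\hat\Psi'_i(\hat x)) > \Mb_{g_i}(\hat H^\textsc{DEF}_i(t,x)) - \bar\delta$ and $\Mb_{g_i}(\hat\Psi'_i(\hat x), \hat\Psi'_i(x)) < \bar\delta$.

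I would then chain these two mass bounds with the hypothesis $\Mb_{g_i}(\hat H^\textsc{DEF}_i(t,x)) \geq \bL(\HC_i) - \varepsilon_{4,i}/4$ and the earlier choice $\bar\delta < \varepsilon_{4,i}/10$ to conclude
\[
\Mb_{g_i}(\hat\Psi'_i(x)) > \bL(\HC_i) - \varepsilon_{4,i}.
\]
Since $x \in \hat X^\alpha_i$ and $\hat\Psi'_i(x) = \hat H_i(1,x)$, property \eqref{eqn:epsilon_4_i_a} forces $|\hat\Psi'_i(x)| \in \tilde\cB_{\bar g}$. Property (iii) in the choice of $\bar\delta$ upgrades the two mass-closeness bounds to $\bF_{g_i}$-closeness bounds of size $d_0$, so $|\hat\Psi'_i(\hat x)|$ sits inside a $(\eta_0 + d_0)$-neighborhood of $\cB_{\bar g}$, well inside the threshold $\eta_{\ref{lem:far_of_diff_geom}}$ (after absorbing the $C^\infty$-closeness of $g_i$ to $\bar g$).

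The final step is to invoke Lemma \ref{lem:far_of_diff_geom} (the $W$-half) with $g = \bar g$, $g' = g_i$, $W_1 = |\hat\Psi'_i(\hat x)|$, $W_2 = |T_{t,x}|$, and $W_3 = |\hat H^\textsc{DEF}_i(t,x)|$: the annular-replacement hypothesis between $W_1$ and $W_2$ is provided by the fourth bullet of Lemma \ref{lem:(e, d)-deformation}(2), and the $\bF_{g_i}$-closeness of $W_3$ to $W_2$ was just established. The lemma's conclusion places $W_3$ in $\cB'_{\ref{lem:far_of_diff_geom}} = \hat\cB_{\bar g}$, which is exactly the claim.

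The only delicate point I anticipate is the bookkeeping of the two $\bar\delta$-losses in propagating the high-mass hypothesis from $\hat H^\textsc{DEF}_i(t,x)$ down to $\hat\Psi'_i(x)$: both steps cost $\bar\delta$, so we need $\bL(\HC_i) - \varepsilon_{4,i}/4 - 2\bar\delta > \bL(\HC_i) - \varepsilon_{4,i}$, which explains why the threshold in the claim was weakened from $\varepsilon_{4,i}$ to $\varepsilon_{4,i}/4$ and $\bar\delta$ was pre-chosen below $\varepsilon_{4,i}/10$. Once this accounting is in place, everything else is a routine application of the two lemmas.
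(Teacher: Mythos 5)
Your proposal is correct and follows essentially the same route as the paper's proof: extract $\hat x$ and $T_{t,x}$ from Lemma \ref{lem:(e, d)-deformation}(2), propagate the high-mass hypothesis to $\hat\Psi'_i(x)$ at a cost of $2\bar\delta<\varepsilon_{4,i}/5$ so that \eqref{eqn:epsilon_4_i_a} places $|\hat\Psi'_i(x)|$ in $\tilde\cB_{\bar g}$, upgrade the mass bounds to $\Fb_{g_i}$-bounds via property (iii) of $\bar\delta$, and close with Lemma \ref{lem:far_of_diff_geom} applied to the chain $|\hat\Psi'_i(\hat x)| \to |T_{t,x}| \to |\hat H^\textsc{DEF}_i(t,x)|$. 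The mass-threshold accounting you flag as the delicate point is exactly the accounting the paper performs.
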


    \begin{claim}\label{claim3}
        For any $x \in \hat W^\omega_i$ and any $t \in [0, 1]$, if $\Mb_{g_i}(\hat \Psi^*_i(x)) \geq \mathbf{L}_{g_i}(\HC_i) - \varepsilon_{4, i} / 4$, then $|\hat \Psi^*_i(x))| \in \hat \cG_{g}$.
    \end{claim}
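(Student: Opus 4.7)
The plan is to mirror essentially verbatim the argument given for the preceding Claim~2, with $\alpha$ replaced by $\omega$, the condition \eqref{eqn:epsilon_4_i_a} replaced by \eqref{eqn:epsilon_4_i_w}, and $\cB_{\bar g}$ replaced by $\cG_{\bar g}$. The statement as written has $\hat{\Psi}^*_i$ in place of $\hat{H}^\textsc{DEF}_i$; I will read the claim as being about $\hat{H}^\textsc{DEF}_i(t,x)$ (the case $t=1$ recovers $\hat{\Psi}^*_i(x)$, so both are covered), since this is what parallels Claim~2 and is what Lemma \ref{lem:(e, d)-deformation}(2) directly provides information about.

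First, fix $x \in \hat X^\omega_i$ and $t \in [0,1]$ with $\Mb_{g_i}(\hat H^\textsc{DEF}_i(t,x)) \geq \bL(\HC_i) - \varepsilon_{4,i}/4$. Apply part (2) of Lemma \ref{lem:(e, d)-deformation} to $\hat{\Psi}'_i$ to produce $T_{t,x} \in \Zc_2(S^3;\Z_2)$ and $\hat{x} \in \hat X_i$ with the four properties:
\begin{enumerate}[(i)]
\item $\Mb_{g_i}(\hat{\Psi}'_i(x) - \hat{\Psi}'_i(\hat{x})) < \bar\delta$,
\item $\Mb_{g_i}(\hat{\Psi}'_i(\hat{x})) > \Mb_{g_i}(\hat H^\textsc{DEF}_i(t,x)) - \bar\delta$,
\item $\Mb_{g_i}(\hat H^\textsc{DEF}_i(t,x) - T_{t,x}) < \bar\delta$, and
\item $T_{t,x}$ agrees with $\hat{\Psi}'_i(\hat{x})$ off a union of at most $3^{29}$ balls of $g_i$-radius $r_0$.
\end{enumerate}
Note that $\hat{x}$ may be taken to lie in the same cell of the subdivided complex as $x$; since $\hat X^\omega_i$ is a subcomplex we can arrange $\hat{x}\in\hat X^\omega_i$.

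Next, combining (i) and (ii) with the smallness $\bar\delta < \varepsilon_{4,i}/10$ from condition (i) in the definition of $\bar\delta$, we get $\Mb_{g_i}(\hat{\Psi}'_i(x)) > \bL(\HC_i) - \varepsilon_{4,i}$, so by \eqref{eqn:epsilon_4_i_w}, $|\hat{\Psi}'_i(x)| \in \tilde{\cG}_{\bar g}$. Combining with (i) and Property (iii) of $\bar\delta$ gives $\Fb_{g_i}(\hat{\Psi}'_i(\hat{x}), \hat{\Psi}'_i(x)) < d_0$, hence $|\hat{\Psi}'_i(\hat{x})| \in \bB^{\Fb_{g_i}}_{d_0}(\tilde{\cG}_{\bar g})$. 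Similarly, Property (iii) of $\bar\delta$ and (iii) above yield $\Fb_{g_i}(\hat H^\textsc{DEF}_i(t,x), T_{t,x}) < d_0$.

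Finally, the conclusion is extracted from Lemma \ref{lem:far_of_diff_geom}. Taking the roles in the lemma to be $V_1 = $ some element of $\cG_{\bar g}$ within $\Fb_{\bar g}$-distance $\eta_0$ of $\hat{\Psi}'_i(\hat{x})$, with $V_2 = \hat{\Psi}'_i(\hat{x})$ (replacement given by (iv)), $V_3 = T_{t,x}$ (using $\Fb_{g_i}$-closeness just established), and $V_4 = \hat H^\textsc{DEF}_i(t,x)$, every distance is bounded by $d_0 = \eta_{\ref{lem:far_of_diff_geom}}/10$, so $\hat H^\textsc{DEF}_i(t,x) \in \cG'_{\ref{lem:far_of_diff_geom}} = \hat\cG_{\bar g}$. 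The only nontrivial point to check is that all mass bounds ($\leq 2L$) are in force, which follows from \eqref{eqn:H_DEF_upper_bound} and the choice $A = 8\pi + 1$. No single step is a genuine obstacle; the work is purely in chaining the constants $\bar\delta, d_0, \eta_0, \varepsilon_{4,i}$ through the correct cascade of lemmas.
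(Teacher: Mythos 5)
Your proposal matches the paper's own proof essentially step for step: invoke Lemma \ref{lem:(e, d)-deformation}(2) for $T_{t,x}$ and $\hat x$, use (i)--(ii) together with $\bar\delta < \varepsilon_{4,i}/10$ and \eqref{eqn:epsilon_4_i_w} to place $|\hat\Psi'_i(x)|$ in $\tilde\cG_{\bar g}$, convert the mass bounds to $\Fb_{g_i}$-bounds via Property (iii) of $\bar\delta$, and close with Lemma \ref{lem:far_of_diff_geom}. Your reading of the claim as really being about $\hat H^{\textsc{DEF}}_i(t,x)$ (with $t=1$ giving $\hat\Psi^*_i$) is also consistent with how the paper's proof actually proceeds.
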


    We define the space 
    \[
        \hat{\hat W} \coloneqq ([0, 2] \times X_i) \cup \hat W_i
    \]
    where $\{2\} \times X_i \subset [0, 2] \times X_i$ and $\hat W^\alpha_i \subset \hat W_i$ are identified. On $\hat{\hat W}$, we define the map
    \[
        \hat{\hat \Psi}: \hat{\hat W} \to \Zc_n(M; \Fb_{g_i}; \Z_2)
    \]
    as follows:
    \begin{align*}
        \hat{\hat \Psi}\vert_{[0, 1] \times X_i}(t, x) &\coloneqq \hat H_i\vert_{[0, 1] \times X_i}(t, x)\\
        \hat{\hat \Psi}\vert_{[1, 2]\x X_i}(t, x) &\coloneqq \hat H^\textsc{DEF}_i\vert_{[0, 1] \times  X_i}(t - 1, x)\\
        \hat{\hat \Psi}\vert_{\hat W_i} &\coloneqq \Psi^*_i\,.
    \end{align*}

    Since $\hat {\hat \Psi}^\alpha = \hat H_i\vert_{\{0\} \times X_i} = \hat \Psi^\alpha_i = \Phi_{i}$ and $\sup_{x} \Mb_{g_i}\circ \hat{\hat{\Psi}}(x) < L_i + \delta_i$ by \eqref{eqn:hat_H_i_upper_bound} and \eqref{eqn:H_DEF_upper_bound}, we can conclude that $\hat {\hat \Psi} \in \tilde H^i$.
    
    Moreover, for $\varepsilon_4 \coloneqq \min(\varepsilon_{4, i}/4, \bar\varepsilon / 100)$, the previous three claims imply \eqref{eq_hat_hat_Psi_prop}, \eqref{eqn:epsilon_4_a} and \eqref{eqn:epsilon_4_w}.

\medskip
\paragraph*{\textbf{Part 5. Constructing a map $\Xi^\omega$ homologous to $(\hat{\hat{\Psi}})^\omega$}}
    Finally, for the $i$ we fixed in the last part, we will now construct some map $\Xi\in\tilde\HC_i$ such that
    \[
        \sup \bM_{g_i}\circ\Xi^\omega<\bL(\HC_i),
    \]
    thereby arriving at a contradiction.

    First, we define two subsets
    \begin{align*}
        \hat{\hat{\cG}}_{g} &\coloneqq \{V \in \hat{\cG}_{g} : \|V\|_{g_i}(M) \geq \mathbf{L}_{g_i}(\HC_i) - \varepsilon_4\}\,,\\
        \quad \hat{\hat{\cB}}_{g} &\coloneqq \{V \in \hat{\cB}_{g} : \|V\|_{g_i}(M) \geq \mathbf{L}_{g_i}(\HC_i) - \varepsilon_4\}\,.
    \end{align*}
    Due to the fact that $\Fb_{g}(\hat{\cG}_{g}, \hat{\cB}_{g}) > \eta_0$,   \eqref{eq_hat_hat_Psi_prop}, \eqref{eqn:epsilon_4_a}, and $\eqref{eqn:epsilon_4_w}$, we can proceed by subdividing the simplicial complex structure of $\hat{\hat W}$ to ensure that the subcomplex
    \[
        A\coloneqq\bigcup\{(2k + 3)\textrm{-cells }\alpha\subset \hat{\hat W}:|\hat{\hat \Psi}|_\alpha|\textrm{ intersects } \hat{\hat{\cG}}_{g}\}
    \]
    of $\hat{\hat W}$ satisfies (see Figure \ref{fig:part5}): 
    \begin{enumerate}[(i)]
        \item $A$ is disjoint from $\hat{\hat W}^\alpha=\{0\}\x X_i$.
        \item $|\hat{\hat \Psi}|_A|$ is disjoint from  $\hat{\hat \cB}_{g}$.
        \item For every $x \in \partial A\backslash \hat{\hat W}^\omega$, 
        \begin{equation}\label{eqn:partial_A_bound}
            \Mb_{g_i} \circ \hat{\hat \Psi}(x) < \mathbf{L}_{g_i}(\HC_i) - \varepsilon_4\,,
        \end{equation}
        and thus,
        \[
            |\hat{\hat \Psi}|_{\partial A\backslash \hat{\hat W}^\omega}|\;\textrm{ is disjoint from }\; \hat{\hat\cG}_{g} \cup \hat{\hat\cB}_{g}.
        \]
    \end{enumerate}
    \begin{figure}[h]
        \centering
        \makebox[\textwidth][c]{\includegraphics[width=5.5in]{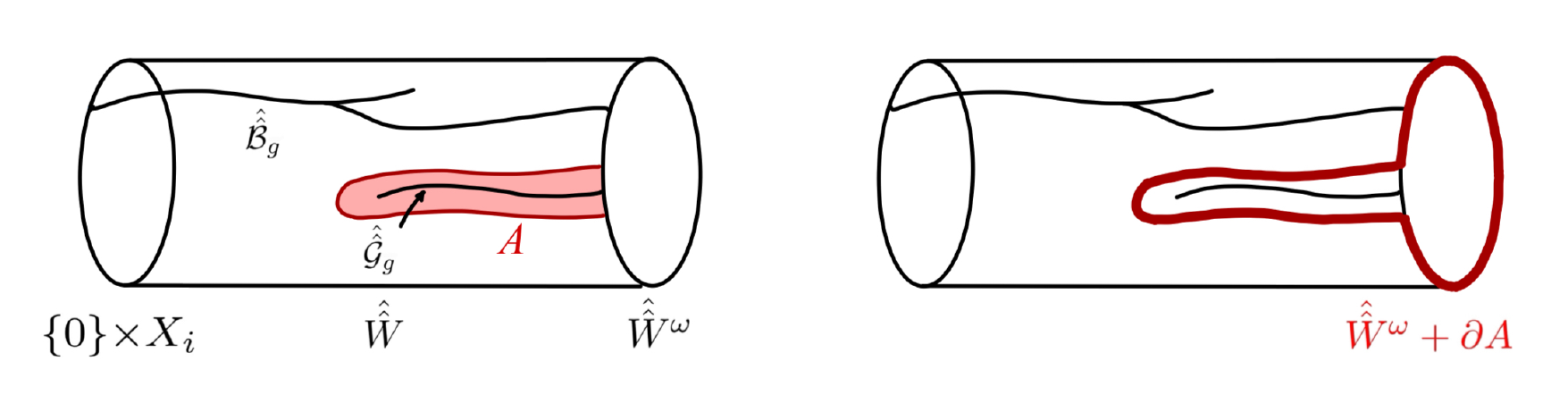}}
        \caption{The final parameter space $\hat{\hat W}^\omega+\partial A.$}
        \label{fig:part5}
    \end{figure}

    Next, we define the map
    \[
        \Xi\coloneqq\hat{\hat \Psi}|_{\overline{\hat{\hat W}\backslash A}}\;.
    \]
    
    We will now show that $\Xi$ leads to a contradiction. As $A$ is disjoint from $\hat{\hat W}^\alpha \subset \hat{\hat W}$, we know that the boundary of $\overline{\hat{\hat W}\backslash A}$ is a {\it disjoint} union of $\hat{\hat W}^\alpha$ and $\hat{\hat{W}}^\omega+\partial A$: Note that we used ``$+$" in the sense of adding simplicial subcomplexes with $\Z_2$-coefficients, such that the part $\hat{\hat W}^\omega\cap \partial A$ cancels out. Consequently, we can conclude that $\Xi\in \tilde\HC_i$.
    
    Now, let us consider $\Xi^\omega\in\HC_i$. Its domain $\hat{\hat W}^\omega+ \partial A$ is a union of $\partial A\backslash \hat{\hat W}^\omega$ and $\hat{\hat W}^\omega\backslash A$ (see Figure (\ref{fig:part5})). By \eqref{eqn:partial_A_bound},
    \[
        \sup_{\partial A\backslash \hat{\hat W}^\omega} \bM_{g_i} \circ \Xi < \mathbf{L}_{g_i}(\HC_i) - \varepsilon_4\,,
    \]
    and by \eqref{eqn:epsilon_4_w},
    \[
        \sup_{\hat{\hat W}^\omega \backslash  A} \bM_{g_i} \circ \Xi < \mathbf{L}_{g_i}(\HC_i) - \varepsilon_4\,.
    \]
    In summary, we arrive at the conclusion
    \begin{equation*}
        \bM_{g_i}\circ \Xi^\omega< \bL_{g_i}(\HC_i)-\varepsilon_4,
    \end{equation*}
    leading to a contradiction. This completes the proof of Theorem \ref{thm:strong_multi_one_I}.
    
\section{Proof of Theorem \ref{thm_13_width}}\label{sect:proof_main_thm}

    Theorem \ref{thm_13_width} is a consequence of Theorem \ref{thm:strong_multi_one_I}. 
    
    Let $(S^3, \bar g)$ be the unit $3$-sphere. Recall that C. Nurser \cite{Nur16} showed the $13$-width 
    \[
        \omega_{13}(S^3, \bar g) \leq 8\pi\,.
    \]
    Suppose, for the sake of contradiction, that the $13$-width is precisely $8\pi$ on $(S^3, \bar g)$. In fact, C. Nurser \cite[\S 3.6]{Nur16} has constructed an $\cF$-continuous map $\Phi_0: \mathbb{RP}^{13} \to \mathcal{Z}_2(S^3; \mathbb{Z}_2)$ with no concentration of mass, such that
    \[
        \sup_{ x \in \mathbb{RP}^{13}} \mathbf{M}_{\bar g}(\Phi_0(x)) = 8\pi\,.
    \]
    Namely, let us consider the following 14 polynomials for $(x_1,x_2,x_3,x_4)\in\R^4$:
    \begin{align}\label{eq_13poly}
        &1,x_1,x_2,x_3,x_4,x_1x_2,x_1x_3,x_1x_4,x_2x_3,x_2x_4,x_3x_4,
        x_1^2-x_2^2,x_1^2-x_3^2,x_1^2-x_4^2
    \end{align}
    and denote them by $p_0,p_1, \cdots ,p_{13}$ respectively. 
    Then $\Phi_{0}:\RP^{13}\to\Zc_2(S^3;\Z_2)$ is defined by
    \[
        \Phi_{0}([a_0:a_1:\cdots:a_{13}])=\partial^* \left\{x\in S^3:\sum^{13}_{i=0}a_ip_i(x) < 0\right\}\,,
    \]
    where $\partial^*$ is the reduced boundary operator.

    For each $i \in \mathbb{N}^+$, we set $\Phi_i \equiv \Phi_0$. By Corollary \ref{cor:relation_to_p_width}, $(\Phi_i)_i$ is a minimizing sequence in $\mathcal{H}(\Phi_0)$. By Theorem \ref{thm:strong_multi_one_I}, to derive a contradiction, it suffices to verify that every embedded minimal cycle in $\bC_{\bar g}((\Phi_i)_i)$ is a multiplicity two equator. This is equivalent to the following lemma.

    \begin{lem}\label{lem:noMultiplicityOne}
        No embedded minimal cycle in $\bC_{\bar g}((\Phi_i)_i)$ is of multiplicity one.
    \end{lem}
    Indeed, if Lemma~\ref{lem:noMultiplicityOne} holds, since the equator is the unique minimal surface of the least area $4\pi$ in $(S^3, \bar g)$, by the Frankel property, every embedded minimal cycle in $\bC_{\bar g}((\Phi_i)_i)$ is of multiplicity two and thus, a multiplicity two equator. So below let us prove Lemma~\ref{lem:noMultiplicityOne}.
    
        Suppose, for the sake of contradiction, that there exists $V \in \bC_{\bar g}((\Phi_i)_i)$ and an embedded minimal surface $\Sigma \subset (S^3, \bar g)$ such that
        \[
            V = |\Sigma|\,.
        \]
        In particular, $\area_{\bar g}(\Sigma) = 8\pi$, and there exists a sequence $(\underline{a}^{(j)} \equiv [a^{(j)}_0 : a^{(j)}_1 : \cdots : a^{(j)}_{13}])_j$ in $\mathbb{RP}^{13}$ such that in the varifold sense, as $j \to \infty$,
$$|\Phi_0(\underline{a}^{(j)})| \to V\,.$$
        Without loss of generality, by taking a subsequence, we can assume that $\lim_{i \to \infty} \underline{a}^{(j)} = \underline{a}^{(\infty)} = [a^{(\infty)}_0 : a^{(\infty)}_1 : \cdots : a^{(\infty)}_{13}] \in \mathbb{RP}^{13}$.

Let us state a lemma, which will be proven shortly.
\begin{lem}\label{lem:zeroSetNotSmooth8pi}
        For any choice of $a_0,a_1,...,a_{13}\in \R$, the zero set of the polynomial $\sum^{13}_{i=0}a_ip_i(x)$ restricted onto $S^3$ cannot be the union of a closed smooth surface and an $\mathcal{H}^2$-measure zero set, which has area $8\pi$.
    \end{lem}
    Note that here we do not treat the zero set as varifold, and so it cannot have multiplicity.

        If we assume Lemma~\ref{lem:zeroSetNotSmooth8pi}, to obtain a contradiction, it suffices to show that 
        \begin{equation}\label{eqn:V_is_zero_set}
            V = \left|\left\{x\in S^3:\sum^{13}_{i=0}a^{(\infty)}_ip_i(x) = 0\right\}\right| \,.
        \end{equation}

        Note that for every $[a_0: a_1: \cdots: a_{13}] \in \mathbb{RP}^{13}$, as sets,
        \[\begin{aligned}
            \spt(\Phi_0(\underline{a})) \subset \partial \left\{x\in S^3:\sum^{13}_{i=0}a_ip_i(x) < 0\right\} \subset \left\{x\in S^3:\sum^{13}_{i=0}a_ip_i(x) = 0\right\}\,,
        \end{aligned}\]
        where $\partial$ is the topological boundary operator. Furthermore, for every $\varepsilon > 0$, let $N_\varepsilon(\cdot)$ be the tubular $\varepsilon$-neighborhood map, and we have
        \[
            \left\{x\in S^3:\sum^{13}_{i=0}a^{(j)}_ip_i(x) = 0\right\} \subset N_\varepsilon\left(\left\{x\in S^3:\sum^{13}_{i=0}a^{(\infty)}_ip_i(x) = 0\right\}\right)\,.
        \]
        Therefore, we have
        \[
            \spt V \subset \left\{x\in S^3:\sum^{13}_{i=0}a^{(\infty)}_ip_i(x) = 0\right\}\,.
        \]
        Then \eqref{eqn:V_is_zero_set} follows from $\|V\|(S^3, \bar g) = 8\pi$ and
        \[
            \mathcal{H}^2\left(\left\{x\in S^3:\sum^{13}_{i=0}a^{(\infty)}_ip_i(x) = 0\right\}\right) \leq 8\pi\,.
        \]
        proved in \S~3.6 in~\cite{Nur16}.

        Now, it remains to prove Lemma~\ref{lem:zeroSetNotSmooth8pi}.

    \begin{proof}[Proof of Lemma~\ref{lem:zeroSetNotSmooth8pi}]
        First, let us recall a Crofton-type formula. Let  $d\omega_n$ be the volume form on the unit $n$-sphere $S^n$. Let $C_1$ and $C_2$ be submanifolds in $S^n$ with finite volume that have  dimension $p$ and $q$ respectively. Noting that each orthonormal $(n+1)$-frame in $\R^{n+1}$ can defined by first choosing an element in $S^n$, then another in $S^{n-1}$, and so on, and finally in $S^1$, we can equip on $SO(n+1)$ the volume form $dg$ given by $d\omega_n d\omega_{n-1}...d\omega_1$. Then a formula of Satanl\'o \cite[\S 3]{San50} states
        \begin{align*}
            &\int_{SO(n+1)}\cH^{p+q-n}(C_1\cap g\cdot C_2)dg\\
            &=\frac{\vol(S^{p+q-n})}{\vol(S^p)\vol(S^q)}\vol(S^1)\vol(S^2)...\vol(S^n)\cH^p(C_1)\cH^q(C_2).
        \end{align*}
        Note that $\cH^n$ stands for the $n$-dimensional Hausdorff measure.
        
        In particular,  when $n=2$, $p=1$, and $C_2$ is a great circle,
        \begin{equation}\label{eq_crofton_nis2}
        \int_{SO(3)}\cH^{0}(C_1\cap g\cdot C_2)d\omega_2d\omega_1=\frac{1}{\pi}\vol(S^1)\vol(S^2)\cH^1(C_1),    
        \end{equation}
        and  when $n=3,p=2,$ and $C_2$ is a great 2-sphere,
        \begin{equation}\label{eq_crofton_nis3}
        \int_{SO(4)}\cH^{1}(C_1\cap g\cdot C_2)d\omega_3d\omega_2d\omega_1=\frac{1}{2}\vol(S^1)\vol(S^2)\vol(S^3)\cH^2(C_1). 
        \end{equation}
        
        \begin{lem}\label{lem_crofton_nis2}
            Let $\Gamma$ be the zero set of a quadratic polynomial, for $(x_1,x_2,x_3)\in\R^3$, restricted onto the unit $2$-sphere $S^2$. Assume $\Gamma$ is not the entire $S^2$. Then $\mathcal H^1(\Gamma)\leq 4\pi$, with equality holds if and only if $\Gamma$ is the union of two distinct great circle.
        \end{lem}
        \begin{proof}
            To show $\mathcal H^1(\Gamma)\leq 4\pi$, we apply (\ref{eq_crofton_nis2}) with $C_1=\Gamma$. Since $\Gamma$ intersects a generic great circle at at most 4 points by B\'ezout's theorem, the left hand side of (\ref{eq_crofton_nis2}) is at most $4\vol(S^1)\vol(S^2)$, and so $\mathcal H^1(\Gamma)\leq 4\pi$.
        
            As for the equality case, by (\ref{eq_crofton_nis2}) we know $\Gamma$ intersects a generic great circle at  exactly 4 points. Suppose by contradiction that $\Gamma$ does not consist of two distinct great circles. Since $\cH^1(\Gamma)=4\pi$, $\Gamma$ cannot just be one great circle, and so there exists $x\in \Gamma$ at which $\Gamma$ has non-zero geodesic curvature. Let $\gamma$ be some  great circle that touches $\Gamma$ at $x$ (on one side, in fact). Then, by considering the degree of intersection, there are three cases: (1) $\gamma\cap \Gamma=\{x\}$; (2) $\gamma\cap \Gamma=\{x,y\}$ with $y$ being a degree two touching point; and (3) $\gamma\cap \Gamma=\{x,y,z\}$ with $y,z$ being transversely intersecting points. But in each  case, since $\Gamma$ has non-zero curvature at $x$, we can perturb $\gamma$ to a generic great circle that does not intersect  $\Gamma$ at 4 points. Contradiction arises.
        \end{proof}
        
        Now, to prove Lemma \ref{lem:zeroSetNotSmooth8pi}, we fix a polynomial $\sum^{13}_{i=0}a_ip_i(x)$ and denote by $\Sigma$ the zero set of it restricted onto $S^3$. It suffices to assume that $\Sigma$ has area $8\pi$  and prove that it cannot be a smooth surface except for a $\mathcal{H}^2$ measure zero set.
        
        Let us recall how to prove $\mathcal H^2(\Sigma)\leq 8\pi$ (proven by C. Nurser in \cite[\S 3.6]{Nur16}). Let $C_2$ be a fixed great 2-sphere in $S^3$. Applying Lemma \ref{lem_crofton_nis2} we have   $\cH^{1}(\Sigma\cap g\cdot C_2)\leq 4\pi$: Note that since $\Sigma$ is not the whole $S^3$ itself, we know that for each generic $g\cdot C_2$ the set $\Sigma\cap g\cdot C_2$ is not the great 2-sphere $g\cdot C_2$. Then by (\ref{eq_crofton_nis3}), $\cH^2(\Sigma)\leq 8\pi$.
        
            Suppose the equality $\cH^2(\Sigma)= 8\pi$ holds. Then by (\ref{eq_crofton_nis3}), we know for each generic great 2-sphere $g\cdot C_2$, $\cH^{1}(\Sigma\cap g\cdot C_2)= 4\pi$, which  by Lemma \ref{lem_crofton_nis2} means $\Sigma\cap g\cdot C_2$ is a union of two distinct great circles. On the other hand, if $\Sigma$ were the union of a closed smooth surface with an $\mathcal{H}^2$-measure zero set, then a generic great 2-sphere should intersect the smooth part of $\Sigma$ transversely and the intersection would be smooth closed curves. Hence, it can be checked that $\Sigma$ cannot be the union of a closed smooth surface with an $\mathcal{H}^2$-measure zero set.
        This finishes the proof of Lemma \ref{lem:zeroSetNotSmooth8pi}.
    \end{proof}
This finishes the proof of Theorem \ref{thm_13_width}.
\section{Proof of Theorem \ref{thm:strong_multi_one_II}} \label{sect:proof_main_thm_II}

    The proof is similar to that of Theorem \ref{thm:strong_multi_one_I}. The only difference is that we need to construct a new minimizing sequence instead of arguing by contradiction.
    
    As in \S \ref{sect:proof_main_thm_I}, let $(M^{n+1}, g)$ ($3 \leq n + 1 \leq 7$) be a closed Riemannian manifold equipped with a bumpy metric or a metric of positive Ricci curvature. Fix an $\bF_g$-continuous map $\Phi:X \to\Zc_n(M; \Fb_g; \Z_2)$, where $X$ is a pure finite simplicial $k$-cycle with empty boundary,   such that  $L \coloneqq \bL_g(\cH(\Phi))>0$.

    We also define $r_0, \mathcal{SV}^{L}_{g}$, $\cM^{L}_{g}$,  $\cG_{g}$ and $\cB_{g}$ in the same manner.

    Since $(M, g)$ has positive Ricci curvature or a bumpy metric, by Lemma \ref{lem:am_cpt_varying_metric}, both $\cG_{g}$ and $\cB_{g}$ are compact in the varifold topology. We define
    \begin{align*}
        d_0 &\coloneqq \min(\eta_{\ref{lem:cycle_of_small_rep}}(M, g, 2k + 3, \cG_{g}), \eta_{\ref{lem:far_of_diff_geom}}(M, g, 2k + 3, L, \cG_{g}, \cB_{g})) / 10\,,\\
        \varepsilon_0 &\coloneqq \bar\varepsilon_{\ref{lem:not_am_varying_metric}}(M, g, 2k + 3, r_0, d_0, L)\,,\\
        s_0 &\coloneqq \bar{s}_{\ref{lem:not_am_varying_metric}}(M, g, 2k + 3, r_0, d_0, L)\,,\\
        \eta_0 &\coloneqq \min(d_0, \varepsilon_0, \eta_{\ref{lem:not_am_varying_metric}}(M, g, 2k + 3, r_0, d_0, L)) / 10\,,\\
        \tilde{\cG}_{g} &\coloneqq \cG'_{\ref{lem:far_of_diff_geom}}(M, g, 2k + 3, L, \cG_{g}, \cB_{g})\,,\\
        \tilde{\cB}_{g} &\coloneqq \cB'_{\ref{lem:far_of_diff_geom}}(M, g, 2k + 3, L, \cG_{g}, \cB_{g})\,,\\
        \hat \cG_{g} &\coloneqq \cG''_{\ref{lem:far_of_diff_geom}}(M, g, 2k + 3, L, \cG_{g}, \cB_{g})\,,\\
        \hat \cB_{g} &\coloneqq \cB''_{\ref{lem:far_of_diff_geom}}(M, g, 2k + 3, L, \cG_{g}, \cB_{g})\,,
    \end{align*}
    from Lemma \ref{lem:cycle_of_small_rep}, Lemma \ref{lem:far_of_diff_geom} and Lemma \ref{lem:not_am_varying_metric}. Note that here $d_0, \tilde{\cG}_{g}$, $\tilde{\cB}_{g}$, $\hat \cG_{g}$ and $\hat \cB_{g}$ are different from those defined in \S \ref{sect:proof_main_thm_I}.

    In fact, we can refine the critical set as follows.

    \begin{prop}
        There exists a pulled-tight minimizing sequence of $\mathbf{F}_{g}$-continuous maps $(\Phi_{i}: X_i \to \mathcal{Z}_n(M; \mathbf{F}_{g}; \mathbb{Z}_2))^\infty_{i = 1}$, such that
        \[
            \mathbf{C}_{g}((\Phi_i)^\infty_{i = 1}) \subset (\tilde{\cG}_{g} \cup \tilde{\cB}_{g}) \cap \mathcal{SV}^{L}_{g}\,.
        \]
        
        In particular, there exists an positive constant $\bar{\varepsilon}_0 > 0$, such that
        \[
            \Mb_{g}\circ \Phi_i(x) \geq L - \bar{\varepsilon}_0 \implies |\Phi_i(x)| \in (\tilde{\cG}_{g} \cup \tilde{\cB}_{g}) \cap \bB^{\bF_{g}}_{\eta_0} (\mathcal{SV}^{L}_{g})\,.
        \]
    \end{prop}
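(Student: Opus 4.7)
The plan is to successively refine the initial Marques-Neves pulled-tight sequence by applying the $(\varepsilon,\delta)$-deformation of Corollary \ref{cor:(e, d)-deformation_seq} and then pulling tight once more, a strategy mirroring Parts 3 and 4 of the proof of Theorem \ref{thm_13_width} but simplified since we work in a single fixed metric $\bar g$ and have no auxiliary restrictive homology class to carry along.

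First, we start with a pulled-tight minimizing sequence $(\Phi'_i:\bar W_i\to\cZ_n(M;\bF_{\bar g};\Z_2))$ of $p$-admissible maps whose critical set lies in $\mathcal{SV}^{L_0}_{\bar g}$, as provided by Marques-Neves \cite{MN21}. Since $p$-admissibility is a homotopy invariant in the $\bF$-topology, it will be preserved through every subsequent deformation.

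Next, we apply Corollary \ref{cor:(e, d)-deformation_seq} with $c = L_0 + 1$ and a parameter $D \in (0, d_0)$ to be chosen, taking
\[
    \mathcal{W} \coloneqq \{V \in \bC_{\bar g}((\Phi'_i)_i) : V \text{ is } (2p+1, \eta_{\ref{cor:(e, d)-deformation_seq}})_{\bar g}\text{-almost minimizing}\}.
\]
By Theorem \ref{thm:Schoen_Simon_reg}, every element of $\mathcal{W}$ is a stationary integral varifold supported on a smooth, embedded, closed minimal hypersurface of area $L_0$. Using the monotonicities $(m_1,r)$-AM $\Rightarrow (m_2,r)$-AM for $m_1 \leq m_2$ and $(m,r_1)$-AM $\Rightarrow (m,r_2)$-AM for $r_1 \geq r_2$, and choosing $D$ large enough so that $\eta_{\ref{cor:(e, d)-deformation_seq}} \geq r_0$, we obtain $\mathcal{W} \subset \cG_{\bar g} \cup \cB_{\bar g}$. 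Simultaneously requiring $D$ to be small enough that $\bB^{\bF_{\bar g}}_D(\cG_{\bar g}) \subset \tilde\cG_{\bar g}$ and $\bB^{\bF_{\bar g}}_D(\cB_{\bar g}) \subset \tilde\cB_{\bar g}$, which is possible since $\tilde\cG_{\bar g}, \tilde\cB_{\bar g}$ are open neighborhoods of $\cG_{\bar g}, \cB_{\bar g}$ by Lemma \ref{lem:far_of_diff_geom}, the corollary yields a homotopic sequence $(\Phi^*_i)$ with $\bC_{\bar g}((\Phi^*_i)_i) \subset \bB^{\bF_{\bar g}}_D(\mathcal{W}) \subset \tilde\cG_{\bar g} \cup \tilde\cB_{\bar g}$. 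Finally, applying Corollary \ref{cor:PT-seq} to $(\Phi^*_i)$ produces a pulled-tight sequence $(\Phi_i)$ whose critical set is additionally contained in $\mathcal{SV}^{L_0}_{\bar g}$, completing the inclusion $\bC_{\bar g}((\Phi_i)_i) \subset (\tilde\cG_{\bar g} \cup \tilde\cB_{\bar g}) \cap \mathcal{SV}^{L_0}_{\bar g}$. The existence of $\bar\varepsilon_0 > 0$ then follows from the openness of $(\tilde\cG_{\bar g} \cup \tilde\cB_{\bar g}) \cap \bB^{\bF_{\bar g}}_{\eta_0}(\mathcal{SV}^{L_0}_{\bar g})$ by a standard contradiction argument on any sequence $|\Phi_{i_k}(x_{i_k})|$ with masses tending to $L_0$.

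The principal technical obstacle is the simultaneous tuning of $D$: large enough that $\eta_{\ref{cor:(e, d)-deformation_seq}} \geq r_0$ so that $\mathcal{W} \subset \cG_{\bar g} \cup \cB_{\bar g}$, yet small enough that $\bB^{\bF_{\bar g}}_D(\cG_{\bar g}) \cup \bB^{\bF_{\bar g}}_D(\cB_{\bar g}) \subset \tilde\cG_{\bar g} \cup \tilde\cB_{\bar g}$. This reduces to checking compatibility between the universal constants $r_0$ and $\eta_{\ref{lem:far_of_diff_geom}}$; if needed, we may shrink $r_0$ further from its original definition, since weakening the radius in $(2p+3,r_0)$-AM only enlarges $\cG_{\bar g}$ and $\cB_{\bar g}$, without affecting any of the preceding constructions.
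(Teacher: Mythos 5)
Your overall strategy --- deform the high-mass, far-from-almost-minimizing slices of the Marques--Neves pulled-tight sequence and then pull tight again --- is exactly the paper's, and the verification of the hypothesis of the deformation step (taking $\mathcal{W}$ to be the almost-minimizing part of the critical set, so that the non-almost-minimizing hypothesis is automatic) as well as the final compactness/openness argument for $\bar\varepsilon_0$ are fine. The gap is in the quantitative step you flag yourself and then dismiss. Routing the argument through Corollary \ref{cor:(e, d)-deformation_seq} ties the annulus radius to the constant $\eta_{\ref{cor:(e, d)-deformation_seq}} = \min(D/6, \eta_{\ref{lem:close_of_small_rep}})$, so you need simultaneously $\eta_{\ref{cor:(e, d)-deformation_seq}} \geq r_0$ (to get $\mathcal{W} \subset \cG_{\bar g} \cup \cB_{\bar g}$, since $(2p+1,\eta)$-AM only implies $(2p+3,r_0)$-AM when $\eta \geq r_0$) and $D$ small enough that $\bB^{\bF_{\bar g}}_D(\cG_{\bar g}\cup\cB_{\bar g}) \subset \tilde\cG_{\bar g}\cup\tilde\cB_{\bar g}$. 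The first forces $D \geq 6r_0$; the second forces $D$ below a threshold determined by $\cG_{\bar g},\cB_{\bar g}$ via Lemma \ref{lem:far_of_diff_geom}. Your proposed repair --- shrink $r_0$ --- is circular: shrinking $r_0$ enlarges $\cG_{\bar g}$ and $\cB_{\bar g}$, which changes $\eta_{\ref{lem:far_of_diff_geom}}$ and hence $d_0$ and the admissible range of $D$, which in turn constrains $r_0$ again. There is no argument that this loop closes, and nothing in the paper's constants guarantees it does.

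The paper avoids the issue by applying Lemma \ref{lem:(e, d)-deformation} directly rather than its corollary: there the annulus radius $\eta$ and the neighborhood size $R$ are independent inputs, so one sets $\eta = r_0$, $R = d_0$, $\bar\varepsilon = \varepsilon_0'$, $s = s_0$, $\mathcal{W} = \cG_{\bar g}\cup\cB_{\bar g}$, and verifies the annular $(\bar\varepsilon,\delta)$-deformation hypothesis via Lemma \ref{lem:not_am_varying_metric} (any slice near $\mathcal{SV}^{L_0}_{\bar g}$ but $\Fb$-distance $\geq d_0$ from $\cG_{\bar g}\cup\cB_{\bar g}$ admits the required family of annuli of outer radius $< r_0$). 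The conclusion (2)--(3) of that lemma then exhibits the deformed slice as a $V_3$ in the sense of Lemma \ref{lem:far_of_diff_geom}, i.e.\ an element of $\cG'_{\ref{lem:far_of_diff_geom}}\cup\cB'_{\ref{lem:far_of_diff_geom}} = \tilde\cG_{\bar g}\cup\tilde\cB_{\bar g}$ --- note that in this section $\tilde\cG_{\bar g}$ is \emph{defined} as $\cG'_{\ref{lem:far_of_diff_geom}}$, not as a metric ball, so the membership is by construction rather than by a smallness-of-$D$ argument. To fix your proof, replace the appeal to Corollary \ref{cor:(e, d)-deformation_seq} by a direct application of Lemma \ref{lem:(e, d)-deformation} with the decoupled parameters, together with Lemma \ref{lem:not_am_varying_metric} to check its hypothesis and Lemma \ref{lem:far_of_diff_geom} to identify the image of the high-mass slices.
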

    \begin{proof}
        By Marques-Neves \cite{MN21}, there exists a pulled-tight minimizing sequence of $\mathbf{F}_{g}$-continuous  maps $(\Phi_{i}: X_i \to \mathcal{Z}_n(M; \mathbf{F}_{g}; \mathbb{Z}_2))^\infty_{i = 1}$ such that
        \[
            \mathbf{C}_{g}((\Phi_i)^\infty_{i = 1}) \subset \mathcal{SV}^{L}_{g}\,.
        \]
        Therefore, by discarding finitely many $i$'s and after relabelling, there exists $\varepsilon'_0 \in (0, \varepsilon_0/2)$ such that for any $i \in \N^+$ and any $x \in X_i$,
        \[
            \Mb_{g}\circ \Phi_i(x) \geq L - \varepsilon'_0 \implies |\Phi_i(x)| \in \bB^{\Fb_{g}}_{\eta_0}(\mathcal{SV}^{L}_{g})\,.
        \]
        In addition, by discarding extra finitely many $i$'s, we may assume for every $i$,
        \[
            L_i \coloneqq \sup_{x \in X_i} \Mb_{g}\circ \Phi_i(x) < L + \varepsilon'_0/100\,.
        \]

        To apply Lemma \ref{lem:(e, d)-deformation} to each $\Phi_i$ with
        \begin{align*}
            R &= d_0\,,\\
            \bar\varepsilon &= \varepsilon'_0\,,\\
            \eta &= r_0\,,\\
            s &= s_0\,,\\
            \mathcal{W} &= \cG_{g} \cup \cB_{g}\,,
        \end{align*}
        it suffices to verify that $\Phi_i$ satisfies the assumption of Lemma \ref{lem:(e, d)-deformation}. Let $x \in X_i$ satisfy 
        \[
            \bM_{g}(\Phi_i(x)) \geq L_i - \bar\varepsilon\,,\quad \Fb_{g}(|\Phi_i(x)|, \mathcal{W}) \geq R\,.
        \] 
        and then we have
        \[
            \Phi_i(x) \in \bB^{\Fb_{g}}_{\eta_0}(\mathcal{SV}^{L}_{g}) \setminus \bB^{\Fb_{g}}_{d_0}(\mathcal{W})\,.
        \]
        By Lemma \ref{lem:not_am_varying_metric}, $|\Phi_i(x)|$ satisfies the annular $(\bar \varepsilon, \delta)$-deformation conditions as required by Lemma \ref{lem:(e, d)-deformation}.

        Consequently, for any $\bar \delta_i > 0$, we obtain a new sweepout $\Phi^*_i$ homotopic to $\Phi_i$ in the $\bF_{g}$ topology such that
        \begin{enumerate}[(i)]
            \item $\Mb_{g}(\Phi^*_i(x)) < \Mb_{g}(\Phi_i(x)) + \bar \delta_i$;
            \item There exists $T_{1, x} \in \Zc_n(M; \Z_2)$ and $\hat x \in X_i$ such that
                \[
                    \Mb_{g}(\Phi^*_i(x), T_{1, x}) < \bar \delta_i\,,
                \]
                and $T_{1, x} = \Phi_i(\hat x)$ on $M \setminus \bigcup^m_{i = 1} \overline{B}_{g}(p_i, \eta)$ for some collection $\{p_i\}^m_{i = 1} \subset M$, $m \leq 3^{2k+1}$;
            \item If $\Mb_{g}\circ \Phi^*_i(x) \geq L - \varepsilon'_0 / 100 > L_i -  \varepsilon'_0 / 10$, 
            \[
                \Fb_{g}(|\Phi_i(\hat x)|, \mathcal{W}) \leq 2R\,.
            \]
        \end{enumerate}
        
        Then, let $\bar \delta_i < \min(\eta_0, \frac{1}{i})$ and $\bar \varepsilon_0 = \varepsilon'_0 / 100$. Then if $\Mb_{g}\circ \Phi^*_i(x) \geq L - \bar{\varepsilon}_0$, by (ii), (iii) and Lemma \ref{lem:far_of_diff_geom}, we have
        \[
            |\Phi^*_i(x)| \in \tilde{\cG}_{g} \cup \tilde{\cB}_{g}\,.
        \]
        In addition, $(\Phi^*_i)^\infty_{i = 1}$ is a minimizing sequence.

        Finally, applying Corollary \ref{cor:PT-seq}, we obtain a pulled-tight minimzing sequence $(\Psi_i)^\infty_{i = 1}$ such that 
        \[
            \mathbf{C}_{g}((\Psi_i)^\infty_{i = 1}) \subset (\tilde{\cG}_{g} \cup \tilde{\cB}_{g}) \cap \mathcal{SV}^{L}_{g}\,.
        \]

        The second part follows from the compactness of the critical set and the openness of $\tilde{\cG}_{g} \cup \tilde{\cB}_{g}$.
    \end{proof}

    Let $(\Phi_i)$ be a pulled-tight minimizing sequence from the previous Proposition. Since $\Fb_{g}(\tilde{\cG}_{g}, \tilde{\cB}_{g}) > 0$, for each $X_i$, we can refine $X_i$ to obtain two pure cubical subcomplex $X^B_i$ and $X^G_i$ with the following properties.
    \begin{enumerate}[(i)]
        \item $X_i = X^B_i + X^G_i$;
        \item On $X^B_i$, $\Mb_{g}\circ \Phi_i(x) \geq L - \bar{\varepsilon}_0 \implies |\Phi_i(x)| \in \tilde{\cB}_{g} \cap \bB^{\bF_{g}}_{\eta_0} (\mathcal{SV}^{L}_{g})$;
        \item On $X^G_i$, $\Mb_{g}\circ \Phi_i(x) \geq L - \bar{\varepsilon}_0 \implies |\Phi_i(x)| \in \tilde{\cG}_{g} \cap \bB^{\bF_{g}}_{\eta_0} (\mathcal{SV}^{L}_{g})$;
        \item For all $x \in \partial X^B_i$,
        \[
            \Mb_{g}\circ \Phi_i(x) \leq L - \bar{\varepsilon}_0\,.
        \]
    \end{enumerate}
    Define $\Phi^B_i \coloneqq \Phi_i \vert_{X^B_i}$ and $\delta = \bar\varepsilon_0 / 10$. 
    
    Now, there are two cases. The first case is that there exists $\delta_0 > 0$ such that
    \[
        \limsup_i \bL_g(\HC^\delta_{g}(\Phi^B_{i})) < L - \delta_0\,.
    \]
    In this case, for sufficiently large $i$, we obtain a $\Psi_i \in \tilde{\HC}^\delta_{g}(\Phi^B_{i})$ with domain $W_i$ such that
    \begin{equation}\label{eqn:less_than_L}
        \sup_{x \in W^\omega_i} \Mb_g \circ \Psi^\omega_i(x) < L - \delta_0\,.
    \end{equation}
    Therefore, we can define $X'_i \coloneqq X^G_i \cup W^\omega_i$ and $\Phi'_i: X'_i \to \cZ_n(M; \bF_g; \Z_2)$ by
    \[
        \Phi'_i(x) \coloneqq \begin{cases}
            \Phi_i(x) & x \in X^G_i\\
            \Psi^\omega_i(x) & x \in W_i^\omega\,.
        \end{cases}
    \]
    Clearly, $\Phi'_i \in \HC(\Phi)$. Moreover, by \eqref{eqn:less_than_L} and Property (iii) for $\Phi_i$, for every $x \in X'_i$,
    \[
        \Mb_{g}\circ \Phi'_i(x) \geq L - \min(\delta_0, \bar \varepsilon_0) \implies |\Phi'_i(x)| \in \tilde{\cG}_{g}\,,
    \]
    and thus, $\bC(\Phi'_i) \subset \tilde{\cG}_{g}$. By Lemma \ref{lem:cycle_of_small_rep}, it is a pulled-tight minimizing sequence such that every embedded cycle in its critical set is associated with a flat cycle. This concludes Theorem \ref{thm:strong_multi_one_II}.

    The second case would be
    \begin{equation}\label{eqn:eq_L}
        \limsup_i \bL_{g}(\HC^\delta_{g}(\Phi^B_{i})) = L\,,
    \end{equation}
    and we shall deduce a contradiction from this by proceeding with the proof of Theorem \ref{thm_13_width}. It suffices to consider the boundary part 
    \[
        Z_i \coloneqq \partial X^B_i
    \] when attaching a homotopy map. Fortunately, due to the small mass of cycles on the boundary, they have no significant impact on the critical set. This will be elaborated upon in the following.

\medskip
\paragraph*{\textbf{Part 1. Metric perturbations.}}

    We make use of the same Proposition \ref{prop_baire_residual} to choose a sequence $(g_i)^\infty_{i = 1}$ in $\Gamma^\infty_{\text{uniq}}$ such that $\|g_i - g\|_{C^\infty, g} < \eta_0$, and
    \[
        \lim_{i \to \infty} g_i = g
    \]
    in the $C^\infty$ topology. We define for each $i \in \N^+$,
    \[
        L_i \coloneqq \sup_{x \in X^B_i} \Mb_{g_i} \circ \Phi^B_{i}(x)\,.
    \]
    Let $(\delta_i)^\infty_{i = 1}$ be a decreasing sequence in $(0, \bar\varepsilon_0 / 2)$ such that 
    \[
        \lim_{i \to \infty}\delta_i = 0\,.
    \]

\medskip
\paragraph*{\textbf{Part 2. Restrictive homological min-max.}}
    We define
    \[
        \eta_2 \coloneqq \min(\eta_0, \varepsilon_0) / 10\,.
    \]

    For each $i \in \mathbb{N}^+$, we consider the restrictive homology class 
    \[
        \tilde\HC_i=\tilde \HC^{\delta_i}_{g_i}(\Phi^B_i)\,, \quad \HC_i=\HC^{\delta_i}_{g_i}(\Phi^B_i)\,.
    \]
    
    By \eqref{eqn:eq_L},
    \[
        \lim_{i \to \infty} \Lb(\HC_i) = \lim_{i \to \infty} L_i + \delta_i = L\,.
    \] 
    
    Similarly,  applying the restrictive min-max theorem, Theorem \ref{thm_restrictive_holo_min_max}, to each $\HC_i$, for each $i$, we obtain a sweepout $\Psi_i : W^B_i \to \Zc_n(M; \Fb_{g_i};\Z_2)$ in $\tilde\HC_i$ and $\varepsilon_{2, i} > 0$, such that
    \[
        \mathbf{M}_{g_i}((\Psi^B_i)^\alpha(x)) \geq  \mathbf{L}_{g_i}(\HC_i) - \varepsilon_{2,i}\implies|(\Psi^B_i)^\alpha(x)| \in \tilde{\cB}_{g} \cap \bB^{\bF_{g}}_{\eta_2} (\mathcal{SV}^{L}_{g})\,,
    \]
    and
    \[
        \mathbf{M}_{g_i}((\Psi^B_i)^\omega(x)) \geq  \mathbf{L}_{g_i}(\HC_i) - \varepsilon_{2,i}\implies|(\Psi^B_i)^\omega(x)| \in \tilde{\cG}_{g} \cap \bB^{\bF_{g}}_{\eta_2} (\mathcal{SV}^{L}_{g})\,.
    \]

\medskip
\paragraph*{\textbf{Part 3. Pull-tight.}}
    We define 
    \[
        \eta_3 \coloneqq 3\eta_2\,.
    \]
    
    For each $i$, after possibly discarding finitely many $i$, we define the space
    \[
        \hat W^B_i \coloneqq ([0,1] \x X^B_i) \cup W^B_i\,,
    \]
    where $\{1\}\x X^B_i\subset [0,1]\x W^B_i$ and $X^B_i\subset W^B_i$ are identified. Note that in this case,
    \[
        (\hat W^B_i)^\alpha = \{0\} \times X^B_i \cong X^B_i, \quad (\hat W^B_i)^\omega = [0, 1] \times Z_i \cup (W^B_i)^\omega\,.
    \]
    We can apply the same Proposition \ref{prop_pull_tight} with $A = L + 1$ to obtain a deformation map $H$, and define
    \[
        \hat\Psi^B_i:\hat W^B_i\to \cZ_n(M;\bF_{g_i};\Z_2)
    \] by
    \begin{align*}
        \hat\Psi^B_i|_{[0,1]\x X^B_i}(t, x)&\coloneqq H(t, \Phi^B_{i}(x))\\
        \hat\Psi^B_i|_{W^B_i}(x)&\coloneqq H(1,\Psi^B_{i}(x)).
    \end{align*}

    Then, there exists an $\bar\varepsilon_{3} \in \R^+$ and $(\varepsilon_{3, i})^\infty_{i = 1} \subset \R^+$ such that
    \begin{align*}
        \mathbf{M}_{g_i}((\hat \Psi^B_i)^\alpha(x)) \geq  \mathbf{L}_{g_i}(\HC_i) - \bar\varepsilon_{3}&\implies|(\hat \Psi^B_i)^\alpha(x)| \in \bB^{\bF_{g}}_{\eta_3} (\mathcal{SV}^{L}_{g})\,,\\
        \mathbf{M}_{g_i}((\hat \Psi^B_i)^\alpha(x)) \geq  \mathbf{L}_{g_i}(\HC_i) - \varepsilon_{3,i}&\implies|(\hat \Psi^B_i)^\alpha(x)| \in \tilde{\cB}_{g} \cap \bB^{\bF_{g}}_{\eta_2} (\mathcal{SV}^{L}_{g})\,,\\
        \mathbf{M}_{g_i}((\hat \Psi^B_i)^\omega(x)) \geq  \mathbf{L}_{g_i}(\HC_i) - \varepsilon_{3,i}&\implies|(\hat \Psi^B_i)^\omega(x)| \in \tilde{\cG}_{g} \cap \bB^{\bF_{g}}_{\eta_2} (\mathcal{SV}^{L}_{g})\,.
    \end{align*}
    Here, we use the inequality on $(\hat W^B_i)^\omega$,
    \[
        \sup_{[0, 1] \times Z_i} \mathbf{M}_{g_i} \circ (\hat \Psi^B_i)^\omega \leq \mathbf{L}_{g_i}(\HC_i) - \bar{\varepsilon}_0 / 2\,.
    \]

\medskip
\paragraph*{\textbf{Part 4. $(\varepsilon, \delta)$-deformation.}}
    Fix such a sufficiently large $i$. In $(M, g_i)$, we apply Lemma \ref{prop:M_cts_approx_F_cts} to ${\hat{\Psi}}^B_i: \hat{W}^B_i \to \Zc_n(M; \Fb_{g_i}; \Z_2)$ 
    and obtain an $\Mb_{g_i}$-continuous map ${{\hat{\Psi}}^{B**}_i}$ and an
    $\bF_{g_i}$-continuous homotopy map ${\hat{H}}_i$.
    
    Similar arguments imply that we can apply Lemma \ref{lem:(e, d)-deformation} to ${\hat{\Psi}^{B**}_i}$ to obtain 
    \[
        {\hat \Psi}^{B*}_i: {\hat{W}}^B_i \to \Zc_n(M; \bM_g; \Z_2)\,,
    \]
    and a homotopy map
    \[
        \hat H^\textsc{DEF}_i: [0, 1] \times \hat{W}^B_i \to \Zc_n(M; \bM_g; \Z_2)\,.
    \]

    Analogously, we define the space 
    \[
        \hat{\hat W}^B \coloneqq ([0, 2] \times X^B_i) \cup \hat W^B_i
    \]
    where $\{2\} \times X^B_i \subset [0, 2] \times X^B_i$ and $(\hat W^B_i)^\alpha \subset \hat W^B_i$ are identified. On $\hat{\hat W}^B$, we define the map
    \[
        \hat{\hat \Psi}: \hat{\hat W}^B \to \Zc_n(M; \Fb_{g_i}; \Z_2)
    \]
    in $\tilde{\mathcal{H}}^i$, as follows
    \begin{align*}
        \hat{\hat \Psi}\vert_{[0, 1] \times X^B_i}(t, x) &\coloneqq \hat H_i\vert_{[0, 1] \times  X^B_i}(t, x),\\
        \hat{\hat \Psi}\vert_{[1, 2] \times X^B_i}(t, x) &\coloneqq \hat H^\textsc{DEF}_i\vert_{[0, 1] \times  X^B_i}(t - 1, x),\\
        \hat{\hat \Psi}\vert_{\hat W^B_i} &\coloneqq \Psi^*_i\,.
    \end{align*}
    Note that $(\hat{\hat W}^B)^\alpha = X^B_i$ and $(\hat{\hat W}^B)^\omega = [0, 2] \times Z_i \cup (\hat W^B_i)^\omega$.
    
    Moreover, there exists $\varepsilon_4 > 0$ such that 
    \begin{align*}
        \forall x \in \hat {\hat W}^B,\ \mathbf{M}_{g_i}(\hat{\hat \Psi}(x)) \geq  \mathbf{L}_{g_i}(\HC_i) - \varepsilon_4 &\implies|\hat{\hat \Psi}(x)| \in \hat{\cG}_{g} \cup \hat{\cB}_{g} \,,\\
        \forall x \in (\hat {\hat{W}}^B)^\alpha,\ \mathbf{M}_{g_i}(\hat{\hat \Psi}(x)) \geq  \mathbf{L}_{g_i}(\HC_i) - \varepsilon_{4}&\implies|\hat{\hat \Psi}(x)| \in \hat{\cB}_{g}\,,\\
        \forall x \in (\hat {\hat{W}}^B)^\omega,\ \mathbf{M}_{g_i}(\hat{\hat \Psi}(x)) \geq  \mathbf{L}_{g_i}(\HC_i) - \varepsilon_{4}&\implies|\hat{\hat \Psi}(x)| \in  \hat{\cG}_{g}\,,
    \end{align*}
    Here, we use the inequality on $(\hat {\hat{W}}^B)^\omega$,
    \[
        \sup_{[0, 2] \times Z_i} \mathbf{M}_{g_i} \circ \hat {\hat \Psi}^\omega \leq \mathbf{L}_{g_i}(\HC_i) - \bar{\varepsilon}_0 / 4\,.
    \]

\paragraph*{\textbf{Part 5. Constructing a map $\Xi^\omega$ homologous to $(\hat{\hat{\Psi}})^\omega$}}
    
    Similar to Part 5 in \S \ref{sect:proof_main_thm_I}, using the fact that $\Fb_{g_i}(\hat{\cG}_{g}, \hat{\cB}_{g}) > 0$, we can construct a map $\Xi^B\in\tilde\HC_i$ from $\hat{\hat{\Psi}}^B$ such that
    \[
        \bM_{g_i}\circ(\Xi^B)^\omega<\bL_{g_i}(\HC_i),
    \]
    thereby arriving at a contradiction. This completes the proof of Theorem \ref{thm:strong_multi_one_II}.
      
\section{Technical ingredients}\label{sect:technical}

\subsection{Proof of Proposition \ref{prop_pull_tight}}\label{proof_prop_pull_tight}

    Since $\mathcal{U}^{\leq A}= \mathcal{V}^{\leq A} \setminus \bB^{\bF_{g}}_{\eta_3}(\mathcal{S}^{\leq A})$ (defined in (\ref{eq:U})) is compact, there exists $\bar\varepsilon_{3, 1} > 0$, such that
    \[
        \inf_{V \in \mathcal{U}^{\leq A}} \|\delta_{g} V\|_{g} > 4\bar\varepsilon_{3, 1}\,,
    \]
    where 
    \[
        \|\delta_{g} V\|_{g} \coloneqq \sup\{\delta_{g}V(Y): Y \in \Gamma TM, \|Y\|_{C^1, g} \leq 1\}\,,
    \]
    with $\Gamma TM$ denoting the set of smooth vector fields on $M$.
      
    Furthermore, we can choose
    \begin{itemize}
        \item a positive integer $q$,
        \item varifolds $\{V_j\}^q_{j = 1} \subset \mathcal{U}^{\leq A}$,
        \item radii $\{r_j\}^q_{j = 1} \subset (0, \eta_3)$,
        \item smooth vector fields $\{X_j\}^q_{j = 1} \subset \Gamma TM$ with $\|X_j\|_{C^1, g} \leq 1$,
        \item open balls $\{\mathcal{B}_j \coloneqq \bB^{\mathbf{F}_{g}}_{r_j}(V_j)\}^q_{j = 1}$ with $r_j < \eta_3 / 2$,
    \end{itemize}
    such that
    \begin{align}\label{eq_epsilon31}
        \nonumber \mathcal{U}^{\leq A} &\subset\tilde{\mathcal{U}}^{\leq A}\coloneqq \bigcup^q_{j = 1} \mathcal{B}_j\,, \\
        \nonumber \emptyset &= \bB^{\Fb_{g}}_{\eta_3 / 2}(\mathcal{S}^{\leq A}) \cap \tilde{\mathcal{U}}^{\leq A}\,,\\
        \delta_{g} V(X_j) & \leq -\frac{1}{2}\|\delta_{g} V_j\| < -2\bar\varepsilon_{3, 1} < 0,\ \forall V \in \mathcal{B}_j, \text{ and } j = 1, 2, \cdots, q\,.
    \end{align}
    Hence, without loss of generality, by possibly discarding finitely many $i$'s, for each $i \in \mathbb{N}^+$ and each $j = 1, 2, \cdots, q$, we have 
    \begin{equation}\label{eq_epsilon31i}
        \delta_{g_i} V(X_j) < -\bar\varepsilon_{3, 1}  < 0\,,
    \end{equation}
    holds for all $V \in \mathcal{B}_j$.
      
    For each $j = 1, 2, \cdots, q$, we define a continuous function
    \[
        \psi_j: \tilde{\mathcal{U}}^{\leq A} \to [0, \infty)\,,
    \]
    by defining $\psi_j(V)\coloneqq\Fb_{g}(V,\mathcal{V}^{\leq A} \setminus \mathcal{B}_j)$, the $\mathbf{F}_{g}$-distance of $V$  from $ \mathcal{V}^{\leq A} \setminus \mathcal{B}_j$.
    
    Now, we can define vector fields associated with each varifold in $ \tilde{\mathcal{U}}^{\leq A}$, i.e.
    \begin{align*}
        X: \tilde{\mathcal{U}}^{\leq A} &\to \Gamma TM\,,\\
        V &\mapsto \sum^q_{j = 1} \psi_j(V) X_j\,.
    \end{align*}
    Note that $X$ is continuous, and by compactness, there exists $\bar\varepsilon_{3, 2} > 0$  such that for all $V \in \mathcal{U}^{\leq A}$,
    \begin{equation}\label{eq_epsilon32}
        \delta_{g} V(X(V)) < -2\bar\varepsilon_{3,2} < 0\,. 
    \end{equation}
    Then, by discarding finitely many $g_i$, for each $i \in \mathbb{N}^+$,     \begin{equation}\label{eq_epsilon32i}
        \delta_{g_i} V(X(V)) < -\bar\varepsilon_{3,2} < 0\,, 
    \end{equation}
    We can extend $X$ such that $X$ becomes a continuous map $\cV^{\leq A}\to\Gamma TM$ by putting $X(V)$ to be the zero vector field for each $V$ outside $\tilde \cU^{\leq A}$.  

    Now, we are ready to define the desired map       
    \[
        H:[0,1]\x \{T\in\cZ_n(M;\bF_{g};\Z_2):\bM_{g}(T)\leq A\}\to \{T\in\cZ_n(M;\bF_{g};\Z_2):\bM_{g}(T)\leq A\}.
    \]
    First, we define a map
    \[
        f:[0,\infty)\x \mathcal{V}^{\leq A}\to\mathrm{Diff}(M)
    \]
    by letting $\{f(t,V)\}_t$ be the one-parameter family of diffeomorphisms on $M$ generated by the vector field $X(V)$. By (\ref{eq_epsilon32}) and (\ref{eq_epsilon32i}),  there exists a continuous function $h:\cV^{\leq A}\to [0,1]$ such that:
    \begin{itemize}
        \item $h>0$ on $\tilde\cU^{\leq A}$ and $h=0$ elsewhere.
        \item If $0\leq t<s\leq h(V)$,
        \begin{equation}\label{eq_mass_decrease_under_diffeo_gbar}
            \|f(s,V)_\#(V)\|_{g}(M)<\|f(t,V)_\#(V)\|_{g}(M).
        \end{equation}
        \item For each $i$, if $0\leq t<s\leq h(V)$, \begin{equation}\label{eq_mass_decrease_under_diffeo_gi}
            \|f(s,V)_\#(V)\|_{g_i}(M)<\|f(t,V)_\#(V)\|_{g_i}(M).
        \end{equation}
    \end{itemize}
    Now, define the desired map $H$ by
    $$H(t,T) = \begin{cases}
  f(t,|T|)_\#(T)  & \textrm{ if } 0\leq t\leq h(|T|), \\
  f(h(|T|),|T|)_\#(T) &  \textrm{ if } h(|T|)\leq t\leq 1.
    \end{cases}$$

    Clearly, $H(0,\cdot)=\id$, so  $H$ satisfies Proposition \ref{prop_pull_tight} (\ref{prop_item_start_id}). And because $h=0$ on $\bB^{\Fb_{g}}_{\eta_3 / 2}(\mathcal{S}^{\leq A})$, $H(t,\cdot)$ fixes $T$ if $|T| \in \bB^{\Fb_{g}}_{\eta_3 / 2}(\mathcal{S}^{\leq A})$, so Proposition \ref{prop_pull_tight} (\ref{prop_item_fix_stationary}) holds. Moreover, by the three bullet points  in the definition of $h$ above,   we know for each $(t,T)$,
    \[
        \bM_{g}(H(t,T))\leq \bM_{g}(T),\;\;\bM_{g_i}(H(t,T)) \leq \bM_{g_i}(T)\,.
    \]
    And any of the equalities hold only if $t=0$ or $h(|T|)=0$.
    So $H$ satisfies  Proposition \ref{prop_pull_tight}  (\ref{prop_item_decrease_mass}) too.
    Furthermore, we claim that there exists $\varepsilon_3 > 0$, such that for each $T$, if $|H(1,T)|\in\cU^{\leq A}$, then
    \[
        \bM_{g}(H(1,T)) \leq \bM_{g}(T) - 2\varepsilon_3\,.
    \]
    Indeed, if not, then there exists a sequence $(T_j)_j$ such that $|H(1,T_j)|\in\cU^{\leq A}$ and 
    \begin{equation}\label{eq_MH}
        \bM_{g}(H(1,T)) \geq  \bM_{g}(T) - 1/j.
    \end{equation}
    Then, by compactness and relabeling the $j$'s,  $|T_j|$  converges to some $V'\in\cV^{\leq A}$. However, note that:
    \begin{itemize}
        \item Putting $T=T_j$ into (\ref{eq_MH}) and taking $j\to\infty$, we have 
        \[
            \|f(h(V'),V')(V')\|(M)\geq \|V'\|(M)\,,
        \]
        which by (\ref{eq_mass_decrease_under_diffeo_gbar}) implies $h(V')=0$. Thus, by the definition of $h$, $V'\notin\tilde\cU^{\leq A}$, so $V'\notin\cU^{\leq A}$.
        \item On the other hand, taking $j\to\infty$  to $|H(1,T_j)|\in\cU^{\leq A}$, we know 
        \[
            f(h(V'),V')_\#(V')\in\cU^{\leq A}\,.
        \] But we have shown $h(V')=0,$ so $V'\in \cU^{\leq A} $. 
    \end{itemize}
    Hence, a contradiction arises. This proves our claim that if $|H(1,T)|\in\cU^{\leq A}$, then
    \[
        \bM_{g}(H(1,T)) \leq \bM_{g}(T) - 2\varepsilon_3\,.
    \]
    Then, we also have  
    \[
        \bM_{g_i}(H(1,T)) \leq \bM_{ g_i}(T) - \varepsilon_3
    \]
    by discarding finitely many $g_i$. Therefore, $H$ satisfies Proposition \ref{prop_pull_tight} (\ref{prop_item_mass_fixed_amount_below}).  
    
    This finishes the proof of Proposition \ref{prop_pull_tight}.
\subsection{Proof of Lemma \ref{lem_critical_set}}\label{sect_lem_critical_set_proof}
    Note that:
    \begin{itemize}
        \item $\lim_i \sup \Mb_{g}\circ \Phi_{i}= L$, which by Proposition \ref{prop_pull_tight} (\ref{prop_item_decrease_mass}) implies 
        \[
            \lim_i \sup \Mb_{g}\circ \hat{\Psi}_i|_{[0,1]\x X_i} = L\,.
        \]
        \item By $\Lb_{g_i}(\mathcal{H}_i)\to L$, $\lim_i (\sup \Mb_{g}\circ \Phi_{i}+\delta_i) = L $, and Proposition \ref{prop_pull_tight} (\ref{prop_item_decrease_mass}), we know 
        \[
            \lim_i \sup\Mb_{g}\circ \hat{\Psi}_i|_{W_i}= L\,.
        \]
    \end{itemize}
    Hence, 
    \[
        \mathbf{C}_{g}((\hat{\Psi}_i)_{i }) =\bC_{g}((\hat{\Psi}_i|_{[0,1]\x X_i})_{i})\cup \bC_{g}((\hat{\Psi}_i|_{W_i})_i)\,,
    \]
    where the right hand side uses Notation \ref{nota:critical} (as we do not really view $(\hat{\Psi}_i|_{[0,1]\x X_i})_{i }$ and $(\hat{\Psi}_i|_{W_i})_i$ as minimizing sequences).

    To see that $\bC_{g}((\hat{\Psi}_i|_{[0,1]\x X_i})_{i })\subset \bB^{\bF_{g}}_{\eta_3}(\mathcal{SV}^{L}_{g})$, take $(t_i,x_i)\in [0,1]\x X_i$ such that, after passing to a subsequence, \[
        |\hat{\Psi}_i|_{[0,1]\x X_i}(t_i,x_i)|
    \] 
    tends to some varifold $V$ with mass $L$. Then by Proposition \ref{prop_pull_tight} (\ref{prop_item_decrease_mass}) we know the mass of 
    \[
        \hat{\Psi}_i|_{[0,1]\x X_i}(0,x_i)=\Phi_{i}(x_i)
    \]
    tends to $L$. Since $(\Phi_{i})_i$ is a pulled-tight minimizing sequence, we know $\Phi_{i}(x_i)$ subsequentially converge to some stationary varifold of mass $L$. By Proposition \ref{prop_pull_tight} (\ref{prop_item_fix_stationary}), this stationary varifold is $V$. So $\bC_{g}((\hat{\Psi}_i|_{[0,1]\x X_i})_{i })\subset \bB^{\bF_{g}}_{\eta_3}(\mathcal{SV}^{L}_{g})$.

    To see that $\bC((\hat{\Psi}_i|_{W_i})_i)\subset \bB^{\bF_{g}}_{\eta_3}(\mathcal{SV}^{L}_{g})$, suppose by contradiction that, after passing to a subsequence, $|\hat{\Psi}_i|_{W_i}(x_i)|$ converges to some $V$ with mass $L$ but $V\notin \bB^{\bF_{g}}_{\eta_3}(\mathcal{SV}^{L}_{g})$. Then for every large $i$, by Proposition \ref{prop_pull_tight} (\ref{prop_item_mass_fixed_amount_below}),  $\bM_{g}(\hat\Psi_i(x_i)) \leq \bM_{g}(\Psi(x_i)) - \varepsilon_3$. However, noting 
    \[
        \lim_i\sup \Mb_{g}\circ\Psi_i=L\,,
    \] contradiction arises. This finishes the proof.
\subsection{Proof of Claim \ref{claim1}}
    We need to prove that,
        for any $x \in \hat W_i$, if $\Mb_{g_i}(\hat \Psi^*_i(x)) \geq \mathbf{L}_{g_i}(\HC_i) - \bar \varepsilon / 100$, then $|\hat \Psi^*_i(x)| \in \hat \cG_{g} \sqcup \hat \cB_{g}$

  First,
        $\Mb_{g_i}(\hat \Psi^*_i(x)) \geq \mathbf{L}_{g_i}(\HC_i) - \bar \varepsilon / 100$ implies $\Mb_{g_i}(\hat \Psi^*_i(x)) \geq L - \bar\varepsilon /10$. 
        
        By (2) and (3) of Lemma \ref{lem:(e, d)-deformation}, there exists $T_{1, x} \in \Zc_n(M; \Z_2)$ and $\hat x \in \hat W_i$ satisfying
        \begin{enumerate}[(i)]
            \item $\Mb_{g_i}(\hat \Psi^*_i(x), T_{1, x}) < \bar \delta$, and thus, by Property (iii) of $\bar \delta$, $\Fb_{g_i}(\hat \Psi^*_i(x), T_{1, x}) < d_0$;
            \item \[T_{1, x}\llcorner(M \setminus(\overline{B}_{g_i}(p_1, \eta) \cup \cdots \cup (\overline{B}_{g_i}(p_m, r_0))) = \hat \Psi'_i(\hat x)\llcorner(M \setminus(\overline{B}_{g_i}(p_1, r_0) \cup \cdots \cup (\overline{B}_{g_i}(p_m, r_0)))\]
            for some collection $\{p_1, \cdots, p_m\} \subset M$, $m \leq 3^{2k + 3}$,
            \item $\Fb_{g_i}(|\hat \Psi'_i(\hat x)|, \cB_{g} \cup \cG_{g}) \leq 2d_0$.
        \end{enumerate}

        Since $d_0 = \eta_{\ref{lem:far_of_diff_geom}}(M, g, 2k + 3, L, \cG_{g}, \cB_{g}) / 10$, by Lemma \ref{lem:far_of_diff_geom} with $V_j = |\hat \Psi'_i(\hat x)|$, $V'_j = |T_{1, x}|$ and $V''_j = |\hat \Psi^*_i(x)|$ for $j = 1$ or $2$, we have
        \[
            |\hat \Psi^*_i(x)| \in \hat \cG_{g} \sqcup \hat \cB_{g}\,.
        \]

\subsection{Proof of Claim \ref{claim2}}

    We need to prove that,
        for any $x \in \hat W^\alpha_i$ and any $t \in [0, 1]$, if $\Mb_{g_i}(\hat H^\textsc{DEF}_i(t, x)) \geq \mathbf{L}_{g_i}(\HC_i) - \varepsilon_{4, i} / 4$, then $|\hat H^\textsc{DEF}_i(t, x)| \in \hat \cB_{g}$.
 
        By (2) of Lemma \ref{lem:(e, d)-deformation}, there exists $T_{t, x} \in \Zc_n(M; \Z_2)$ such that
        \begin{enumerate}[(i)]
            \item $\Mb_{g_i}(\hat \Psi'_i(x), \hat \Psi'_i(\hat x)) < \bar\delta$ 
            \item $\Mb_{g_i}(\hat \Psi'_i(\hat x)) > \Mb_{g_i}(\hat H^\textsc{DEF}_i(t, x)) - \bar \delta$;
            \item $\Mb_{g_i}(\hat H^\textsc{DEF}_i(t, x), T_{t, x}) < \bar \delta$;
            \item 
            \[
                T_{t, x}\llcorner(M \setminus(\overline{B}_{g_i}(p_1, \eta) \cup \cdots \cup (\overline{B}_{g_i}(p_m, r_0))) = \hat \Psi'_i(\hat x)\llcorner(M \setminus(\overline{B}_{g_i}(p_1, r_0) \cup \cdots \cup (\overline{B}_{g_i}(p_m, r_0)))
            \]
            for some collection $\{p_1, \cdots, p_m\} \subset M$, $m \leq 3^{2k + 3}$.
        \end{enumerate}

        By (i) and (ii), we have 
        \[
            \Mb_{g_i}(\hat \Psi'_i(x)) > \Mb_{g_i}(\hat \Psi'_i(\hat x)) - \bar\delta > \Mb_{g_i}(\hat H^\textsc{DEF}_i(t, x)) - 2\bar \delta > \mathbf{L}_{g_i}(\HC_i) - \varepsilon_{4, i}\,,
        \]
        and it follows from \eqref{eqn:epsilon_4_i_a} that
        \[
            |\hat \Psi'_i(x)| \in \tilde{\cB}_{g}\,.
        \]
        
        By Property (iii) of $\bar \delta$, (i) and (iii) above imply $\Fb_{g_i}(\hat \Psi'_i(x), \hat \Psi'_i(\hat x)) < d_0$ and $\Fb_{g_i}(\hat H^\textsc{DEF}_i(t, x), T_{t, x}) < d_0$.
        Consequently,
        \[
             |\hat \Psi'_i(\hat x)| \in \bB^{\Fb_{g_i}}_{d_0}(\tilde{\cB}_{g})\,.
        \]

        Since $d_0 = \eta_{\ref{lem:far_of_diff_geom}}(M, g, 2k + 3, L, \cG_{g}, \cB_{g}) / 10$, by Lemma \ref{lem:far_of_diff_geom} with $V_2 = |\hat \Psi'_i(\hat x)|$, $V'_2 = |T_{t, x}|$ and $V''_2 = |\hat H^\textsc{DEF}_i(t, x)|$, we conclude that
        \[
            |\hat H^\textsc{DEF}_i(t, x)| \in \hat \cB_{g}\,.
        \]

    \subsection{Proof of Claim \ref{claim3}}

    We need to prove that,
        for any $x \in \hat W^\omega_i$ and any $t \in [0, 1]$, if $\Mb_{g_i}(\hat \Psi^*_i(x)) \geq \mathbf{L}_{g_i}(\HC_i) - \varepsilon_{4, i} / 4$, then $|\hat \Psi^*_i(x))| \in \hat \cG_{g}$.
 
        By (2) of Lemma \ref{lem:(e, d)-deformation}, there exists $T_{1, x} \in \Zc_n(M; \Z_2)$ such that
        \begin{enumerate}[(i)]
            \item $\Mb_{g_i}(\hat \Psi'_i(x), \hat \Psi'_i(\hat x)) < \bar\delta$ 
            \item $\Mb_{g_i}(\hat \Psi'_i(\hat x)) > \Mb_{g_i}(\hat H^\textsc{DEF}_i(t, x)) - \bar \delta$;
            \item $\Mb_{g_i}(\hat \Psi^*(x), T_{1, x}) < \bar \delta$;
            \item 
            \[
                T_{1, x}\llcorner(M \setminus(\overline{B}_{g_i}(p_1, \eta) \cup \cdots \cup (\overline{B}_{g_i}(p_m, r_0))) = \hat \Psi'_i(\hat x)\llcorner(M \setminus(\overline{B}_{g_i}(p_1, r_0) \cup \cdots \cup (\overline{B}_{g_i}(p_m, r_0)))
            \]
            for some collection $\{p_1, \cdots, p_m\} \subset M$, $m \leq 3^{2k + 3}$.
        \end{enumerate}

        By (i) and (ii), we have 
        \[
            \Mb_{g_i}(\hat \Psi'_i(x)) > \Mb_{g_i}(\hat \Psi'_i(\hat x)) - \bar\delta > \Mb_{g_i}(\hat H^\textsc{DEF}_i(t, x)) - 2\bar \delta > \mathbf{L}_{g_i}(\HC_i) - \varepsilon_{4, i}\,,
        \]
        and it follows from \eqref{eqn:epsilon_4_i_w} that
        \[
            |\hat \Psi'_i(x)| \in \tilde{\cG}_{g}\,.
        \]
        
        By Property (iii) of $\bar \delta$, (i) and (iii) above imply $\Fb_{g_i}(\hat \Psi'_i(x), \hat \Psi'_i(\hat x)) < d_0$ and $\Fb_{g_i}(\hat \Psi^*(x), T_{1, x}) < d_0$.
        Consequently,
        \[
             |\hat \Psi'_i(\hat x)| \in \bB^{\Fb_{g_i}}_{d_0}(\tilde{\cG}_{g})\,.
        \]

        Since $d_0 = \eta_{\ref{lem:far_of_diff_geom}}(M, g, 2k + 3, L, \cG_{g}, \cB_{g}) / 10$, by Lemma \ref{lem:far_of_diff_geom} with $V_1 = |\hat \Psi'_i(\hat x)|$, $V'_1 = |T_{t, x}|$ and $V''_1 = |\hat \Psi^*(x)|$, we conclude that
        \[
            |\hat \Psi^*(x)| \in \hat \cG_{g}\,.
        \]


\printbibliography

@misc{Mar23,
author = {Marques, Fernando Cod\'a},
year = {2023},
howpublished = {Personal communication},
}

@article{All72,
	title = {On the {First} {Variation} of a {Varifold}},
	volume = {95},
	issn = {0003-486X},
	url = {http://www.jstor.org/stable/1970868},
	doi = {10.2307/1970868},
	number = {3},
	urldate = {2018-04-21},
	journal = {Annals of Mathematics},
	author = {Allard, William K.},
	year = {1972},
    mrnumber = {0307015},
    zmnumber = {0252.49028},
	pages = {417--491},
}

@book{RS82,
  title = {Introduction to Piecewise-Linear Topology},
  author = {Rourke, Colin Patrick and Sanderson, Brian Joseph},
  year = {1982},
  series = {Springer Study Edition},
  edition = {Rev. printing},
  publisher = {{Springer}},
  address = {{Berlin}},
  isbn = {978-3-540-11102-3 978-0-387-11102-5},
  langid = {english},
  mrnumber = {665919},
  zmnumber = {0477.57003},
  annotation = {OCLC: 7948164}
}

@article{Yangyang_Li19,
    AUTHOR = {Li, Yangyang},
     TITLE = {Existence of infinitely many minimal hypersurfaces in
              higher-dimensional closed manifolds with generic metrics},
   JOURNAL = {J. Differential Geom.},
  FJOURNAL = {Journal of Differential Geometry},
    VOLUME = {124},
      YEAR = {2023},
    NUMBER = {2},
     PAGES = {381--395},
      ISSN = {0022-040X,1945-743X},
   MRCLASS = {53C42},
  MRNUMBER = {4602728},
       DOI = {10.4310/jdg/1686931604},
       URL = {https://doi.org/10.4310/jdg/1686931604},
}

@article{Don22,
  title={The First p-Widths of the Unit Disk},
  author={Donato, Sidney},
  journal={The Journal of Geometric Analysis},
  volume={32},
  number={6},
  pages={1--38},
  year={2022},
  publisher={Springer}
}

@article{LMN18,
  title={Weyl law for the volume spectrum},
  author={Liokumovich, Yevgeny and Marques, Fernando Cod\'a and Neves, Andr{\'e}},
  journal={Annals of Mathematics},
  volume={187},
  number={3},
  pages={933--961},
  year={2018},
  publisher={JSTOR}
}

@phdthesis{Nur16,
  title={Low min-max widths of the round three-sphere},
  author={Nurser, Charles Arthur George},
  year={2016},
  school={Imperial College London}
}

@article{MN21,
  title={Morse index of multiplicity one min-max minimal hypersurfaces},
  author={Marques, Fernando Cod\'a and Neves, Andr{\'e}},
  journal={Advances in Mathematics},
  volume={378},
  pages={107527},
  year={2021},
  publisher={Elsevier}
}

@article{MN17,
  title={Existence of infinitely many minimal hypersurfaces in positive Ricci curvature},
  author={Marques, Fernando Cod\'a and Neves, Andr{\'e}},
  journal={Inventiones mathematicae},
  volume={209},
  number={2},
  pages={577--616},
  year={2017},
  publisher={Springer}
}

@article{MN14,
  title={Min-max theory and the Willmore conjecture},
  author={Marques, Fernando Cod\'a and Neves, Andr{\'e}},
  journal={Annals of mathematics},
  pages={683--782},
  year={2014},
  publisher={JSTOR}
}

@article{Alm62,
  title={The homotopy groups of the integral cycle groups},
  author={Almgren, Frederick},
  journal={Topology},
  volume={1},
  number={4},
  pages={257--299},
  year={1962},
  publisher={Elsevier}
}

@article{CM23,
  title = {The P-Widths of a Surface},
  author = {Chodosh, Otis and Mantoulidis, Christos},
  year = {2023},
  month = jun,
  journal = {Publications math\'ematiques de l'IH\'ES},
  volume = {137},
  number = {1},
  pages = {245--342},
  issn = {1618-1913},
  doi = {10.1007/s10240-023-00141-7}
}

@article{Zho20,
  title={On the multiplicity one conjecture in min-max theory},
  author={Zhou, Xin},
  journal={Annals of Mathematics},
  volume={192},
  number={3},
  pages={767--820},
  year={2020},
  publisher={JSTOR}
}

@article{AmbrozioCarlottoSharp2018comparing,
  title={Comparing the Morse index and the first Betti number of minimal hypersurfaces},
  author={Ambrozio, Lucas and Carlotto, Alessandro and Sharp, Ben},
  journal={Journal of Differential Geometry},
  volume={108},
  number={3},
  pages={379--410},
  year={2018},
  publisher={Lehigh University}
}

@article{WangZhou2025improved,
  title={Improved C 1, 1 Regularity for Multiple Membranes Problem},
  author={Wang, Zhichao and Zhou, Xin},
  journal={Peking Mathematical Journal},
  pages={1--57},
  year={2025},
  publisher={Springer}
}

@article{SarnataroStryker2023Optimal,
  title={Optimal regularity for minimizers of the prescribed mean curvature functional over isotopies},
  author={Sarnataro, Lorenzo and Stryker, Douglas},
  journal={arXiv preprint arXiv:2304.02722},
  year={2023}
}

@article{WangZhou23FourMinimalSpheres,
  title={Existence of four minimal spheres in {$S^3$} with a bumpy metric},
  author={Wang, Zhichao and Zhou, Xin},
  journal={arXiv preprint arXiv:2305.08755},
  year={2023}
}

@article{BatistaLima22MinMaxWidth,
  title={Min-max widths of the real projective 3-space},
  author={Batista, M{\'a}rcio and Lima, Anderson},
  journal={Transactions of the American Mathematical Society},
  volume={375},
  number={07},
  pages={5239--5258},
  year={2022}
}

@article{BatistaLima23Lens,
  title={A short note about 1-width of Lens spaces},
  author={Batista, M{\'a}rcio and Lima, Anderson},
  journal={Bulletin of Mathematical Sciences},
  volume={13},
  number={01},
  pages={2250005},
  year={2023},
  publisher={World Scientific}
}

@article{wangZhou2022higherMulti,
  title={Min-max minimal hypersurfaces with higher multiplicity},
  author={Wang, Zhichao and Zhou, Xin},
  journal={arXiv preprint arXiv:2201.06154},
  year={2022}
}

@article{Luna19WidthRealProjective,
  title={Compact minimal hypersurfaces of index one and the width of real projective spaces},
  author={Luna, Alejandra Ram{\'\i}rez},
  journal={arXiv preprint arXiv:1902.08221},
  year={2019}
}

@article{Aie19,
  title={The width of ellipsoids},
  author={Aiex, Nicolau Sarquis },
  journal={Communications in Analysis and Geometry},
  volume={27},
  number={2},
  pages={251--285},
  year={2019},
  publisher={International Press of Boston}
}

@article{YangyangLi20_improved_morse_index,
    AUTHOR = {Li, Yangyang},
     TITLE = {An improved {M}orse index bound of min-max minimal
              hypersurfaces},
   JOURNAL = {Calc. Var. Partial Differential Equations},
  FJOURNAL = {Calculus of Variations and Partial Differential Equations},
    VOLUME = {62},
      YEAR = {2023},
    NUMBER = {6},
     PAGES = {Paper No. 179, 32},
      ISSN = {0944-2669,1432-0835},
   MRCLASS = {49Q05 (49J35 53A10 58E12)},
  MRNUMBER = {4608450},
       DOI = {10.1007/s00526-023-02512-8},
       URL = {https://doi.org/10.1007/s00526-023-02512-8},
}

@article{SS81,
  title={Regularity of stable minimal hypersurfaces},
  author={Schoen, Richard and Simon, Leon},
  journal={Communications on Pure and Applied Mathematics},
  volume={34},
  number={6},
  pages={741--797},
  year={1981},
  publisher={Wiley Online Library}
}

@book{Pit81,
  title={Existence and regularity of minimal surfaces on Riemannian manifolds.(MN-27)},
  author={Pitts, Jon },
  year={81},
  publisher={Princeton University Press}
}

@book{BP02,
  title = {Torus Actions and Their Applications in Topology and Combinatorics},
  author = {Buchstaber, V. M. and Panov, Taras E.},
  year = {2002},
  series = {University Lecture Series},
  number = {v. 24},
  publisher = {{American Mathematical Society}},
  address = {{Providence, R.I}},
  isbn = {978-0-8218-3186-1},
  lccn = {QA613.2 .B83 2002},
  mrnumber = {1897064},
  zmnumber = {1012.52021}
}

@article{MN20,
  title={Applications of Min--Max Methods to Geometry},
  author={Marques, Fernando  Cod\'a and Neves, Andr{\'e}},
  journal={Geometric Analysis: Cetraro, Italy 2018},
  pages={41--77},
  year={2020},
  publisher={Springer}
}

@inproceedings{San50,
  title={Integral geometry in general spaces},
  author={Santal{\'o}, Luis A},
  booktitle={Proc. Intern. Congress Math.(Cambridge 1950)},
  volume={1},
  pages={483--489},
  year={1950}
}

@article{Zhu23,
  title={Widths of balls and free boundary minimal submanifolds},
  author={Zhu, Jonathan J},
  journal={Advanced Nonlinear Studies},
  volume={23},
  number={1},
  year={2023},
  publisher={De Gruyter}
}

@article{Chu22,
  title={A free boundary minimal surface via a 6-sweepout},
  author={Chu, Adrian Chun-Pong},
  journal={arXiv preprint arXiv:2208.06577},
  year={2022}
}

@article{Son23,
author = {Antoine Song},
title = {{Existence of infinitely many minimal hypersurfaces in closed manifolds}},
volume = {197},
journal = {Annals of Mathematics},
number = {3},
publisher = {Department of Mathematics of Princeton University},
pages = {859 -- 895},
keywords = {minimal surface, min-max theory, widths},
year = {2023},
doi = {10.4007/annals.2023.197.3.1},
URL = {https://doi.org/10.4007/annals.2023.197.3.1}
}

@book{Yau82,
  title={Seminar on differential geometry},
  author={Yau, Shing-Tung},
  number={102},
  year={1982},
  publisher={Princeton University Press}
}

@article{IMN18,
  title={Density of minimal hypersurfaces for generic metrics},
  author={Irie, Kei and Marques, Fernando and Neves, Andr{\'e}},
  journal={Annals of Mathematics},
  volume={187},
  number={3},
  pages={963--972},
  year={2018},
  publisher={Department of Mathematics, Princeton University Princeton, New Jersey, USA}
}

\end{document}